\newcommand{\blind}{0}
\numberwithin{equation}{section}
\theoremstyle{plain}
\newtheorem{thm}{Theorem}[section] 
\newtheorem{lemma}{Lemma}[section] 
\newtheorem{cor}{Corollary}[section]
\newtheorem{cond}{Condition}[section]
\newtheorem{definition}{Definition}[section]
\newtheorem{defi}{Definition}[section]
\newcommand{\bed}{\begin{defi}}
\newcommand{\eed}{\end{defi}}
\renewcommand{\b}{\mathbf b}
\newcommand{\eps}{\epsilon}
\newcommand{\bitem}{\begin{itemize}}
\newcommand{\eitem}{\end{itemize}}
\newcommand{\goto}{\rightarrow}
\newcommand{\beqn}{\begin{equation}}
\newcommand{\eeqn}{\end{equation}}
\newcommand{\balign}{\begin{align}}
\newcommand{\ealign}{\end{align}}
\newcommand{\s}{\sigma}
\newcommand{\tr}{\mathrm{tr}}
\newcommand{\beq}{\begin{equation}}
\newcommand{\eeq}{\end{equation}}
\newcommand{\diag}{\mathrm{diag}}
\newtheorem*{remark}{Remark}
\begin{document}

\def\spacingset#1{\renewcommand{\baselinestretch}%
{#1}\small\normalsize} \spacingset{1}


\if0\blind
{
  \title{\bf Network Goodness-of-Fit for the block-model family}
  \author{Jiashun Jin\thanks{
    JJ gratefully acknowledge the support of the NSF grant \textit{DMS-2015469}. ZK gratefully acknowledges the support of the NSF CAREER grant \textit{DMS-1943902}.}\hspace{.2cm}\\
    Department of Statistics, Carnegie Mellon University\\
    and \\
    Zheng Tracy Ke \\
    Department of Statistics, Harvard University\\
    and\\
    Jiajun Tang \\
    Department of Statistics, Harvard University\\
    and \\
    Jingming Wang\\
    Department of Statistics, University of Virginia}
  \maketitle
} \fi

\if1\blind
{
  \bigskip
  \bigskip
  \bigskip
  \begin{center}
    {\LARGE\bf Network Goodness-of-Fit for the block-model family}
\end{center}
  \medskip
} \fi

\bigskip
\begin{abstract}
 The block-model family has four popular network models (SBM, DCBM, MMSBM, and DCMM).  A fundamental problem is, how well each of these models fits with real networks.  We propose GoF-MSCORE as a new Goodness-of-Fit (GoF) metric for DCMM (the broadest one among the four), with two main ideas. The first is to use cycle count statistics as a general recipe for GoF. The second is  
 a novel network fitting scheme.    GoF-MSCORE is a flexible GoF approach, and we further extend it to SBM, DCBM, and MMSBM. This gives rise to a series of GoF metrics covering each of the four models in the block-model family. 

We show that for each of the four models, if the assumed model is correct, then the corresponding GoF metric  
converges to $N(0, 1)$  as the network sizes diverge.    We also analyze the powers and show that these metrics are optimal in many settings. In comparison, many other GoF ideas face challenges: they may lack a parameter-free limiting null, or are non-optimal in power,  or  
face an analytical hurdle. Note that a parameter-free limiting null is especially desirable 
as many network models  have a large number of unknown parameters. 
The limiting nulls of our GoF metrics are always $N(0,1)$, which are parameter-free as desired.

For 12 frequently-used real networks,  we use the proposed GoF metrics to show that DCMM fits well with almost all of them.   We also show that SBM, DCBM, and MMSBM do not fit well with many of these networks, especially when the networks are relatively large.     To complement with our study on GoF, we also show that the DCMM is nearly as broad as the  rank-$K$ network model. Based on these results,  we recommend the DCMM as a promising model for undirected networks.

\end{abstract}

\noindent%
{\it Keywords:}  Community detection, latent variable,  mixed membership, non-negative matrix factorization,  estimating $K$, self-normalized statistics,  vertex hunting.  
\vfill

\spacingset{1.75} 

 \tableofcontents
\section{Introduction} 
\label{sec:intro}    
Network modeling is a fundamental problem.  In recent years, a long list of  network models  were proposed (e.g., Section \ref{subsec:model}),  and each has motivated a long line of research. However, eventually,  we wish to understand 
how these models overlap with each other, and to identify a few representative models. 
Therefore, a fundamental problem is: Out of many existing  models,   
which one achieves a better balance between practical feasibility/interpretability and mathematical tractability (i.e., what is 
the sweet spot of network modeling)?  

We study this problem focusing on undirected networks, but the gained insights are useful for  other networks (e.g., multi-layer  networks \cite{FengDy} and dynamic networks \citep{xue2020time,zhang2020mixed,jiang2023autoregressive}).   
Let $A$ be the adjacency matrix of an undirected network with $n$ nodes, where $A_{ij}  = 1$ if there is an edge between nodes $i$ and $j$, and $A_{ij}  = 0$ otherwise (we do not count self-edges, so $A_{ii} = 0$ for all $i$).  
We assume the network has $K$ different communities, ${\cal C}_1, {\cal C}_2, \ldots, {\cal C}_K$ (communities are groups  of nodes that have more edges within than across; e.g., see Example 1, Section \ref{sec:numeric}, and \cite{JiZhuMM, SCORE-Review, bhattacharya2023inferences}).  The table below presents $12$ frequently seen networks,  
where $(n, K)$ are as above, and $d_{min}, d_{max}, \bar{d}$ are the minimum, maximum, and average degrees, respectively. These networks are not hand-picked for our favor and  
provide a solid ground for fair comparison.

\spacingset{1.1}
\begin{table}[htb!]\centering
\scalebox{0.7}{
\begin{tabular}{l|ccccc|l|ccccc}\hline
Dataset &$n$ &$K$ &$d_{\min}$ &$d_{\max}$ &$\overline{d}$ &Dataset &$n$ &$K$ &$d_{\min}$ &$d_{\max}$ &$\overline{d}$ \\ \hline
Karate &34 &2 &1 &17 &4.59 &Polbooks &105 &2 &2 &25 &8.40 \\ 
Football &115 &11 &7 &12 &10.7 &Weblogs &1222 &2 &1 &351 &27.4 \\ 
Dolphin &62 &2 &1 &12 &5.13 &Citee2016 &1790 &3 &1 &977 &115 \\ 
Fan &79 &2 &1 &78 &4.48 &Caltech &590 &8 &1 &179 &43.5 \\ 
CoAuthor &236 &2 &1 &21 &2.51 &Simmons &1137 &4 &1 &293 &42.7 \\ 
UKfaculty &81 &3 &2 &41 &14.2 & LastFM & 7624 & NA & 1& 216& 7.29\\ 
\hline
\end{tabular}
} 
\end{table}

\spacingset{1.75}

\vspace{-2 em}  

\subsection{The rank-$K$ network models and the block-model family} 
\label{subsec:model} 
Many popular network models are rank-$K$ models. Following the convention,  we assume the upper triangular entries of $A$ are independent Bernoulli variables with $ \mathbb{P}(A_{ij}  = 1)=\Omega(i,j)$, for $i\neq j$ and a matrix $\Omega \in \mathbb{R}^{n\times n}$. Let $\diag(\Omega) \in \mathbb{R}^{n\times n}$ be the $n \times n$ diagonal matrix where the $i$-th diagonal entry is $\Omega(i, i)$, and write $W=A - \mathbb{E}[A]\in\mathbb{R}^{n\times n}$. It follows that 
\begin{equation} 
\label{matrixform} 
A = \Omega - \diag(\Omega) + W, \qquad (\mbox{we call $\Omega$ the Bernoulli probability matrix}).
\end{equation}  
\bed \label{def:Rank-K-model}
We say the network model (\ref{matrixform}) is a rank-$K$ model if $\mathrm{rank}(\Omega) = K$. 
\eed  
Many network models are rank-$K$ models, and below are some examples.   {\it (1) Random dot product graph (RDPG) models} \citep{rubin2017statistical}. In this model, $\Omega = BB'$ for a matrix $B \in \mathbb{R}^{n\times K}$ (so $\Omega$ must be positive semi-definite; this makes the model relatively  restrictive). 
{\it (2) Generalized RDPG models (GRDPG)} \citep{rubin2017statistical}. In this model, $\Omega = B J B'$, where $J = \diag(a_1, \ldots, a_K) \in\mathbb{R}^{K\times K}$ and  $a_k = \pm 1$.   
This is broader than RDPG as $\Omega$ does not have to be positive semi-definite. 
{\it (3) The latent space models and the  graphon models}. These models assume $\Omega(i,j)=f(z_i, z_j)$, where $z_i\in [0,1]$ are independent and identically distributed and $f(\cdot,\cdot)$ is a smooth function. The graphon model is not exactly a rank-$K$ model, but it can be approximated by a rank-$K$ model, as shown by \citep{Chatterjee2015Matrix}.  See Figure \ref{fig:model} (top left).

These models are quite popular, but they are not the {\it modeling sweet spot} we seek for.  
Take GRDPG for example.  First, an eligible pair of $(B, J)$ is the pair of matrices where 
all entries of $B J B'$ fall in $[0, 1]$,    but how to figure out the 
eligible set of $(B, J)$ is an unsolved hard problem.  
Second, $B$ often lacks an intuitive interpretation in real applications; especially,  it remains unclear how to relate $B$ to the network community structure aforementioned.

The DCMM and the block-model family (block-family for short) are also rank-$K$ models. 
Natural networks usually have severe degree heterogeneity and mixed membership 
(e.g., see the table above and \cite{SCORE-Review}).  To model these features, Jin {\it et al}  \cite{JKL2019} (see  also \cite{JiZhuMM, MSCORE, SCORE-Review}) proposed the Degree-Corrected Mixed-Membership (DCMM) model.  
For each node $1\leq i\leq n$, let $\theta_i > 0$ be the degree heterogeneity parameter of  node $i$, and 
let $\pi_i \in \mathbb{R}^K$ be the membership vector of node $i$, where $\pi_i(k) = \mbox{the weight node $i$ puts on community $k$}$, $1 \leq k \leq K$. 
Also, for a symmetrical non-negative matrix $P \in \mathbb{R}^{K, K}$, let  
$P(k, \ell)$ be  the baseline connecting probability between communities $k$ and $\ell$,  $1 \leq k, \ell \leq K$. DCMM is a special rank-$K$ model where we assume $\Omega(i,j) = \theta_i \theta_j \pi_i' P \pi_j$, 
$1 \leq i, j \leq n$. 
Write $\theta = (\theta_1,\ldots,\theta_n)'$ and $\Pi=[\pi_1,\ldots,\pi_n]'$, and let $\Theta \in \mathbb{R}^{n\times n}$ be the diagonal matrix where $\Theta(i,i) = \theta_i$.  
With these notations,  in DCMM, 
\begin{equation}  \label{modelmatrixform}
A=\Omega-\diag(\Omega)+W, \qquad \mbox{and} \qquad \Omega = \Theta \Pi P \Pi' \Theta. 
\end{equation}
This model belongs to the block-family. The other three models in this family are the Stochastic Block Model (SBM), the Degree-Corrected 
Block Model (DCBM) \citep{karrer2011stochastic}, and the Mixed-Membership Stochastic Block Model (MMSBM)  \citep{airoldi2008mixed}.  If we call node $i$ a pure node when  
$\pi_i$ is degenerate (i.e., one entry is $1$, all other entries are $0$),    
then  (a) DCMM reduces to DCBM if all nodes are pure (i.e., each $\pi_i$ must be degenerate),
(b) DCMM reduces to MMSBM 
if all $\theta_i$ are equal, and (c) DCBM reduces to SBM if all nodes are pure and all $\theta_i$ in the same community are equal.  See \cite{JKL2019, EstK} and Figure~\ref{fig:model} (left).  
Regarding model identifiability,  SBM is always identifiable, MMSBM is 
identifiable if each community has at least one pure node, 
DCBM is identifiable if $P$ has unit-diagonal entries, and DCMM is 
identifiable if $P$ has unit-diagonal entries and each community has at least one pure node (e.g., \cite{MSCORE, SCORE-Review}).

\spacingset{1}
\begin{figure}[tb!]
\centering
\includegraphics[height = .21\textwidth, width=.4\textwidth]{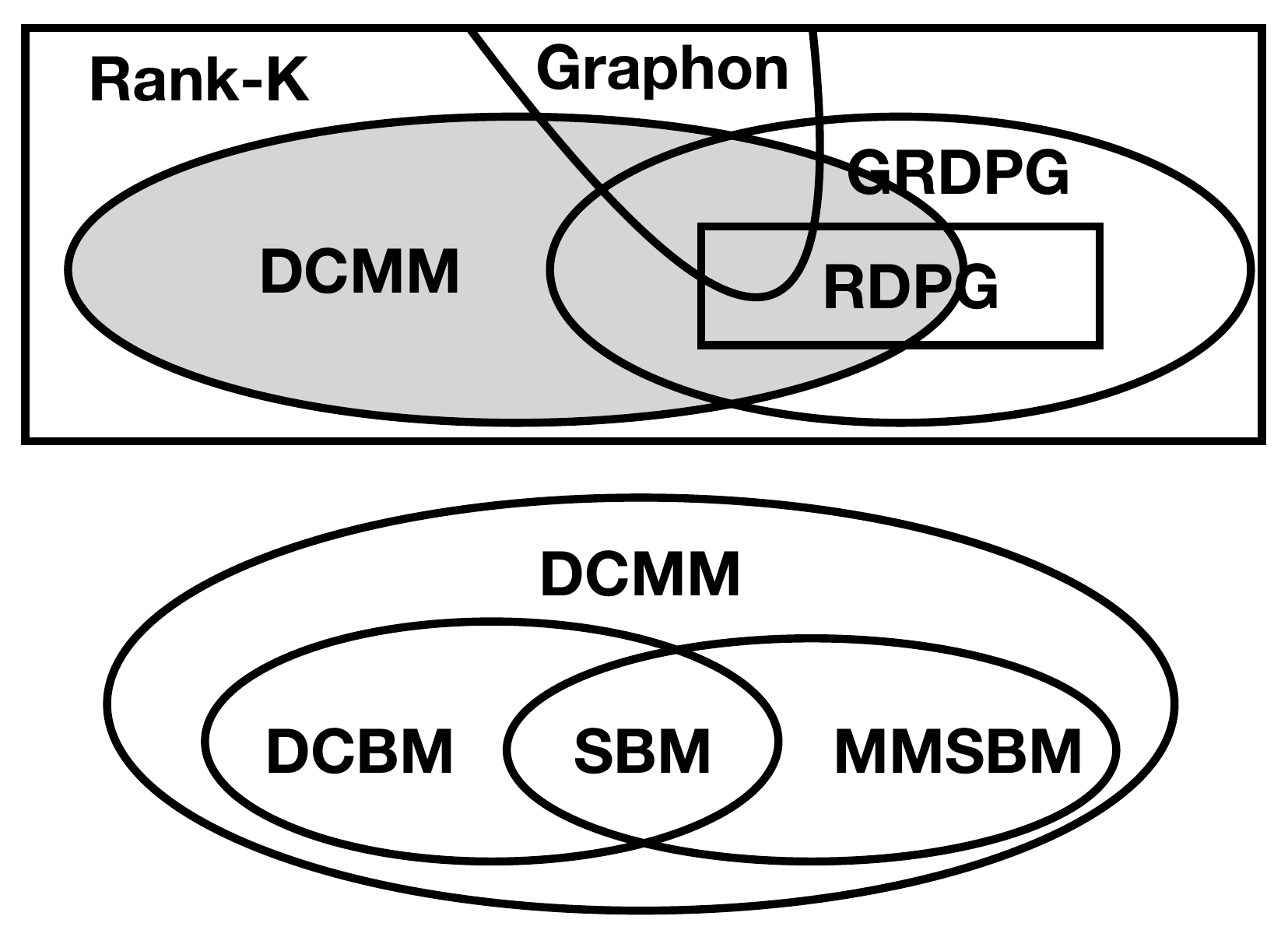} \hspace{.5cm}
\includegraphics[height = .23\textwidth, width=.45\textwidth, trim=0 9 0 6, clip=true]{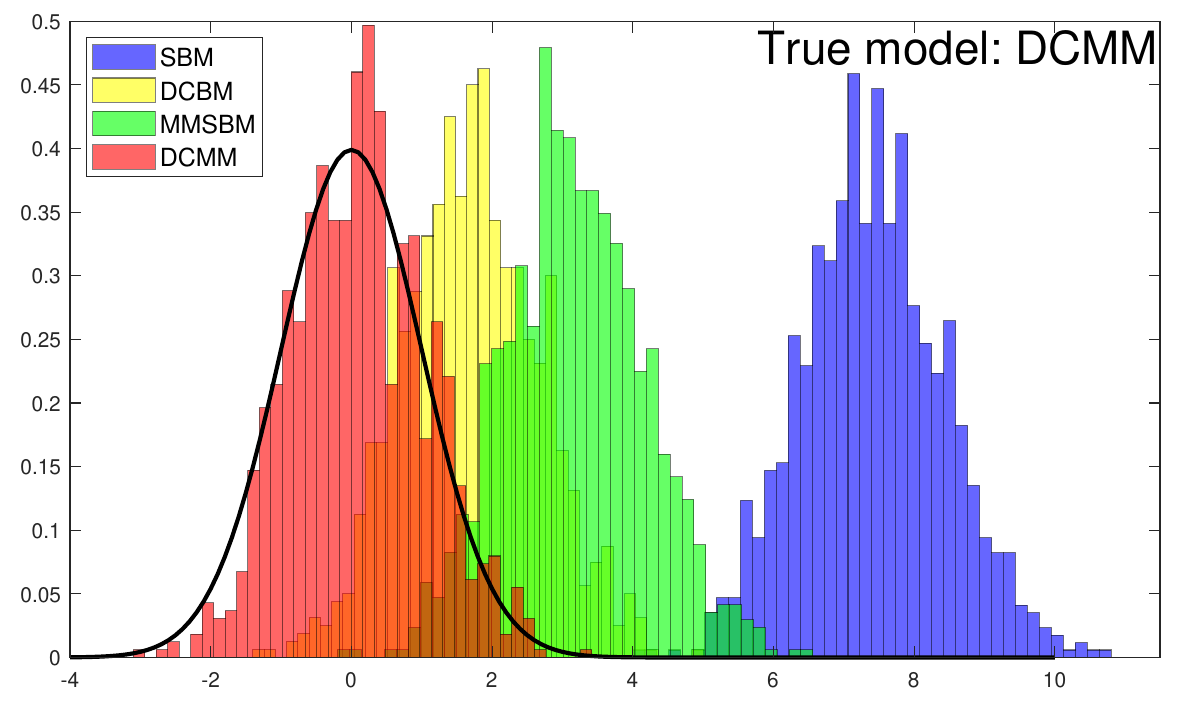} 
\caption{{\small Left: Comparison of models (Section~\ref{subsec:model}). Right:  Histograms of four GoF metrics (based on 1000 networks generated from DCMM; black curve: density of $N(0,1)$; see Section~\ref{subsec:contribution}).   The results suggest that DCMM fits well with the networks, but SBM, DCBM, and MMSBM do not.}  
\label{fig:model}
}
\end{figure}
\spacingset{1.75}

A DCMM model is always a rank-$K$ model, but when the reverse is true?  This is the problem of non-negative matrix factorization (NMF). Fix an $\Omega$ as in the rank-$K$ model. For simplicity, we suppose $\Omega$ is irreducible  (i.e., $\Omega$ cannot be made block-wise diagonal by a simultaneous row \& column permutation), but this can be relaxed. Let $u_i  = \sum_{j = 1}^n \Omega(i, j)$, 
$\bar{u} = (1/n) \sum_{i=1}^n u_i$, $U = \diag(u_1, \ldots, u_n)$.   Let $(\tau_k, \rho_k)$ be the $k$-th eigen-pair of the Laplacian $L = U^{-1} \Omega$, and let $\omega_k = \bar{u} / (\rho_k' U \rho_k)$.   
Assume that more than $K/2$ eigenvalues of $\Omega$ are positive  and $\sum_{k =2}^K |\tau_k|  \cdot  \omega_k  \cdot  \|\sqrt{n} \rho_k\|_{\infty}^2 \leq 1/(K-1)$. 
Theorem \ref{thm:Laplacian} is proved in the supplement.
\begin{thm}[NMF] \label{thm:Laplacian}  
First,  $\tau_1 = 1$.  Second,   if $K = 2$, or if $K \geq 3$ and  the above conditions hold, then there are matrices $(\Theta, \Pi, P)$ as in the DCMM  such that $\Omega = \Theta \Pi P \Pi' \Theta$. 
\spacingset{1}
\spacingset{1.47}
\end{thm}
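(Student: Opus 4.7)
My plan is to handle the two claims sequentially, the first being a short Perron--Frobenius observation and the second a SCORE-style geometric construction.

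For $\tau_1=1$, I would observe that $L=U^{-1}\Omega$ is row-stochastic since $(L\mathbf{1})_i=u_i^{-1}\sum_j\Omega(i,j)=1$, so $\mathbf{1}$ is an eigenvector of $L$ with eigenvalue $1$. Irreducibility of $\Omega$ transfers to $L$, and Perron--Frobenius gives $|\tau_k|\le 1$ with equality only at $k=1$; hence $\tau_1=1$ and $\rho_1\propto\mathbf{1}$, with the useful by-product $\omega_1=1/n$.

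For the decomposition, I would first rewrite $\Omega$ in the basis $\{\rho_k\}$. Since $L$ is similar via $U^{1/2}$ to the symmetric matrix $\tilde L := U^{-1/2}\Omega U^{-1/2}$, the eigenvectors $\{\rho_k\}$ are $U$-orthogonal, and
\[
\Omega \;=\; \sum_{k=1}^K \frac{\tau_k}{\rho_k'U\rho_k}\,(U\rho_k)(U\rho_k)'
\;=\; \frac{uu'}{n\bar u}\;+\;\sum_{k=2}^K \frac{\tau_k\omega_k}{\bar u}\,(U\rho_k)(U\rho_k)'.
\]
The leading term singles out the natural, strictly positive degree vector $\theta_i:=u_i/\sqrt{n\bar u}$. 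Setting $\Theta=\diag(\theta)$, the task reduces to expressing
\[
M \;:=\; \Theta^{-1}\Omega\,\Theta^{-1} \;=\; \mathbf{1}\mathbf{1}' + \sum_{k=2}^K c_k\,\rho_k\rho_k',\qquad c_k:=n\tau_k\omega_k,
\]
in the form $\Pi P\Pi'$ with $\Pi$ non-negative and row-stochastic and $P\ge 0$ symmetric.

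The construction of $\Pi$ and $P$ is the geometric heart of the proof. Form the embedding $R\in\mathbb R^{n\times(K-1)}$ with rows $R_i=(\rho_2(i),\dots,\rho_K(i))$, choose $K$ simplex vertices $r_1,\dots,r_K\in\mathbb R^{K-1}$ whose convex hull contains every $R_i$, and let $\pi_i$ be the barycentric coordinates of $R_i$. Substituting $\rho_k(i)=\sum_a\pi_i(a)(r_a)_{k-1}$ into $M$ and using $\sum_a\pi_i(a)=1$ to absorb the constant $1$ yields $M=\Pi P\Pi'$ with $P_{ab}=1+\sum_{k\ge 2}c_k(r_a)_{k-1}(r_b)_{k-1}$. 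For $K=2$ the simplex is just $[\min_i\rho_2(i),\max_i\rho_2(i)]$; $P\ge 0$ is automatic because the extremal nodes are pure and the corresponding $P_{ab}$'s equal the non-negative entries of $M$, so the case $K=2$ requires no additional hypothesis. For $K\ge 3$, I would use a regular simplex of circumradius calibrated so that the bound $\sum_{k\ge 2}|\tau_k|\omega_k\|\sqrt n\,\rho_k\|_\infty^2\le 1/(K-1)$ is exactly the threshold for every $R_i$ to lie inside.

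The hardest part will be simultaneously enforcing (i) simplex containment so that $\pi_i\ge 0$ and (ii) entry-wise non-negativity of $P$ for $K\ge 3$. The assumption that more than $K/2$ eigenvalues of $\Omega$ are positive enters exactly here: with a majority of the $c_k$'s positive, the quadratic form $\sum_{k\ge 2}c_k(r_a)_{k-1}(r_b)_{k-1}$ cannot dip below $-1$ once the regular-simplex geometry is combined with the spectral bound. Calibrating the vertex radius so that both conditions hold with a common slack of $1/(K-1)$ is the only place the two hypotheses are jointly essential; the remaining steps---checking symmetry of $P$, that $\pi_i$ lies in the simplex, and consistency of $M=\Pi P\Pi'$---are routine bookkeeping from the spectral identity above.
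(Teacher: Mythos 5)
Your treatment of $\tau_1=1$ matches the paper's (row-stochasticity of $L=U^{-1}\Omega$ plus Perron), and your spectral rewriting of $\Omega$, the reduction to $M=\Theta^{-1}\Omega\Theta^{-1}=\mathbf{1}\mathbf{1}'+\sum_{k\ge 2}c_k\rho_k\rho_k'$ with $\theta_i=u_i/\sqrt{n\bar u}$, and your $K=2$ argument (extremal nodes are pure, so the entries of $P$ coincide with entries of the nonnegative matrix $M$) are all correct. The routes then diverge: the paper does \emph{not} construct $(\Pi,P)$ at all --- it passes to the symmetric normalization $U^{-1/2}\Omega U^{-1/2}$, cites the sufficient condition for NMF solvability from the reference \cite{NMF}, and only verifies that $\eta_k(i)/\eta_1(i)=\sqrt{n}(c_k/c_1)\rho_k(i)$ turns that condition into the stated hypothesis $\sum_{k\ge 2}|\tau_k|\,\omega_k\,\|\sqrt{n}\rho_k\|_\infty^2\le 1/(K-1)$. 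You instead attempt to prove the cited result from scratch, which is more ambitious and, if completed, more self-contained.

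The gap is precisely in the step you flag as ``the hardest part,'' and as sketched it does not close for $K\ge 3$. Write $Q(x,y)=\sum_{k\ge 2}c_k x_{k-1}y_{k-1}$ and $\|x\|_c^2=\sum_{k\ge 2}|c_k|x_{k-1}^2$; the hypothesis gives $\|R_i\|_c^2\le 1/(K-1)$ for every $i$. If you control $P_{ab}=1+Q(r_a,r_b)\ge 0$ by Cauchy--Schwarz, you need the vertices to satisfy $\|r_a\|_c\le 1$; but a regular simplex in $\mathbb{R}^{K-1}$ with circumradius $s$ (in the $\|\cdot\|_c$ geometry) only contains the concentric ball of radius $s/(K-1)$, so containment of all $R_i$ forces $s\ge\sqrt{K-1}$. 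These two requirements on $s$ are incompatible by a factor of $\sqrt{K-1}$, so ``calibrating the circumradius'' cannot simultaneously deliver $\pi_i\ge 0$ and $P\ge 0$ by this route. The actual argument must exploit the sign pattern of the $c_k$ more finely than a Cauchy--Schwarz bound (this is where the assumption that more than $K/2$ eigenvalues of $\Omega$ are positive genuinely enters, as you correctly suspect), and you have not supplied that argument. Two smaller points: $\omega_1=\bar u/(\rho_1'U\rho_1)=1$, not $1/n$ (harmless, since you never use it), and Perron--Frobenius for an irreducible nonnegative matrix does not by itself rule out $-1$ as an eigenvalue of $L$; it only gives that the spectral radius equals $1$ and is attained at the positive eigenvector, which is all that is needed for $\tau_1=1$.
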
 

 For many real data sets (see Section~\ref{sec:numeric}), $K=2$; it follows by Theorem~\ref{thm:Laplacian} that a DCMM model is always a rank-$K$ model. When $K\geq 3$, 
we can show that $\omega_k \leq C$ and $\|\sqrt{n} \rho_k\|_{\infty}^2 \leq C$ under a mild community balance condition.  Also, $\tau_1  = 1$, and in the most challenging weak-signal case, 
$\max\{|\tau_2|, \ldots, |\tau_K|\} =o(1)$ \cite{JKL2019, SCORE-Review}.  In such cases, the  conditions of Theorem \ref{thm:Laplacian} hold \cite[Section 3]{NMF}.  
Therefore, the DCMM is nearly as broad as the rank-$K$ model.

{\bf Example 1}. In \cite{JBES}, DCMM was used to model $21$ citee networks of the same 
 $2830$ nodes and three communities ``Bayesian", ``Biostatisitcs", and ``non-parametrics". For each node $i$, $\theta_i$ models the impact of author $i$, 
  $\pi_i$ models the research interests of author $i$ in the three communities,  
 and $P$ models the baseline citation probability between these communities.

In summary,  compared to the rank-$K$ model, DCMM is not only nearly as broad but 
also more useful and interpretable, because all of its parameters $(\Theta, P, \Pi)$ have explicit meanings; see Example 1.  This makes DCMM more appealing than the rank-$K$ model in practice.  
 
 \vspace{-.5 em} 
 
\subsection{Network Goodness-of-Fit (GoF): review and our contribution}  
\label{subsec:contribution}   
The above suggests that the block-model family (or block-family) (especially the DCMM) are promising models for real  networks. A fundamental problem is therefore as follows. 
\vspace{-.65 em} 
\begin{itemize} 
\item {\it Goodness-of-Fit (GoF) for the block-family}. Given each of the $4$ models in the block  family, 
how well does it fit with real networks?  In particular, is DCMM adequate?  Can we replace it by a simpler  model in the family (e.g.,  SBM, MMSBM, or DCBM)?  
\end{itemize}  
\vspace{-.65 em} 
To test  if a model fits with an observed network with $n$ nodes,  we hope to develop a test statistic $T_n$ and claim a lack-of-fit  when
$|T_n| \geq t_0$ for a threshold $t_0$.   
To use this in practice, it is crucial that the threshold $t_0$ can be specified explicitly.   This requires that we design $T_n$ such that for an {\it explicit distribution $\psi_0$ that does not depend on any unknown parameters},   
\vspace{-.5 em} 
\begin{equation} \label{gof-goal} 
\mbox{$T_n$ converges weakly to $\psi_0$ if the assumed model is true}.  
\end{equation} 
\vspace{-1 em} 
When \eqref{gof-goal} holds, we call $\psi_0$ the ``parameter-free limiting null".  

\vspace{.75 em}

How to construct a GoF metric with a parameter-free limiting null is a challenging problem.   
One may use a likelihood approach, but as the assumed model may have a large number of  parameters,    the limiting null  is hard to derive and may depend on unknown parameters in a complicated way.   
Lei \cite{LeiGoF} constructed a normalized adjacency matrix $\widetilde{A}$ in an SBM setting and showed 
that the first eigenvalue of $\widetilde{A}$ converges to a Tracy-Widom distribution.  Dong et al. \cite{dong2020spectral} used linear spectral statistics of $\widetilde{A}$ and showed that the limiting null is $N(0,1)$.  Hu et al. \cite{hu2021using} used the maximum entry-wise deviation to test the goodness of fit for SBM. 
Unfortunately,  these approaches focused on the SBM and it is hard to extend them to more complicated models such as DCMM where we have {\it far more parameters}. 
In DCMM, it is unclear how to construct $\widetilde{A}$, and it is even harder to 
derive a test statistic with a parameter-free limiting null.    Also, \cite{LeiGoF, dong2020spectral} required that the average degree of the network is $O(n)$ (this is rather restrictive as most networks are sparse) and did not analyze the power.

Our problem is also related to network model selection \citep{han2019universal,  li2020network}.  Given a network from a rank-$K$ model, they are interested in estimating $K$ or testing $K = K_0$ (here, $K_0$ is known but $K$ is not).  Since both DCBM and DCMM are rank-$K$ models,   those approaches 
have no power in telling  whether DCBM or DCMM is more appropriate for a given data set.   
Also,  since a rank-$K$ model is not necessarily a DCMM model,  we cannot use their methods as GoF metrics for DCMM.  See Section \ref{subsec:power} for more comparisons. Our GoF problem is also related to global testing (i.e., testing $K = 1$ vs. $K > 1$) \citep{JKL2019}, but the goals are clearly very different. 

For these reasons, how to find {\it a proper GoF metric for each of the four models}  remains an open problem. We face several challenges. First,  our data has limited information but the models have many unknown parameters.   Second, for many GoF metrics we may have,  the limiting nulls may depend on unknown parameters in a complicated way.  Last, we want an approach that is optimal in power and 
automatically adapts to a wide range of sparsity levels.

Our idea is as follows.   We discover a family of self-normalized cycle count (SCC) statistics, denoted by $\psi_{n, m}(\widehat{\Omega})$, where $m \geq 3$ is the fixed length of the cycles being counted and $\widehat{\Omega}$ is an estimate for $\Omega$ to be determined. 
  We show that  
 if some mild regularity conditions hold, then 
\begin{equation} \label{Introlimitnull} 
\psi_{n, m}(\widehat{\Omega}) \goto N(0,1) \;\;  \mbox{in law},  \qquad  \mbox{in the idealized case where $\widehat{\Omega} = \Omega$}. 
\end{equation}  
This is exactly what we need for a GoF: while $\Omega$ may have numerous unknown parameters,  the limiting null $N(0,1)$ is parameter-free. 
Since (\ref{Introlimitnull}) holds for all $m \geq 3$, we may take $m = 3$ for simplicity   (the analysis for $m\geq 4$ is similar but more lengthy). Let $T_n(\widehat{\Omega}) = \psi_{n, 3}(\widehat{\Omega})$.

It remains to find a good estimate $\widehat{\Omega}$ for $\Omega$ and show that 
$|T_n(\widehat{\Omega}) - T_n(\Omega)| \goto  0$.   
Take the DCMM for example. In DCMM,  $\Omega = \Theta \Pi P \Pi' \Theta$ and $\Pi$ is an $n \times K$ matrix of latent variables.   
Therefore, a reasonable approach is,  
we first estimate $\Pi$ and then use the result to estimate $\Omega$ by refitting.  
There are relatively few approaches to 
estimating $\Pi$,  among which is the recent idea of Mixed-SCORE (MSCORE)  \citep{MSCORE, SCORE-Review}, an efficient spectral approach.

Unfortunately, either we use MSCORE or other spectral approaches, 
we face {\it an analytical hurdle}.   The main reason is that, to show $|T_n(\widehat{\Omega}) - T_n(\Omega)| \goto 0$, the SCC approach dictates that we must choose an $\widehat{\Omega}$ that is a {\it relatively simple (multivariate) function of $A$ with an explicit form} (e.g., Section \ref{subsec:SCChurdle}), but none of the existing approaches has such a property.

We propose GoF-MSCORE as a new method for estimating $\Omega$ under the DCMM model, 
motivated by a simplex structure we discover in an embedded subspace. 
In this approach, we first use MSCORE \citep{MSCORE} to obtain an initial 
estimate for $\Pi$.  We then combine the MSCORE estimate,  
 the simplex structure aforementioned,  and several other techniques (e.g., net-rounding) to re-estimate 
$\Pi$. We then use the result to estimate $\Omega$ by refitting.  This gives rise to a 
new GoF metric $T_n(\widehat{\Omega}^{\mathrm{DCMM}})$ for DCMM, where $\widehat{\Omega}^{\mathrm{DCMM}}$ 
is an estimate for $\Omega$ under DCMM that is not only accurate but also a 
simple function of $A$ with an explicit form, as desired.

The GoF-MSCORE metric for DCMM, $T_n(\widehat{\Omega}^{{\rm DCMM}})$, is a flexible idea: we extend it to   GoF metrics for SBM, DCBM, and MMSBM, denoted by $T_n(\widehat{\Omega}^{{\rm SBM}})$,  $T_n(\widehat{\Omega}^{{\rm DCBM}})$, and 
$T_n(\widehat{\Omega}^{{\rm MMSBM}})$, 
respectively. In all these cases,  we have successfully circumvented the analytical hurdle and 
showed: $T_n(\widehat{\Omega}^*) \goto N(0,1)$ in law, if $*$ is the 
correct model (here $* =$ SBM, DCBM, MMSBM, and DCMM).  
Although the design of these GoF metrics no longer poses any hurdle impossible to overcome,  
 the analysis is still delicate 
and long, especially when $*=$ MMSBM or DCMM.  
We also analyze the power of GoF-MSCORE and show that it is 
optimal in a broad setting.

Figure \ref{fig:model} (right)  compares the four GoF metrics using $1000$ simulated networks from DCMM with parameters as follows: (1)  $(n, K)=(3000,2)$, and $P \in \mathbb{R}^{2, 2}$  with $1$ on the main diagonal and $0.05$ elsewhere, 
(2) $\theta_i$'s are iid from ${\rm Uniform}(0.1,0.3)$, and (3) each community has $n/8$ pure nodes; for all other nodes, the $\pi_i$'s are iid from ${\rm Dirichlet}(0.5,0.5)$. 
The results show that DCMM fits well with the networks, but  SBM, DCBM, and MMSBM 
do not. 
Thus, our approach is effective in distinguishing between different models.

We also investigate the $12$ real networks (see the table above). Take 
Citee2016 for example. The GoF metrics are $T_n(\widehat{\Omega}^*) = 759.2$, $308.4$,  $405.3$ and $3.687$, for $* =$ SBM, DCBM, MMSBM  and DCMM, respectively.  This strongly suggests that  DCMM 
fits well with the citee network, but SBM, DCBM, and MMSBM yield a poor fit.  
See Section \ref{sec:numeric} for more discussion. 
  
Below in Section \ref{sec:SCC}, we introduce a family of SCC statistics and 
a general recipe for GoF. We also explain why many GoF approaches may pose an analytical challenge. In Section \ref{sec:DCMM}, we propose GoF-MSCORE as new GoF metric for DCMM, and extend it to the other three models in the block-family. We show all four GoF metrics have $N(0,1)$ as the parameter-free limiting null, and have optimal power in a broad alternative setting.  Section \ref{sec:numeric} 
analyzes some simulated networks and the 12 real networks above.  
Section \ref{sec:Discu} is a short discussion. 


\vspace{-.5 cm}

\section{A general recipe for GoF and an analytical hurdle} 
\label{sec:SCC} 
To find a  GoF metric with a parameter-free limiting null is a challenging task.  
We tackle this by introducing a family of self-normalized cycle count (SCC) statistics.  
The SCC approach is promising  but has an (unexpected) analytical hurdle.  We discuss our key ideas to overcome the hurdle in Section \ref{subsec:SCChurdle}, with the detailed methods and theory deferred to Section~\ref{sec:DCMM}.  

\vspace{-1 em} 
\subsection{A general GoF recipe by Self-normalized Cycle Counts (SCC)} 
\label{subsec:SCCoracle}

\begin{definition}
Given an integer $m \geq 3$, define the order-$m$ cycle count statistic by $C_{n, m} = \sum_{i_1, i_2, \ldots, i_m (dist)} A_{i_1 i_2} A_{i_2 i_3} \ldots A_{i_m i_1}$, where ``dist" means $i_1,i_2,\ldots,i_m$ are $m$ distinct indices. Given any estimate $\widehat{\Omega}$, define $U_{n, m}(\widehat{\Omega})  = \sum_{i_1, i_2, \ldots, i_m (dist)} \widehat{A}_{i_1 i_2} \widehat{A}_{i_2 i_3} \ldots \widehat{A}_{i_m i_1}$,  with $\widehat A=A-\widehat\Omega$. 
\end{definition}
Note that $A_{i_1 i_2} A_{i_2 i_3} \ldots A_{i_m i_1} = 1$ if $(i_1, i_2, \ldots, i_m)$ forms a length-$m$ (order-$m$) cycle in the graph, and 
$A_{i_1 i_2} A_{i_2 i_3} \ldots A_{i_m i_1} =  0$ otherwise. So, $\frac{1}{m!}C_{n, m}$ is the total number of 
order-$m$ cycles.  We call $\frac{1}{m!}U_{n, m}(\widehat{\Omega})$   
the number of order-$m$ signed-cycles  
\citep{Bubeck2016testing, JKL2019} (e.g., 
$\frac{1}{6}C_{n, 3}$ and $\frac{1}{6}U_{n, 3}(\widehat{\Omega})$ are numbers of triangles and signed triangles;  $\frac{1}{24}C_{n, 4}$ and $\frac{1}{24}U_{n, 4}(\widehat{\Omega})$ are numbers of quadrilaterals and signed quadrilaterals).  
Define the Self-normalized Cycle Count (SCC) statistic by  
$\psi_{n, m}(\widehat{\Omega}) =  U_{n, m}(\widehat{\Omega}) / \sqrt{2 m C_{n, m}}$.     
The following results are proved in the supplement.  
\begin{thm}[Parameter-free limiting null (oracle case)]\label{thm:SCC} 
Let $u = \Omega {\bf 1}_n = (u_1, \ldots, u_n)'$, $\bar{u} = (1/n) \sum_{i=1}^n u_i$,   and  $u_{max} = \max\{u_1, \ldots, u_n\}$.  Fix $m \geq 3$.  As $n \goto \infty$, if 
$\Omega(i,j) \leq C u_iu_j/(n\bar{u})$,  ${\rm tr} (\Omega^m)\geq C\Vert u\Vert^{2m}/(n\bar{u})^{m}$,  and $
\max\{ n\bar{u} / \|u\|^2,\,  u^2_{\max}/ (n\bar{u}) \}  \goto 0$, 
 then  $\psi_{n, m}({\Omega})  \goto N(0,1)$.   
\end{thm}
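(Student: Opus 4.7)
The plan is to treat $U_{n,m}(\Omega)$ in the oracle case as a homogeneous polynomial of degree $m$ in the independent centered variables $\{W_{ij}\}_{i<j}$ (on distinct indices $\widehat A_{ij}=A_{ij}-\Omega_{ij}=W_{ij}$, with $\mathrm{Var}(W_{ij})=\Omega_{ij}(1-\Omega_{ij})$), and to apply the method of moments to prove asymptotic normality of $U_{n,m}(\Omega)/\sqrt{\mathrm{Var}(U_{n,m}(\Omega))}$. In parallel I would show $C_{n,m}/\mathbb{E}[C_{n,m}]\to 1$ in probability and that $2m\,\mathbb{E}[C_{n,m}]$ matches the leading order of $\mathrm{Var}(U_{n,m}(\Omega))$; Slutsky then delivers $\psi_{n,m}(\Omega)\to N(0,1)$.

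For the first two moments, $\mathbb{E}[U_{n,m}(\Omega)]=0$ because the $m$ edges of a cycle on distinct indices are all distinct, the $W_{ij}$ are jointly independent, and each has mean zero. For the variance, two ordered cycles on distinct indices contribute a nonzero expectation only when they coincide as unordered cycles, and each unordered cycle arises from exactly $2m$ cyclic reparameterizations, giving
\[
\mathrm{Var}\bigl(U_{n,m}(\Omega)\bigr) \;=\; 2m\!\!\sum_{i_1,\ldots,i_m\,(dist)}\!\!\prod_{a=1}^{m}\Omega_{i_a i_{a+1}}\bigl(1-\Omega_{i_a i_{a+1}}\bigr).
\]
The hypotheses $\Omega_{ij}\leq Cu_iu_j/(n\bar u)$ and $u_{\max}^2/(n\bar u)\to 0$ force $\max_{i,j}\Omega_{ij}\to 0$, so each $(1-\Omega_{i_a i_{a+1}})$ is $1-o(1)$ and the right-hand side equals $2m\,\mathbb{E}[C_{n,m}](1+o(1))$. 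A direct expansion identifies $\mathbb{E}[C_{n,m}]$ with $\tr(\Omega^m)$ up to repeated-index corrections that are negligible under the same bounds; combined with $\tr(\Omega^m)\geq C\|u\|^{2m}/(n\bar u)^m$ and $n\bar u/\|u\|^2\to 0$, this forces the variance to diverge.

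For the central limit theorem, I would expand $\mathbb{E}[U_{n,m}(\Omega)^k]$ as a sum indexed by $k$-tuples of ordered $m$-cycles; since $\mathbb{E}[W_{ij}]=0$ and the $W_{ij}$ are independent, only configurations in which every distinct edge is used at least twice survive. The dominant contribution comes from perfect pairings---partitions of the $k$ cycles into $k/2$ pairs, with the two cycles of each pair coinciding as unordered cycles and different pairs being edge-disjoint---which yields $(k-1)!!\bigl[\mathrm{Var}(U_{n,m}(\Omega))\bigr]^{k/2}$ for even $k$ and a subdominant contribution for odd $k$. Non-pairing configurations produce multigraphs with either fewer distinct vertices or some edge repeated more than twice; each such collapse costs an additional factor of $n\bar u/\|u\|^2$ or $u_{\max}^2/(n\bar u)$, both vanishing by hypothesis. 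This is the same scheme used for signed triangles in \cite{Bubeck2016testing} and for general signed cycles in \cite{JKL2019}, adapted to the heterogeneous DCMM regime.

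The main obstacle will be the combinatorial bookkeeping in the moment expansion: enumerating and dominating the multigraph shapes obtained as unions of $k$ ordered cycles and showing each non-pairing shape is uniformly subdominant in $k$. The decisive input is the entrywise bound $\Omega_{ij}\leq Cu_iu_j/(n\bar u)$, which lets me estimate any sum $\sum \prod\Omega_{i_a i_b}$ indexed by a multigraph by a product of $\|u\|$-factors with an $n\bar u$ penalty per edge, so that each ``collapsed'' vertex loses at least one factor of $n\bar u/\|u\|^2$ (or $u_{\max}^2/(n\bar u)$ when the collapse forces a maximum-degree term). Combined with the denominator lower bound $\tr(\Omega^m)\gtrsim \|u\|^{2m}/(n\bar u)^m$, this produces the geometric savings needed. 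A Chebyshev argument applied to $\mathrm{Var}(C_{n,m})$, itself a polynomial in $W$ susceptible to the same expansion, finally delivers $C_{n,m}/\mathbb{E}[C_{n,m}]\to 1$ in probability, completing the self-normalization.
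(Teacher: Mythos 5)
Your proposal is sound but follows a genuinely different route from the paper. You prove asymptotic normality of $U_{n,m}(\Omega)/\sqrt{\mathrm{Var}(U_{n,m}(\Omega))}$ by the method of moments, expanding $\mathbb{E}[U_{n,m}(\Omega)^k]$ over $k$-tuples of ordered $m$-cycles and showing that perfect pairings dominate while every collapsed configuration loses at least a factor of $n\bar u/\|u\|^2$ or $u_{\max}^2/(n\bar u)$. The paper instead invokes Hall's martingale central limit theorem: it filters on $\mathcal{F}_{n,k}=\sigma(\{A_{ij}\}_{i<j\leq k})$, writes $U_{n,m}$ as a sum of martingale differences $X_{n,k}$ (each collecting the cycles whose largest index is $k$), and verifies the conditional-variance condition via $\mathrm{Var}\bigl(\sum_k\mathbb{E}[X_{n,k}^2\mid\mathcal{F}_{n,k-1}]\bigr)\to 0$ plus a Lindeberg condition via $\sum_k\mathbb{E}[X_{n,k}^4]\to 0$. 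The two approaches share the identical estimation toolkit — the entrywise bound $\Omega_{ij}\leq Cu_iu_j/(n\bar u)$ converts any multigraph sum into a product of $\|u\|$-factors penalized per edge, and the denominator lower bound $\tr(\Omega^m)\gtrsim\|u\|^{2m}/(n\bar u)^m$ supplies the geometric savings — and your treatment of the mean, the variance, and the Chebyshev argument for $C_{n,m}/\mathbb{E}[C_{n,m}]\to 1$ coincides with the paper's. The trade-off is in the combinatorial load: the martingale route confines all enumeration to second and fourth moments of the increments (unions of at most four paths/cycles), whereas your moment method must classify unions of $k$ cycles with every edge doubled for each fixed $k$, which is conceptually elementary and connects directly to the signed-cycle literature you cite, but is substantially more bookkeeping. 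One point to make explicit when you write it up: for odd $k$ and for non-pairing even configurations, the subdominance follows because any surviving multigraph has every edge of multiplicity at least two, hence at most $\lfloor mk/2\rfloor$ distinct edges, and any component that is not an edge-disjoint matched pair of cycles necessarily sacrifices at least one distinct vertex relative to the pairing count — this is where the hypotheses $n\bar u/\|u\|^2\to 0$ and $u_{\max}^2/(n\bar u)\to 0$ enter, exactly as in the paper's bounds on $\mathrm{var}(I_a)$ and $\mathrm{var}(I_b)$.
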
 
\begin{cor}\label{cor:SCC}
Fix an integer $m \geq 3$ and suppose there are constants $c_1 > c_0 > 0$ and a positive vector $\theta \in \mathbb{R}^n$ such that $c_0 \theta_i \theta_j \leq \Omega(i, j) \leq c_1 \theta_i \theta_j$ for all $1 \leq i, j \leq n$. As $n \goto \infty$, if $\| \theta\|\to \infty$,  $\theta_{\max} \to 0$,   then 
$\psi_{n, m}({\Omega})  \goto N(0,1)$. 
\end{cor}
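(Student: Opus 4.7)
The plan is to derive Corollary~\ref{cor:SCC} as a direct consequence of Theorem~\ref{thm:SCC} by verifying, under the two-sided bound $c_0 \theta_i \theta_j \leq \Omega(i,j) \leq c_1 \theta_i \theta_j$, each of the three hypotheses on $(u, \bar u, u_{\max}, \mathrm{tr}(\Omega^m))$. Nothing new about cycle counts is needed; the entire task is a bookkeeping translation of the $(\Omega(i,j))$-conditions into the cleaner $\theta$-conditions.

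First I would compute the orders of the relevant quantities. Since $u_i = \sum_{j} \Omega(i,j) \asymp \theta_i \|\theta\|_1$, I get $\bar u \asymp \|\theta\|_1^2/n$, $n\bar u \asymp \|\theta\|_1^2$, $\|u\|^2 \asymp \|\theta\|_1^2 \|\theta\|^2$, and $u_{\max} \asymp \theta_{\max}\|\theta\|_1$ (all up to constants depending only on $c_0, c_1$). The first hypothesis of Theorem~\ref{thm:SCC}, $\Omega(i,j) \leq C u_i u_j /(n\bar u)$, then reduces to $c_1 \theta_i \theta_j \leq C \theta_i\theta_j$, which holds with $C$ depending only on $c_0, c_1$.

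Next I would verify the trace bound. Using $\Omega(i,j) \geq c_0 \theta_i \theta_j$ entrywise and expanding,
\begin{equation*}
\mathrm{tr}(\Omega^m) = \sum_{i_1,\ldots,i_m} \Omega(i_1,i_2)\Omega(i_2,i_3)\cdots \Omega(i_m,i_1) \geq c_0^m \sum_{i_1,\ldots,i_m} \theta_{i_1}^2 \theta_{i_2}^2 \cdots \theta_{i_m}^2 = c_0^m \|\theta\|^{2m}.
\end{equation*}
On the other hand $\|u\|^{2m}/(n\bar u)^m \asymp \|\theta\|^{2m}$, so the required inequality $\mathrm{tr}(\Omega^m) \geq C \|u\|^{2m}/(n\bar u)^m$ holds with a constant again depending only on $c_0, c_1$.

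Finally, for the asymptotic condition I would observe $n\bar u / \|u\|^2 \asymp 1/\|\theta\|^2 \to 0$ since $\|\theta\|\to\infty$, and $u_{\max}^2/(n\bar u) \asymp \theta_{\max}^2 \to 0$ by hypothesis, so $\max\{n\bar u/\|u\|^2, u_{\max}^2/(n\bar u)\}\to 0$. With all three hypotheses of Theorem~\ref{thm:SCC} verified, the conclusion $\psi_{n,m}(\Omega)\to N(0,1)$ follows immediately. There is no real obstacle here; the only thing to be careful about is tracking that all implicit constants depend only on $c_0,c_1$ (not on $n$ or $\theta$), so that the qualitative limits $\|\theta\|\to\infty$ and $\theta_{\max}\to 0$ really do drive both pieces of the $\max$ to zero.
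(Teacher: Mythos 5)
Your proposal is correct and follows essentially the same route as the paper's own proof: both reduce the corollary to Theorem~\ref{thm:SCC} by noting $u_i \asymp \theta_i\|\theta\|_1$, $n\bar u \asymp \|\theta\|_1^2$, hence $u_iu_j/(n\bar u)\asymp \theta_i\theta_j$, bounding $\mathrm{tr}(\Omega^m)\geq c_0^m\|\theta\|^{2m}\asymp \|u\|^{2m}/(n\bar u)^m$, and checking $n\bar u/\|u\|^2\asymp \|\theta\|^{-2}\to 0$ and $u_{\max}^2/(n\bar u)\asymp\theta_{\max}^2\to 0$. No gaps; the bookkeeping of constants depending only on $(c_0,c_1)$ is handled as the paper does.
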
  
The proof of these results is different from the analysis of classical cycle counts (e.g., \citep{zhang2022edgeworth}): The classical cycle counts are non-degenerate U-statstics, but $U_{n,m}(\widehat{\Omega})$ is a degenerate U-statistic.   Our proofs use Hall's works on martingale central limit theorem (CLT) \citep{hall2014martingale}.  To verify the conditions of martingale CLT, we need delicate analysis in moments and combinatorics.   
Network test statistics with asymptotic normality were discovered earlier in \cite{JKL2019, EstK}, but with different forms.  Also, they only considered special cases.  For example,  \cite{JKL2019} only considered $\Omega = \theta \theta'$ and $m = 3, 4$, and 
\cite{EstK} only considered the case where $m =4$ and $\Omega$ satisfies a DCBM.   Our results are for general $(m, \Omega)$ and the analysis is more complicated.  In particular,  we do not require $\mathrm{rank}(\Omega) = K$ so our results may motivate GoF metrics beyond the rank-$K$ model  
(while those in \cite{JKL2019, EstK} are special rank-$K$ models with specific structures).  

The SCC statistics provide  a general recipe for constructing GoF metrics with parameter-free limiting null:    (a) obtain a good estimate $\widehat{\Omega}$ for $\Omega$,   (b) show that $|\psi_{n, m}(\widehat{\Omega}) - \psi_{n, m}(\Omega)| \goto_p 0$, and (c) claim $\psi_{n, m}(\widehat{\Omega}) \goto N(0,1)$.  For simplicity, we focus on the case of $m = 3$, where  
\begin{equation} \label{DefineTn} 
T_n(\widehat{\Omega}) \equiv \psi_{n, 3}(\widehat{\Omega}) = U_{n, 3}(\widehat{\Omega}) / \sqrt{6 C_{n, 3}},  
\end{equation} 
but our results are readily extendable to $m = 4$: using a larger $m$ (e.g., $m = 5, 6$) does not improve the power substantially but makes the analysis more tedious. See also Remark 1.  
  
{\bf Remark 1} {\it (Why we should not take $m \leq 2$)}.    
A natural idea is to 
use a normalized version of $Y_n = \sum_{i \neq j} \widehat{A}(i,j)^2$ for GoF,  
where as above $\widehat{A} = A - \widehat{\Omega}$.  This  is essentially the SCC statistic with $m = 2$.   
It was pointed out in \cite{JKL2019} that the variance of $Y_n$ is much larger than expected, so the power 
of the test is typically much smaller than that of $m = 3$ or $m = 4$. Moreover, when $m=1$, 
the SCC has no definition, but we may enforce $\psi_{n, 1}(\widehat{\Omega})  = \sum_{i \neq j} \widehat{A}(i, j) / \sqrt{\sum_{i \neq j} A(i, j)}$. The power of the statistic hinges on  
${\bf 1}_n' (\Omega - \widetilde{\Omega}) {\bf 1}_n / \sqrt{2  \mathrm{trace}(\Omega)}$, where $\widetilde{\Omega}$ belongs to the assumed model class and is the population counterpart of $\widehat{\Omega}$ (see Section~\ref{subsec:power}).  
For many pairs of $(\Omega, \widetilde{\Omega})$,  the term ${\bf 1}_n' (\Omega - \widetilde{\Omega}) {\bf 1}_n$ can be $0$ (e.g., when $\Omega$ follows SBM with $K=1$ and $\widetilde{\Omega}$ follows a symmetric SBM with $K=2$) or much smaller than $\sqrt{2\mathrm{trace}(\Omega)}$, 
and this statistic easily loses power.   For these reasons, we do not recommend SCC test statistics with $m \leq 2$.

\subsection{Analytical strategies and hurdles} 
\label{subsec:SCChurdle}    
Theorem \ref{thm:SCC} is for the oracle case.  To use the idea for the real case, we need to construct an $\widehat{\Omega}$ and show that $|T_n(\widehat{\Omega}) - T_n(\Omega)| \goto 0$ in probability. 
Write $T_n(\widehat{\Omega}) - T_n(\Omega) = [U_{n, 3}(\widehat{\Omega}) - U_{n, 3}(\Omega)] / \sqrt{6 C_{n, 3}}$.  
To show the claim, the key is to analyze $|U_{n, 3}(\widehat{\Omega}) - U_{n, 3}(\Omega)|$.  A conventional strategy is to decompose $(U_{n, 3}(\widehat{\Omega})  - U_{n, 3}(\Omega))$ into many terms, and analyze them one by one. 
Letting $W_1=W-\diag(\Omega)$ and $\Delta=\widehat{\Omega}-\Omega$,  
the following lemma is proved in the supplement.    
\begin{lemma}\label{lem:diffU} 
We have that $U_{n,3} (\widehat{\Omega}) - U_{n,3} ({\Omega}) =3{\rm tr}(W_1\Delta^2)-3{\rm tr}(W_1^2\Delta)-{\rm tr}(\Delta^3)
+6\tr(W_1\circ W_1\Delta)-3\tr(W_1\circ\Delta^2)+3\tr(\Delta\circ W_1^2)-6\tr(\Delta\circ W_1\Delta)+3\tr(\Delta\circ\Delta^2)+6\tr(W_1\circ\Delta\circ\Delta)-6\tr(W_1\circ W_1\circ\Delta)-2\tr(\Delta\circ\Delta\circ\Delta)$, where $\tr(\cdot)$ is the trace of a matrix  and $\circ$ is the Hadamart product.  
\end{lemma}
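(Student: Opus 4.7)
The plan is to substitute $\widehat A = A - \widehat\Omega = (A - \Omega) - \Delta = W_1 - \Delta$ into the definition of $U_{n,3}(\widehat\Omega)$ and expand. At each distinct triple $(i_1,i_2,i_3)$ the product $(W_1 - \Delta)_{i_1 i_2}(W_1 - \Delta)_{i_2 i_3}(W_1 - \Delta)_{i_3 i_1}$ breaks, by binomial expansion, into $2^3 = 8$ monomials in the entries of $W_1$ and $\Delta$. The all-$W_1$ monomial, once summed over distinct triples, is precisely $U_{n,3}(\Omega)$, so the difference $U_{n,3}(\widehat\Omega) - U_{n,3}(\Omega)$ equals the sum of the remaining seven monomials. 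Grouping these seven by the number of $\Delta$-factors, and using the cyclic symmetry of the distinct-index sum, they collapse to three ``cycle sums''
\[
S_1 = \sum_{i,j,k\,\text{dist}}\Delta_{ij}W_{1,jk}W_{1,ki},\quad
S_2 = \sum_{i,j,k\,\text{dist}}\Delta_{ij}\Delta_{jk}W_{1,ki},\quad
S_3 = \sum_{i,j,k\,\text{dist}}\Delta_{ij}\Delta_{jk}\Delta_{ki},
\]
with overall coefficients $-3$, $+3$, and $-1$, respectively, coming from the signs $(-1)^{\#\Delta}$ times the number of positions for the $\Delta$-factors in each group.

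The next step is to rewrite each distinct-index cycle sum as a trace plus corrections for coincident indices. The inclusion-exclusion identity on the three possible pair equalities gives
\[
\sum_{i,j,k\,\text{dist}} \;=\; \sum_{i,j,k} \;-\; \sum_{i=j} \;-\; \sum_{j=k} \;-\; \sum_{k=i} \;+\; 2\sum_{i=j=k}.
\]
The unrestricted sums evaluate to $\tr(\Delta W_1^2)$, $\tr(\Delta^2 W_1)$, and $\tr(\Delta^3)$, producing the three leading terms $3\tr(W_1\Delta^2) - 3\tr(W_1^2\Delta) - \tr(\Delta^3)$ after using the cyclic invariance of the trace. Each pairwise-coincidence restriction, after exploiting the symmetry of $W_1$ and $\Delta$ together with the identity $(AB)_{ii} = (BA)_{ii}$, takes the form $\sum_i M(i,i)(M'M'')_{ii}$, which under the lemma's Hadamard-trace convention is exactly $\tr(M \circ M'M'')$. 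The triple-coincidence restriction contributes $\sum_i M_1(i,i)M_2(i,i)M_3(i,i) = \tr(M_1 \circ M_2 \circ M_3)$.

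The final step is the bookkeeping: for each $S_\ell$ and each of its three pairwise-coincidence patterns, identify the resulting Hadamard-trace term, apply the inclusion-exclusion sign $-1$ and the overall coefficient of $S_\ell$, then collect. Two features explain the coefficient list $\{6,-3,3,-6,3,6,-6,-2\}$ of the eight Hadamard terms: (a) more than one pairwise-coincidence pattern of a given $S_\ell$ can produce the same Hadamard-trace, owing to the symmetries of $W_1$ and $\Delta$, so the corresponding contributions add, which is what yields the $\pm 6$ entries; (b) the triple-coincidence always contributes the factor $+2$ from inclusion-exclusion times the overall sign of $S_\ell$. The main obstacle is precisely this enumeration: the correct Hadamard-trace produced by a given coincidence depends on which two of the three indices collapse (first--second, second--third, or third--first) and on which two of the three matrices thereby merge into a matrix product versus a diagonal Hadamard pair, so a small table keyed by (cycle sum, coincidence pattern) is what keeps the accounting unambiguous and avoids sign or multiplicity errors.
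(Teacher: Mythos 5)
Your proof is correct, and I verified the bookkeeping: for instance, for $S_1=\sum_{\mathrm{dist}}\Delta_{ij}W_{1,jk}W_{1,ki}$ the three pairwise coincidences give $\tr(\Delta\circ W_1^2)$ once and $\tr(W_1\circ W_1\Delta)$ twice (the $j=k$ and $k=i$ collapses coincide by symmetry of $W_1$ and $\Delta$), which after the inclusion--exclusion signs and the overall factor $-3$ reproduces exactly the terms $+3\tr(\Delta\circ W_1^2)+6\tr(W_1\circ W_1\Delta)-6\tr(W_1\circ W_1\circ\Delta)$ in the statement; $S_2$ and $S_3$ check out the same way. The route is organized differently from the paper's: the paper first converts the distinct-index sum to traces by quoting the single-matrix identity $f(M)=\tr(M^3)-3\tr(M\circ M^2)+2\tr(M\circ M\circ M)$ from \cite{JKL2019} with $M=W_1-\Delta$ and $M=W_1$, and only then binomially expands the resulting matrix powers and Hadamard products (using $\tr(M_1\circ M_2M_3)=\tr(M_1\circ M_3M_2)$ to merge equal terms); you instead expand the scalar product first and then apply inclusion--exclusion to each mixed cycle sum, which amounts to deriving the polarized (multilinear) form of that same identity inline. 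The two orders of operations commute and yield identical arithmetic; what yours buys is self-containedness (no appeal to the external identity), at the cost of having to track the coincidence-pattern table for three separate mixed sums rather than one symmetric expression.
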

We now discuss how to construct $\widehat{\Omega}$ and how to analyze the individual terms in Lemma 
\ref{lem:diffU}. For space reasons,  we only discuss the DCBM case and the DCMM case.

Consider the DCBM case. Recall that $A = \Omega - \diag(\Omega) + W$ and $\Omega = \Theta \Pi P \Pi' \Theta$, where $\Pi$ is the $n \times K$ matrix of latent variables.   
In such a  low-rank latent-variable model, it is conventional to estimate $\Omega$ using a spectral approach.  
For example, one may use the (classical) SVD approach: 
$\widehat{\Omega} = \sum_{k = 1}^K \hat{\lambda}_k \hat{\xi}_k \hat{\xi}_k'$, 
where $(\hat{\lambda}_k, \hat{\xi}_k)$ is the $k$-th eigen-pair of $A$.  
One may also use the GoF-SCORE approach as follows: 
we first use SCORE (a community detection algorithm by \cite{SCORE-Review}) to obtain an 
estimate $\widehat{\Pi}^{{\rm score}}$ for $\Pi$, and then use $(A, \widehat{\Pi}^{{\rm score}})$ 
to estimate $(\Theta, P)$.  

It turns out that the SVD approach poses an analytical hurdle, but the SCORE approach does not. 
To see the point, consider the SVD approach first. For example, suppose we want to analyze the term ${\rm tr}( {W}_1^2 \Delta)$ in Lemma  \ref{lem:diffU}. Write ${\rm tr}( {W}_1^2 \Delta)  = {\rm tr} ( [W-\diag(\Omega)]^2(\widehat{\Omega}-\Omega))$ and 
$\widehat{\Omega} -\Omega = \sum_{k=1}^K \hat{\lambda}_k \hat{\xi}_k \hat{\xi}_k'-\sum_{k=1}^K \lambda_k \xi_k \xi_k' $, where similarly $(\lambda_k, \xi_k)$ is the $k$-th eigen-pair of  $\Omega$. 
Now, first, since $(\hat{\lambda}_k, \hat{\xi}_k)$ are complicated functions of $A$ without an explicit form,  
it is hard to analyze the dependence between $\widehat{\Omega}$ and $W$, and so 
it is hard to analyze the precise mean and variance of ${\rm tr}( {W}_1^2 \Delta)$ directly. 
Second, one may bound the term using the Cauchy-Schwarz inequality: $
|{\rm tr}( {W}_1^2 \Delta)|\leq 2K\|[W-\diag(\Omega)]^2(\Omega-\widehat{\Omega})\|\leq 2K\|\Omega-\widehat{\Omega}\|\cdot\|W-\diag(\Omega)\|^2$. Unfortunately, even with the best 
large-deviation results on $\|\Omega-\widehat{\Omega}\|$,  the bound is too loose for our purpose 
(it is easy to have a random sequence $x_i$ such that $\sum_{i=1}^n x_i \goto 0$ but 
$\sum_{i = 1}^n |x_i| \goto \infty$, in probability).  Such an analytical hurdule was noted in \cite{JKL2019, han2019universal}  
among others. 

Consider the SCORE approach. As DCBM  does not allow mixed membership,   each row of $\Pi$ is a {\it degenerate weight vector}.\footnote{A weight vector is degenerate if one of its entry is $1$ and all other entries are $0$.}  In such a case, previous study (e.g., \cite{EstK}) showed that  $\mathbb{P}(\widehat{\Pi}^{{\rm score}} \neq  \Pi) = o(1)$ under mild conditions.  Therefore,  we can assume $\Pi$ as known, and it is not hard to derive an estimate for $(\Theta, P, \Omega$) that is a {\it relatively simple and explicit (multivariate) function of $A$} (e.g., see \cite{EstK}).  Though the analysis is still delicate and long,  the approach does not face an analytical hurdle.

Consider now the DCMM case, where we have mixed memberships.  
We may extend the GoF-SCORE approach, but 
there is a key difference when mixed membership presents: 
it is merely impossible 
to have an estimate $\widehat{\Pi}$ for $\Pi$ such that $\mathbb{P}(\widehat{\Pi} \neq \Pi) = o(1)$.  This is because: Under DCMM, a row of $\Pi$ is not discrete but a continuous variable which may take any value in the set  $
\{x \in \mathbb{R}^K: \sum_{i = 1}^K x_i = 1, x_i \geq 0\}$.   
For this reason,   we face an analytical hurdle.

To overcome the challenge, we propose GoF-MSCORE, which 
 is related to the recent idea of  Mixed-SCORE (MSCORE) \citep{MSCORE}.  
 MSCORE has three steps. (i) Obtaining $\widehat{\Xi}=[\hat{\xi}_1,\ldots,\hat{\xi}_K]$, where $\hat{\xi}_k$ denotes the $k$th eigenvector of $A$. (ii) Constructing $\widehat{R} = [\hat{\xi}_2 / \hat{\xi}_1, \ldots, \hat{\xi}_K / \hat{\xi}_1] \in \mathbb{R}^{n, K-1}$, 
where for any $\xi, \eta \in \mathbb{R}^n$,   $\xi / \eta \in \mathbb{R}^n$ is the vector of  entry-wise ratios. This step is known as the SCORE normalization \citep{SCORE}, which helps remove the nuisance effects of degree parameters from the empirical eigenvectors. (iii) Exploiting a simplex geometry in $\widehat{R}$ to estimate $\Pi$. In detail, \cite{MSCORE} showed that there is an ideal $K$-vertex simplex ${\cal S}$, such that, subject to noise corruption, each row of $\widehat{R}$ is contained in ${\cal S}$. Therefore, we can apply a vertex hunting algorithm on $\widehat{R}$ to estimate the vertices of ${\cal S}$. \cite{MSCORE} also showed that there exists a direct connection between ${\cal S}$ and the target $\Pi$, which gives rise to an explicit step of constructing $\widehat{\Pi}$ from the estimated simplex vertices. We refer to the supplement for a full description of MSCORE. 

We now discuss GoF-MSCORE: 
We first obtain an initial estimate $\widehat{\Pi}^{{\rm MS}}$ for $\Pi$ by MSCORE and then use a  {\it net-rounding algorithm}   to obtain an $n \times K$ matrix $\widehat{H}$. 
Next, we use   $A \widehat{H}$ to obtain a second estimate for 
$\Pi$, denoted by $\widehat{\Pi}$.  Finally, we use $\widehat{\Pi}$ to estimate $(\Theta, P)$ and derive an estimate $\widehat{\Omega}$ for $\Omega$. The key is,  for a non-stochastic matrix $H$, $\mathbb{P}(\widehat{H} \neq H) = o(1)$, so the GoF-MSCORE estimates of $(\Theta, \Pi, P, \Omega)$ are not only accurate but also have a  relatively simple and explicit form;  this is exactly what we need.  See Figure \ref{fig:flow} and Remark 3.

\spacingset{1}
\begin{figure}[htb!] 
\centering
\includegraphics[width=.6\textwidth]{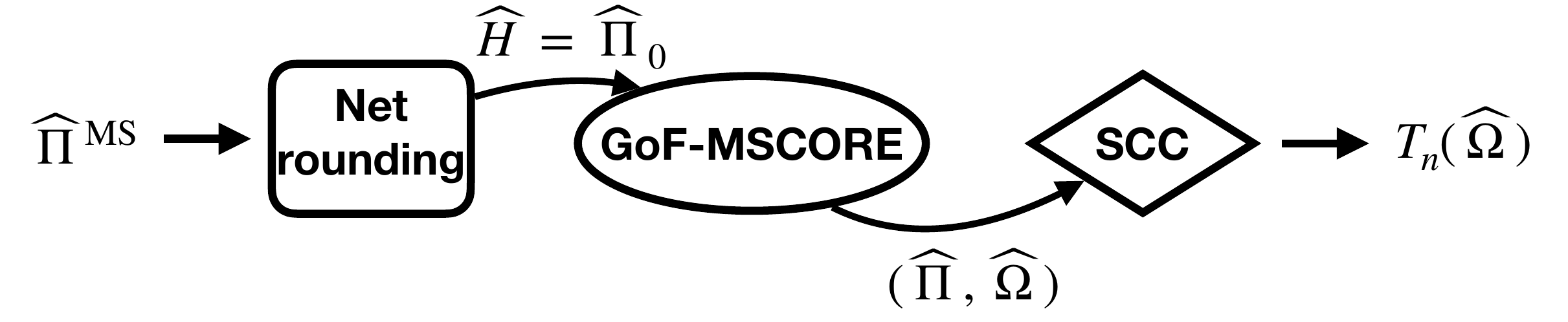}
\caption{\small GoF-MSCORE for DCMM (flow chart).}  \label{fig:flow} 
\end{figure}
\spacingset{1.75}

In summary,   to remove the analytical  hurdle, we need an $\widehat{\Omega}$ that is not only accurate but also has a  relatively simple and explicit form.  
For DCBM,  each row of $\Pi$ is a degenerate weight vector, so we can obtain such an estimate by the GoF-SCORE  above. For the DCMM,  a row of $\Pi$ is a continuous variable which may take any 
weight-vector values. In this case, existing approaches face an analytical hurdle,  and we need a new and more complicated  approach.

\section{GoF for all models in the block-model family} 
\label{sec:DCMM} 
We first propose GoF-MSCORE as a GoF metric for DCMM in Section \ref{subsec:GoF-MSCORE} and study the asymptotic normality in Section \ref{subsec:normality}.  
We then extend the metrics and theory to MMSBM, DCBM, and SBM, respectively in Sections 
\ref{subsec:MMSBM}-\ref{subsec:DCBM}.  Power is discussed in Section \ref{subsec:power}.  
\subsection{GoF-MSCORE: a new GoF metric for DCMM} 
\label{subsec:GoF-MSCORE} 

GoF-MSCORE is inspired by Lemma \ref{lem:AFMoracle} below.  
Fix  $H \in \mathbb{R}^{n, K}$ such that each row of $H$ is a weight vector. Write
$\eta=\Pi'\Theta{\bf 1}_n\in\mathbb{R}^K$ and $G_H = \Pi'\Theta H\in\mathbb{R}^{K,K}$.  Introduce 
\begin{equation} \label{DefineV} 
R_H=[r_1,   \ldots, r_n]' :=\diag(\Omega {\bf 1}_n)^{-1}\Omega H, \qquad V_H = [v_1,  \ldots, v_K]' := [\diag(P\eta)]^{-1}PG_H.  
\end{equation} 
For $1 \leq i \leq n$, let $w_i =  (V_H^{-1})' r_i$. For simplicity,  we drop the subscript ``$H$" in $v_k$, $r_i$ and $w_i$, but bear in mind that they all depend on $H$. Write $Z_H =V_H (H'\Omega H)^{-1}V_H'$.  
Recall that in the DCMM model, we require $P$ to have unit diagonal entries for identifiability (see Section \ref{subsec:model}). 
Lemma \ref{lem:AFMoracle}  is proved in the supplement  (where $\circ$ is the Hadamard product). 
\begin{lemma}[The population quantities] \label{lem:AFMoracle}
Consider a DCMM model where $G_H$ is non-singular. Then, the rows of $R_H$ are in a simplex ${\cal S}\subset\mathbb{R}^K$ whose vertices are denoted by $v_1, v_2, \ldots, v_K$. For each $1 \leq i \leq n$, $r_i$ falls on  one vertex of ${\cal S}$ if node $i$ is pure, and it falls in the interior of ${\cal S}$ otherwise.  
Furthermore,  the following statements are true. First, letting $w_i=\|\pi_i\circ P\eta\|_1^{-1}(\pi_i\circ P\eta)$, it holds that 
$r_i = \sum_{k=1}^K w_i(k)v_k$, $1\leq i\leq n$.   
Second, $Z_H =[\diag(P\eta)]^{-1}P[\diag(P\eta)]^{-1}$, and it follows that $\diag(P \eta) = [\diag(Z_H)]^{-1/2}$ and $P = [\diag(Z_H)]^{-1/2} Z_H [\diag(Z_H)]^{-1/2}$.  
Last, $\theta_i = e_i'\Omega {\bf 1}_n / \|\pi_i\circ P\eta\|_1$, $1 \leq i \leq n$. 
\end{lemma}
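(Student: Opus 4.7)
The plan is to read off every claim by direct substitution into the DCMM factorization $\Omega = \Theta \Pi P \Pi' \Theta$; the argument is mostly bookkeeping, with a single subtlety about non-degeneracy. First I would compute $\Omega {\bf 1}_n = \Theta \Pi P \eta$, so that $e_i' \Omega {\bf 1}_n = \theta_i \pi_i' P \eta = \theta_i \|\pi_i \circ P\eta\|_1$, where the $\ell_1$ form is legitimate because $\pi_i$ and $P \eta$ have non-negative entries. Dividing proves the last claim and, as a by-product, identifies the normalizer $\diag(\Omega {\bf 1}_n) = \Theta \diag(\Pi P \eta)$ that appears inside $R_H$.

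Next I would handle the simplex geometry. Plugging in, row $i$ of $R_H$ collapses to $r_i' = (\pi_i' P \eta)^{-1} \pi_i' P G_H$, and since $P G_H = \diag(P\eta) V_H$ by the definition of $V_H$, this rearranges to $r_i = V_H' w_i$ with $w_i = \|\pi_i \circ P\eta\|_1^{-1} (\pi_i \circ P\eta)$. This yields simultaneously the weight-vector formula for $w_i$, the convex combination $r_i = \sum_k w_i(k) v_k$, and the identity $w_i = (V_H^{-1})' r_i$ once invertibility of $V_H$ is secured. Purity of node $i$ corresponds to $\pi_i$ being an indicator, hence to $w_i$ being an indicator and $r_i = v_k$; otherwise $w_i$ has at least two positive entries and $r_i$ sits in the relative interior of $\mathcal{S}$.

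For the $Z_H$ identity, I would compute $H' \Omega H = (H' \Theta \Pi) P (\Pi' \Theta H) = G_H' P G_H$ and substitute; the $G_H$ and $P$ factors telescope, leaving $Z_H = [\diag(P\eta)]^{-1} P [\diag(P\eta)]^{-1}$. Because $P$ has unit diagonal, the $k$-th diagonal of $Z_H$ is $(P\eta)_k^{-2}$, so $[\diag(Z_H)]^{-1/2} = \diag(P\eta)$, and the two stated formulas for $\diag(P\eta)$ and $P$ drop out by symmetric rescaling.

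The only real hurdle is the non-degeneracy bookkeeping that keeps the above manipulations honest: that $P\eta$ is strictly positive entrywise (so $w_i$ is a bona fide weight vector and the inverses make sense) and that $v_1, \ldots, v_K$ are affinely independent (so $\mathcal{S}$ is a genuine $(K-1)$-simplex embedded in $\mathbb{R}^K$). The first follows from $P$ having unit diagonal together with each $\eta_k > 0$ under the standing identifiability setup. The second reduces to invertibility of $V_H = [\diag(P\eta)]^{-1} P G_H$, which follows from the assumed non-singularity of $G_H$ together with the non-singularity of $P$ and the positivity of $P\eta$ just established.
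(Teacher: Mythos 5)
Your proposal is correct and follows essentially the same route as the paper's proof: direct substitution of $\Omega=\Theta\Pi P\Pi'\Theta$ into the definitions, the identity $PG_H=\diag(P\eta)V_H$ to expose the convex-combination structure $r_i=\sum_k w_i(k)v_k$, the telescoping computation $H'\Omega H=G_H'PG_H$ for the $Z_H$ formula, and positivity of $P\eta$ (via the unit diagonal of $P$) to justify the pure-node/vertex correspondence. The only addition beyond the paper's argument is your explicit remark on affine independence of the vertices via invertibility of $V_H$, which is harmless and consistent with the lemma's hypothesis that $G_H$ is non-singular.
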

We call the simplex in Lemma \ref{lem:AFMoracle} the {\it ideal simplex}. 
Lemma~\ref{lem:AFMoracle} inspires an oracle approach for using $(\Omega, H, K)$ to retrieve $(\Theta, P, \Pi)$ as follows:  (1) Obtain $R_H$ as in (\ref{DefineV}) and write $R_H = [r_1, r_2, \ldots, r_n]'$. Obtain $v_1,\ldots,v_K$ by computing the convex hull of $r_1,  \ldots, r_n$  (this recovers the ideal simplex as long as each community has a pure node) and write $V_H=[v_1,v_2,\ldots,v_K]'$. (2) Retrieve $\Pi = [\pi_1, \pi_2, \ldots, \pi_n]'$:  (2a) Obtain $w_i$ from $r_i$ and $V_H$ using the linear equations $\sum_{k=1}^K w_i(k)v_k=r_i$  and $\sum_{k=1}^K w_i(k)=1$. (2b) Obtain $Z_H =V_H (H'\Omega H)^{-1}V_H'$ and $\diag(P\eta)=[\diag(Z_H)]^{-1/2}$.   
(2c) Obtain $\pi_i$ from $w_i$ and $P\eta$ using the relationship $w_i\propto (\pi_i\circ P\eta)$ and the fact that $\|\pi_i\|_1=1$. 
(3) Obtain $P = [\diag(Z_H)]^{-1/2} Z_H [\diag(Z_H)]^{-1/2}$. (4) Obtain $\theta_i$ from $\Omega$, $\pi_i$ and $P\eta$ using the last item of Lemma~\ref{lem:AFMoracle}, and write $\Theta=\diag(\theta_1,\theta_2,\ldots,\theta_n)$. Combining these with Lemma \ref{lem:AFMoracle} gives the following lemma, the proof of which is elementary so is omitted.  
\begin{lemma} \label{lemma:AFMoracle-Add} 
Fix $(\Omega, H, K)$. Under the condition of Lemma \ref{lem:AFMoracle}, suppose each community has at least one pure node.   
The oracle procedure above retrieves the matrices $(\Theta, \Pi, P)$ exactly. 
\end{lemma}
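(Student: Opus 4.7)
The plan is to verify that each of the four stages of the oracle procedure exactly inverts a relation already established in Lemma~\ref{lem:AFMoracle}, so no information is lost along the way. Steps~(3) and~(4) are direct restatements of identities proved there, so the real work reduces to justifying Step~(1) and Steps~(2a)--(2c). For Step~(1), I would first observe that $V_H$ is invertible: from \eqref{DefineV} one has $V_H = [\diag(P\eta)]^{-1} P G_H$, where $\diag(P\eta)$ has strictly positive diagonal entries (since $P$ has unit diagonals, nonnegative off-diagonals, and $\eta = \Pi'\Theta{\bf 1}_n > 0$), $P$ is invertible by the standard DCMM identifiability condition stated in Section~\ref{subsec:model}, and $G_H$ is non-singular by hypothesis. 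Hence the vertices $v_1,\ldots,v_K$ are affinely independent and ${\cal S}$ is a non-degenerate $(K-1)$-simplex. Next, for each community $k$ there is a pure node $i_k$ with $\pi_{i_k} = e_k$; substituting this into the formula $w_i = \|\pi_i\circ P\eta\|_1^{-1}(\pi_i\circ P\eta)$ from Lemma~\ref{lem:AFMoracle} yields $w_{i_k} = e_k$ and therefore $r_{i_k} = v_k$, so every vertex literally appears as a row of $R_H$. Rows corresponding to non-pure nodes are strict convex combinations of the $v_k$'s and hence lie in the relative interior of ${\cal S}$, so they cannot be extreme points. Computing the convex hull of $r_1,\ldots,r_n$ therefore returns exactly $\{v_1,\ldots,v_K\}$, which reconstructs $V_H$ up to a relabeling of vertices (the labeling is immaterial since $\Pi$ is only defined up to the same column permutation).

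For Step~(2a), invertibility of $V_H$ makes the linear system $\sum_k w_i(k) v_k = r_i$ together with $\sum_k w_i(k) = 1$ uniquely solvable, and Lemma~\ref{lem:AFMoracle} identifies the solution as the stated $w_i$. For Step~(2b), Lemma~\ref{lem:AFMoracle} gives $Z_H = [\diag(P\eta)]^{-1} P [\diag(P\eta)]^{-1}$; since $P$ has unit diagonals, the diagonal of $Z_H$ equals $[\diag(P\eta)]^{-2}$, and positivity of $P\eta$ lets one take the unique positive square root to recover $\diag(P\eta)$. For Step~(2c), the proportionality $w_i \propto \pi_i \circ P\eta$ combined with the now-known $P\eta$ and the constraint $\|\pi_i\|_1 = 1$ uniquely determines $\pi_i$ via entrywise division followed by $\ell_1$ renormalization. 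Finally, Steps~(3) and~(4) follow verbatim from the second and third items of Lemma~\ref{lem:AFMoracle}, yielding $P$ and $\Theta$ exactly.

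The only genuinely substantive point is the vertex-identification argument in Step~(1), which is where the pure-node assumption and the invertibility of $V_H$ are used simultaneously; every other step is a direct algebraic inversion of formulas already proved in Lemma~\ref{lem:AFMoracle}. Consequently the full proof is elementary and fits into a short paragraph, consistent with the excerpt's remark that it is omitted.
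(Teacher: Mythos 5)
Your proof is correct and is exactly the elementary argument the paper intends (and omits): each stage of the oracle procedure inverts an identity from Lemma~\ref{lem:AFMoracle}, with the pure-node assumption guaranteeing that every vertex $v_k$ is attained by some row $r_{i_k}$, so that the convex hull recovers $V_H$ and the remaining steps are direct algebraic inversions. The only cosmetic imprecision is the claim that non-pure rows lie in the relative interior of ${\cal S}$ (they may sit on a proper face when $\pi_i$ has zero entries), but all that is needed --- and all you actually use --- is that they are not extreme points.
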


\spacingset{1}
\begin{algorithm}[tb!]
\caption{The GoF-MSCORE algorithm for GoF of the DCMM model}\label{alg:afmSCORE}
\medskip
\noindent
{\bf Input:} The adjacency matrix $A\in\mathbb{R}^{n,n}$, and the number of communities $K$ 
\begin{enumerate} 
\item {\it (Initial estimate of $\Pi$)}.  Apply the orthodox MSCORE to obtain $\widehat{\Pi}^{\mathrm{MS}} = [\hat{\pi}^{\mathrm{MS}}_1, \ldots, \hat{\pi}^{\mathrm{MS}}_n]'$.   
\item ({\it Obtain $H$ by net-rouding}). For $1 \leq i \leq n$, suppose $\hat{\pi}_i^{\mathrm{MS}}(k)$ is the largest entry in $\hat{\pi}_i^{\mathrm{MS}}$ (if there exists more than one largest entry, pick the smallest $k$). Let $\hat{\pi}_{0i} = e_k$ (the $k$th standard basis of $\mathbb{R}^K$). Output $\widehat{H} =\widehat{\Pi}_0= [\hat{\pi}_{01}, \hat{\pi}_{02}, \ldots, \hat{\pi}_{0n}]'$
\item {\it (Re-Estimate $\Pi$)}. Let $\widehat{\Pi}= [\hat{\pi}_1,\hat{\pi}_2,\ldots,\hat{\pi}_n]'$, where $\hat{\pi}_1, \hat{\pi}_2,  \ldots, \hat{\pi}_n$ are computed as follows: 
\begin{itemize}
\item[(3.1)] {\it (Node embedding and Vertex Hunting)}. Obtain
$\widehat{R}_H  =\diag(A {\bf 1}_n)^{-1}A \widehat{H}$, and denote by $\hat{r}_i'$ the $i$th row of $\widehat{R}_H$, for $1\leq i\leq n$. 
Apply the successive projection algorithm to $\hat{r}_1,  \ldots, \hat{r}_n$ to obtain $\widehat{V}_H = [\hat{v}_1, \hat{v}_2, \ldots,  \hat{v}_K]'$, which contains the estimated vertices. In the rare event that 
$\widehat{V}_H$ is singular, re-set $\widehat{V}_H=I_K$. 
\item[(3.2)] Compute the barycentric coordinates $\hat{w}_i=(\widehat{V}_H^{-1})'\hat{r}_i$. 
\item[(3.3)] Obtain $\widehat{Z}_H = \widehat{V}_H (\widehat{H}'A \widehat{H})^{-1}\widehat{V}_H'$. 
Let $\widehat{P\eta}\in\mathbb{R}^K$ be the vector containing the diagonal entries of $[\diag(\widehat{Z}_H)]^{-1/2}$. In the rare event that $\widehat{Z}_H(k,k)\leq 0$ for some $k$, set the $k$th entry of $\widehat{P\eta}$ to $|\widehat{Z}_{H}(k,k)|^{-1/2}$ if $\widehat{Z}_{H}(k,k)\neq 0$ and $1$ if $\widehat{Z}_H(k,k)=0$.  
\item[(3.4)] Let $\hat{\pi}^*_i=[\diag(\widehat{P\eta})]^{-1}\hat{w}_i$ and $\hat{\pi}_i=\|\hat{\pi}^*_i\|^{-1}_1 \hat{\pi}^*_i$. 
\end{itemize}
\item {\it (Estimate $\Theta$, $P$ and $\Omega$)}. Let $\widehat{P}=[\diag(\widehat{Z}_H)]^{-1/2} \widehat{Z}_H [\diag(\widehat{Z}_H)]^{-1/2}$ and $\widehat{\Theta}=\diag(\hat{\theta}_1,\hat{\theta}_2,\ldots,\hat{\theta}_n)$, where for each $1\leq i\leq n$, $\hat{\theta}_i=e_i'A{\bf 1}_n/\|\hat{\pi}_i\circ\widehat{P\eta}\|_1$. 
Let $\widehat{\Omega}=\widehat{\Theta}\widehat{\Pi}\widehat{P}\widehat{\Pi}'\widehat{\Theta}$.  
\item {\it (GoF for DCMM)}.  Obtain the GoF metric for DCMM by $T_n(\widehat{\Omega})$ as in 
(\ref{DefineTn}). 
\end{enumerate}
{\bf Output:}  $\widehat{\Pi}=[\hat\pi_1,\ldots,\hat\pi_n]'$, $\widehat{\Theta}$, $\widehat{P}$,   $\widehat{\Omega}$, and $T_n(\widehat{\Omega})$ (or $T_n(\widehat{\Omega}^{{\rm DCMM}})$ to be more specific).  
\end{algorithm}
\spacingset{1.75}

We now extend the oracle procedure to the real case: In the input, we replace $(\Omega, H, K)$ by 
$(A, \widehat{H}, K)$ for an $\widehat{H}$ to be introduced. The most challenging part is to estimate the vertices $v_1, v_2, \ldots, v_K$ of the ideal simplex (i.e., Vertex Hunting). 
\bed 
Vertex Hunting (VH)  is the process of estimating $v_1, v_2, \ldots, v_K$,  the $K$ vertices of the ideal simplex 
prescribed by Lemma \ref{lem:AFMoracle} that lies on a hyperplane of $\mathbb{R}^K$.  
\eed 
In the oracle case, computing the convex hull of $r_1, r_2, \ldots, r_n$  gives exactly the ideal simplex. In the real version, we must replace $R_H$ by its noisy counterpart $\widehat{R}_H: = \diag(A\widehat{H})^{-1} A \widehat{H}$ (in the subscript, we drop the hat in $\widehat{H}$ for simplicity).  Write
$\widehat{R}_H = [\hat{r}_1, \ldots, \hat{r}_n]'$.  Due to noise  
corruption, the convex-hull approach no longer works, and we need  a different approach.  
There are existing Vertex Hunting algorithms; e.g., 
successive projection (SP) and $K$-nearest neighborhood successive projection (KNN-SP); see \cite[Section 3.4]{SCORE-Review} for a survey. 
SP is convenient to analyze  and enjoys nice theoretical properties,   
but SP is vulnerable to outliers;  and numerically, it frequently underperforms KNN-SP.  
For these reasons, we use SP for theoretical study and KNN-SP for numerical study, as recommended by the literature 
(e.g., \cite{MSCORE, SCORE-Review}).

After we obtain an estimate for $v_1, v_2, \ldots,v_K$,   we 
extend the oracle procedure to the real case. What remains is to  choose a data-driven $\widehat{H}$.  
A  good $\widehat{H}$ should satisfy two requirements:  
(i) To remove the analytical hurdle  (see Section \ref{subsec:SCChurdle}), 
we need to have a non-stochastic matrix $H$ such that 
$\mathbb{P}(\widehat{H} \neq H)= o(1)$. (ii)  
The absolute eigenvalues of $G_H=\Pi'\Theta H$ are properly large (as $\Theta\Pi$ is a basis for the column span of $\Omega$, this requires the sin-theta distance between the column span of $H$ and that  of $\Omega$ to be properly small).\spacingset{1}\footnote{We can relax (i)-(ii) to that of $\mathbb{P}(\widehat{H} \notin \{H_1, \ldots, H_N\}) =   o(1)$ for a finite number of non-stochastic matrices $H_1, \ldots, H_N$, where the absolute eigenvalues of 
$\Pi' \Theta H_m$ are properly large for all $1 \leq m \leq N$.}\spacingset{1.75}
One may take $\widehat{H} = \widehat{\Pi}^{\mathrm{MS}}$, where 
$\widehat{\Pi}^{\mathrm{MS}}= [\hat{\pi}^{\mathrm{MS}}_1, \ldots, \hat{\pi}^{\mathrm{MS}}_n]'$ is from MSCORE \citep{MSCORE}. However, in DCMM, $\pi_i$ is a continuous variable and  $\hat{\pi}_i^{\mathrm{MS}}$ may take infinitely many values. 
Hence,  $\widehat{H} = \widehat{\Pi}^{\mathrm{MS}}$ does not satisfy Requirement (i).

Fortunately, we can use $\widehat{\Pi}^{\mathrm{MS}}$ to construct a desirable $\widehat{H}$ by {\it net-rounding}.   Let $S_0 = \{x \in \mathbb{R}_+^K: \sum_{k = 1}^K x_k = 1\}$ be the standard simplex in $\mathbb{R}^K$.  A net on $S_0$ is a finite-size subset ${\cal N}$ of $S_0$.  
One example is the $\epsilon$-net ${\cal N}_{\eps}$, 
where for any $a \in S_0$,  
there is $b \in {\cal N}_{\eps}$ so that $\|a - b\| \leq \eps$. 
Another example is ${\cal N}^*=\{e_1,e_2,\ldots,e_K\}$ ($e_k$: $k$-th standard Euclidean basis vector of $\mathbb{R}^K$).  To apply net-rounding to $\widehat{\Pi}^{\mathrm{MS}}$, we fix a net ${\cal N}$, and for each $1 \leq i \leq n$,  
we replace $\hat{\pi}_i^{\mathrm{MS}}$ by the closest point in the net, denoted by $\hat{\pi}_{i0}$ 
(we break ties lexicographically).  We use the resultant matrix as 
$\widehat{H}$.
The purpose of net-rounding is to discretize $\widehat{\Pi}^{\mathrm{MS}}$ to achieve $\mathbb{P}(\widehat{H}\neq H)=o(1)$ for some non-stochastic $H$.
We may use other discretization ideas (e.g., \cite{shen2022one} applied k-means iteratively to discretize graph embeddings), but other ideas lack theory to guarantee $\mathbb{P}(\widehat{H}\neq H)=o(1)$.

The choice of the net ${\cal N}$ is not unique.  To avoid over-fitting, we prefer to select a relatively simple ${\cal N}$. In light of this, we recommend to take  
${\cal N} = {\cal N}^*  = \{e_1, e_2, \ldots, e_K\}$   (net-rounding reduces to clustering nodes into $K$ groups, depending on which of the $K$ entries of $\hat{\pi}^{\mathrm{MS}}_i$ is the largest one; see Step 2 of Algorithm~\ref{alg:afmSCORE}). 
For numerical study with either real or simulated data,   we do not tune our algorithm over different choices of ${\cal N}$ to avoid over-fitting. Fortunately, it turns out such a choice works well  both for our theoretical and numerical study.

Finally, combining these ideas gives rise to a new GoF metric $T_n(\widehat{\Omega})$ for DCMM which we call GoF-MSCORE; see Algorithm \ref{alg:afmSCORE} and  Figure \ref{fig:flow}.  
We can view GoF-MSCORE as a generic algorithm: for 
a carefully chosen $\widehat{H}$, we use Algorithm \ref{alg:afmSCORE}
with $(A, \widehat{H}, K)$ as input but with Steps 1-2 of Algorithm \ref{alg:afmSCORE} skipped;   
let $T_n(\widehat{\Omega}; \widehat{H})$ be the resultant GoF metric. 
In this sense, Algorithm \ref{alg:afmSCORE} is a special case of generic GoF-MSCORE where $\widehat{H}$ is computed by Steps 1-2.

{\bf Remark 2} {\it (Comparison with MSCORE \citep{MSCORE})}.  GoF-MSCORE 
is not simply using MSCORE twice: Step 3 in Algorithm \ref{alg:afmSCORE} is different from the MSCORE in \citep{MSCORE}. 
Although both Step 3 and MSCORE use a low-dimensional node embedding and explore a simplex structure, the  embedding in MSCORE is based on eigenvectors, while the one in Step 3 is by projecting 
$A$ to the column space of $\widehat{H}$ (and many choices of $\widehat{H}$ may work).  
The simplex structures are not identical either.   The new approach is flexible in choosing an analytically friendly $\widehat{H}$.

In Algorithm \ref{alg:afmSCORE}, we use Vertex Hunting (VH)  in both Step 1 and Step~3.1.   For VH, following the recommendation by \cite{MSCORE, SCORE-Review}, we use SP for theoretical study   and KNN-SP for real-data analysis; our numerical study confirms that SP may significantly underperform KNN-SP.   Note that, first, SP is tuning-free, and second, aside from the VH steps, GoF-MSCORE is tuning-free.
To apply KNN-SP to data points $x_1, \ldots, x_n$,  we fix $\alpha > 0$ and an integer $N \geq 1$.   For each $x_i$, let $S_i$ be the set of $N$ nearest neighbors of $x_i$ falling within a distance of $(\max_{j,k}\|x_i-x_k\|) / \alpha$ to $x_i$ 
(including $x_i$ itself).  If $|S_i| \leq 2$, we prune $x_i$ out; otherwise, we replace $x_i$ by the average of points in $S_i$. We then apply SP  \citep{araujo2001successive} to the resultant set of points.   For real networks,  we set $\alpha = 20$ if $[n/K]>20$ and $\alpha = 5$ if $[n/K] \leq 20$ (in this case, $n$ is small, so we need to choose a smaller $\alpha$, otherwise, we may have too many empty sets $S_i$).
We set $N$ as the integer closest to $(m_0+1) \min\{10, n/10\}$,  where $m_0$ is the integer closest to $[K (\bar{d} / d_{min}) / 250]^2$ 
($\bar{d}$ and $d_{min}$ denote the average and minimum degrees of the network, respectively).  Intuitively, $m_0$ should be a super-linear function of $K (\bar{d} / d_{min})$, 
which explains the square here (for all data sets in Table \ref{tab:realdata0}, 
$m_0 \leq 3$).   Our results are relatively insensitive to different choices of $(N, \alpha)$: see Table~\ref{tab:tuning}.  For all real networks,  
we use the same KNN-SP algorithm with the same $(N, \alpha)$ specified above without further tuning. 
See Section \ref{sec:numeric} and \cite{SCORE-Review} for details.

The computational cost of GoF-MSCORE is from (a) initializing by MSCORE and obtaining $\widehat{H}$ from net-rounding, (b) constructing $\widehat{\Omega}$, and (c) computing $T_n(\widehat{\Omega})$.  
In (a)-(b), except for the two VH steps (in MSCORE and in Step~3.1 of Algorithm~\ref{alg:afmSCORE}), 
the other steps (including obtaining the leading eigenvectors in MSCORE and other explicit operations) have a complexity of $O(n^2K^2)$. 
For the VH steps, 
the complexity depends on which algorithm is used. In this paper, we use either SP or KNN-SP, 
whose complexity is $O(nK^3)$. Therefore, the complexity of part (a)-(b) is polynomial. 
For part (c), it was known (\cite[Theorem 1.1]{JKL2019} or Section~\ref{subsec:proof-diffU} of the supplement) that $T_n(\widehat{\Omega})$ can be written as a matrix function of $(A, \widehat{\Omega})$ that only involves matrix multiplication, trace, and Hadamard product; in addition, $A$ has many zeros, and $\widehat{\Omega}$ has a given rank-$K$ factorization $\widehat{\Omega}=\widehat{\Theta}\widehat{\Pi}\widehat{P}\widehat{\Pi}'\widehat{\Theta}$. Consequently, the complexity of computing $T_n(\widehat{\Omega})$ does not exceed $O(n^2\bar{d}+n^2K^2)$. This shows that  part (c) is also polynomial. 


{\bf Remark 3} {\it (How we overcome the analytical hurdle)}.  By Algorithm~\ref{alg:afmSCORE}, $(\widehat{\Pi}, \widehat{P}, \widehat{\Theta})$ are  simple functions of $(A, \widehat{H}, \widehat{V})$, so $\widehat{\Omega}$ is an explicit function of $(A, \widehat{H}, \widehat{V})$. We recall that $\widehat{V}$ is obtained by applying successive projection to 
$\widehat{R}_H = [\mathrm{diag}(A {\bf 1}_n)]^{-1} A \widehat{H}$. This algorithm ensures that each column of $\widehat{V}$ is a specific row of 
$\widehat{R}_H$; hence, $\widehat{V}$ is an explicit form of $(A, \widehat{H})$. It follows that $\widehat{\Omega}$ is an explicit function of $(A, \widehat{H})$. 
Also, the $\widehat{H}$ from net rounding is such that $\mathbb{P}(\widehat{H} \neq H) = o(1)$ for a non-stochastic $H$. 
To show weak convergence of $T_n(\widehat{\Omega})$, we can replace $\widehat{H}$ by $H$ and treat $\widehat{\Omega}$ as an explicit function of $(A, H)$. 
This overcomes the analytical hurdle in Section~\ref{subsec:SCChurdle}.

\subsection{Asymptotic normality of the GoF-MSCORE} \label{subsec:normality}
Fix $K\geq 1$. Consider the DCMM model \eqref{modelmatrixform}. As in Section~\ref{subsec:model}, we assume $P$ has unit diagonal (for identifiability). 
 For any $\eta\in\mathbb{R}^n$, let $\|\eta\|$, $\|\eta\|_1$, $\eta_{\max}$ and $\eta_{\min}$ denote its Euclidean norm,  $\ell^1$-norm (absolute sum of entries), maximum entry, and minimum entry, respectively.  For any $M\in\mathbb{R}^{K\times K}$, let  $\|M\|$ and $\|M\|_{\max}$ be its spectral norm and entry-wise maximum norm, respectively. 
Define $G=\|\theta\|^{-2}\Pi'\Theta^2\Pi\in\mathbb{R}^{K\times K}$. Let $\lambda_k(PG)$ be the $k$th largest (in magnitude) right eigenvalue of $PG$, $1\leq k\leq K$. We assume the following regularity condition. 
\begin{cond}\label{cond:afmSCORE} 
Let $\beta_n\in (0,1]$ be a sequence indexed by $n$. Let $c_1$-$c_3$ be positive constants. 
\begin{itemize} 
\setlength \itemsep{-.5 em} 
\item[(a)]  $\|P\|_{\max}\leq c_1$, $\|G^{-1}\|\leq  c_1$,  $\theta_{\max}\sqrt{\log(n)}\leq c_3$, and $(\|\theta\|_5^{5}\|\theta\|_1 / \|\theta\|^6) \log(n)\to 0$.  
\item[(b)] $|\lambda_K(PG)|\geq c_2\beta_n$ and $\beta_n\|\theta\|/\sqrt{\log(n)}\to\infty$. 
\end{itemize}
\end{cond} 

In (a), the first two items are about $(P, G)$.  Since $P$ has unit diagonals and $G$ is properly scaled ($\|G\|$ is always bounded), these conditions are mild.  
The next two items are about $\theta$, where the first one is mild:  since most real networks are sparse,   $\theta_{\max}$ is usually much smaller than $1$.\spacingset{1}\footnote{In the dense case of $\theta_{\max}\asymp 1$,  if we use a variant of SCC where we replace $C_{n, 3}$ in the denominator by $V_{n,3}=\sum_{i_1,i_2,i_3 (dist)} (\widehat{\Omega}_{i_1i_2}-\widehat{\Omega}^2_{i_1i_2})(\widehat{\Omega}_{i_2i_3}-\widehat{\Omega}^2_{i_2i_3})(\widehat{\Omega}_{i_3i_1}-\widehat{\Omega}^2_{i_3i_1})$, then all our results can be extended.}\spacingset{1.75}
The last one is also mild. 
To see the point, take the special case of $\theta_{\max}\asymp\theta_{\min}$. In this case, $(\|\theta\|_5^{5}\|\theta\|_1 /  \|\theta\|^6) \log(n) \asymp n^{-1} \log(n)$, and the condition reduces to $n^{-1} \log(n) \goto 0$.  In the literature (e.g., \cite{JKL2019, EstK}), these conditions are known as standard. 

Condition (b) is our main assumption, which is known to be nearly necessary and hard to relax. 
To see the point, denote by $\lambda_k(\Omega)$ the $k$th largest eigenvalue of $\Omega$ (in magnitude). 
It can be shown that $\lambda_k(\Omega)=\|\theta\|^2\lambda_k(PG)$ (in the supplement).  
Using this, Condition (b) boils down to $|\lambda_K(\Omega)|/\sqrt{\lambda_1(\Omega)\log(n)}\to\infty$.  It was noted by \cite{JKL2019, EstK} that $|\lambda_K(\Omega)|/\sqrt{\lambda_1(\Omega)}$ 
is the effective Signal-to-Noise Ratio (SNR) of network testing. In particular, it was shown in 
\cite{EstK} that when $|\lambda_K(\Omega)|/\sqrt{\lambda_1(\Omega)}\to 0$, a 
consistent estimate for $K$ does not exist. In such a case, 
the problem of  GoF is not well-posed. 
This suggests that Condition (b) is nearly necessary.

Recall that in Algorithm~\ref{alg:afmSCORE} we use MSCORE and net-rounding to obtain $\widehat{H}$. We need the following condition to ensure the good performance of $\widehat{H}$.  For each $i$,  let $g_i \geq 0$ be the gap between the largest and second largest entries of $\pi_i$, and let $k_i^*$ be index of  the largest entry of $\pi_i$ ($k_i^*$ is uniquely defined if $g_i > 0$). Define $\Pi_0 =   
[\pi_{01}, \pi_{02},\ldots,\pi_{0n}]'$, where  $\pi_{0i} =e_{k_i^*}$  and $e_k$ is the 
$k$-th standard Euclidean basis vector of $\mathbb{R}^K$.  
It is seen that if we apply net-rounding to $\Pi$ using the net ${\cal N}^*=\{e_1,\ldots,e_K\}$, then $\Pi_0$ is the output. 
Same as before, let $\lambda_k=\lambda_k(PG)$ be the $k$th right eigenvalue of $PG$, and let $\mu_k\in\mathbb{R}^K$ be the corresponding right eigenvector. Let  $\kappa(\cdot)$ be the conditioning number of a matrix. Write $\delta_n= \theta_{\min}\|\theta\|/(\theta_{\max}\sqrt{\theta_{\max}\|\theta\|_1})$.

\begin{cond}\label{cond:MSCORE} 
Let $c_4\in (0,1)$ be a constant. We assume:
(a) Each community has at least one pure node. (b) $|\lambda_2|\leq (1-c_4)\lambda_1$, and $\mu_1$ is a positive vector with $\frac{\max_{1\leq k\leq K}\mu_1(k)}{\min_{1\leq k\leq K}\mu_1(k)}\leq c_4^{-1}$. (c) $\delta_n \beta_n\|\theta\|/\log(n)\to\infty$. (d) $\kappa(\Pi'\Theta\Pi_0)\leq c_4^{-1}$. (e) For each $1\leq i\leq n$, $g_i \geq (\delta_n\beta_n\|\theta\|)^{-1}\log(n)$.   
\end{cond} 

Here, (a)-(c) are standard mild conditions required for MSCORE; see \cite{MSCORE} for justification.  
In these requirements, $\delta_n\beta_n\|\theta\|/\log(n)\to\infty$ is slightly stronger than $\beta_n\|\theta\|/\sqrt{\log(n)}\to\infty$ in Condition~\ref{cond:afmSCORE}, as we need   to guarantee that $\widehat{H}$ is concentrated at $\Pi_0$. 
Moreover,  (d) is also a mild regularity condition, which excludes the cases where the sine-theta distance between the column spaces of $\Theta^{1/2} \Pi$ and $\Theta^{1/2} \Pi_0$ is large. The condition (e) guarantees that the output of net-rounding is unique: Define $\hat{g}_i$ similarly as $g_i$ where we replace $\pi_i$ by $\hat{\pi}_i^{\mathrm{MS}}$. Under this condition, it can be shown that $\hat{g}_i>0$ with high probability, ensuring that there is only a unique point in ${\cal N}^*$ that is closest to $\hat{\pi}_i^{\mathrm{MS}}$.
Since $(\delta_n\beta_n\|\theta\|)^{-1}\log(n)=o(1)$ (as implied by (c)), this condition excludes only a tiny subset from the probability simplex, which is mild.

The next theorem studies Steps 1-2 of Algorithm~\ref{alg:afmSCORE} and is proved in the supplement. 
\begin{thm}[MSCORE and net-rounding]\label{thm:MSCORE}
Fix $K\geq 1$. Consider the DCMM model \eqref{modelmatrixform}, where Conditions~\ref{cond:afmSCORE}-\ref{cond:MSCORE} hold. Denote by $\widehat{\Pi}^{\mathrm{MS}} = [\hat{\pi}^{\mathrm{MS}}_1, \ldots, \hat{\pi}^{\mathrm{MS}}_n]'$ the output of MSOCRE. As $n \goto \infty$, with probability $1 - O(n^{-3})$, up to a column permutation of $\widehat{\Pi}^{\mathrm{MS}}$, 
$\max_{1 \leq i \leq n} \{\|\hat{\pi}^{\mathrm{MS}}_i - \pi_i\|_1\} \leq C(\delta_n\beta_n\|\theta\|)^{-1}\sqrt{\log(n)}$. 
Moreover, let $\widehat{H}$ be the output from Steps 1-2 of Algorithm~\ref{alg:afmSCORE}.  As $n\to\infty$, 
$\mathbb{P}(\widehat{H}=\Pi_0)=1-O(n^{-3})$.  
\end{thm}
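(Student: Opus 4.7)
The theorem has two parts, and the strategy is to reduce part (ii) (net-rounding) to part (i) (row-wise control of the MSCORE output) plus the gap assumption in Condition~\ref{cond:MSCORE}(e). Throughout, recall that $\widehat{\Pi}^{\mathrm{MS}}$ is produced by the standard three-step MSCORE pipeline (eigendecomposition, SCORE ratio normalization, and vertex hunting via SP), so part (i) is essentially a row-wise refinement of the MSCORE consistency results of \cite{MSCORE, SCORE-Review} under our specific scaling regime. The plan is to (a) record the entry-wise eigenvector concentration available under Condition~\ref{cond:afmSCORE}, (b) propagate it through the SCORE ratio map and the SP vertex estimator to obtain a uniform row-wise bound, and finally (c) use the bound together with Condition~\ref{cond:MSCORE}(e) to argue that net-rounding identifies the argmax of each $\pi_i$ correctly.

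\paragraph{Step 1: row-wise bound for MSCORE.}
First I will use the identity $\lambda_k(\Omega) = \|\theta\|^2\lambda_k(PG)$ (which will be recorded as a separate lemma in the supplement) to translate Condition~\ref{cond:afmSCORE}(b) into a lower bound $|\lambda_K(\Omega)| \gtrsim \beta_n \|\theta\|^2$ on the smallest nonzero eigenvalue of $\Omega$. Combined with $\theta_{\max}\sqrt{\log n} \leq c_3$ and $\beta_n\|\theta\|/\sqrt{\log n}\to\infty$, standard entry-wise eigenvector perturbation (e.g.\ a leave-one-out or Abbe--Fan--Wang-type argument) yields
$\max_{1\le i\le n} \|\hat{\xi}_k(i) - s_k\xi_k(i)\|$-type bounds of order $\sqrt{\log n}/(\beta_n\|\theta\|)$, up to sign flips $s_k$. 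Under Condition~\ref{cond:MSCORE}(a)-(b), the population leading eigenvector $\xi_1$ is entry-wise positive with ratio bounded by $c_4^{-1}$, so the SCORE ratios $\hat{R}(i,\cdot) = \hat{\xi}_{2:K}(i)/\hat{\xi}_1(i)$ are well defined and satisfy a row-wise bound of the same order, possibly with a $\delta_n^{-1}$ factor absorbing the degree heterogeneity, giving an overall rate $(\delta_n\beta_n\|\theta\|)^{-1}\sqrt{\log n}$. The SP vertex hunter applied to $\widehat{R}$ is known to be stable under row-wise perturbations when the pure nodes exist (Condition~\ref{cond:MSCORE}(a)) and the simplex is non-degenerate (Condition~\ref{cond:MSCORE}(d) gives control on $\Pi'\Theta\Pi_0$, which in turn bounds the conditioning of the ideal simplex). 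Propagating this through the explicit barycentric-to-$\pi$ map of Lemma~\ref{lem:AFMoracle} and using $\|\Pi\pi\|_1=1$ for the final renormalization, I obtain
\[
\max_{1\le i\le n}\|\hat{\pi}_i^{\mathrm{MS}} - \pi_i\|_1 \le C(\delta_n\beta_n\|\theta\|)^{-1}\sqrt{\log n}
\]
with probability at least $1 - O(n^{-3})$, up to a fixed column permutation; the $n^{-3}$ probability comes from tuning the constants in the sub-exponential tail bounds used for the entry-wise eigenvector concentration.

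\paragraph{Step 2: net-rounding.}
After the column permutation from Step~1, fix $1\le i\le n$ and let $k_i^\ast = \arg\max_k \pi_i(k)$ with gap $g_i$. Net-rounding outputs $e_{\hat{k}_i}$ where $\hat{k}_i = \arg\max_k \hat{\pi}_i^{\mathrm{MS}}(k)$; so $\hat{H}_i = \pi_{0i}$ precisely when $\hat{k}_i = k_i^\ast$. A simple inequality shows that $\hat{k}_i = k_i^\ast$ whenever $\|\hat{\pi}_i^{\mathrm{MS}} - \pi_i\|_\infty < g_i/2$, which is implied by $\|\hat{\pi}_i^{\mathrm{MS}} - \pi_i\|_1 < g_i/2$. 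Combining the bound of Step~1 with Condition~\ref{cond:MSCORE}(e) (which gives $g_i \ge (\delta_n\beta_n\|\theta\|)^{-1}\log n$), this holds for all $i$ simultaneously as soon as $\sqrt{\log n} = o(\log n)$, i.e.\ for all $n$ large. Hence $\widehat{H}=\Pi_0$ on the event where Step~1 succeeds, and the probability is $1-O(n^{-3})$.

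\paragraph{Main obstacle.}
The mechanical part (Step~2) is routine; the real work is the row-wise eigenvector concentration that drives Step~1. The obstacle is to track how the three sources of fluctuation---entry-wise noise in $\hat{\xi}_k$, division by the possibly small $\hat{\xi}_1(i)$ in the SCORE step, and SP's perturbation stability---combine without losing the correct dependence on $\delta_n$, $\beta_n$, and $\|\theta\|$, and to do so at tail probability $n^{-3}$ rather than at an $o(1)$ level. This requires a careful leave-one-out analysis for the eigenvectors and a uniform lower bound on $|\hat{\xi}_1(i)|$, both of which lean essentially on Conditions~\ref{cond:afmSCORE} and \ref{cond:MSCORE}(a)-(d); the detailed accounting is deferred to the supplement.
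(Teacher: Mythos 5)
Your proposal is correct and follows essentially the same route as the paper: an entry-wise (leave-one-out) eigenvector concentration bound under Condition~\ref{cond:afmSCORE}, propagated through the SCORE ratio map and the vertex-hunting step (with the final propagation deferred to the MSCORE analysis, exactly as the paper defers to \cite{MSCORE}), followed by the observation that the row-wise $\ell_1$ rate $C(\delta_n\beta_n\|\theta\|)^{-1}\sqrt{\log(n)}$ is dominated by the gap lower bound $(\delta_n\beta_n\|\theta\|)^{-1}\log(n)$ from Condition~\ref{cond:MSCORE}(e), so net-rounding recovers the argmax of every $\pi_i$ simultaneously. Your Step~2 argmax-preservation inequality is the same argument the paper states as a contradiction, and your identification of the main obstacle (sharp entry-wise eigenvector bounds with the correct $\delta_n,\beta_n,\|\theta\|$ dependence at tail level $n^{-3}$, plus a uniform lower bound on $|\hat{\xi}_1(i)|$) matches what the paper's Lemma on entry-wise eigenvector analysis actually supplies.
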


 Here, the first claim  contains node-wise error bounds for MSCORE. It improves the theory in \cite{MSCORE} by removing some unnecessary conditions (e.g., \cite{MSCORE} assumes that all except the first eigenvalues of $PG$ are at the same order, which is not needed here). 
Our proof uses a more refined entry-wise eigenvector analysis of $A$ (see Section~\ref{sec:MSCORE-supp}).  The second claim says that  the $\widehat{H}$ produced by MSCORE and net-rounding concentrates at a non-stochastic matrix $\Pi_0$.  

\begin{thm}[Asymptotic normality of GoF-MSCORE (DCMM)]\label{thm:main}
Fix $K\geq 1$. Consider the DCMM model \eqref{modelmatrixform}, where Condition~\ref{cond:afmSCORE} holds. Let $T_n(\widehat{\Omega})$ be the output of Algorithm~\ref{alg:afmSCORE}, and let  $T_n(\widehat{\Omega}; \widehat{H})$ be the output of the generic GoF-MSCORE with a given $\widehat{H}$.  As $n \goto \infty$,  if there is a non-stochastic $H$ such that $\kappa(\Pi'\Theta H)\leq C$ and $\mathbb{P}(\widehat{H}\neq H)=o(1)$, then $T_n(\widehat{\Omega};  \widehat{H}) \goto N(0, 1)$. 
Moreover, if Condition~\ref{cond:MSCORE} also holds, then $T_n(\widehat{\Omega}) \goto N(0, 1)$. 
\end{thm}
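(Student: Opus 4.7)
The plan is to combine the oracle central limit theorem from Theorem~\ref{thm:SCC} with a consistency statement $|T_n(\widehat{\Omega};\widehat{H}) - T_n(\Omega)|\to_p 0$, and then conclude by Slutsky. The oracle step proceeds by verifying the three hypotheses of Theorem~\ref{thm:SCC} under Condition~\ref{cond:afmSCORE}: since $\Omega=\Theta\Pi P\Pi'\Theta$ with $P$ unit-diagonal, $\|G^{-1}\|\leq c_1$ and $\theta_{\max}\to 0$, a short computation gives $u_i\asymp\theta_i\|\theta\|_1$, $\mathrm{tr}(\Omega^3)\asymp\|\theta\|^6$, and $u_{\max}^2/(n\bar{u})+n\bar{u}/\|u\|^2=o(1)$, so $T_n(\Omega)=\psi_{n,3}(\Omega)\to N(0,1)$. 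The second assertion of the theorem then reduces to the first by taking $H=\Pi_0$, because Theorem~\ref{thm:MSCORE} yields $\mathbb{P}(\widehat{H}\ne\Pi_0)=O(n^{-3})$ and Condition~\ref{cond:MSCORE}(d) gives $\kappa(\Pi'\Theta\Pi_0)\leq c_4^{-1}$.

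For the consistency step, the decoupling idea stressed in Remark~3 is essential. On the $1-o(1)$ event $\{\widehat{H}=H\}$ we treat $H$ as deterministic; tracing through Algorithm~\ref{alg:afmSCORE} then expresses $\widehat{\Omega}$ as an explicit function of $(A,H)$: $\widehat{R}_H=\mathrm{diag}(A\mathbf{1}_n)^{-1}AH$, $\widehat{V}_H$ is a selection of $K$ rows of $\widehat{R}_H$ by successive projection, $\widehat{Z}_H=\widehat{V}_H(H'AH)^{-1}\widehat{V}_H'$, and $\widehat{P},\widehat{\Pi},\widehat{\Theta}$ are algebraic combinations of the above. This removes the hurdle of Section~\ref{subsec:SCChurdle}, since the dependence of $\widehat{\Omega}$ on $W$ becomes analytically transparent. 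Next, Lemma~\ref{lem:diffU} decomposes $U_{n,3}(\widehat{\Omega})-U_{n,3}(\Omega)$ into eleven trace and Hadamard-trace terms in $W_1=W-\mathrm{diag}(\Omega)$ and $\Delta=\widehat{\Omega}-\Omega$; since $C_{n,3}=(1+o_p(1))\mathrm{tr}(\Omega^3)\asymp\|\theta\|^6$, it suffices to show each of the eleven terms is $o_p(\|\theta\|^3)$.

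I would then run the analysis in two layers. The Hadamard terms and $\mathrm{tr}(\Delta^3)$ are controlled by a row-wise entry-wise bound $\|\Delta\|_{\max}=o_p(\theta_{\max}^2)$, obtained by propagating Bernstein-type concentration of $(AH)_{i,\cdot}$ and $(A\mathbf{1}_n)_i$ around their means through the (locally Lipschitz) successive-projection map at the well-separated vertices of the ideal simplex of Lemma~\ref{lem:AFMoracle}, and then through the remaining algebraic steps of Algorithm~\ref{alg:afmSCORE}. Given the explicit $(A,H)$-form of $\widehat{\Omega}$, these pointwise bounds are essentially routine but notationally heavy.

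The hard part will be the two mixed trace terms $\mathrm{tr}(W_1^2\Delta)$ and $\mathrm{tr}(W_1\Delta^2)$, because a naive Cauchy--Schwarz bound inflates them by a factor of $\sqrt{n}$ and loses the required sharpness (cf.\ Section~\ref{subsec:SCChurdle}). My strategy is to use the explicit $(A,H)$-form of $\widehat{\Omega}$ together with the Neumann expansion
\[
(H'AH)^{-1}=(H'\Omega H)^{-1}\sum_{r\geq 0}\bigl[-(H'\Omega H)^{-1}H'W_1 H\bigr]^{r},
\]
valid because $\kappa(\Pi'\Theta H)\leq C$ and Condition~\ref{cond:afmSCORE}(b) together imply $\|(H'\Omega H)^{-1}H'W_1 H\|=o_p(1)$. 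This yields $\Delta=\Delta^{(1)}+\Delta^{(2)}+\cdots$ with $\Delta^{(r)}$ of order $r$ in $W$; substituting into $\mathrm{tr}(W_1^k\Delta^{3-k})$ produces polynomial functionals of $W$ whose variances can be bounded by combinatorial walk counts on the graph. The leading contributions cancel because $H$ is a good column-surrogate for $\Theta\Pi$ (as quantified by $\kappa(\Pi'\Theta H)\leq C$), leaving an $o_p(\|\theta\|^3)$ remainder. Combining these bounds across the eleven terms and invoking Slutsky then completes the proof.
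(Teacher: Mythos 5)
Your overall skeleton matches the paper's: the oracle CLT for $\psi_{n,3}(\Omega)$ plus $C_{n,3}/\mathrm{tr}(\Omega^3)\to_p 1$ and Slutsky, the reduction to showing $|U_{n,3}(\widehat{\Omega})-U_{n,3}(\Omega)|=o_p(\|\theta\|^3)$ via Lemma~\ref{lem:diffU}, the decoupling on the event $\{\widehat{H}=H\}$, and a Neumann expansion of $(H'AH)^{-1}$ with moment/combinatorial bounds for the $W_1$-dependent terms. However, there is a genuine gap in how you propose to handle the vertex-hunting step and the Hadamard and pure-$\Delta$ terms. You plan to establish $\|\Delta\|_{\max}=o_p(\theta_{\max}^2)$ by propagating Bernstein-type concentration through a ``locally Lipschitz'' successive-projection map at well-separated vertices of the ideal simplex. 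This step would fail under the theorem's hypotheses: the first claim assumes only Condition~\ref{cond:afmSCORE} and $\kappa(\Pi'\Theta H)\le C$ --- it does \emph{not} assume each community has a pure node (that is Condition~\ref{cond:MSCORE}(a), invoked only for the second claim). Without pure nodes the rows $\hat r_i$ need not approach the vertices of the ideal simplex, the SP output $\widehat V_H$ is essentially uncontrolled (the algorithm even falls back to $\widehat V_H=I_K$ when singular), and no Lipschitz-stability statement for SP --- a discrete argmax selection --- is available.

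The paper avoids all of this via a structural cancellation you have missed (Lemma~\ref{lem:TvsH}): because each SP vertex is a row of $\widehat R_H$ and $\widehat R_H\mathbf{1}_K=\mathbf{1}_n$, one gets $\widehat V_H\mathbf{1}_K=\mathbf{1}_K$, and in the product $\widehat W\widehat Z_H\widehat W'$ the matrix $\widehat V_H$ cancels, yielding exactly $\widehat{\Omega}=AH(H'AH)^{-1}H'A$. Hence no analysis of vertex hunting is needed under the null. With this closed form the paper never uses entry-wise bounds on $\Delta$: all eleven terms of Lemma~\ref{lem:diffU} are controlled by four operator-norm bounds, namely $\|\Delta\|=o_p(\|\theta\|)$, $\|W_1\Delta\|=O_p(\|\theta\|^2)$, $\|W_1^2\Delta\|=o_p(\|\theta\|^3)$ and $\|\mathrm{diag}(W_1^2)\Delta\|=o_p(\|\theta\|^3)$, combined with $\mathrm{rank}(\Delta)\le 2K$, the inequality $|\mathrm{tr}(B)|\le\mathrm{rank}(B)\|B\|$, and the Hadamard-product facts $\mathrm{rank}(B\circ D)\le\mathrm{rank}(B)\,\mathrm{rank}(D)$ and $\|B\circ D\|\le\min\{\mathrm{rank}(B),\mathrm{rank}(D)\}\|B\|\|D\|$ (Lemma~\ref{lem:hada}). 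You should replace your entry-wise/vertex-stability layer with this low-rank argument; your treatment of the hard mixed terms via the Neumann series and variance/walk counting is then essentially the paper's Lemmas~\ref{lem:Delta} and \ref{lem:1.7} and can stand.
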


Theorem \ref{thm:main} analyzes Steps 3-5 of Algorithm~\ref{alg:afmSCORE} and contains our main result for GoF. Here, to show the asymptotic normality of the generic 
GoF-MSCORE, we only need Condition~\ref{cond:afmSCORE} and that the $\widehat{H}$ satisfies $\mathbb{P}(\widehat{H} \neq H) = o(1)$; Condition~\ref{cond:MSCORE} is not needed. 
Therefore,  the generic GoF-MSCORE is broadly applicable, not tied to the specific choice of $\widehat{H}$ in Algorithm~\ref{alg:afmSCORE}. Investigating other choices of $\widehat{H}$ is an interesting future research direction. 
 
The proof of Theorem~\ref{thm:main} is  technically involved. The key is to 
show $|T_n(\widehat{\Omega}) - T_n(\Omega)| \goto0$   in probability. To show this, we must decompose $T_n(\widehat{\Omega}) - T_n(\Omega)$ as the sum of {\it many terms}  (see Lemma~\ref{lem:diffU}) and analyze each of them separately. Especially, there does not exist a framework where we can analyze these terms uniformly (e.g., \cite{JKL2019}), and for each term, we need to carefully account for the dependence between $W$ and $(\widehat{\Omega}-\Omega)$ and deal with complicated combinatorics. 
%

\subsection{Extension of GoF-MSCORE from DCMM to MMSBM} 
\label{subsec:MMSBM} 
MMSBM is a special case of DCMM where $\Omega =  \Pi P \Pi'$. 
Recall that in DCMM (e.g.,  (\ref{modelmatrixform})), for identifiability, we require $P$ to have 
unit-diagonal.  For MMSBM, we remove the constraint for it is not only inappropriate but 
also not required for identifiability. Instead, letting $\alpha_n = (1/K) \mathrm{trace}(P)$, we  
 write $\Omega = \alpha_n \Pi (\alpha_n^{-1} P) \Pi'$, where $\mathrm{trace}(\alpha_n^{-1} P) = K$, so 
$\alpha_n^{-1} P$ is on the same scale as the $P$ in DCMM. 
In light of this, we  
use $\alpha_n^{-1}P$ as the new $P$ and
rewrite MMSBM as  
\begin{equation}  \label{MMSBM}
A=\Omega-\diag(\Omega)+W, \qquad \mbox{and} \qquad \Omega = \alpha_n \Pi P \Pi', \qquad \mathrm{trace}(P) = K. 
\end{equation}

In such a setting, the main idea of GoF-MSCORE continues to work.   
In detail, let $(\lambda_k, \xi_k)$ be the $k$-th eigen-pair of
$\Omega$ as before.  First, similar to \cite{MSCORE}, we can show that all the $n$ rows of $\Xi = [\xi_1, \xi_2, \ldots, \xi_K]$ are contained in a simplex (Ideal Simplex 1) with $K$ vertices in (a hyperplane of)  $\mathbb{R}^K$, 
where each vertex is a row of $\Xi$. Therefore, 
we can extend MSCORE to estimate $\Pi$ in the current setting. 
Moreover, fix a matrix $H$ as in Lemma \ref{lem:AFMoracle} and let $R_H = \Omega H$ (note that the form of $R_H$ is simpler than that in Lemma \ref{lem:AFMoracle}). 
Similar to Lemma~\ref{lem:AFMoracle}, the $n$ rows of $R_H$ are contained in a simplex (Ideal Simplex 2) with $K$ vertices in $\mathbb{R}^K$, where each vertex is a row of $R_H$.  With these simplex structures, it is not hard to extend the GoF-MSCORE to the current setting. 
Since the idea is similar, we omit the details.

To this end, we propose {\it a revised (and simpler) version of GoF-MSCORE}, denoted by 
GoF-MSCORE-rev, as a GoF metric for MMSBM. Let $(\hat{\lambda}_k, \hat{\xi}_k)$ be the 
$k$-th eigen-pair of $A$ and let $\widehat{\Xi} = [\hat{\xi}_1, \hat{\xi}_2, \ldots, \hat{\xi}_K]$. GoF-MSCORE-rev runs as follows.  Input: $(A, K)$. 
\begin{itemize} 
\setlength \itemsep{-.25 em} 
\item ({\it Initial estimate of $\Pi$}). Apply vertex hunting to the $n$ rows of $\widehat{\Xi}$ (e.g., using successive projection), 
and let $\hat{v}_1^*, \ldots, \hat{v}_K^*$ be the estimated vertices. Let  $\widehat{V}^*=[\hat{v}_1^*,\ldots,\hat{v}_K^*]$.  For $1\leq i\leq n$, let $\tilde{\pi}_i=(\widehat{V}^*)^{-1}\widehat{\Xi}'e_i$, and obtain $\hat{\pi}_i^{\mathrm{MS}_0}$ by setting negative entries of $\tilde{\pi}_i$ to zero and then re-normalizing it to have a unit $\ell^1$-norm. Write $\widehat{\Pi}^{\mathrm{MS}_0}=[\hat{\pi}_1^{\mathrm{MS}_0},  \ldots, \hat{\pi}_n^{\mathrm{MS}_0}]'$.  
\item ({\it Net-rounding}).  Obtain  $\widehat{H} \in \mathbb{R}^{n, K}$ by applying net-rounding to 
$\widehat{\Pi}^{\mathrm{MS}_0}$ with ${\cal N} = {\cal N}^*$.  
\item ({\it Re-estimate of $\Pi$}).  Obtain $\widehat{R}_H=A \widehat{H}$ and denote by $\hat{r}_i'$ the $i$th row of  $\widehat{R}_H$, $1\leq i\leq n$. Apply vertex hunting to the $n$ rows of $\widehat R_H$ and let $\hat{v}_1,\ldots,\hat{v}_K$ be the estimated vertices. 
Let $\widehat{V}_H=[\hat{v}_1,\ldots,\hat{v}_K]'$. In the rare event that $\widehat{V}_H$ is singular, re-set $\widehat{V}_H=I_K$.  Estimate $\pi_i$ by  $\hat{w}_i=(\widehat{V}_H^{-1})'\hat{r}_i$ and write $\widehat{\Pi}=[\hat{w}_1,\ldots,\hat{w}_n]'$. 
\item Estimate $\alpha_nP$ by $\widehat{\alpha_nP}=(\widehat\Pi'\widehat \Pi)^{-1}\widehat\Pi'A\widehat\Pi(\widehat\Pi'\widehat \Pi)^{-1}$. Let $\widehat{\Omega}  =\widehat{\Pi}\cdot\widehat{\alpha_nP}\cdot\widehat{\Pi}'$ and output $T_n(\widehat{\Omega})$.  
\end{itemize}

For vertex hunting, similarly as before, we use SP for theoretical study and KNN-SP (where tuning parameters $(N, \alpha)$ are set in the same way as in GoF-MSCORE) for real data analysis.  
Since MMSBM is a special DCMM, we adopt similar regularity conditions as in Section \ref{subsec:normality} but 
replace the matrix $\Theta$ there (which is general) by  $\Theta = \sqrt{\alpha_n}  I_n$. Let $\Pi_0$ be defined in the same way by applying net-rounding to $\Pi$. 
Theorem \ref{thm:MMSBM}  is proved in the supplement.

\begin{thm}\label{thm:MMSBM}
Fix $K\geq 1$. Consider the MMSBM model \eqref{MMSBM}, where Condition~\ref{cond:afmSCORE} and (a) and (c)-(e) of Condition~\ref{cond:MSCORE} hold with $\Theta=\sqrt{\alpha_n}I_n$. Let $\widehat{H}$ and $T_n(\widehat{\Omega})$ be as in GoF-MSCORE-rev. As $n\to\infty$, $\mathbb{P}(\widehat{H}=\Pi_0)=1-O(n^{-3})$, and $T_n(\widehat{\Omega})\to N(0,1)$. 
\end{thm}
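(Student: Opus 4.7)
The plan is to mirror the two-part strategy used for Theorem~\ref{thm:MSCORE} and Theorem~\ref{thm:main}, but simplified by the absence of degree heterogeneity in MMSBM. The first part establishes the concentration $\mathbb{P}(\widehat{H}=\Pi_0)=1-O(n^{-3})$, which reduces the analysis to a fixed non-stochastic $H=\Pi_0$; the second part then mimics the SCC argument behind Theorem~\ref{thm:main}.

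For the first claim, note that under MMSBM no SCORE normalization is needed: the rows of $\Xi=[\xi_1,\ldots,\xi_K]$ already form Ideal Simplex~1 with vertex matrix $V^*$, and $\pi_i=(V^*)^{-1}\Xi'e_i$. I would first obtain an entrywise eigenvector bound of the form $\|\widehat{\Xi}-\Xi O\|_{2,\infty}\leq C(\beta_n\|\theta\|)^{-1}\sqrt{\log n}$ up to a rotation $O$, using the same Neumann-series / leave-one-out style argument mentioned for Theorem~\ref{thm:MSCORE} (with $\theta=\sqrt{\alpha_n}\mathbf{1}_n$, $\delta_n\asymp 1$, so the bound simplifies). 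Combined with a standard successive-projection error bound for vertex hunting and the condition $\kappa(\Pi'\Theta\Pi_0)\leq c_4^{-1}$, this yields $\max_i\|\hat{\pi}_i^{\mathrm{MS}_0}-\pi_i\|_1\leq C(\beta_n\|\theta\|)^{-1}\sqrt{\log n}$. The net-rounding step then outputs $\Pi_0$ provided this bound is smaller than $g_i/2$ for every $i$, which is exactly what the gap condition (e) of Condition~\ref{cond:MSCORE} guarantees. A union bound over the $O(n^{-3})$ exceptional probability events gives the claim.

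For the asymptotic normality, the structural observation is that conditional on $\widehat{H}=\Pi_0$ (a non-stochastic matrix), the quantities $\widehat{V}_H$, $\widehat{\Pi}$, $\widehat{\alpha_n P}$, and hence $\widehat{\Omega}$, become explicit polynomial-type functions of $(A,\Pi_0)$. In particular, $\widehat{\alpha_nP}=(\Pi_0'\Pi_0)^{-1}\Pi_0'A\Pi_0(\Pi_0'\Pi_0)^{-1}$ after the vertex-hunting substitution, whose population counterpart is $\widetilde{\alpha_nP}$ obtained with $A$ replaced by $\Omega$. I would then invoke Lemma~\ref{lem:diffU} to decompose $T_n(\widehat{\Omega})-T_n(\Omega)=[U_{n,3}(\widehat{\Omega})-U_{n,3}(\Omega)]/\sqrt{6C_{n,3}}$ into eleven trace terms involving $W_1=W-\mathrm{diag}(\Omega)$ and $\Delta=\widehat{\Omega}-\Omega$, and bound each term by explicitly expanding $\Delta$ as a sum of products of $W_1$ with deterministic factors built from $\Omega$ and $\Pi_0$. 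Because MMSBM has no $\Theta$ factor, the combinatorial accounting is noticeably shorter than for DCMM: the key spectral-norm bound $\|\Delta\|\lesssim\sqrt{\alpha_n n}/\beta_n$ and entrywise bounds on the ``sandwich'' matrix $\Pi_0(\Pi_0'\Pi_0)^{-1}\Pi_0'$ suffice. Finally, $T_n(\Omega)\to N(0,1)$ follows from Corollary~\ref{cor:SCC} with $\theta_i\equiv\sqrt{\alpha_n}$, since under the stated conditions $\alpha_n\to 0$ and $\alpha_n n\to\infty$.

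The main obstacle will be the cross-terms $\mathrm{tr}(W_1\Delta^2)$ and $\mathrm{tr}(W_1^2\Delta)$ in Lemma~\ref{lem:diffU}. The crude Cauchy--Schwarz bound is too loose (as discussed in Section~\ref{subsec:SCChurdle}), so I must track the dependence of $\Delta$ on $W$ explicitly. The plan is to write $\Delta=\sum_\ell L_\ell W_1 R_\ell+(\text{higher order in }W_1)$ for deterministic $L_\ell, R_\ell$ obtained from the explicit form of $\widehat{\Omega}(A,\Pi_0)$, and then expand each trace term into a sum over index tuples where exact mean and variance computations become feasible by exploiting independence of distinct $W_{ij}$'s and the sparsity bound $\|\Omega\|_{\max}\lesssim\alpha_n$. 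Controlling these combinatorial sums at the required rate of $o_P(\sqrt{C_{n,3}})=o_P(\alpha_n^{3/2}n^{3/2})$ is the longest piece of work but proceeds by the same moment-bookkeeping template that drives Theorem~\ref{thm:main}; it is simpler here because only $\Pi_0$ (not $\Theta\Pi_0$) enters.
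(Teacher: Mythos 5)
Your first part (concentration of $\widehat{H}$ at $\Pi_0$) follows the paper's route: an entrywise eigenvector bound for $\widehat{\Xi}$ (the paper's Corollary on MMSBM eigenvectors gives $\max_i\|\widehat{\Xi}(i)-\Xi(i)O\|\lesssim \sqrt{\log n}/(\beta_n n\sqrt{\alpha_n})$, which after multiplying by $\|(V^*)^{-1}\|\asymp\sqrt{n}$ yields exactly your $\ell^1$ rate $(\beta_n\|\theta\|)^{-1}\sqrt{\log n}$ for $\hat{\pi}_i^{\mathrm{MS}_0}$), followed by the gap condition (e) to make net-rounding exact. That part is sound.

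The second part has a genuine gap: you have mis-identified the explicit form of $\widehat{\Omega}$ as a function of $(A,\Pi_0)$. In GoF-MSCORE-rev the re-estimation step sets $\widehat{\Pi}=\widehat{R}_H\widehat{V}_H^{-1}=A\widehat{H}\widehat{V}_H^{-1}$, \emph{not} $\widehat{\Pi}=\Pi_0$, so $\widehat{\alpha_nP}=(\widehat\Pi'\widehat\Pi)^{-1}\widehat\Pi'A\widehat\Pi(\widehat\Pi'\widehat\Pi)^{-1}$ is not $(\Pi_0'\Pi_0)^{-1}\Pi_0'A\Pi_0(\Pi_0'\Pi_0)^{-1}$. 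After the $\widehat{V}_H$ factors cancel, the correct object is
\begin{equation*}
\widehat{\Omega}=A\widehat H\bigl(\widehat H'A^2\widehat H\bigr)^{-1}\bigl(\widehat H'A^3\widehat H\bigr)\bigl(\widehat H'A^2\widehat H\bigr)^{-1}\widehat H'A,
\end{equation*}
whose population counterpart satisfies the exact identity $\Omega=\Omega H(H'\Omega^2H)^{-1}(H'\Omega^3H)(H'\Omega^2H)^{-1}H'\Omega$ because $\mathrm{rank}(\Omega)=K$. The estimator you propose to analyze is instead the block-constant SBM fit $\Pi_0(\Pi_0'\Pi_0)^{-1}\Pi_0'A\Pi_0(\Pi_0'\Pi_0)^{-1}\Pi_0'$, whose population version $\widetilde{\Omega}=\Pi_0(\Pi_0'\Pi_0)^{-1}\Pi_0'\Omega\Pi_0(\Pi_0'\Pi_0)^{-1}\Pi_0'$ differs from $\Omega=\alpha_n\Pi P\Pi'$ whenever genuine mixed memberships are present. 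Then $\Delta$ carries a deterministic bias with $\mathrm{tr}((\widetilde{\Omega}-\Omega)^3)$ generically of order $n^3\alpha_n^3\asymp\mathrm{tr}(\Omega^3)$ rather than $o(\sqrt{\mathrm{tr}(\Omega^3)})$, so $T_n(\widehat{\Omega})$ would diverge — this is precisely the power mechanism of Lemmas~\ref{lem:SBMvsDCBM}--\ref{lem:DCBMvsDCMM}, not a null distribution. With the correct form, the analysis is also heavier than you anticipate (not lighter than DCMM): one must Neumann-expand $(H'A^2H)^{-1}$ around $(H'\Omega^2H)^{-1}$, control $H'W_1^aH$ up to $a=4$ and mixed terms like $H'W_1\Omega W_1H$ and $H'\Omega W_1^2H$, and only then feed the resulting three-part decomposition of $\Delta$ into the Lemma~\ref{lem:diffU} bookkeeping.
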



\subsection{GoF-SCORE for DCBM and SBM}
\label{subsec:DCBM}
The SBM and DCBM are special cases of MMSBM and DCMM, respectively.  
In MMSBM and DCMM,  a row of $\Pi$ is a continuous variable and may take any $K$-dimensional weight-vector value, so it is impossible 
to have an estimate $\widehat{\Pi}$ such that $\mathbb{P}(\widehat{\Pi} \neq \Pi) = o(1)$. 
As explained in Section \ref{subsec:SCChurdle}, this poses an analytical hurdle. To overcome this hurdle, we need to estimate $\Pi$ twice and develop 
new ideas so that the final  estimate not only is accurate but also has an explicit and 
relatively simple form;  see Algorithm \ref{alg:afmSCORE} and Section \ref{subsec:SCChurdle}. 
In SBM and DCBM, each row of $\Pi$ is a degenerate 
weight vector, taking values in $\{e_1, e_2, \ldots, e_K\}$ ($e_k$: 
$k$-th standard basis vector in $\mathbb{R}^K$).  
In such a case, there is an estimate $\widehat{\Pi}$ so that $\mathbb{P}(\widehat{\Pi} \neq \Pi) = o(1)$ (e.g., by  SCORE \citep{SCORE-Review, EstK}), and we do not have the analytical hurdle.  As a result, for SBM and DCBM, we can use a GoF algorithm simpler than those for MMSBM and DCMM. 

Consider the DCBM first. In this model, $\Omega = \Theta \Pi P \Pi' \Theta$, but each row of $\Pi$ is a degenerate weight vector. We propose GoF-SCORE as a new GoF metric as follows. 
\begin{itemize}
\setlength \itemsep{-.35 em} 
\item {\it (Clustering)}. Apply the SCORE algorithm \citep{SCORE-Review} to cluster nodes into $K$ groups. Let $\hat{\pi}_i = e_k$ if node $i$ is clustered to group $k$ and write $\widehat{\Pi} = [\hat{\pi}_1, \hat{\pi}_2, \ldots, \hat{\pi}_n]'$. 
\item {\it (Estimation of $\Omega$)}. 
Let $M=\widehat{\Pi}'A\widehat{\Pi}$. 
Estimate $P$ by $\widehat{P}=[\diag(M)]^{-1/2}M[\diag(M)]^{-1/2}$. Estimate $\Theta$ by $\widehat{\Theta} = \diag(\hat{\theta}_1,\hat{\theta}_2, \ldots,\hat{\theta}_n)$, with $\hat{\theta}_i =  e_i'A  {\bf 1}_n\sqrt{\hat{\pi}_i'M\hat{\pi}_i} /(\hat{\pi}_i'M{\bf 1}_K)$, for $1\leq i\leq n$.   
Output $\widehat{\Omega}^{\mathrm{DCBM}}  = \widehat{\Theta} \widehat{\Pi} \widehat{P} \widehat{\Pi}'  \widehat{\Theta}$ and $T_n(\widehat{\Omega}^{\mathrm{DCBM}})$.  
\end{itemize} 
\begin{thm}\label{thm:DCBM}
Fix $K\geq 1$. Consider the DCBM model in \eqref{modelmatrixform}, where $\lambda_{\min}(\Pi'\Theta \Pi)\geq c_5\Vert \theta\Vert_1$ for a constant $c_5>0$, and Condition~\ref{cond:afmSCORE} and (b)-(c) of Condition~\ref{cond:MSCORE} are satisfied. 
As $n \goto \infty$, 
$T_n(\widehat{\Omega}^{\mathrm{DCBM}}) \goto N(0, 1)$.   
\end{thm}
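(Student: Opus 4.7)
The plan is to reduce the DCBM asymptotic normality to the oracle result of Theorem 2.1 by showing that $|T_n(\widehat{\Omega}^{\mathrm{DCBM}}) - T_n(\Omega)| \to 0$ in probability. Since $T_n(\widehat{\Omega}) - T_n(\Omega) = [U_{n,3}(\widehat{\Omega}) - U_{n,3}(\Omega)]/\sqrt{6 C_{n,3}}$, this amounts to bounding the eleven terms supplied by Lemma 2.2 and showing each is $o_p(\sqrt{C_{n,3}})$. First I would verify that the oracle-case hypotheses of Theorem 2.1 (and Corollary 2.1 after substituting $\Omega_{ij} \asymp \theta_i\theta_j$) hold under Condition 3.1 and $\lambda_{\min}(\Pi'\Theta\Pi)\geq c_5\|\theta\|_1$, which in particular yields $C_{n,3} \asymp \sum_{i_1,i_2,i_3 (\text{dist})}\Omega_{i_1 i_2}\Omega_{i_2 i_3}\Omega_{i_3 i_1}$ of the expected order.

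The second step is to exploit the discreteness of the DCBM memberships to side-step the analytical hurdle described in Section 2.2. Under Conditions 3.1 and 3.2(b)-(c) together with the eigenvalue lower bound on $\Pi'\Theta\Pi$, existing SCORE theory (see \cite{EstK, SCORE-Review}) gives $\mathbb{P}(\widehat{\Pi}=\Pi)=1-o(1)$, where equality is understood up to an inessential column permutation. On this high-probability event, $\widehat{\Pi}$ is deterministic, so $M=\Pi'A\Pi$ is an explicit quadratic function of $A$, and consequently $\widehat{P}$, $\widehat{\Theta}$ and $\widehat{\Omega}^{\mathrm{DCBM}}=\widehat{\Theta}\widehat{\Pi}\widehat{P}\widehat{\Pi}'\widehat{\Theta}$ all have closed-form expressions in $A$ with a prescribed rank-$K$ factorization. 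This is precisely the simplification that DCBM affords over DCMM.

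With $\Delta=\widehat{\Omega}^{\mathrm{DCBM}}-\Omega$ available in closed form, I would Taylor-expand $\widehat{P}$ and $\widehat{\Theta}$ around their population counterparts to write $\Delta=\Delta^{(1)}+\Delta^{(2)}$, where $\Delta^{(1)}$ is linear in $W$ and $\Delta^{(2)}$ collects higher-order remainders controlled by concentration of $\Pi'W\Pi$ and $e_i'W\mathbf{1}_n$. The eleven terms in Lemma 2.2 then split into trace terms such as $\tr(W_1^2\Delta)$, $\tr(W_1\Delta^2)$, $\tr(\Delta^3)$, and their Hadamard-product analogues. Each is handled by direct mean-variance computation combined with combinatorial bookkeeping over the free indices, using that $\Delta$ is piecewise-constant on community pairs and that $\|\theta\|_1\,\theta_{\max}\cdot\log n$ is small. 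The main obstacle, as flagged in Section 2.2, will be the bilinear-in-$(W_1,\Delta)$ terms like $\tr(W_1^2\Delta)$: a naïve Cauchy-Schwarz bound through $\|\Delta\|\cdot\|W_1\|^2$ is too loose, so one must instead substitute the explicit linear representation of $\Delta^{(1)}$, separate the index configurations where the $W$ entries in $\Delta^{(1)}$ coincide with the free trace indices (producing the leading drift, which cancels against the bias of the plug-in estimator) from those where they do not (which behave as mean-zero fluctuations controlled by fourth-moment calculations). Once each term is shown to be $o_p(\sqrt{C_{n,3}})$, Slutsky's theorem combined with Theorem 2.1 applied to $T_n(\Omega)$ yields $T_n(\widehat{\Omega}^{\mathrm{DCBM}})\to N(0,1)$.
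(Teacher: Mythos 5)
Your proposal follows essentially the same route as the paper's proof: verify the oracle hypotheses so that $T_n(\Omega)\to N(0,1)$, use the exact-recovery property $\mathbb{P}(\widehat{\Pi}=\Pi)=1-o(1)$ of SCORE to turn $\widehat{\Omega}^{\mathrm{DCBM}}$ into an explicit closed-form function of $A$, expand $\Delta=\widehat{\Omega}^{\mathrm{DCBM}}-\Omega$ around the population quantities (the paper does this via a Neumann-series expansion of $[\mathrm{diag}(\Pi'A\mathbf{1}_n)]^{-1}$, splitting $\Delta$ into three pieces), and then bound each term of the Lemma~\ref{lem:diffU} decomposition by entrywise mean--variance and combinatorial computations rather than a loose operator-norm Cauchy--Schwarz bound. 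The only cosmetic difference is your framing of the $\tr(W_1^2\Delta)$ term as a drift-versus-bias cancellation, whereas the paper simply shows $\|W_1^2\Delta\|=o_{\mathbb{P}}(\|\theta\|^3)$ directly from the explicit representation; the substance of the argument is the same.
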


Our approach can be used for estimating $K$, an interesting problem in network analysis.  
Consider a DCMM with $K$ communities but $K$ is unknown. Fix $K_0 \geq 1$ and we test $K=K_0$ versus $K=K_0+1$. We can use $T_n(\widehat{\Omega}^{\mathrm{DCBM}})$ for $K=K_0$ as the test statistic.
If we conduct this test sequentially for $K_0=1,2,\ldots,$, it provides an estimator of $K$ (see \cite{EstK}). 
In Section~\ref{subsec:power}, we show that GoF-SCORE can tell between $K=K_0$ and $K=K_0+1$ with minimal requirement on the signal-to-noise ratio. This shows that GoF-SCORE is optimal.   
See also Section~\ref{subsec:power} for comparison with existing methods of estimating $K$.

{\bf Remark 4} {\it (Comparison with \cite{EstK})}.  GoF-SCORE is connected to the method in \cite{EstK}, but their focus is on estimating $K$ via a statistic similar to $\psi_{n, 4}(\widehat{\Omega}^{\mathrm{DCBM}})$. In comparison, Theorem \ref{thm:DCBM} considers $T_n=\psi_{n, 3}(\widehat{\Omega}^{\mathrm{DCBM}})$, which was not studied in \cite{EstK}. Moreover, in Section~\ref{subsec:power}, 
we study the power of GoF-SCORE in detecting a mis-specified $K$ or mixed membership. These are new results that cannot be deduced from \cite{EstK}. 
Last, the focus of this paper is using SCC as a general recipe for GoF of different models (not limited to DCBM). To achieve this broad scope, we need methods and analysis more sophisticated  than those in \cite{EstK} (see Section~\ref{sec:SCC}).

We now consider SBM.  This is a special DCBM, where $\Omega = \alpha_n \Pi P \Pi'$. It can also be viewed a special MMSBM where each row of $\Pi$ is a degenerate weight vector. Similarly as in \eqref{MMSBM}, we assume $\mathrm{trace}(P)=K$ so that $\alpha_n$ and  $P$ are identifiable and that $P$ is on the same scale as in DCBM.   
We propose GoF-SCORE-rev as a simpler version of GoF-SCORE as follows. 
\begin{itemize} 
\setlength \itemsep{-.35 em} 
\item {\it (Spectral clustering)}. Let $\hat{\xi}_k\in\mathbb{R}^n$ be the eigenvector of $A$ associated with the $k$th largest eigenvalue (in magnitude). Apply the k-means algorithm to the rows of $\widehat{\Xi}=[\hat{\xi}_1,\ldots,\hat{\xi}_K]$, assuming $K$ clusters. Let $\hat{\pi}_i = e_k$ if row $i$ is clustered to class $k$. Write 
$\widehat{\Pi} = [\hat{\pi}_1,\ldots,\hat{\pi}_n]'$. 
\item {\it (Estimate $\Omega$)}. Let $\widehat{\alpha_n P} = (\widehat\Pi'\widehat\Pi)^{-1}\widehat\Pi' A\widehat\Pi (\widehat\Pi'\widehat\Pi)^{-1}$. Output  $\widehat{\Omega}^{\mathrm{SBM}} = \widehat{\Pi} \widehat{\alpha_n P}  \widehat{\Pi}'$ and $T_n(\widehat{\Omega}^{\mathrm{SBM}})$.      
\end{itemize} 
\begin{thm}\label{thm:SBM}
Fix $K\geq 1$. Consider the SBM model \eqref{MMSBM} (where each $\pi_i$ ranges in $\{e_1,\ldots,e_K\}$). For $1\leq k\leq K$, let $n_k$ denote the number of nodes with $\pi_i=e_k$, and let $\lambda_k$ be the $k$th largest right eigenvalue  (in magnitude) of $n^{-1}P\Pi'\Pi\in\mathbb{R}^{K,K}$. Suppose $\|P\|_{\max}\leq C$, $\alpha_n\log(n)\leq C$,  $|\lambda_K|\sqrt{n\alpha_n/\log(n)}\to\infty$, and $\max_k\{n_k\}\leq C\min_k \{n_k\}$. 
As $n \goto \infty$, 
$T_n(\widehat{\Omega}^{\mathrm{SBM}}) \goto N(0, 1)$.   
\end{thm}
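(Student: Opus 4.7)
The plan is to follow the general two-step recipe of Section~\ref{subsec:SCCoracle}. First, verify that the oracle statistic $T_n(\Omega)=\psi_{n,3}(\Omega)$ already satisfies the asymptotic normality. Under the stated SBM conditions ($\|P\|_{\max}\leq C$, $\alpha_n\log(n)\leq C$, balanced cluster sizes, $|\lambda_K|\sqrt{n\alpha_n/\log(n)}\to\infty$), the Bernoulli matrix $\Omega=\alpha_n\Pi P\Pi'$ satisfies $c_0\alpha_n\leq \Omega(i,j)\leq c_1\alpha_n$ on support, so Corollary~\ref{cor:SCC} applies with $\theta_i\equiv\sqrt{\alpha_n}$, giving $\psi_{n,3}(\Omega)\to N(0,1)$. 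The remaining task is to show $|T_n(\widehat{\Omega}^{\mathrm{SBM}})-T_n(\Omega)|=o_p(1)$, i.e., $U_{n,3}(\widehat{\Omega}^{\mathrm{SBM}})-U_{n,3}(\Omega)=o_p(\sqrt{C_{n,3}})=o_p(n^{3/2}\alpha_n^{3/2})$, where we will use that $C_{n,3}\asymp n^3\alpha_n^3$ with high probability.

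The crucial SBM-specific reduction is exact label recovery. Because each row of $\Pi$ is a standard basis vector, the population eigenvectors of $\Omega$ are piecewise constant across communities, with between-community separation of order $|\lambda_K|/\sqrt{n}$. Combining the Davis-Kahan theorem with a row-wise eigenvector concentration bound (e.g., the entry-wise perturbation estimates used earlier in Theorem~\ref{thm:MSCORE}), and then applying a standard k-means misclassification analysis (Lei-Rinaldo style), one obtains $\mathbb{P}(\widehat{\Pi}=\Pi\text{ up to a column permutation})=1-o(1)$ under $|\lambda_K|\sqrt{n\alpha_n/\log(n)}\to\infty$ with balanced clusters. On this event, let $P_\Pi=\Pi(\Pi'\Pi)^{-1}\Pi'$ be the rank-$K$ orthogonal projection onto $\mathrm{col}(\Pi)$; then $\widehat{\Omega}^{\mathrm{SBM}}=P_\Pi A P_\Pi$, and since $P_\Pi\Omega P_\Pi=\Omega$, we have the explicit closed form
\[
\Delta:=\widehat{\Omega}^{\mathrm{SBM}}-\Omega=P_\Pi W P_\Pi - P_\Pi\mathrm{diag}(\Omega)P_\Pi.
\]
This is exactly the type of simple, explicit function of $A$ that (as explained in Section~\ref{subsec:SCChurdle}) removes the analytical hurdle.

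Now apply Lemma~\ref{lem:diffU} to decompose $U_{n,3}(\widehat{\Omega}^{\mathrm{SBM}})-U_{n,3}(\Omega)$ into its eleven terms, and bound each one using three basic facts: (i) $\Delta$ has rank at most $K$; (ii) $\|P_\Pi W P_\Pi\|_{\mathrm{op}}\leq \|W\|_{\mathrm{op}}=O_p(\sqrt{n\alpha_n})$ by standard Bernoulli random matrix concentration; (iii) by averaging of $W$-entries over $\asymp n^2/K^2$ pairs, $|\Delta_{ij}|=O_p(\sqrt{\alpha_n\log(n)}/n)$ entry-wise. The pure-trace terms ($\mathrm{tr}(\Delta^3)$, $\mathrm{tr}(W_1\Delta^2)$, $\mathrm{tr}(W_1^2\Delta)$) are controlled by the rank-$K$ bound $|\mathrm{tr}(M)|\leq K\|M\|_{\mathrm{op}}$ combined with (ii), giving orders $O_p((n\alpha_n)^{3/2})$ or better; the deterministic piece $-P_\Pi\mathrm{diag}(\Omega)P_\Pi$ contributes only smaller order. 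The Hadamard-product terms such as $\mathrm{tr}(W_1\circ W_1\cdot \Delta)$, $\mathrm{tr}(W_1\circ\Delta^2)$ and $\mathrm{tr}(\Delta\circ\Delta\circ\Delta)$ are bounded via (iii) together with moment estimates for sums of independent Bernoulli terms, exactly as in Section~\ref{sec:SCC}. Summing up, all eleven terms are $o_p(n^{3/2}\alpha_n^{3/2})$, and Slutsky's lemma concludes $T_n(\widehat{\Omega}^{\mathrm{SBM}})\to N(0,1)$.

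The main obstacle will be the Hadamard-product terms, because $\Delta$ depends on $W$ (on the good event $\Delta$ is a bilinear function of $W$), and Hadamard products do not respect the low-rank structure we can exploit for pure traces. The cleanest way is to condition on the exact-recovery event so that $P_\Pi$ becomes a non-stochastic orthogonal projection, turning $\Delta$ into an explicit linear functional of $W$ with deterministic coefficients; then the mean and variance of each Hadamard-product term can be computed by careful index bookkeeping and martingale-style variance estimates, very much in the spirit of Section~\ref{sec:SCC} but considerably simpler than in the DCMM case because $\Pi$ is discrete and $\Delta$ is literally linear in $W$, not a nonlinear empirical-simplex construction. This is the step where the bulk of the routine but tedious combinatorial work lies.
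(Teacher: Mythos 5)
Your overall route is the same as the paper's: establish the oracle normality via Corollary~\ref{cor:SCC} with $\theta_i\equiv\sqrt{\alpha_n}$, prove exact label recovery by spectral clustering (the paper uses the entry-wise eigenvector bounds of Corollary~\ref{cor:eigenMMSBM} plus the $n^{-1/2}$-separation of the $K$ distinct rows of $\Xi$, which is your Davis--Kahan/k-means step), restrict to the event $\widehat{\Pi}=\Pi$ so that $\widehat{\Omega}^{\mathrm{SBM}}=P_\Pi A P_\Pi$ and $\Delta=P_\Pi W_1 P_\Pi$ with $W_1=W-\diag(\Omega)$, and then control the terms of Lemma~\ref{lem:diffU}. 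The paper organizes the last step through the master inequality from the proof of Theorem~\ref{thm:main}, reducing everything to the four operator norms $\|\Delta\|$, $\|W_1\Delta\|$, $\|W_1^2\Delta\|$, $\|\diag(W_1^2)\Delta\|$, which it bounds via the $K\times K$ quantities $\|\Pi'W_1\Pi\|=O_{\mathbb{P}}(n\sqrt{\alpha_n\log n})$, $\|\Pi'W_1^2\Pi\|=O_{\mathbb{P}}(n^2\alpha_n)$, etc.; your Hadamard-product terms are dispatched the same way using $\tr(B\circ D)=\tr(\diag(B)D)$ (Lemma~\ref{lem:hada}), and are in fact the easy terms here, not the hard ones.

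There is one concrete quantitative gap. You propose to bound the pure-trace terms by $|\tr(M)|\leq K\|M\|$ combined with your item (ii), $\|\Delta\|\leq\|W\|=O_{\mathbb{P}}(\sqrt{n\alpha_n})$, and you claim this gives ``$O_{\mathbb{P}}((n\alpha_n)^{3/2})$ or better.'' That is not enough: the normalization is $\sqrt{6C_{n,3}}\asymp (n\alpha_n)^{3/2}$, so an $O_{\mathbb{P}}((n\alpha_n)^{3/2})$ bound on $\tr(W_1^2\Delta)$ does not yield $o_{\mathbb{P}}(1)$ after normalization --- this is exactly the ``loose Cauchy--Schwarz'' hurdle the paper warns about in Section~\ref{subsec:SCChurdle}. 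The fix is already in your own item (iii): because $\Delta$ is blockwise constant with entries averaged over $\asymp n^2/K^2$ independent $W$-entries, one has $\|\Delta\|\lesssim n\|\Delta\|_{\max}=O_{\mathbb{P}}(\sqrt{\alpha_n\log n})$, which is much smaller than $\sqrt{n\alpha_n}$; substituting this into $K\|W_1\|^2\|\Delta\|$ gives $O_{\mathbb{P}}(n\alpha_n^{3/2}\sqrt{\log n})=o_{\mathbb{P}}(n^{3/2}\alpha_n^{3/2})$ since $\sqrt{\log n/n}\to 0$. You must use the averaged bound (iii), not the crude bound (ii), in every trace term; with that correction your argument coincides with the paper's proof.
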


 {\bf Remark 5}.  \cite{LeiGoF} proposed a Tracy-Widom approach for GoF, but it is for the SBM case 
with $\alpha_n\geq C$ (so the networks are non-sparse), so it is unclear how to extend it to a broader setting. 
For DCBM, another GoF idea is to use degree-based $\chi^2$-tests \citep{Karwa2022}.  In particular,  \cite{Zhang2022} proposed an adjusted $\chi^2$-test by applying an initial grouping of nodes and then adjusting node degrees by the grouping.    However,  \cite{JKL2019} pointed out that using Sinkhorn's theorem, for a DCBM   with 
    $K > 1$ communities,  we can  pair it with a DCBM with $K = 1$, such that for each node, 
    the expected degrees under two models are approximately the same.  Extending this, we can find settings where the adjusted node degrees \citep{Zhang2022} are also approximately the same between two models.   For this reason, the $\chi^2$-test may not be an appropriate GoF metric. 

\subsection{Power analysis and optimality} \label{subsec:power} 
If we cast GoF as a hypothesis test setting,  then we have an explicit 
null hypothesis. Consider now an alternative hypothesis, where similarly, the upper triangle of $A$ contains independent Bernoulli   with $\mathbb{P}(A_{ij} =1)=\Omega(i,j)$, but $\Omega$ does not satisfy the assumed model. 
Recall that $\psi_{n, m}(\widehat{\Omega}) = U_{n, m}(\widehat{\Omega}) / \sqrt{2 m C_{n, m}}$, where $\widehat{\Omega}$ is fitted under the assumed  model. 
By Section~\ref{sec:DCMM}, if the assumed model is one of the $4$ block-models, then  we can write 
$\widehat{\Omega} =  \mathbb{M}(A)$ for a mapping $\mathbb{M}$. Let $
\widetilde{\Omega} =\mathbb{M}(\Omega)$   (e.g., if the assumed model is DCMM and we run Steps 1-4
of Algorithm~\ref{alg:afmSCORE} with $A = \Omega$ as the input, then $\widetilde{\Omega}$ is the  output). 
Define the Signal-to-Noise Ratio (SNR):  
\beq \label{SNR}
\mathrm{SNR}_{n,m}(\Omega) = \mathrm{trace}\bigl([\Omega - \widetilde{\Omega}]^{m}\bigr) / \sqrt{2m \cdot\mathrm{trace}(\Omega^m)}, \qquad\mbox{where}\quad \widetilde{\Omega}=\mathbb{M}(\Omega).
\eeq
\begin{thm}[Power (oracle case)]\label{thm:SCC-power-general}
Let $u=\Omega{\bf 1}_n$. 
Suppose $\Omega$ satisfies the same conditions in Theorem~\ref{thm:SCC}. 
We further assume there exists $\alpha_n\in (0,1)$ such that $|\Omega_{ij}-\widetilde{\Omega}_{ij}|\leq C\alpha_nu_iu_j/(n\bar{u})$ for all $1\leq i,j\leq n$, and $|\tr((\Omega-\widetilde\Omega)^m)|\asymp  C\alpha^m_n \|u\|^{2m}/(n\bar{u})^m$. For $m\in \{3,4\}$, as $n\to\infty$,  
if $\mathrm{SNR}_{n,m}(\Omega)\to\infty$, then $\psi_{n, m}(\widetilde{\Omega})  \goto \infty$ in probability.
\spacingset{1}\footnote{A similar argument holds for a general $m$, but the proof is much more tedious due to combinatorics.}\spacingset{1.75}  
\end{thm}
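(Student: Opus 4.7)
\textbf{Proof plan for Theorem \ref{thm:SCC-power-general}.} My plan is to prove the result by showing (i) $E[U_{n,m}(\widetilde\Omega)]$ is dominated by $\tr((\Omega-\widetilde\Omega)^m)$, (ii) $\mathrm{Var}(U_{n,m}(\widetilde\Omega))$ is controlled by $\tr(\Omega^m)$ (same order as in the null case of Theorem \ref{thm:SCC}), and (iii) $C_{n,m}$ concentrates at its expectation, which is comparable to $\tr(\Omega^m)$. Combining these via Chebyshev's inequality then gives
\[
\psi_{n,m}(\widetilde\Omega) \;=\; \frac{\tr((\Omega-\widetilde\Omega)^m) + O_p(\sqrt{\tr(\Omega^m)})}{\sqrt{2m\,\tr(\Omega^m)}\,(1+o_p(1))} \;=\; \mathrm{SNR}_{n,m}(\Omega)\,(1+o_p(1)) + O_p(1),
\]
which diverges in probability whenever $\mathrm{SNR}_{n,m}(\Omega)\to\infty$.

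For step (i), I would write $\widehat{A}_{ij}=W_1(i,j)+\Delta_{ij}^*$ for $i\neq j$, where $\Delta^*=\widetilde\Omega-\Omega=-\Delta$, and invoke Lemma \ref{lem:diffU} (for $m=3$, with an analogous decomposition for $m=4$) applied to $\widetilde\Omega$ in place of $\widehat\Omega$. Since $U_{n,m}(\Omega)$ is a degenerate centered U-statistic with mean $0$, the mean of $U_{n,m}(\widetilde\Omega)$ equals the expectation of the decomposition in Lemma \ref{lem:diffU}. The leading term there is $\pm\tr((\Delta^*)^m)$, whose magnitude by assumption is $\asymp \alpha_n^m\|u\|^{2m}/(n\bar u)^m$. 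All remaining mean contributions involve at least one $W_1$ factor; using $E[W_1]=-\diag(\Omega)$, $E[W_1^2]$ is diagonal, and the entry-wise bound $|\Delta_{ij}|\leq C\alpha_n u_iu_j/(n\bar u)$, each such contribution reduces to a trace against a diagonal matrix and is of strictly smaller order than $\tr(\Delta^m)$, using the sparsity conditions $u_{\max}^2/(n\bar u)\to0$ from Theorem \ref{thm:SCC}.

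For step (ii), I would bound $\mathrm{Var}(U_{n,m}(\widetilde\Omega))$ termwise after the expansion $\widehat{A}_{ij}=W_1(i,j)+\Delta^*_{ij}$. The ``all-$W_1$'' term is precisely $U_{n,m}(\Omega)$, whose variance is $O(\tr(\Omega^m))$ by the martingale-CLT analysis used to prove Theorem \ref{thm:SCC}. Each mixed term, indexed by a nonempty proper subset $S\subsetneq[m]$ of positions occupied by $\Delta^*$, has the form $\sum_{\text{dist}}\prod_{k\in S}\Delta^*_{i_k i_{k+1}}\prod_{k\notin S}W_1(i_k,i_{k+1})$, whose variance I would bound by matching pairs of shared edges and collapsing the resulting multi-index sums into traces of products of $\Omega$, $\Delta^*$, and Hadamard variants. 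Using $|\Delta_{ij}|\leq C\alpha_n u_iu_j/(n\bar u)$ and $\Omega_{ij}\leq Cu_iu_j/(n\bar u)$, each such trace is bounded by a product of factors of the form $\|u\|^{2}/(n\bar u)$, and a case analysis (tractable for $m\in\{3,4\}$) shows that every mixed-term variance is $O(\alpha_n^{2s}\tr(\Omega^m))=O(\tr(\Omega^m))$, where $s=|S|\geq1$. So the total variance is $O(\tr(\Omega^m))$.

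For step (iii), $E[C_{n,m}]=\sum_{\text{dist}}\Omega_{i_1i_2}\cdots\Omega_{i_mi_1}=\tr(\Omega^m)(1+o(1))$ using $u_{\max}^2/(n\bar u)\to 0$ to absorb the diagonal corrections, and $\mathrm{Var}(C_{n,m})=o([\tr(\Omega^m)]^2)$ follows from the same combinatorial bookkeeping applied to the undegenerate cycle count under the assumption $n\bar u/\|u\|^2\to0$. Hence $C_{n,m}/\tr(\Omega^m)\to 1$ in probability, and the assembly above completes the proof. \textbf{The main obstacle} is the variance bound in step (ii): unlike the null case where $U_{n,m}(\Omega)$ is a single degenerate U-statistic, each mixed term is a ``partially degenerate'' U-statistic with a different pattern of $W_1$ and $\Delta$ factors, and bounding its variance requires a careful enumeration of how repeated vertex/edge pairs connect the two copies in $U^{(S)}\cdot U^{(S)}$. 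This combinatorics remains tractable for $m\in\{3,4\}$ (hence the restriction in the statement) but proliferates for larger $m$.
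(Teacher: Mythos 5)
Your overall strategy coincides with the paper's: isolate the deterministic all-$\Delta$ term, which after normalization equals $\mathrm{SNR}_{n,m}(\Omega)(1+o(1))$ once the coinciding-index bias is absorbed; show the remaining terms (each containing at least one $W$ factor) are negligible; and use $C_{n,m}/\mathrm{tr}(\Omega^m)\to 1$. Steps (i) and (iii) are fine (in (i) you can in fact argue more simply: since the cycle indices are distinct, every off-diagonal $W_1$ factor is exactly $W$ with mean zero, so all mixed terms have mean exactly zero).

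The genuine gap is in step (ii): the claim that every mixed-term variance is $O(\alpha_n^{2s}\,\mathrm{tr}(\Omega^m))=O(\mathrm{tr}(\Omega^m))$ is false under the stated assumptions, so the displayed identity $\psi_{n,m}(\widetilde{\Omega})=\mathrm{SNR}_{n,m}(1+o_p(1))+O_p(1)$ is not justified. Take $m=3$ and the term $\sum_{i,j,k\,(dist)}W_{ij}\widetilde{\Delta}_{jk}\widetilde{\Delta}_{ki}$ with $\widetilde{\Delta}=\Omega-\widetilde{\Omega}$. Its variance is $\sum_{i\neq j}\mathrm{var}(W_{ij})\bigl(\sum_k\widetilde{\Delta}_{jk}\widetilde{\Delta}_{ki}\bigr)^2\lesssim \alpha_n^4\|u\|^4\|u\|_3^6/(n\bar{u})^5$, and its ratio to $\mathrm{tr}(\Omega^3)\asymp\|u\|^6/(n\bar{u})^3$ is $\alpha_n^4\|u\|_3^6/[\|u\|^2(n\bar{u})^2]\leq \alpha_n^4 u_{\max}^2\|u\|^2/(n\bar{u})^2=\alpha_n^4\cdot\frac{u_{\max}^2}{n\bar{u}}\cdot\frac{\|u\|^2}{n\bar{u}}$. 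The two assumptions of Theorem~\ref{thm:SCC} only force the first factor to $0$ and the second to $\infty$; their product is unconstrained (e.g., nearly homogeneous $u$ with $u_{\max}\asymp (n\bar{u})^{1/2-\epsilon}$ and $\alpha_n$ not too small), so this variance can dominate $\mathrm{tr}(\Omega^m)$. What saves the theorem — and what the paper actually proves — is the weaker but sufficient bound that each mixed-term variance is $o\bigl([\mathrm{tr}((\Omega-\widetilde{\Omega})^m)]^2\bigr)$, i.e., that $\mathrm{SNR}_{n,m}$ divided by the normalized standard deviation of each noise term tends to infinity. For the term above this ratio is $\asymp \alpha_n\|u\|^4/[(n\bar{u})^{1/2}\|u\|_3^3]\geq (\alpha_n\|u\|/(n\bar{u})^{1/2})\cdot(\|u\|/u_{\max})\to\infty$, using $u_{\max}^2/(n\bar{u})\to0$ together with $\alpha_n\|u\|/(n\bar{u})^{1/2}\to\infty$, which is equivalent to $\mathrm{SNR}_{n,m}\to\infty$ under your trace assumption. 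You therefore need to rewrite step (ii) to compare each mixed-term variance against $\mathrm{SNR}_{n,m}^2\cdot\mathrm{tr}(\Omega^m)$ rather than against $\mathrm{tr}(\Omega^m)$; with that change the argument closes exactly as in the paper.
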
 

Theorem~\ref{thm:SCC-power-general} studies the power of the oracle SCC metric $\psi_{n,m}(\widetilde{\Omega})$, but two questions remain: (i) For each specific null-alternative hypothesis pair, when does the SNR tend to infinity? (ii) Can we show that the real SCC metric also satisfies that $\psi_{n,m}(\widehat{\Omega})\to\infty$ in probability?

First, we answer (i). For simplicity, we focus on $m=3$ and calculate $\mathrm{SNR}_{n,3}(\Omega)$ for three different  null-alternative hypothesis pairs or cases (the calculation for other $m$ is similar).    

In the first case, we assume that $\Omega$ is positive semi-definite and has a rank larger than $K$. For each assumed model in the block-model family, the algorithm of estimating $\Omega$ in Section~\ref{sec:DCMM} outputs a matrix with rank at most $K$; so, $\mathrm{rank}(\widetilde{\Omega})\leq K$. We then apply the Weyl's inequality \citep[Theorem 4.3.1]{HornJohnson} to obtain: $\lambda_j(\widetilde{\Omega}-\Omega)\geq \lambda_{K+j}(\Omega)$, for $1\leq j\leq n-K$, where $\lambda_k(\cdot)$ denotes the $k$th largest eigenvalue of a symmetric matrix. This yields a lower bound for the SNR:  
\begin{lemma}[A higher-rank alternative]\label{lem:SNR-higher-rank}
Suppose the assumed model is in the block-model family with $K$ communities, and the true $\Omega$ is positive semi-definite and has a rank $K_0>K$. 
Suppose the absolute sum of cubes of $K$ smallest negative eigenvalues of $\Omega-\widetilde{\Omega}$  is upper bounded by $(1-c_0)|\tr((\Omega-\widetilde{\Omega})^3)|$, for a constant $c_0\in (0,1)$. 
Let $\lambda_k$ denote the $k$th largest eigenvalue of $\Omega$. 
 Then, $\mathrm{SNR}_{n,3}(\Omega)\geq \bigl(\sum_{j=1}^{K_0-K}\lambda_{K+j}^3\bigr)/\sqrt{6\sum_{j=1}^{K_0}\lambda_j^3}$. Furthermore, as $n\to\infty$, if $K_0$ is bounded and $\lambda_{K+1}/\sqrt{\lambda_1}\to\infty$, then $\mathrm{SNR}_{n,3}(\Omega)\to\infty$. 
\spacingset{1}\footnote{When $m$ is even, we do not need the condition that $\Omega$ is positive definite. For example, for $m=4$, we can show that $\mathrm{SNR}_{n,4}(\Omega)\to\infty$, as long as $|\lambda_{K+1}|/\sqrt{\lambda_1}\to\infty$. See \cite{JKL2019, EstK}  for related discussions.}
\spacingset{1.75}
\end{lemma}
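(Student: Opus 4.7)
The plan is to control the numerator and denominator of $\mathrm{SNR}_{n,3}(\Omega)$ separately, with Weyl's inequality doing the heavy lifting for the numerator. Throughout, write the eigenvalues of $\Omega-\widetilde{\Omega}$ as $\mu_1\geq\mu_2\geq\cdots\geq\mu_n$, set $\tilde{\lambda}_j=\lambda_{K+j}(\Omega)$, and split $\tr((\Omega-\widetilde{\Omega})^3)=S_+-S_-$, where $S_+=\sum_{\mu_j>0}\mu_j^3$ and $S_-=\sum_{\mu_j<0}|\mu_j|^3$.

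The first step is to argue that $\widetilde{\Omega}$ has rank at most $K$. This comes directly from the estimation algorithms of Section~\ref{sec:DCMM}: under each of the four block models, the output is of the form $\widehat{\Theta}\widehat{\Pi}\widehat{P}\widehat{\Pi}'\widehat{\Theta}$ (or a sub-case thereof) with $\widehat{\Pi}\in\mathbb{R}^{n,K}$, so feeding $\Omega$ into the algorithm produces a $\widetilde{\Omega}$ of rank $\leq K$. Hence its $(K+1)$-th largest eigenvalue is $\leq 0$. Applying Weyl's inequality (Theorem~4.3.1 of Horn--Johnson) to $\Omega=(\Omega-\widetilde{\Omega})+\widetilde{\Omega}$ with $i=j_0$ and $j=K+1$ yields
\[
\lambda_{K+j_0}(\Omega)\leq \mu_{j_0}+\lambda_{K+1}(\widetilde{\Omega})\leq \mu_{j_0},\qquad 1\leq j_0\leq n-K.
\]
Because $\Omega$ is PSD with rank $K_0$, we have $\tilde{\lambda}_j>0$ for $1\leq j\leq K_0-K$ and $\tilde{\lambda}_j=0$ thereafter. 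Thus $\mu_j\geq \tilde{\lambda}_j\geq 0$ for these indices, which gives
$S_+\geq \sum_{j=1}^{K_0-K}\mu_j^3\geq \sum_{j=1}^{K_0-K}\lambda_{K+j}^3$.

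The second step handles the negative part using the stated assumption $S_-\leq (1-c_0)|S_+-S_-|$. A short case analysis shows that $S_+\geq S_-$ must hold (otherwise the inequality forces $S_+=S_-=0$), after which the assumption rearranges to $S_+-S_-\geq \tfrac{1}{2-c_0}S_+$. Combining with the bound on $S_+$ gives
\[
\tr\bigl((\Omega-\widetilde{\Omega})^3\bigr)\;\geq\; \tfrac{1}{2-c_0}\sum_{j=1}^{K_0-K}\lambda_{K+j}^3,
\]
which, absorbing the absolute constant, matches the claimed numerator bound. For the denominator, the PSD + rank-$K_0$ assumption gives $\tr(\Omega^3)=\sum_{j=1}^{K_0}\lambda_j^3$ directly, completing the first claim.

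For the asymptotic claim, I bound the numerator below by its leading term $\lambda_{K+1}^3$ and the denominator above by $\sqrt{6K_0}\lambda_1^{3/2}$, obtaining $\mathrm{SNR}_{n,3}(\Omega)\gtrsim K_0^{-1/2}(\lambda_{K+1}/\sqrt{\lambda_1})^3$, which diverges whenever $K_0$ is bounded and $\lambda_{K+1}/\sqrt{\lambda_1}\to\infty$. The only potentially delicate point is the rank bound on $\widetilde{\Omega}$: although each algorithm in Section~\ref{sec:DCMM} clearly produces a rank-$\leq K$ output, one should verify this uniformly across the four block models (in particular, check that the rare ``re-set'' steps in Algorithm~\ref{alg:afmSCORE} do not inflate the rank beyond $K$). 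Everything else is a clean application of Weyl and the given structural assumption.
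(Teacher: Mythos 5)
Your proposal is correct and follows essentially the same route as the paper's proof: Weyl's inequality applied to $\Omega=(\Omega-\widetilde{\Omega})+\widetilde{\Omega}$ with $\mathrm{rank}(\widetilde{\Omega})\leq K$ to get $\lambda_s(\Omega-\widetilde{\Omega})\geq\lambda_{K+s}(\Omega)$, the negative-eigenvalue assumption to lower-bound $\tr((\Omega-\widetilde{\Omega})^3)$ by (a constant times) $\sum_{j=1}^{K_0-K}\lambda_{K+j}^3$, and $\tr(\Omega^3)=\sum_{j=1}^{K_0}\lambda_j^3$ for the denominator. If anything, your explicit $\tfrac{1}{2-c_0}$ constant is more careful than the paper's own display, which silently drops it.
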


The setting in Lemma~\ref{lem:SNR-higher-rank} includes two special lack-of-fit scenarios:
\begin{itemize} 
\setlength \itemsep{-.35 em} 
\item {\it Misspecification of $K$}: Letting $*$ denote any model in the block-model family, the true model is a $K_0$-community $*$ model, but the assumed model is a $K$-community $*$ model for $K<K_0$.  
For instance, consider an SBM where $\Omega=\alpha \Pi P\Pi'$, $P=(1-b)I_{K_0}+b{\bf 1}_{K_0}{\bf 1}_{K_0}'$, and the community sizes are equal. Then, $\lambda_1\asymp n\alpha$, and $\lambda_j\asymp (1-b)n\alpha$ for $j\geq 2$. If we compute the GoF metric for SBM with $K<K_0$, the SNR tends to $\infty$ if $(1-b)\sqrt{n\alpha}\to\infty$.  
\item {\it Nonlinearity}: The true model is $\Omega=f(\Omega^*)$, with $\Omega^*=\Theta\Pi P\Pi'\Theta$ as in a $K$-community DCMM and $f(\cdot)$ being a nonlinear function applied entry-wise to $\Omega^*$ (one such example is the beta-model \citep{chatterjee2011random}). The assumed model is a $K$-community DCMM. Then, the power comes from the fact that the rank of $\Omega$ is often strictly larger than $K$.  
\end{itemize}   

In the second case, fixing $K\geq 1$, we assume that the true model is a $K$-community DCBM, but the assumed model is a $K$-community SBM. According to the GoF-SCORE-rev algorithm in Section~\ref{subsec:DCBM}, $\widetilde{\Omega}$ is a blockwise constant matrix. However, the true $\Omega$ follows a DCBM and cannot be accurately approximated by any blockwise constant matrix. This explains why our GoF metric has power. The following lemma considers a special case with $K=1$ and $\Omega=\theta\theta'$. The quantity $v(\theta)$ defined below measures the level of degree heterogeneity. It shows that our GoF metric for SBM has power in detecting degree heterogeneity.

\begin{lemma}[SBM versus DCBM]\label{lem:SBMvsDCBM}
Suppose that the true model is a 1-community DCMM with $\Omega=\theta\theta'$, and the assumed model is a  1-community SBM. Then, $\mathrm{SNR}_{n,3}(\Omega)\geq \|\theta\|^3\cdot v^2(\theta)/\sqrt{6}$, where 
$v(\theta):=[\sum_{i=1}^n(\theta_i-\bar{\theta})^2]/(\sum_{i=1}^n\theta_i^2)$, with $\bar{\theta}$ being the average of $\theta_i$'s. Therefore, as $n\to\infty$ if $\|\theta\|^3 v^2(\theta)\to\infty$, then $\mathrm{SNR}_{n,3}(\Omega)\to\infty$. 
\end{lemma}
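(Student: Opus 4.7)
The plan is to compute $\mathrm{SNR}_{n,3}(\Omega)$ explicitly by working out $\widetilde{\Omega}=\mathbb{M}(\Omega)$ and the two relevant traces. First, I trace through the GoF-SCORE-rev algorithm in Section~\ref{subsec:DCBM} under $K=1$: here $\widehat{\Pi}={\bf 1}_n$ is forced, so $\widehat{\alpha_nP}=n^{-2}{\bf 1}_n'A{\bf 1}_n$ and $\widehat{\Omega}^{\mathrm{SBM}}=(n^{-2}{\bf 1}_n'A{\bf 1}_n){\bf 1}_n{\bf 1}_n'$. Substituting $A=\Omega=\theta\theta'$ and using $\|\theta\|_1=n\bar\theta$ yields the clean identity $\widetilde{\Omega}=\bar\theta^2 {\bf 1}_n{\bf 1}_n'$. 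The denominator of \eqref{SNR} is immediate: $\mathrm{trace}(\Omega^3)=(\theta'\theta)^3=\|\theta\|^6$, so the whole game is to lower-bound $\mathrm{trace}((\Omega-\widetilde\Omega)^3)$.

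Next, let $M:=\Omega-\widetilde{\Omega}=\theta\theta'-\bar\theta^2{\bf 1}_n{\bf 1}_n'$. The key observation is that $M$ is the difference of two rank-$1$ matrices, so $\mathrm{rank}(M)\leq 2$, and I can compute its two nonzero eigenvalues $\lambda_1,\lambda_2$ using only $\mathrm{trace}(M)$ and $\|M\|_F^2$. With $s:=\sum_{i=1}^n(\theta_i-\bar\theta)^2=\|\theta\|^2-n\bar\theta^2$, direct calculation gives $\lambda_1+\lambda_2=\mathrm{trace}(M)=s$ and
\[
\mathrm{trace}(M^2)=\sum_{i,j}(\theta_i\theta_j-\bar\theta^2)^2=\|\theta\|^4-n^2\bar\theta^4=s(s+2n\bar\theta^2),
\]
so $\lambda_1\lambda_2=\tfrac12[(\lambda_1+\lambda_2)^2-(\lambda_1^2+\lambda_2^2)]=-n\bar\theta^2 s$. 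Newton's identity then delivers $\mathrm{trace}(M^3)=\lambda_1^3+\lambda_2^3=(\lambda_1+\lambda_2)^3-3\lambda_1\lambda_2(\lambda_1+\lambda_2)=s^3+3n\bar\theta^2 s^2=s^2(s+3n\bar\theta^2)$.

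Finally, I lower-bound this in terms of $v(\theta)=s/\|\theta\|^2$. Since $s+n\bar\theta^2=\|\theta\|^2$, we have the crisp inequality $s+3n\bar\theta^2=\|\theta\|^2+2n\bar\theta^2\geq\|\theta\|^2$, whence $\mathrm{trace}(M^3)\geq s^2\|\theta\|^2=v^2(\theta)\|\theta\|^6$. Plugging into \eqref{SNR},
\[
\mathrm{SNR}_{n,3}(\Omega)=\frac{\mathrm{trace}(M^3)}{\sqrt{6\,\mathrm{trace}(\Omega^3)}}\geq\frac{v^2(\theta)\|\theta\|^6}{\sqrt{6\|\theta\|^6}}=\frac{\|\theta\|^3 v^2(\theta)}{\sqrt{6}},
\]
which is the stated bound; the asymptotic implication is then immediate.

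This computation is essentially elementary; there is no real obstacle, only some bookkeeping. The one mildly nontrivial point is choosing the lower bound $s+3n\bar\theta^2\geq\|\theta\|^2$ rather than the looser $s+3n\bar\theta^2\geq s$ (which would only give $v^3\|\theta\|^3$). The former exploits the algebraic identity $s+n\bar\theta^2=\|\theta\|^2$ to produce a uniform bound that is tight for both the degree-homogeneous regime ($v$ small) and the highly heterogeneous regime ($v$ close to one), and it is what yields exactly the $v^2$ exponent claimed in the lemma.
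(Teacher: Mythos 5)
Your proof is correct and arrives at exactly the same exact identity as the paper, $\mathrm{trace}((\Omega-\widetilde\Omega)^3)=(\|\theta\|^2-n\bar\theta^2)^2(\|\theta\|^2+2n\bar\theta^2)$, followed by the same final bound $\|\theta\|^2+2n\bar\theta^2\geq\|\theta\|^2$; the only difference is that you compute the trace via Newton's identities on the rank-two matrix $\theta\theta'-\bar\theta^2{\bf 1}_n{\bf 1}_n'$, whereas the paper expands $(\Omega-\widetilde\Omega)^3$ by direct matrix multiplication. This is a cosmetic variation, not a different argument.
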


In the third case, fixing $K\geq 1$, we assume that the true model is a $K$-community DCMM, but the assumed model is  a $K$-community DCBM. According to the GoF-SCORE algorithm in Section~\ref{subsec:DCBM}, $\widetilde{\Omega}=\widehat{\Theta}\widehat{B}\widehat{\Theta}$, where $\widehat{\Theta}$ is a diagonal matrix, and $\widehat{B}$ is  blockwise constant. However, $\Omega=\Theta\Pi P\Pi'\Theta$ under the true model, where $\Pi P\Pi'$ is not  blockwise constant. Therefore, even when $\widehat{\Theta}=\Theta$, it is impossible to have $\widetilde{\Omega}=\Omega$. This is where the power arises. The following lemma considers an example with $K=2$, where the signal strength is captured by a parameter $a$. It shows that our GoF metric for DCBM has power in detecting mixed membership.

\begin{lemma}[DCBM versus DCMM] \label{lem:DCBMvsDCMM}
Suppose that the true model is a 2-community DCMM, where $P=(1-b)I_2+b{\bf 1}_2{\bf 1}_2'$, and each $\pi_i$ takes value in $\bigl\{(1,0)', (1-a,a)', (0,1)', (a, 1-a)'\bigr\}$, for some $b\in [0,1)$ and $a\in (0,1/2)$. 
By the value of $\pi_i$, nodes divide into 4 groups $G_1,G_2,G_3,G_4$; we assume that $\sum_{i\in G_k}\theta_i$ is the same for 4 groups; and the same holds for $\sum_{i\in G_k}\theta_i^2$. Suppose that the assumed model is a 2-community DCBM. Then, $\mathrm{SNR}_{n,3}(\Omega) \geq c_0 a^4(1-b)^3\|\theta\|^3$, where $c_0>0$ is a universal constant. As $n\to\infty$, if $a^4(1-b)^3\|\theta\|^3\to\infty$, then $\mathrm{SNR}_{n,3}(\Omega)\to\infty$.  
\end{lemma}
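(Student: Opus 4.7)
The plan is to prove the lower bound on $\mathrm{SNR}_{n,3}(\Omega)=\mathrm{tr}((\Omega-\widetilde\Omega)^3)/\sqrt{6\,\mathrm{tr}(\Omega^3)}$ by computing both quantities in closed form, using the four-group symmetry of the setup.

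\emph{Step~1 (identify $\widetilde\Omega$).} Set $s=\sum_{i\in G_k}\theta_i$ and $q=\sum_{i\in G_k}\theta_i^2$, which by hypothesis are independent of $k$. Since $\pi_i$ is constant on each $G_k$, one computes $\sum_i \theta_i\pi_i = 2s\,\mathbf{1}_2$, and hence $\Omega\theta = 2q(1+b)\theta$, so $\xi_1\propto\theta$. A parallel computation shows the second eigenvector is $\xi_2\propto \theta\circ s$, with entries $s_i\in\{1,1-2a,-1,-(1-2a)\}$ determined by which group contains $i$. After SCORE's degree normalization, the ratios $\xi_2(i)/\xi_1(i)$ take exactly these four values, and for $a\in(0,1/2)$ the two positive values lie strictly above zero while the two negative ones lie strictly below, so any reasonable $1$-dimensional clustering returns the partition $\widetilde\Pi$ assigning $G_1\cup G_2$ to community~1 and $G_3\cup G_4$ to community~2. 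With this $\widetilde\Pi$, the identity $((2-a)+a)^2=4$ gives the crucial simplification $M_{11}+M_{12}=4s^2(1+b)$, from which I read off $\widetilde\theta_i=\theta_i\sqrt{M_{11}}/(2s)$ and
\[
\widetilde\Omega_{ij} \;=\; \theta_i\theta_j\cdot M_{k(i)k(j)}/(4s^2),
\]
with $M_{11}=M_{22}$ and $M_{12}=M_{21}$ explicit in $(a,b)$.

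\emph{Step~2 (structured residual and its cubed trace).} Because $\pi_i'P\pi_j$ depends only on which of the four groups contains $i$ and $j$, the $\theta$-dependence factors completely, and
\[
\Delta_{ij} = (1-b)\,\theta_i\theta_j\,\bar M_{k_4(i)k_4(j)},\qquad k_4(i)\in\{1,2,3,4\},
\]
for a symmetric $4\times 4$ matrix $\bar M$ which I would compute entry-wise from Step~1. Summing via $\sum_{i\in G_k}\theta_i^2=q$ collapses the triple sum to $\mathrm{tr}(\Delta^3)=(1-b)^3 q^3\,\mathrm{tr}(\bar M^3)$. To evaluate the latter, I would notice that $\mathbf{1}_4$ and $(1,-1,1,-1)'$ lie in $\ker(\bar M)$, while $v_1=(1,0,-1,0)'$, $v_2=(0,1,0,-1)'$ span a two-dimensional invariant subspace on which $\bar M$ acts as the block $\bigl(\begin{smallmatrix} 2a-a^2 & -a^2\\ -a^2 & -2a+3a^2\end{smallmatrix}\bigr)$. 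Its two nonzero eigenvalues satisfy $\mu_1+\mu_2=2a^2$ and $\mu_1\mu_2=-4a^2(1-a)^2$, so
\[
\mathrm{tr}(\bar M^3)=(\mu_1+\mu_2)^3-3\mu_1\mu_2(\mu_1+\mu_2)=8a^6+24a^4(1-a)^2\geq 6a^4
\]
uniformly in $a\in(0,1/2)$. Since $4q=\|\theta\|^2$, this gives $\mathrm{tr}(\Delta^3)\geq (3/32)\,a^4(1-b)^3\|\theta\|^6$.

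\emph{Step~3 (denominator and conclusion).} The nonzero eigenvalues of $\Omega$ coincide with those of $P\,\Pi'\Theta^2\Pi = 2q[\alpha I+\beta\sigma_x]$, where $\alpha=(1-b)(1-a+a^2)+b$, $\beta=(1-b)a(1-a)+b$, and $\sigma_x$ is the off-diagonal Pauli matrix; these work out to $2q(1+b)$ and $2q(1-b)((1-a)^2+a^2)$, each at most $\|\theta\|^2/2$ in absolute value, giving $\mathrm{tr}(\Omega^3)\leq 2\|\theta\|^6$. Combining,
\[
\mathrm{SNR}_{n,3}(\Omega) \;\geq\; \frac{(3/32)\,a^4(1-b)^3\|\theta\|^6}{\sqrt{12\,\|\theta\|^6}} \;=\; c_0\,a^4(1-b)^3\|\theta\|^3,
\]
and the divergence claim follows. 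The delicate part of the plan is Step~1, namely verifying that the population GoF-SCORE actually produces the split $\{G_1\cup G_2\}$ vs $\{G_3\cup G_4\}$ and that $\widetilde\theta_i$ inherits the clean proportional-to-$\theta_i$ form; everything afterwards is finite-dimensional linear algebra. The analytic heart of the argument is that the leading $O(a^3)$ contribution to $\mathrm{tr}(\Delta^3)$ cancels, because the order-$a$ piece of $\bar M$ has two nonzero eigenvalues of equal magnitude and opposite sign, forcing the effective signal down to $O(a^4)$—this is precisely why the final exponent is $a^4$ rather than $a^3$.
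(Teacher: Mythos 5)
Your proposal is correct and, at its core, reproduces the paper's computation: both arguments reduce everything to a symmetric $4\times 4$ group-level residual matrix, observe that its kernel is two-dimensional, extract the two nonzero eigenvalues, and arrive at the identical value $\mathrm{tr}([P-\widetilde{P}]^3)=8a^4(1-b)^3[a^2+3(1-a)^2]$ (your $(1-b)^3[8a^6+24a^4(1-a)^2]$), with the same cancellation of the $O(a^3)$ term explaining the $a^4$ exponent. The routes differ only in packaging: the paper recasts the DCMM as a $4$-community DCBM and invokes the general merging formula of Lemma~\ref{lem:power-DCBM} to read off $\widetilde{P}$, then finds the nonzero eigenvalues by solving a quadratic in the ansatz $(\epsilon,1,-\epsilon,-1)'$; you instead derive $\widetilde{\Omega}$ directly from the GoF-SCORE-for-DCBM map applied to $\Omega$ (explicitly justifying, via the eigenvector ratios $\xi_2(i)/\xi_1(i)\in\{1,1-2a,-1,-(1-2a)\}$, that the population clustering merges $G_1\cup G_2$ against $G_3\cup G_4$ --- a step the paper simply asserts through the NSP) and compute the eigenvalues from the trace and determinant of the $2\times 2$ block on $\mathrm{span}\{(1,0,-1,0)',(0,1,0,-1)'\}$, which is slightly cleaner. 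Two small slips, neither consequential: the identity you actually need for $\Omega\theta=2q(1+b)\theta$ is $\sum_i\theta_i^2\pi_i=2q{\bf 1}_2$ (the $\theta^2$-balance assumption), not $\sum_i\theta_i\pi_i=2s{\bf 1}_2$; and the larger eigenvalue of $\Omega$ is $2q(1+b)=\tfrac{1+b}{2}\|\theta\|^2$, which can reach $\|\theta\|^2$ rather than $\|\theta\|^2/2$, though your stated bound $\mathrm{tr}(\Omega^3)\leq 2\|\theta\|^6$ still holds and the conclusion is unaffected.
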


Next, we answer (ii). We still fix $m=3$ and write $T_n(\widehat{\Omega})=\psi_{n,3}(\widehat{\Omega})$. The key is showing that $|T_n(\widehat{\Omega})-T_n(\widetilde{\Omega})|=O_{\mathbb{P}}(1)$ under the alternative hypothesis. Since our algorithms in Section~\ref{sec:DCMM} are designed such that $\widehat{\Omega}$ has an analytically tractable form, the analysis of $|T_n(\widehat{\Omega})-T_n(\widetilde{\Omega})|$ follows a similar strategy as in the null hypothesis, except that we still need additional technical tools, such as the Non-Splitting Property \citep{EstK} of SCORE. Since this part is technical, 
we put it in Section~\ref{sec:power} of the supplement, where we show that $T_n(\widehat{\Omega})\to\infty$ in probability, for several of the aforementioned examples.

Finally, we show the optimality of our GoF metric for one case: 
The true model is a DCBM with $K_0+1$ communities, but the assumed model is a DCBM with $K_0$ communities.
\spacingset{1}\footnote{The results are extendable to the case that the true model has $K_0+j$ communities, for $j> 1$. In fact, by Lemma~\ref{lem:SNR-higher-rank}, regardless of $j$, the SNR tends to infinity as long as the $(K_0+1)$th eigenvalue of $\Omega$ is large.}
\spacingset{1.75}
Given any integers $1\leq L\leq K$, let ${\cal U}_L(K)$ be the collection of $\Omega$ from a DCBM with $K$ communities satisfying the regularity conditions in Section~\ref{sec:power}.  

\begin{thm}\label{thm:power1-mainpaper}
Fix $K_0\geq 1$. Let ${\cal M}_{K_0}(\gamma_n)$ be the collection of $(\Omega_0, \Omega_1)\in {\cal U}_{K_0}(K_0)\times {\cal U}_{K_0}(K_0+1)$ such that 
$\Omega_1$ satisfies $\|P-\widetilde{P}\|\ll \Vert \theta\Vert^2\cdot \mathrm{trace}\bigl([(P-\widetilde{P})\diag(\tilde{s})]^3\bigr)$ and 
$\mathrm{SNR}_{n,3}(\Omega_1)\geq \gamma_n$,
where $\widetilde{P}$ is as in Lemma~\ref{lem:power-DCBM} of the supplement and $\mathrm{SNR}_{n,3}$ is as in \eqref{SNR}. 
As $n\goto\infty$, if $\gamma_n\gg \sqrt{\log(n)}$, then there exists a proper $\alpha_n$ such that for any $(\Omega_0,\Omega_1)\in {\cal M}_{K_0}(\gamma_n)$, the sum of type-I and type-II errors of the level-$\alpha_n$ GoF-SCORE test tends to $0$. If $\gamma_n\to 0$, then there is a constant $c_0\in (0,1)$ such that for any given test we can find a pair $(\Omega_0,\Omega_1)\in {\cal M}_{K_0}(\gamma_n)$ for which the sum of type-I and type-II errors of this test is lower bounded by $c_0$.    
\end{thm}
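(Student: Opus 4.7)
I would split the proof into the achievability part ($\gamma_n \gg \sqrt{\log n}$) and the information-theoretic lower bound ($\gamma_n \to 0$). The achievability combines the null CLT from Theorem~\ref{thm:DCBM} with a power analysis in two stages: an oracle computation showing $\psi_{n,3}(\widetilde{\Omega}_1) \asymp \mathrm{SNR}_{n,3}(\Omega_1) \geq \gamma_n$, followed by a plug-in approximation $|T_n(\widehat{\Omega}^{\mathrm{DCBM}})-\psi_{n,3}(\widetilde{\Omega}_1)| = O_{\mathbb{P}}(\sqrt{\log n})$ based on Lemma~\ref{lem:diffU}. The lower bound is a Bayesian Le Cam argument with a randomized Rademacher-splitting prior on $\Omega_1$.

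\textbf{Achievability.} Pick a two-sided threshold $t_n$ with $\sqrt{\log n} \ll t_n \ll \gamma_n$ and use the level $\alpha_n = 2(1-\Phi(t_n)) \to 0$. Under the null, Theorem~\ref{thm:DCBM} applied with $K=K_0$ gives $T_n(\widehat{\Omega}^{\mathrm{DCBM}}) \to N(0,1)$, so the type-I error is at most $\alpha_n$. Under the alternative, I would first use the moment identities underlying Theorem~\ref{thm:SCC-power-general}, namely $\mathbb{E}[U_{n,3}(\widetilde{\Omega}_1)] = \mathrm{trace}((\Omega_1 - \widetilde{\Omega}_1)^3)$ together with $C_{n,3}/\mathrm{trace}(\Omega_1^3) \to_{\mathbb{P}} 1$, to conclude $\psi_{n,3}(\widetilde{\Omega}_1) \geq (1-o(1))\gamma_n$. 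I would then pass from $\widetilde{\Omega}_1$ to $\widehat{\Omega}^{\mathrm{DCBM}}$ by re-running the Lemma~\ref{lem:diffU} decomposition with $\Delta = \widehat{\Omega}^{\mathrm{DCBM}} - \widetilde{\Omega}_1$. The perturbation $\Delta$ is controlled by stability of the SCORE clustering on $\Omega_1$: the Non-Splitting Property from \cite{EstK} ensures that with probability $1-o(1)$, $\widehat{\Pi}$ coincides with the deterministic population partition determined by applying GoF-SCORE to $\Omega_1$ directly, at which point the entry-wise and spectral bounds on each of the eleven terms in Lemma~\ref{lem:diffU} can be obtained in parallel to the null-side arguments of Theorem~\ref{thm:DCBM}. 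Combining yields $T_n(\widehat{\Omega}^{\mathrm{DCBM}}) \geq (1-o(1))\gamma_n - O_{\mathbb{P}}(\sqrt{\log n}) \gg t_n$, so the type-II error also vanishes.

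\textbf{Impossibility.} I would apply Le Cam's two-point method in Bayesian form. Fix $\Omega_0 \in {\cal U}_{K_0}(K_0)$ with a target community $C$ of balanced size $|C| \asymp n$ and a fixed degree profile $\theta$; construct $\Omega_1 \in {\cal U}_{K_0}(K_0+1)$ by splitting $C$ into two sub-communities along a uniform Rademacher partition $v \in \{\pm 1\}^C$, with within-versus-between contrast $\rho_n$. This produces $\Omega_1(i,j)-\Omega_0(i,j) \asymp \rho_n \theta_i\theta_j v_iv_j$ for $i,j\in C$ and zero elsewhere, with $\lambda_{K_0+1}(\Omega_1) \asymp \rho_n\|\theta_C\|^2$. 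I would tune $\rho_n$ so that $\mathrm{SNR}_{n,3}(\Omega_1) \asymp \gamma_n$; since by Lemma~\ref{lem:SNR-higher-rank} this scales like $\lambda_{K_0+1}^3/\sqrt{\lambda_1^3}$, the condition $\gamma_n \to 0$ is equivalent to $\lambda_{K_0+1}/\sqrt{\lambda_1} \to 0$. To bound the $\chi^2$-divergence of the mixture alternative $\bar{P}_1$ against the null, the product-Bernoulli identity yields $1 + \chi^2 = \mathbb{E}_{v,v'}\bigl[\prod_{i<j \in C}\bigl(1 + \rho_n^2 \theta_i^2\theta_j^2 v_iv_jv'_iv'_j/[\Omega_0(i,j)(1-\Omega_0(i,j))]\bigr)\bigr]$; linearizing $\log(1+x) \approx x$ reduces the inner quantity to $\exp(t(v\cdot v')^2)$ up to a deterministic offset, with $t \asymp \rho_n^2\bar{\theta}^2$. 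Standard MGF bounds for Rademacher inner products give $\mathbb{E}_{v,v'}[\exp(t(v\cdot v')^2)] = O(1)$ whenever $t|C| = O(1)$, and in the balanced-size regime this matches $\lambda_{K_0+1}^2/\lambda_1 = O(1)$, which is implied by $\gamma_n \to 0$. Therefore $TV(P_{\Omega_0}, \bar{P}_1) \leq \sqrt{\chi^2/2} \leq 1-c_0$ for some $c_0 \in (0,1)$, and by averaging, at least one $\Omega_1$ in the support gives total error $\geq c_0$.

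\textbf{Main obstacles.} The hardest step is the alternative-side plug-in bound $|T_n(\widehat{\Omega}^{\mathrm{DCBM}}) - \psi_{n,3}(\widetilde{\Omega}_1)| = O_{\mathbb{P}}(\sqrt{\log n})$: GoF-SCORE is run with a misspecified number of communities $K_0 < K_0+1$, so the exact-clustering guarantees for SCORE from the null analysis of Theorem~\ref{thm:DCBM} do not apply, and one must invoke the Non-Splitting Property to argue that $\widehat{\Pi}$ still concentrates on the deterministic partition induced by applying GoF-SCORE to $\Omega_1$. A secondary obstacle is ensuring every $\Omega_1$ in the support of the randomized prior actually lies in ${\cal U}_{K_0}(K_0+1)$, which constrains the admissible Rademacher splits and forces a careful scaling of $\rho_n$ to preserve the regularity bounds on $P$ and on $|\lambda_K(PG)|$ in Condition~\ref{cond:afmSCORE}.
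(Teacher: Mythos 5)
Your overall architecture matches the paper's for the achievability direction: null CLT from Theorem~\ref{thm:DCBM}, a threshold between $\sqrt{\log n}$ and $\gamma_n$, the Non-Splitting Property to pin $\widehat{\Pi}$ to a deterministic merged partition $\Pi Q$, and a Lemma~\ref{lem:diffU}-type decomposition to transfer the oracle SNR computation to $T_n(\widehat{\Omega}^{\mathrm{DCBM}})$. However, your claimed plug-in rate $|T_n(\widehat{\Omega}^{\mathrm{DCBM}})-\psi_{n,3}(\widetilde{\Omega}_1)|=O_{\mathbb{P}}(\sqrt{\log n})$ is too optimistic, and the assertion that the eleven terms "can be obtained in parallel to the null-side arguments" hides the real difficulty. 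Under the alternative, $\Delta=\widehat{\Omega}-\Omega$ contains the non-vanishing bias $\widetilde{\Delta}=\widetilde{\Omega}-\Omega$ with $\|\widetilde{\Delta}\|\asymp\|P-\widetilde{P}\|\cdot\|\theta\|^2$, which is far larger than the null-side bound $\|\Delta\|=o_{\mathbb{P}}(\|\theta\|)$, so the null-side estimates do not carry over. The paper sorts the extra terms into three groups and finds that several of them (e.g.\ $\mathrm{tr}(\widetilde{\Delta}^2\widehat{\Delta})$, $\mathrm{tr}(W_1\widetilde{\Delta}^2)$, $\mathrm{tr}(\widetilde{\Delta}\circ W_1^2)-\mathrm{tr}(W_1^2\widetilde{\Delta})$) are only $O_{\mathbb{P}}(\|P-\widetilde{P}\|\cdot\|\theta\|^4)$, which after normalizing by $\sqrt{6C_{n,3}}\asymp\|\theta\|^3$ is $O_{\mathbb{P}}(\|P-\widetilde{P}\|\cdot\|\theta\|)$ and can dwarf $\sqrt{\log n}$. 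These terms are negligible relative to the signal $\mathrm{tr}(\widetilde{\Delta}^3)=\|\theta\|^6\,\mathrm{trace}([(P-\widetilde{P})\diag(\tilde{s})]^3)$ only because of the hypothesis $\|P-\widetilde{P}\|\ll\|\theta\|^2\cdot\mathrm{trace}([(P-\widetilde{P})\diag(\tilde{s})]^3)$ built into ${\cal M}_{K_0}(\gamma_n)$ — a hypothesis your sketch never invokes. You should state explicitly that this is where that condition is consumed; without it your plug-in bound fails.

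For the impossibility direction you take a genuinely different route. The paper does not carry out a Le Cam calculation at all: it specializes to $Z=0$, shows $\mathrm{SNR}_{n,3}(\Omega)\asymp(1-b)^3\|\theta\|^3\geq C(|\lambda_{K_0+1}(\Omega)|/\sqrt{\lambda_1(\Omega)})^3$, observes that $\gamma_n\to0$ forces $|\lambda_{K_0+1}(\Omega)|/\sqrt{\lambda_1(\Omega)}\to0$, and then imports the least-favorable construction and lower bound of \cite{EstK} wholesale. Your self-contained Bayesian two-point argument with a Rademacher splitting prior and the $\chi^2$-divergence bound $\mathbb{E}_{v,v'}[\exp(t(v\cdot v')^2)]=O(1)$ is the standard machinery underlying that cited result, and your identification $t|C|\asymp\lambda_{K_0+1}^2/\lambda_1=o(1)$ is consistent; the price is that you must verify membership of every prior draw in ${\cal U}_{K_0}(K_0+1)$ (which you flag) and that the MGF bound really requires $t|C|$ below a small constant rather than merely $O(1)$ — harmless here since $\gamma_n\to0$ gives $t|C|=o(1)$. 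Your version is more transparent and self-contained; the paper's is shorter but leans entirely on \cite{EstK}.
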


Theorem~\ref{thm:power1-mainpaper} reveals an interesting phase transition:
In the Region of Possibility (defined by $\mathrm{SNR}_{n,3}(\Omega) / \sqrt{\log(n)} \goto \infty$),  asymptotically, the  GoF-based test can perfectly separate the two hypotheses.   In the Region of Impossibility (defined by $\mathrm{SNR}_{n,3}(\Omega)\to 0$),  asymptotically, the two hypotheses are partially indistinguishable and no test can have full power.

{\bf Remark 6} {\it (Local power analysis)}. Such analysis considers a sequence of local hypotheses under which a test has constant power. For example, consider the setting of Lemma~\ref{lem:SNR-higher-rank}, where $\Omega=\theta\theta'$. For a sequence $\alpha_n=o(1)$ and a constant $c$, let $\theta=\sqrt{\alpha_n} {\bf 1}_n$ in the null hypothesis and $\theta=\sqrt{\alpha_n}\bigl({\bf 1}_n+ c\cdot n^{-5/4}\alpha_n^{-3/4}\eta\bigr)$ in the alternative hypothesis, where $\eta\in\mathbb{R}^n$ is such that its first half entries are $1$ and second half entries are $-1$. Using the proof of Lemma~\ref{lem:SNR-higher-rank}, we can show that the SNR is equal to a constant $\omega=\omega(c)$. We further conjecture that $T_n(\widehat{\Omega})\to N(\omega(c), 1)$ in law, so that the local power can be characterized. We leave this to future work. 

 {\bf Remark 7} {\it (Alternative models where edges are dependent)}. Our analysis in this subsection assumes that the edges are independent in the alternative model.  
To study the dependent-edge case, we did a simulation where the alternative model is a 
triadic closure model \citep{di2022hierarchy}, and we computed the GoF metric by assuming a 2-community DCMM. The results are in Figure~\ref{Fig:simu-power} of the supplement. It suggests that our test still has power in this case.

\section{Simulations and analysis of the $12$ real networks}
\label{sec:numeric} 

{\it Owing to space limit, the simulation results are contained in Section~\ref{subsec:Simu} of the supplement.} 
 We now investigate our approaches with the $12$ real networks aforementioned. Among them, 
CoAuthor was studied in \cite[Section 4.2]{JaJ2016},  Citee2016 was studied in \cite[Section 5.2]{MSCORE}, 
and Fan is the ego network of   ``Jianqing Fan" in the coauthor network of  \cite[Sections 3.1, 3.3]{JBES} (i.e.,  the subnetwork consisting of Fan and all nodes having an edge with Fan),  LastFM is from SNAP (\url{https://snap.stanford.edu/data}), and the other 8 are the same as those used in \cite{SCOREplus}.  
For all except LastFM,  previous works \citep{JaJ2016, MSCORE, JBES} carefully studied what the most appropriate $K$ is (for example, \cite{JaJ2016} argued that the Fan network has two communities: ``nonparametric statistics'' and ``high-dimensional statistics'', so $K = 2$), so we 
assume $K$ as known;  see Table \ref{tab:realdata0}.  For the LastFM network, we compute the GoF metrics for different $K$ and find that $K=3$ yields the best fit. Due to space constraint, we only report in Table~\ref{tab:realdata0} the results on LastFM with $K=3$; the results of other $K$ are in Table~\ref{tab:SNAPv1} of the supplement.

We test whether the SBM, DCBM, MMSBM, and DCMM models are  appropriate, respectively.   
Consider $T_n(\widehat\Omega^{\rm DCMM})$ first. To use it for real data analysis, we use Algorithm~\ref{alg:afmSCORE}  with $3$ minor regularization steps: (a) after obtaining $\widehat{P}$ in Algorithm~\ref{alg:afmSCORE}, we set its negative entries to zero; (b) after obtaining $\widehat{\Omega}$ in Algorithm~\ref{alg:afmSCORE}, we add a regularization step where we 
set $\widehat\Omega(i,j)=1$ if $\widehat\Omega(i,j)>1$ and similarly set $\widehat\Omega(i,j)=0$ if $\widehat\Omega(i,j)<0$; (c) in Step~(2.1) in Algorithm~\ref{alg:afmSCORE}, after obtaining $\hat{w}_i$ (the barycentric coordinate of $\hat{r}_i$ in the estimated simplex),  we set all of its negative entries to $0$ and then rescale it to have a unit-$\ell_1$-norm. These regularizations are reasonable, as $P$ and $w_i$ are non-negative and $\Omega(i,j) \in [0, 1]$.  Next, we consider $T_n(\widehat\Omega^{\rm MMSBM})$. We use the algorithm in Section \ref{subsec:MMSBM} with the  same regularization steps (b)-(c) above, but (a) is skipped as 
 a negative entry  of $\widehat{P}$ rarely exists in the analysis.   Third, consider $T_n(\widehat\Omega^{\rm SBM})$ and  $T_n(\widehat\Omega^{\rm DCBM})$.   
We use the two algorithms in Section \ref{subsec:DCBM}.   
Compared to $T_n(\widehat{\Omega}^{{\rm DCMM}})$, these algorithms are much simpler: 
There is no $\hat w_i$, and 
the entries of $\widehat P$ are always non-negative. Hence,  
regularization steps (a) and (c) are  skipped.  
Last, recall that MMSBM and DCMM model mixed-memberships and SBM and DCBM do not, 
so $T_n(\widehat\Omega^{\rm MMSBM})$ and $T_n(\widehat\Omega^{\rm DCMM})$ use the Vertex Hunting (VH) algorithm, and $T_n(\widehat\Omega^{\rm SBM})$ and $T_n(\widehat\Omega^{\rm DCBM})$ do not.  
It was argued by \cite{SCORE-Review} 
that, compared with the other $10$ networks, the Caltech and Simmons networks have weak signals, and are harder to analyze. Therefore, when we 
apply $T_n(\widehat\Omega^{\rm MMSBM})$ and $T_n(\widehat\Omega^{\rm DCMM})$ to these two networks, we make a small change: 
In Step 1 of both algorithms, 
before the VH steps, we remove 1\% outlying rows of $\widehat R_H$, using the R package \texttt{isotree} \citep{liu2008isolation}.   The outlier removal stabilizes our estimate $\widehat{\Omega}$  a bit, so there is no need to use the regularization step (b).  For fair comparison, we also remove regularization step (b) in  $T_n(\widehat\Omega^{\rm SBM})$ and  $T_n(\widehat\Omega^{\rm DCBM})$ when applied to the two networks (the results are similar  if we include step (b)).  
Among these algorithms,  
$T_n(\widehat\Omega^{\rm SBM})$ is tuning free, 
and $T_n(\widehat\Omega^{\rm SBM})$ has only one tuning parameter, the threshold $t$ in the SCORE step. We set $t = \log(n)$ as recommended in \cite{MSCORE, SCORE-Review}.  For $T_n(\widehat\Omega^{\rm MMSBM})$ and $T_n(\widehat\Omega^{\rm DCMM})$, the only tuning parameters come from 
the Vertex Hunting algorithm (recall that two other algorithms do not use VH). The VH algorithm has tuning parameters $(N, \alpha)$, 
which are set as in Section \ref{subsec:GoF-MSCORE} (last two paragraphs). 
To avoid over-fitting and  for a fair comparison,  we fix $(N, \alpha, t)$ as recommended above and do not change them from setting to setting (e.g., data set, algorithm).  
The results are relatively insensitive to tuning parameters (see 
Table \ref{tab:tuning} in the supplement for a study of how our results depend on different tuning parameters).

Our main results are in Table \ref{tab:realdata0}. 
Recall that when an assumed model (say, SBM) is true, then the corresponding 
GoF metric (say, $T_n(\widehat{\Omega}^{\mathrm{SBM}})$) converges to $N(0,1)$ as $n \goto \infty$.   
We think we have  {\it a reasonably good fit} if the (absolute value of the; same below) GoF metric is smaller than $5$: 
this may seem a bit less critical, but many of these networks have a relatively small $n$,  and some of them have secondary effects (e.g., outliers)  which are not fully captured by our models.     
Also,  we think {\it a moderate lack of fit}
happens if a GoF metric falls between $5$ and $7.5$, and {\it a significant lack-of-fit} happens if 
we have a GoF metric bigger than $7.5$.

 \spacingset{1.1}
\begin{table}[]\centering
\caption{\small The GoF metrics for SBM, MMSBM, DCBM and DCMM, respectively (each has a limiting null of $N(0,1)$  if the corresponding model is true), for the $12$ real networks.  Boldface: GoF metric bigger than $7.5$  
(significant lack of fit);  italic: GoF metric between $5$ and $7.5$ (moderately lack of fit); others: GoF metric smaller than $5$ (reasonably good fit).}
\label{tab:realdata0}
\scalebox{0.88}{
\begin{tabular}{l|rrrr|l|rrrr}
\hline
Dataset &SBM &DCBM &MMSBM &DCMM &Dataset &SBM &DCBM &MMSBM &DCMM \\ \hline
Karate &$-1.744$ &$0.061$ &$-1.483$ &0.198 &Polbooks &{\bf 8.146} &{\it 5.229} &{\it 6.061} &{\it 5.039} \\ 
Football &0.625 &0.542 &0.824 &$0.752$ &Weblogs &{\bf 117.1} &${\bf 9.033}$ &{\bf 48.74} &{\bf 7.763} \\ 
Dolphin &4.312 &3.165 &3.977 &$3.124$ &Citee2016 &{\bf 759.2} &{\bf 308.4} &{\bf 405.3} &3.687 \\ 
Fan &2.180 &1.789 &$-4.614$ &$3.500$ &Caltech &{\bf 54.88} &{\bf 38.95} &{\bf 42.76} &$4.559$ \\ 
CoAuthor &4.166 &{\it 6.635} &{\it 6.904} &{\it 6.695} &Simmons &{\bf 115.5} &{\bf 85.14} &{\bf 108.4} &$-4.949$ \\ 
UKfaculty &{\it 5.915} &2.540 &2.730 &1.767 & LastFM & {\bf 140.0} & {\bf 141.7} & {\bf 193.1} &  2.204 \\ 
\hline
\end{tabular}
} 
\end{table}
\spacingset{1.75}

We divide the $12$ networks into $3$ groups: (A) Karate, Football, Dolphin, Fan (relatively small and easy data sets),  (B) the last $7$ networks in Table \ref{tab:realdata0}, and (C) the CoAuthor network.  

First, for the $4$ networks in group (A), 
SBM provides a reasonably good fit, and as expected, the other three models (each is broader than the SBM) also provide a reasonably good fit.   

Second, among the $8$ networks in group (B), the last four (Citee2016, Caltech, Simmons, LastFM) are relatively large and difficult networks (e.g., \cite{SCORE-Review}). 
For them, the three models SBM, DCBM, and MMSBM provide a significantly poor fit (with a GoF metrics larger than $42$), but DCMM provides a good fit (with GoF metrics smaller than $5$).  Also, the GoF metrics corresponding to DCBM and MMSBM are significantly smaller than that of SBM.  
These results suggest a very interesting point in network modeling: 
{\it many real networks have two noteworthy features: 
severe degree heterogeneity and mixed-memberships.  
To have a good fit, we must model both features; modeling only one of them would still give a significantly poor fit}.   The discussion for Weblog is similar, except for that even the broadest DCMM model has a moderate lack-of-fit for this network; see more discussion below.     
Among the $12$ networks, Weblog has the largest ratio of $d_{max} / d_{min}$ and $3rd$ largest ratio of $\bar{d}/d_{min}$.  From Table \ref{tab:realdata0}, replacing SBM by DCBM (i.e., adding the layer of degree heterogeneity modeling to SBM) reduces the GoF metric from $117,1$ to $9.033$, which is a remarkable 
improvement.  This illustrates how valuable it is to properly model 
severe degree heterogeneity. The discussion for Polbook and UKfaculty is similar, except for the differences between the GoF metrics for different models  are not as significant as those for Weblog,  Citee2016, Caltech, and Simmons. 
The moderate lack-of-fit of DCMM for Weblog can be 
explained as follows.  In its original form, Weblog is a {\it directed network} with $1,494$ nodes \cite{Adamic2005}, where each node is a blog, and 
each directional edge is a hyperlink.  As first suggested by \cite{karrer2011stochastic}, 
we may use {\it brute-force symmetrization} to construct an undirected network as follows: define an {\it undirected edge} between $i$ and $j$ as long as 
there is at least one directed edged between them. This gives rise to an undirected network (the network in Table \ref{tab:realdata0} is its giant component).  
Fix two nodes $i$ and $j$.  The reciprocal effect between them refers to the effect that, 
when there is a directed edge from $i$ to $j$, it is likely that there is also 
an edge from $j$ to $i$.  The main problem of the above symmetrization is that it forcefully increases the reciprocal effects between 
many pairs of nodes and generate some undesirable artifacts. 
This partially explains why DCMM has a moderate lack-of-fit for  Weblog,  
and suggests that we must be cautious when  
symmetrizing a directed network.

Finally, we discuss CoAuthor network (the only one in group (C)).  
The GoF metrics for SBM, DCBM, MMSBM and DCBM are 
$4.166, 6.635, 6.904,  6.695$, respectively.  
It may seem that SBM (the most idealized one  
among the $4$)  fits best with the network.  This seeming contradiction  
may be due to that CoAuthor not only has a relatively small $n$ but also has the smallest $\bar{d}$ among all $12$ networks (i.e., $(n, \bar{d}) = (236, 2.51)$).  In such a difficult case, 
the GoF metric may not converge fast enough to the limiting null,  so 
it is hard to tell  the best model. 

We argue that DCMM is the best model for CoAuthor, for the following reasons.  
First, $(d_{min}, \bar{d}, d_{max}) = (1, 2.51, 21)$ for this network, 
indicating severe degree heterogeneity. Therefore, out of the four block models, 
DCBM and DCMM are the more appropriate choices. 
Second, recent study suggests the network has significant mixed-memberships, 
and so DCMM is the beset choice.  Recall that CoAuthor is a co-authorship network  constructed using the MADStat data set \citep{JaJ2016, JBES, MSCORE}.    
It was argued by \cite{JaJ2016, MSCORE} that the network has two communities (so $K = 2$): a Carroll-Hall (CH) community on non-parametrics/semi-parametric and North-Carolina (NC) community.   When they first analyzed the network,  Ji and Jin  \cite{JaJ2016} {\it assumed a DCBM with $K = 2$} and considered the problem of community detection (i.e., clustering all $236$ nodes into $2$ groups). 
They applied $4$ community detection methods: 
SCORE, NSC,  APL, 
and BCPL. 
These methods produce similar clustering results in several other networks in Table \ref{tab:realdata0} (e.g., Karate, Weblog), but for CoAuthor, they produced drastically different results \citep{JaJ2016}. 
Why did this happen? To solve the puzzle, \cite{MSCORE} pointed out that one of the nodes, Jianqing Fan, is a leading figure in non-parametric/semi-parametric statistics and was also
on the faculty of UNC-Chapel Hill in the 1990s. Therefore, Fan and many of his collaborators may have significant mixed-memberships in both CH and NC.  These suggest that the DCMM is a more appropriate model than DCBM for CoAuthor. Using MSCORE,  \cite[Table 3]{MSCORE} estimated the mixed membership vectors $\pi_1, \pi_2,  \ldots, \pi_n$. Since $K = 2$, we write each $\hat{\pi}_i = (1 - \hat{w}_i, \hat{w}_i)$, with $\hat{w}_i$ being the estimated weight in the NC community. 
Figure \ref{fig:Fan} presents $\hat{w}_i$ for a few selected nodes $i$. 
These results confirm that the DCMM is most appropriate model for CoAuthor.  
\spacingset{1}
\begin{figure}[tb!] 
\label{fig:Fan} 
\centering
\includegraphics[width=.6\textwidth]{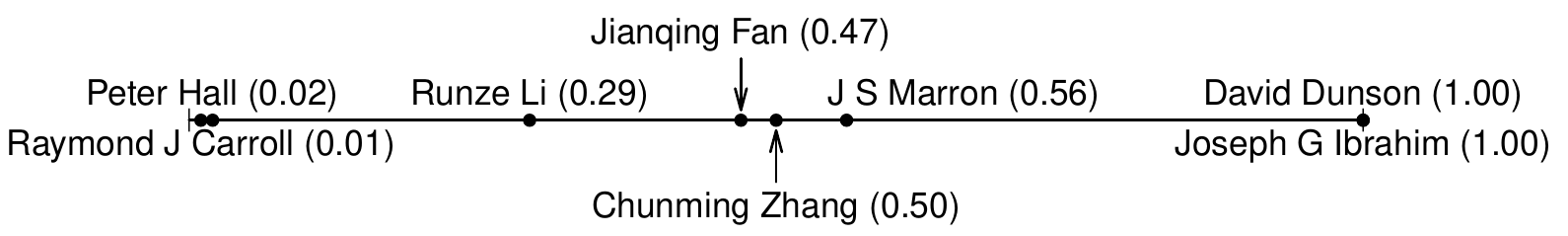}
\caption{Estimated weight $\hat{w}$ in NC for some authors ($(1 - \hat{w})$ is the weight in CH).} 
\end{figure}
\spacingset{1.75}

We summarize our findings.  
First, the block-family provides adequate and interpretable models for real networks. In particular, DCMM is adequate for all $12$ networks (except for Weblog, 
but it may come from the artificial reciprocal effects caused by forceful symmetrization).  
Real networks typically have two 
features: severe degree heterogeneity and significant mixed-memberships. 
DCMM models both features and is preferred.  SBM, MMSBM, and DCBM incorporate at most one of the two features and are inadequate for many networks.  Second, we must be cautious in constructing 
undirected networks from directed ones by brute-force symmetrization: 
it may create artificial reciprocal 
effects (e.g., see the discussion on Weblog) that we may not find in natural networks. 
Third, GoF is not the only way to assess network modeling. 
Our findings on the CoAuthor network suggest we can try different ways to understand 
the important aspects of network modeling. 
Last, combining our results with Theorem~\ref{thm:Laplacian} on NMF, we recommend DCMM as a sweet spot of modeling undirected networks.

\vspace{-1em}
\section{Discussion} 
\label{sec:Discu} 
We focus on GoF of  four block models for undirected binary networks, but the idea is readily extendable to other settings,  
such as weighted networks,   directed networks,  bipartite networks, 
and hierarchical networks (e.g., \cite{JaJ2016, JBES}).  
For example, if we put a tree structure on the mixed mbership 
vectors $\pi_1, \pi_2, \ldots, \pi_n$, then we have a model for 
hierarchical network.  
Once the model is set up, 
many of our ideas on GoF are readily extendable.  In addition, we assume the number of communities, $K$, is finite for the current setting, but our analysis can be easily extended to the case where $K = \big(\log(n)\big)^c $ for some constant $c>0$, by slightly modifying the regularity conditions to include extra logarithmic factors. 
 We also impose some regularity conditions: e.g., the average node degree is much larger than $\log(n)$, and the degree heterogeneity follows some requirement as implied by Conditions~\ref{cond:afmSCORE}-\ref{cond:MSCORE}. When these conditions are not satisfied, we may improve our GoF metric and/or the theory to adapt to such more challenging settings. 

Natural networks have many noteworthy features, some are common (e.g., sparsity) and some are more specific (e.g., cliques). The block-model family models sparsity, degree heterogeneity, 
and mixed-memberships, but they do not specifically model 
outliers  and  cliques. It is unclear whether we should incorporate 
these features into the block models, and if so, how.  It is also unclear how to test whether a 
network has outliers and small-size cliques. These are interesting questions and 
we leave them to the future. 
The cycle count statistics are essentially U-statistics \citep{zhang2022edgeworth} and we can adapt them 
to settings beyond networks, such as
testing and GoF for hypergraph analysis \citep{yuan2022testing}, text analysis,  factor models \citep{wang2017asymptotics}, 
and cancer clustering models \citep{peng2016more,zhong2017tests}. We leave these for the future research.

 \appendix


\section{Additional numerical details} \label{supp:Simu}

In this section, we provide details of numerical results, including the descriptions of auxiliary algorithms, three simulation experiments, and additional real-data results. 

\subsection{Descriptions of MSCORE, SP, and KNN-SP}

In Algorithm~\ref{alg:afmSCORE}, we use MSCORE to get an initial estimate of $\Pi$ and apply the successive projection (SP) for vertex hunting. We also recommend the K-nearest neighborhood successive projection (KNN-SP) algorithm \citep{SCORE-Review} as an alternative of SP for practical use. There we treat MSCORE, SP and KNN-SP as auxiliary algorithms and plug them into our main algorithm.  We now provide descriptions of MSCORE, SP and KNN-SP. 



MSCORE \citep{MSCORE} is a spectral approach for estimating $\Pi$. The details are in Algorithm~\ref{alg:MSCORE}. 

\spacingset{1}

\begin{algorithm}[H]
\caption{The MSCORE algorithm}\label{alg:MSCORE}
\medskip
\noindent
{\bf Input:} The adjacency matrix $A\in\mathbb{R}^{n,n}$, and the number of communities $K$. 
\begin{enumerate} 
\item ({\it SCORE normalization}). Obtain the eigen-pairs $(\hat \lambda_1, \hat \xi_1), (\hat \lambda_2, \hat \xi_2), \cdots, (\hat \lambda_K, \hat \xi_K)$ of  $A$.  Compute $\widehat R =[\hat r_1, \hat r_2, \cdots, \hat r_n]'= {\rm diag}(\hat \xi_1)^{-1} [\hat \xi_2, \cdots, \hat \xi_K] \in \mathbb R^{n, K-1}$.
%

\item ({\it Vertex hunting}). Obtain the simplex vertices $\hat v_1, \hat v_2, \ldots, \hat v_K$ via KNN-SP algorithm in Algorithm~\ref{alg:KNN-SP} with input $\hat r_1, \ldots, \hat r_n$.

\item ({\it Membership estimation}). 
\begin{itemize}
\item Compute $\hat b_1\in \mathbb R^{K}$, with entries $\hat b(k)= [\hat \lambda_1 + \hat v_k' {\rm diag}(\hat \lambda_2, \ldots, \hat \lambda_K) \hat v_k]^{-1/2}$.

\item For each $1\leq i \leq n$, solve the barycentric coordinate $\hat w_i\in \mathbb R^{K}$ from the equations: 
$\hat r_i = \sum_{k=1}^K \hat{w}_i(k) \hat{v}_k$ and $\sum_{k=1}^K \hat w_i(k) = 1$.

\item Obtain $\hat \pi_i^* \in \mathbb R^{K}$, where $\hat \pi_i^*(k)  = \max\{\frac{\hat{w}_i(k)}{\hat b_1(k)}, 0\}$. Let $\hat \pi_i = \hat \pi_i^*/ \|\hat \pi_i^*\|_1$ for $1\leq i \leq n$.

\end{itemize}

\end{enumerate}
{\bf Output:} The estimated membership vectors $\hat\pi_1,\ldots,\hat\pi_n$. 
\medskip
\end{algorithm}

\spacingset{1.5}

SP \citep{araujo2001successive} is a popular algorithm for vertex hunting. Given $K\geq 2$, SP takes $n$ data points in  the input and outputs the estimated vertices $\hat{v}_1,\hat{v}_2,\ldots,\hat{v}_K$. 
In the first iteration of SP, it searches for the data point with the largest Euclidean norm and uses it as $\hat{v}_1$. In the $k$th iteration, it projects all data points into the subspace orthogonal to the previous $(k-1)$ vertices and uses the data point with the largest (post-projection) Euclidean norm as $\hat{v}_k$. 
A nice property of SP is that each $\hat{v}_k$ equals to one of the data points. This makes  theoretical analysis convenient and explains why SP is often recommended in theoretical study. 
Note that the error rate of SP has been given explicitly in \cite{jinimproved}.

The input data and output vertices can be in any dimension $d\geq K-1$. In Algorithm~\ref{alg:afmSCORE}, we apply SP to the rows of $\widehat{R}_H$, which corresponds to $d=K$. Additionally, MSCORE also requires plugging a vertex hunting algorithm, in which the data points are rows of $\widehat{R}$, so that $d=K-1$. We provide the description of SP in Algorithm~\ref{alg:SP} for a general $d$. 

\spacingset{1}

\begin{algorithm}[htbh]
\caption{The SP Algorithm }\label{alg:SP}
\medskip
\noindent
{\bf Input:} The number of vertices $K$, and data points $\hat r_1, \hat r_2, \cdots, \hat r_n \in \mathbb R^{d}$ ($d\geq K-1$). 
\begin{enumerate}
\item Initialize by taking $P$ to be the $d\times d$ zero matrix.

\item For each $k=1,2,\ldots,K$: If $k\leq d-1$ or $k=d\geq K$, do the following: 
\begin{itemize} 
\item Obtain $i_k=\mathrm{argmax}_{1\leq i\leq n} \| (I_K -   P)\hat r_i\|$, and let 
$ \hat v_k = \hat r_{i_k}$. 
\item Update 
$P =\widehat V_{k}(\widehat V_{k}'\widehat V_{k})^{-1}\widehat  V_{k}'$, where $\widehat V_{k} = [\hat v_1, \cdots, \hat v_{k}]$. 
\end{itemize}
If $k= d=K-1$, do the following: 
\begin{itemize} 
\item Note that $\{(I_K -   P)\hat r_i\}_{1\leq i\leq n}$ are now located in a line. Let  $i_{k}$ and $i_{k+1}$ be the indices of the two end points on this line. 
\item Let $ \hat v_k = \hat r_{i_k}$ and $ \hat v_{k+1} = \hat r_{i_{k+1}}$. End the loop. 
\end{itemize}

\end{enumerate}

{\bf Output:} The estimated vertices $\hat v_1, \hat v_2, \ldots,\hat v_K$. 

\medskip
\end{algorithm}
\spacingset{1.5}

\spacingset{1}

\begin{algorithm}[htb!]
\caption{ The KNN-SP Algorithm} \label{alg:KNN-SP}
\medskip

{\bf Input}: $X_1, X_2, \ldots, X_n$, the number of vertices $K$, and tuning parameters $(\alpha,N)$.

\begin{enumerate}

\item {\it (KNN denoise)}. For each $i$, let $B_\alpha(X_i)$ be the ball of radius $s_{\max}/\alpha$ and center $X_i$, where $s_{\max}=\max_{1\leq j\neq k\leq n}\|X_j-X_k\|$. 
 If there are fewer than $N$ points (including $X_i$ itself) in $B_{\alpha}(X_i)$, delete $X_i$; Otherwise, replace $X_i$ by $X_i^*$, which is the average of all points in $B_{\alpha}(X_i)$. 

\item {\it (Vertex Hunting)}. Let ${\cal J}\subset\{1,\ldots,n\}$ denote the set of retained points in Step~1. 
Apply successive projection (SP) algorithm in Algorithm~\ref{alg:SP} to $\{X_i^*: i\in {\cal J}\}$ to get $\hat{v}_1,\ldots,\hat{v}_K$. 

\end{enumerate}
{\bf Output}: The estimated vertices $\hat{v}_1,\ldots,\hat{v}_K$. 

\medskip
\end{algorithm}
\spacingset{1.5}

Although SP is convenient for theoretical analysis, it is not robust to outliers and strong noise. KNN-SP \cite[Section 3.4]{SCORE-Review} modifies SP by adding a denoise step via k-nearest neighbor smoothing; see Algorithm~\ref{alg:KNN-SP}. KNN-SP has tuning parameters $(N, \alpha)$. How to choose $(N, \alpha)$ has been described in Section~\ref{subsec:GoF-MSCORE}. \cite{SCORE-Review} reported that KNN-SP frequently outperforms SP, so we recommend KNN-SP in practical. The error rate of KNN-SP has been studied in \cite{jinimproved}.

\subsection{Simulations} \label{subsec:Simu}

We investigate the numerical performance of the proposed GoF metrics via simulated networks. 
We consider three experiments.  Experiment~1 shows the histograms of GoF metrics.   In Experiment~2, we examine the control of type-I errors when the model is correctly specified. 
In Experiments~3, we study the power of detecting model misspecification.  

\paragraph{Experiment 1: Histograms of GoF metrics.} Fix $(n, K)=(3000,2)$. Let $P\in\mathbb{R}^{2\times 2}$ be such that its diagonal entries are $1$ and off-diagonal entries are $0.05$.  We simulate $1000$ networks from each of the four models:
{\it Experiment 1.1 (DCMM)}: In this sub-experiment, $\theta_i$'s are independently drawn from ${\rm Unif}(0.1,0.3)$; each community has $n/8$ pure nodes, and $\pi_i$'s of the remaining nodes are independently drawn from ${\rm Dirichlet}(0.5,0.5)$. 
{\it Experiment 1.2 (MMSBM)}: $\theta_i$'s are all equal to $\sqrt{\alpha}_n$, where $\alpha_n=0.3$;  and $\pi_i$'s  are drawn in the same way as in Experiment 1.1.
{\it Experiment 1.3 (DCBM)}: $\theta_i$'s are drawn in the same way as in Experiment 1.1, and each community has $n/2$ nodes. 
{\it Experiment 1.4 (SBM)}: $\theta_i$'s are all equal to $\sqrt{\alpha}_n$, where $\alpha_n=0.3$; and each community has $n/2$ nodes. For each true model, we plot the histograms of the proposed GoF metrics $T_n(\widehat\Omega^{\rm DCMM})$, $T_n(\widehat\Omega^{\rm MMSBM})$, $T_n(\widehat\Omega^{\rm DCBM})$, and $T_n(\widehat\Omega^{\rm SBM})$. The results are shown in Figure~\ref{Fig:1}.

\spacingset{1}
\begin{figure}[tb!]
\centering
\includegraphics[width=.42\textwidth, height=.24\textwidth]{PDF/DCMM_new.pdf}
\includegraphics[width=.42\textwidth, height=.24\textwidth]{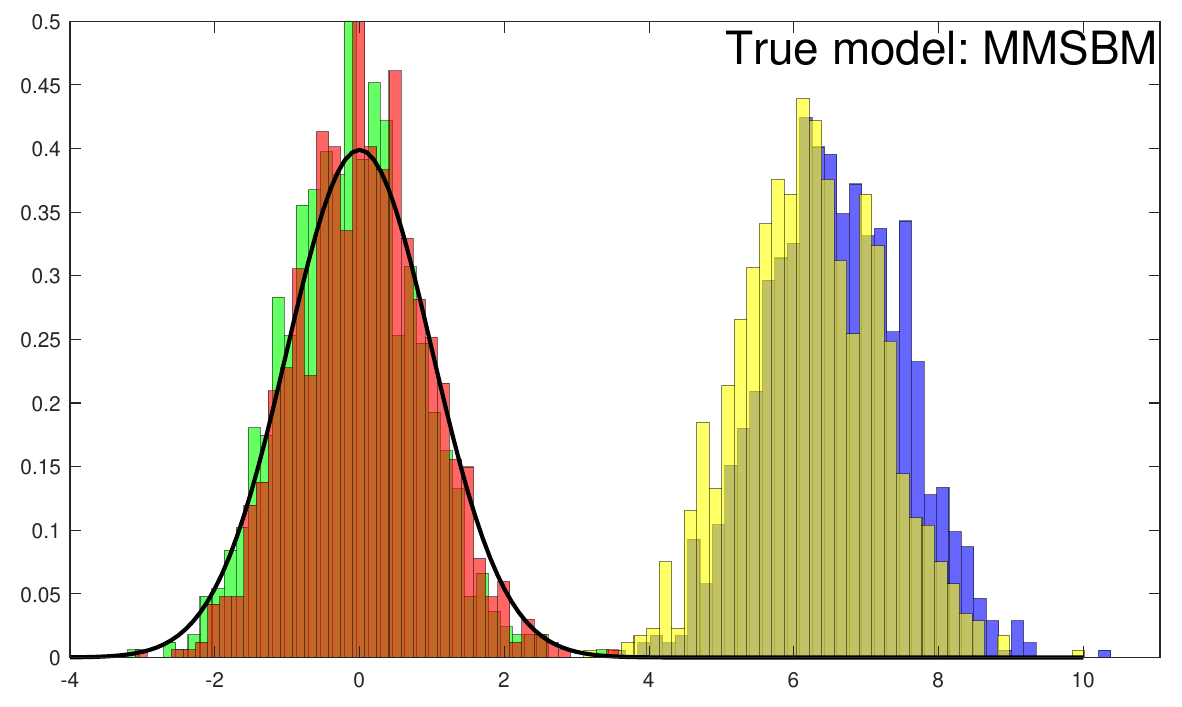}\\
\includegraphics[width=.42\textwidth, height=.24\textwidth]{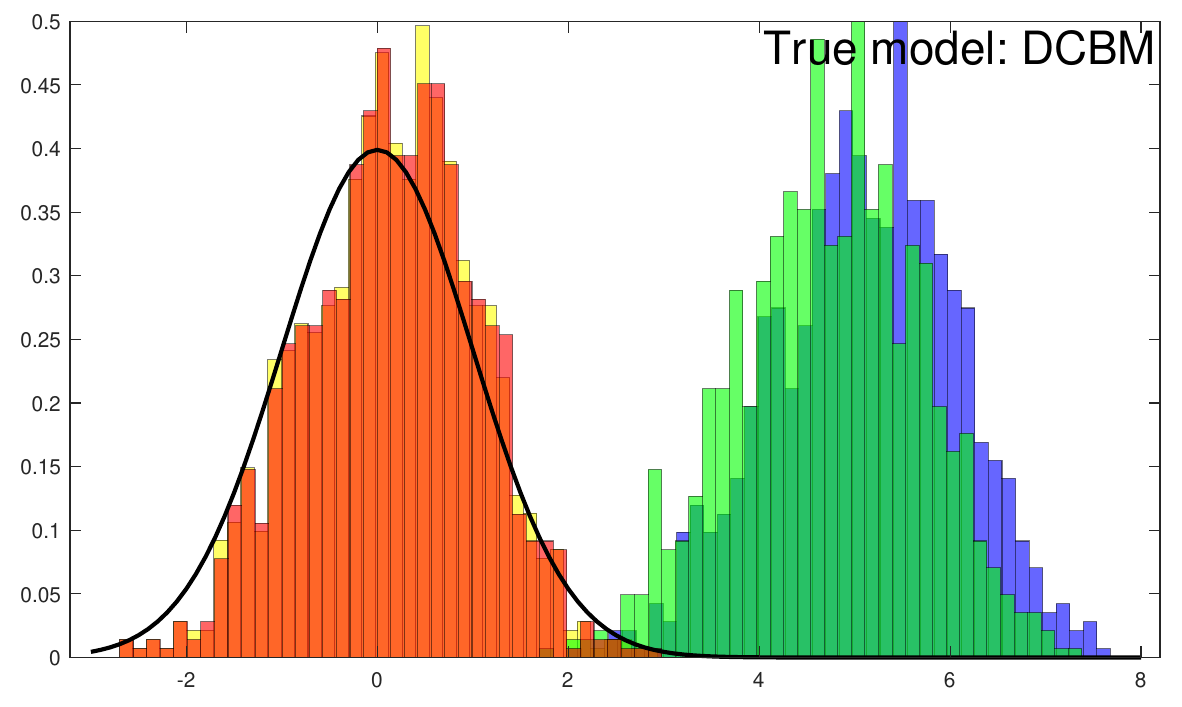}
\includegraphics[width=.42\textwidth, height=.24\textwidth]{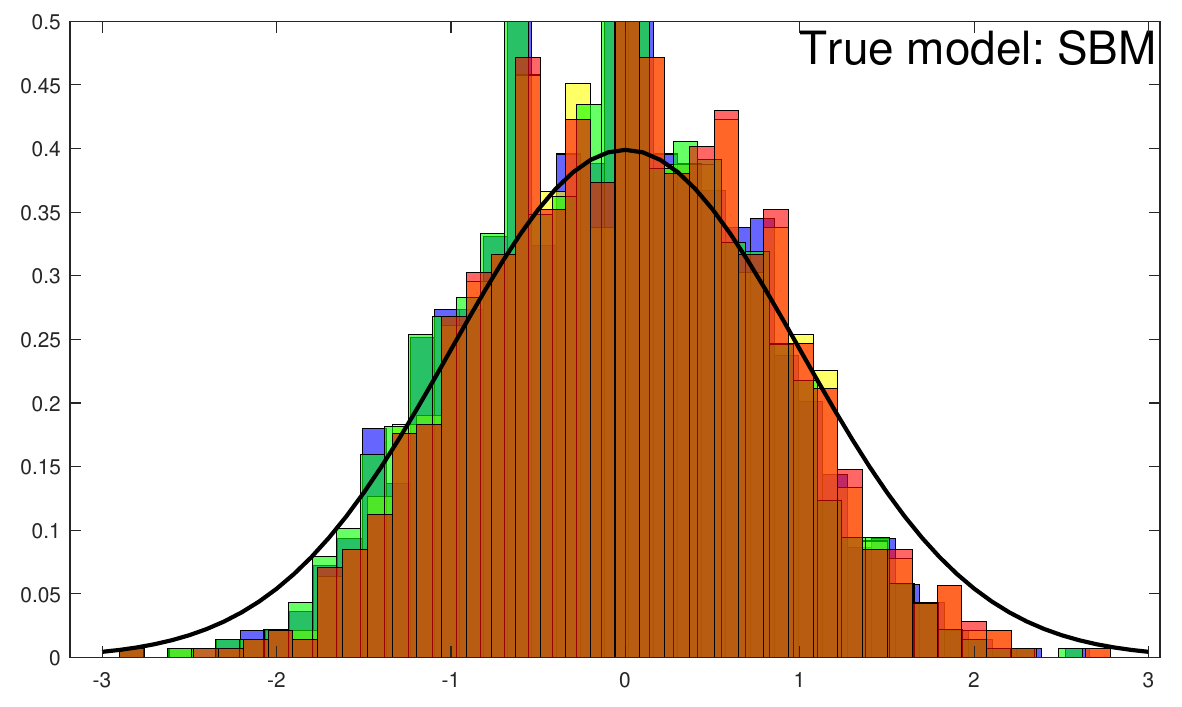}
\caption{Histograms of the GoF metrics in Experiment 1.1-1.4, where the off-diagonal entries of $P$ are equal to $0.05$. In each panel, the networks are generated from a true model in the block-model family, the four colors correspond to four GoF metrics, and the black curve is the density of $N(0,1)$.\label{Fig:1}}
\end{figure}
\spacingset{1.5}

We have some notable observations. First, when the assumed model is   true,   the histogram of the GoF metric is a good approximation to the density of $N(0,1)$ (e.g., Figure~\ref{Fig:1} (top left);  when the true model is DCMM, the histogram of $T_n(\widehat\Omega^{\rm DCMM})$ fits well the black curve; similar for Figure \ref{Fig:1}). This verifies the asymptotic normality shown in Sections~\ref{subsec:normality}-\ref{subsec:DCBM}. Second, when the assumed model includes the true model as a special case, the histogram of the GoF metric is still a good approximation to the standard normal density (e.g., bottom left panel of Figure~\ref{Fig:1}, when the true model is DCBM, the histogram of $T_n(\widehat\Omega^{\rm DCMM})$ still fits well the black curve). This is also consistent with theory. Last, when the assumed model does not include the true model, the GoF metric has a significant departure from the standard normal density, showing a good power (e.g., in the bottom left panel of Figure~\ref{Fig:1}, the histograms of $T_n(\widehat\Omega^{\rm SBM})$ and $T_n(\widehat\Omega^{\rm MMSBM})$ both have a considerable shift from the black curve). These observations suggest that our proposed GoF metrics indeed have both parameter-free limiting nulls and good powers. 

In Experiments 1.5-1.8, we mimic Experiments 1.1-1.4 but change the off-diagonal entries of $P$ from $0.05$ to $0.2$. 
This makes the communities more ``similar" in the true model.  
The results are in Figure~\ref{Fig:2}, where we have similar observations as above.

\spacingset{1}
\begin{figure}[tbh!]
\centering
\includegraphics[width=.42\textwidth, height=.24\textwidth]{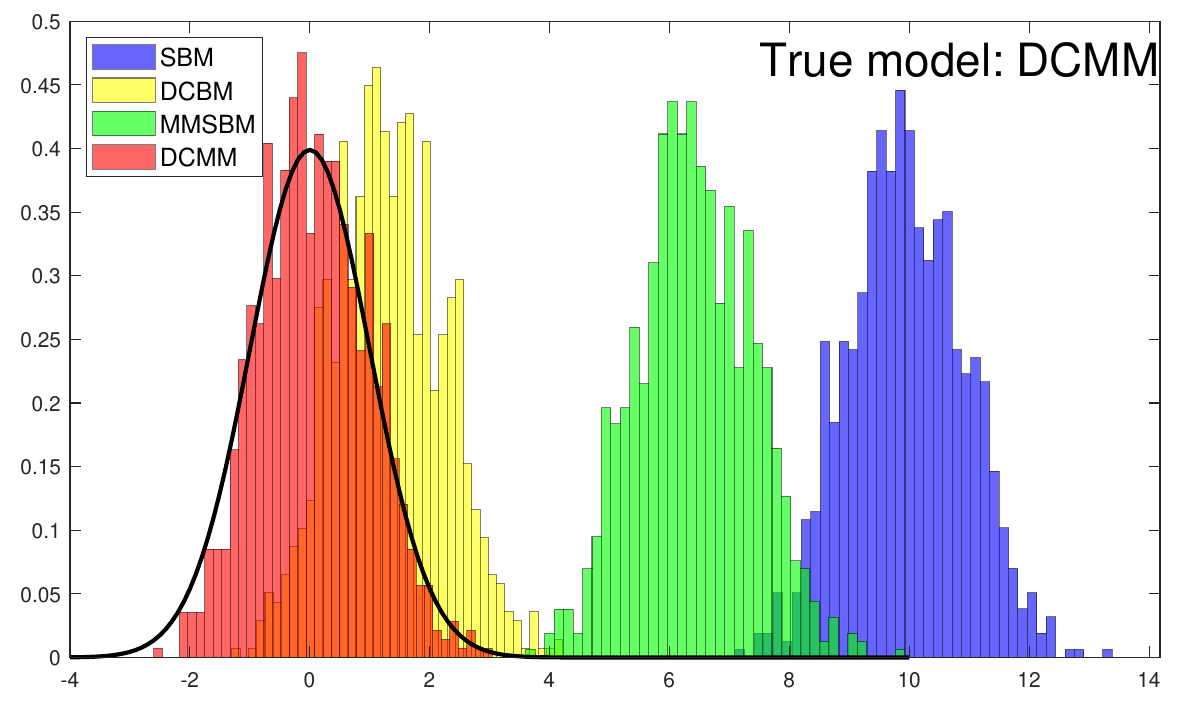}
\includegraphics[width=.42\textwidth, height=.24\textwidth]{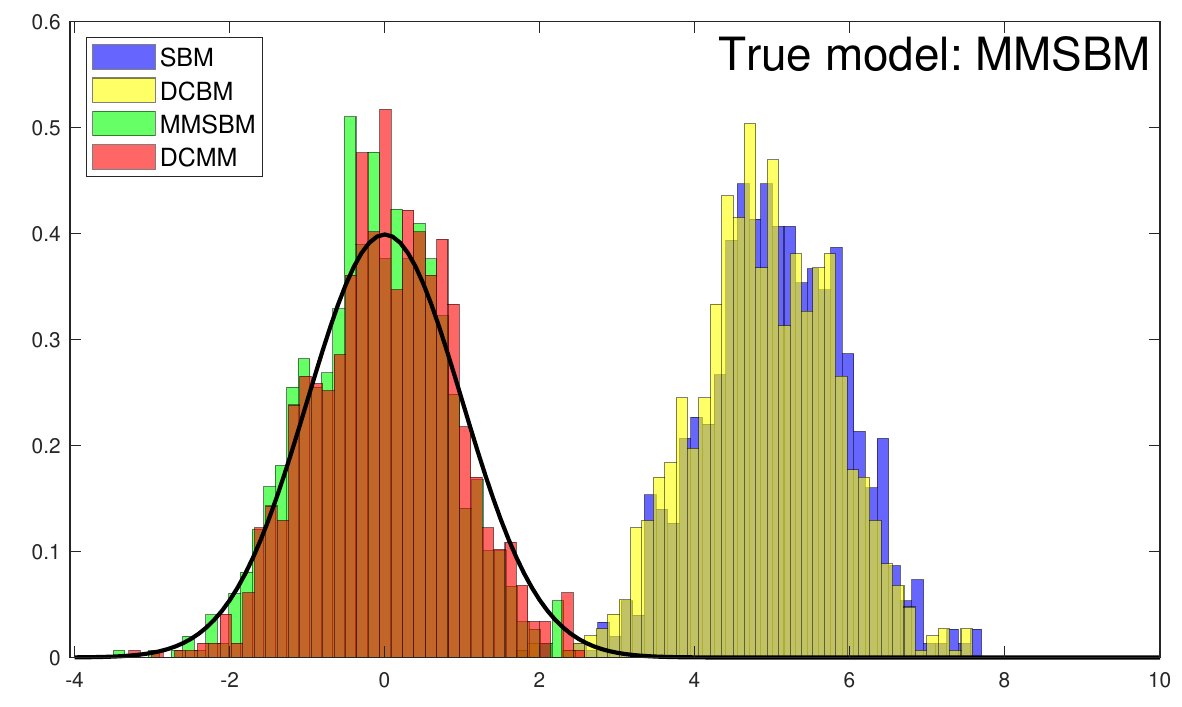}\\
\includegraphics[width=.42\textwidth, height=.24\textwidth]{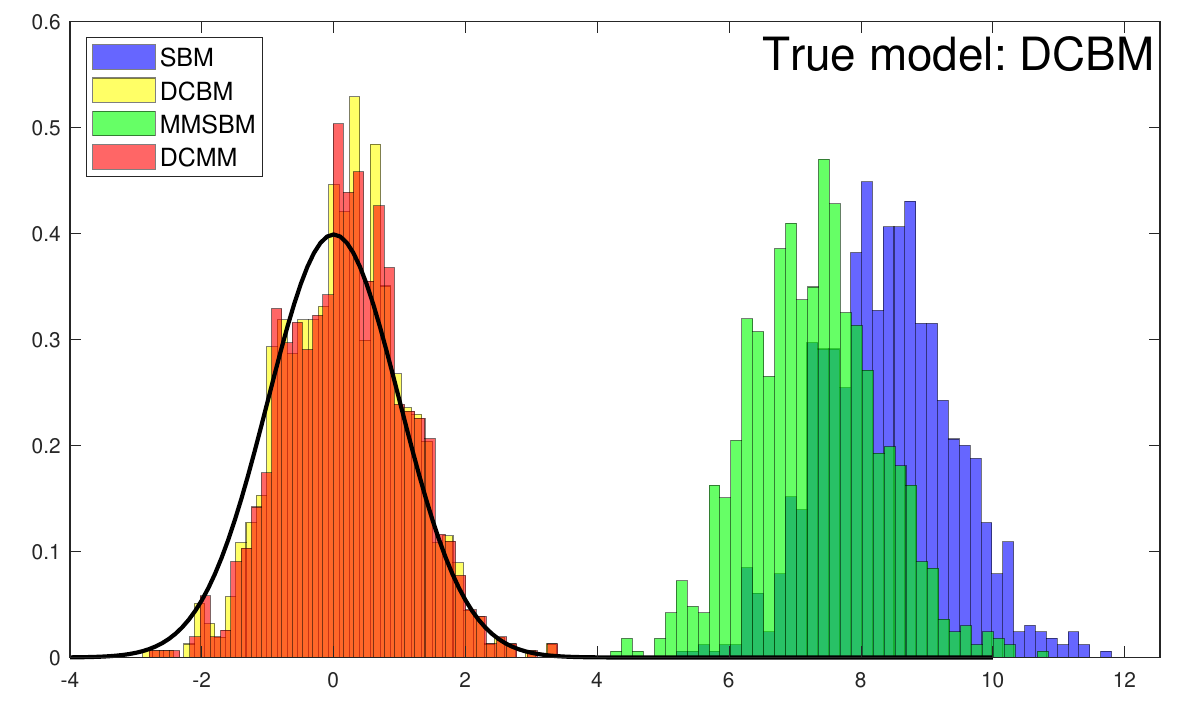}
\includegraphics[width=.42\textwidth, height=.24\textwidth]{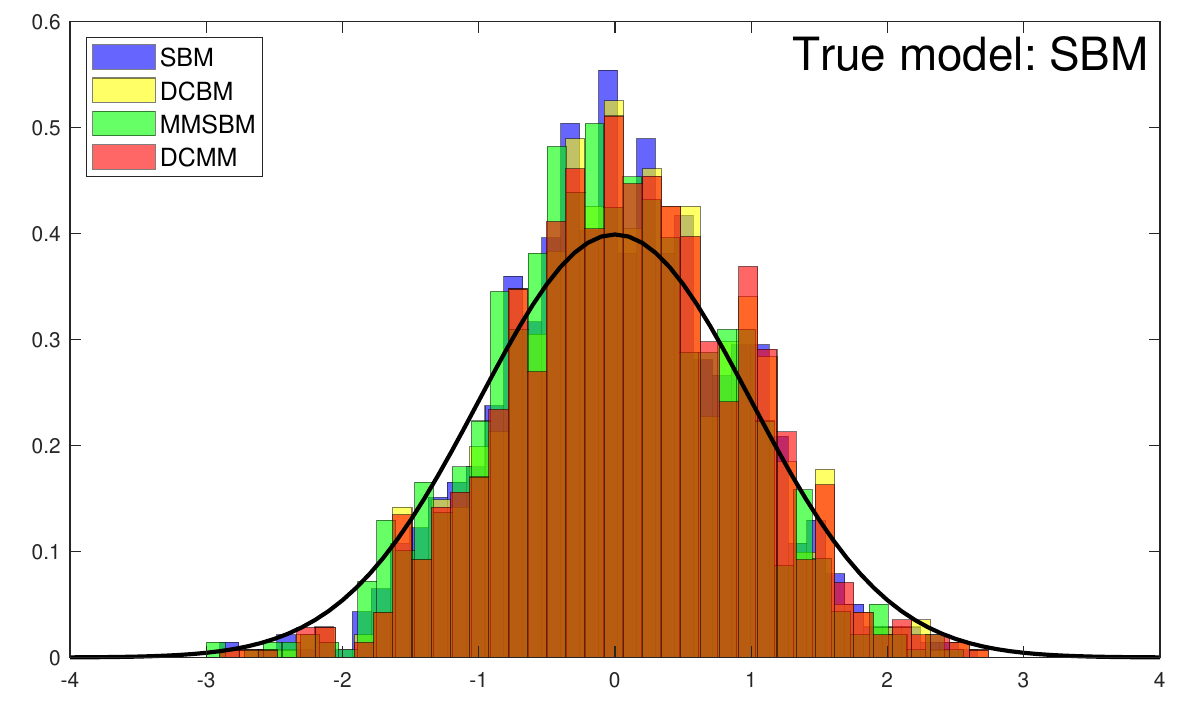}
\vspace{-1em}
\caption{Histograms of the GoF metrics in Experiment 1.1-1.4, where the off-diagonal entries of $P$ are equal to $0.2$. In each panel, the networks are generated from a true model in the block-model family, the four colors correspond to four GoF metrics, and the black curve is the density of $N(0,1)$.\label{Fig:2}}
\end{figure}
\spacingset{1.5}

\paragraph{Experiment 2: Type-I error control.} We study the type-I error control with the GoF metrics when the model is correctly specified. We focus on the setting where the true model is DCMM. The type-I error control for the other three models is similar and omitted here.  Fix $K=2$ and let $P$ have diagonals equal to $1$ and off-diagonals equal to $b$, for some $b\in (0,1)$. Let $\Pi$ be such that each community as $n/8$ pure nodes and that for the remaining nodes, half have $\pi_i=(x, 1-x)'$ and half have $\pi=(1-x, 1)'$, for some $x\in [0, 1/2]$. We use $(n, b, x)=(2000, 0.2, 0.25)$ as the basic setting and vary one parameter  each time: In setting (a), $n$ varies in $\{1200, 1600, \ldots, 3600\}$; in setting (b), 
$b$ varies in $\{0.1, 0.15,\ldots, 0.4\}$; in setting (c), $x$ varies in $\{0.05, 0.1, \ldots, 0.35\}$. 
For each setting, we  consider cases, with $\theta_i\overset{iid}{\sim}{\rm Unif}(0.1,0.3)$ (moderate degree heterogeneity) and  $\theta_i^{-1}\overset{iid}{\sim}{\rm Unif}(2,10)$ (severe degree heterogeneity), respectively (the average of $\theta_i$'s is approximately $0.2$ in both cases). For each setting, we generate $400$ networks, compute $T_n(\widehat\Omega^{\rm DCMM})$ and reject the null model DCMM when $|T_n(\widehat\Omega^{\rm DCMM})|$ exceeds the 97.5\% quantile of a standard normal (hence, this test targets to control the type-I error at the nominal level of $5\%$). 
Table~\ref{tab:1} displays the rejection rate, suggesting that the test achieves a good type-I error control.

\spacingset{1.2}
\begin{table}[tbh!]\centering
\scalebox{1}{
\begin{tabular}{l|c|rrrrrrrr}\hline
\multirow{3}{*}{Setting~(a)} &$n$ &1200 &1600 &2000 &2400 &2800 &3200 &3600 \\
\cline{2-9}
&$\theta_i\sim {\rm Unif}(0.1,0.3)$ &4.25 &5.25 &4.00 &4.00 &4.00 &3.25 &4.00 \\
&$\theta_i^{-1}\sim {\rm Unif}(2,10)$ & 3.50 &4.25 &2.75 &1.75 &4.00 &2.50 &3.25 \\\hline
\multirow{3}{*}{Setting~(b)} &$b$ &0.1 &0.15 &0.2 &0.25 &0.3 &0.35 &0.4 \\ 
\cline{2-9}
&$\theta_i\sim {\rm Unif}(0.1,0.3)$ &2.25 &3.25 &4.00 &4.25 &4.25 &3.00 &4.25 \\ 
&$\theta_i^{-1}\sim {\rm Unif}(2,10)$ &1.75 &3.00 &3.00 &2.50 &5.25 &4.25 &3.75 \\\hline
\multirow{3}{*}{Setting~(c)} &$x$ &0.05 &0.1 &0.15 &0.2 &0.25 &0.3 &0.35 \\ 
\cline{2-9}
&$\theta_i\sim {\rm Unif}(0.1,0.3)$ &3.25 &3.50 &4.75 &4.75 &3.00 &4.75 &4.75 \\ 
&$\theta_i^{-1}\sim {\rm Unif}(2,10)$ &3.50 &3.50 &1.25 &3.00 &3.25 &4.00 &6.00 \\\hline
\end{tabular}}
\caption{\small Empirical rejection rate of the level-$5\%$ GoF test for DCMM (based on 400 repetitions).\label{tab:1}}\label{tab: }
\end{table}

\spacingset{1.5}

\paragraph{Experiment 3: Power in detecting model mis-specification.} 
When the mis-specified model is still in the block model family and has the same $K$, Experiment~1 already shows that our proposed GoF metrics have good power (see Figures~\ref{Fig:1}-\ref{Fig:2}). 
In this experiment, we consider three more cases in which the mis-specified model either has a larger $K$ or is outside the block model family. 

{\it Experiment 3.1: Misspecification of $K$}. In this setting, the true model is a 3-community DCMM, but the assumed model is a 2-community DCMM. Fix $(n, K_0)=(3000, 3)$. Let $P\in\mathbb{R}^{3\times 3}$ be such that its diagonal and off-diagonal entries are 1 and $0.2$, respectively. We generate $\theta_i$'s independently generated from $\mathrm{Unif}(0.1, 0.3)$. Similarly as in Experiment~1, each community have $n/8$ pure nodes, and $\pi_i$'s of the remaining nodes are drawn independently from Dirichlet$(1/K_0, \cdots, 1/K_0)$. 
We compute the GoF-DCMM metric assuming $K=2$. The histogram of $T_n(\widehat{\Omega}^{\mathrm{DCMM}})$ based on 1000 repetitions is displayed in Figure~\ref{Fig:simu-power} (left panel). 
It shows a large departure from the standard normal density, and so the GoF metric has good power in detecting  misspecification of $K$. This setting has been covered in Lemma~\ref{lem:SNR-higher-rank}. The simulation results here are consistent with the statement of Lemma~\ref{lem:SNR-higher-rank}.  

{\it Experiment 3.2: A nonlinear DCMM model}. Fix $(n,K) = (3000, 2)$ and $f(x)=x^2+0.2$. Let $\Omega=\Theta\Pi P\Pi'\Theta$, where $(\Theta,\Pi, P)$ are generated in the same manner as in Experiment~3.1.  We still generate the networks so that the edges are independent, but the edge probabilities are $\mathbb{P}(A_{ij}=1)=f(\Omega_{ij})$,  for $1\leq i<j\leq n$. We call it the  nonlinear DCMM model. In Figure~\ref{Fig:simu-power} (middle panel), we plot the histogram of $T_n(\widehat{\Omega}^{\text{DCMM}})$ based on 1000 repetitions. 
It deviates from the standard normal density, suggesting a non-trivial power of our proposed GoF metric. 
This setting has also been covered in Lemma~\ref{lem:SNR-higher-rank}, where the signal-to-noise ratio (SNR) is hinged on $\{\lambda_{K+1}(f(\Omega))\}_{j\geq K+1}$, where $f(\Omega)$ is the element-wise transformation of $\Omega$. 
In general, when $f(\cdot)$ deviates more from a linear function, there is a more significant departure from the normal density.

{\it Experiment 3.3: A model with dependent edges}. All the misspecified models we have considered so far assume independent edges. We now consider a model that has dependent edges. 
Triadic closure is the property among three nodes A, B, and C, that if the connections A-B and A-C exist, there is a tendency for the new connection B-C to be formed. We follow the hierarchy triadic closure model  \citep{di2022hierarchy} to generate networks with $n=1000$ nodes, using the Matlab code available at \url{https://www.maths.ed.ac.uk/~dhigham/bistability.html}. We compute $T_n(\widehat{\Omega}^{\text{DCMM}})$ with $K=1$ and plot the histogram using 1000 repetitions. The result is in Figure~\ref{Fig:simu-power} (right panel). 
It suggests a non-trivial power.

\spacingset{1}
\begin{figure}[tb!]
\centering
\includegraphics[width=.32\textwidth, height=.28\textwidth]{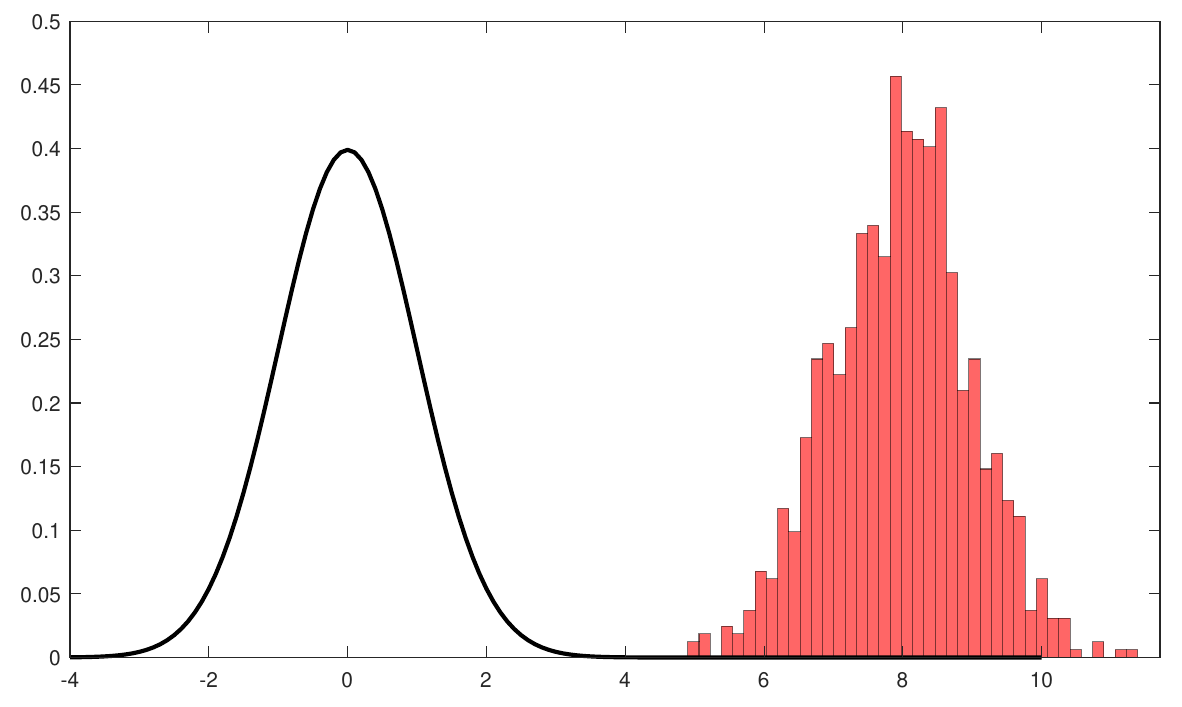}
\includegraphics[width=.32\textwidth, height=.28\textwidth]{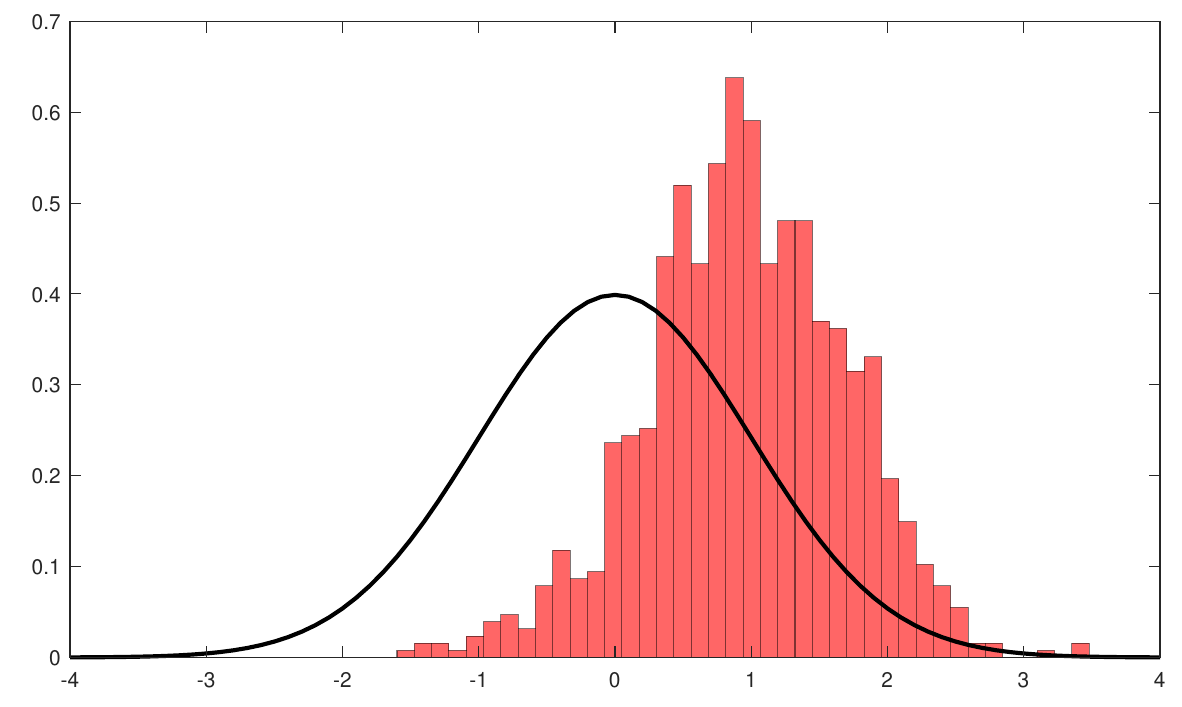}
\includegraphics[width=.32\textwidth, height=.28\textwidth]{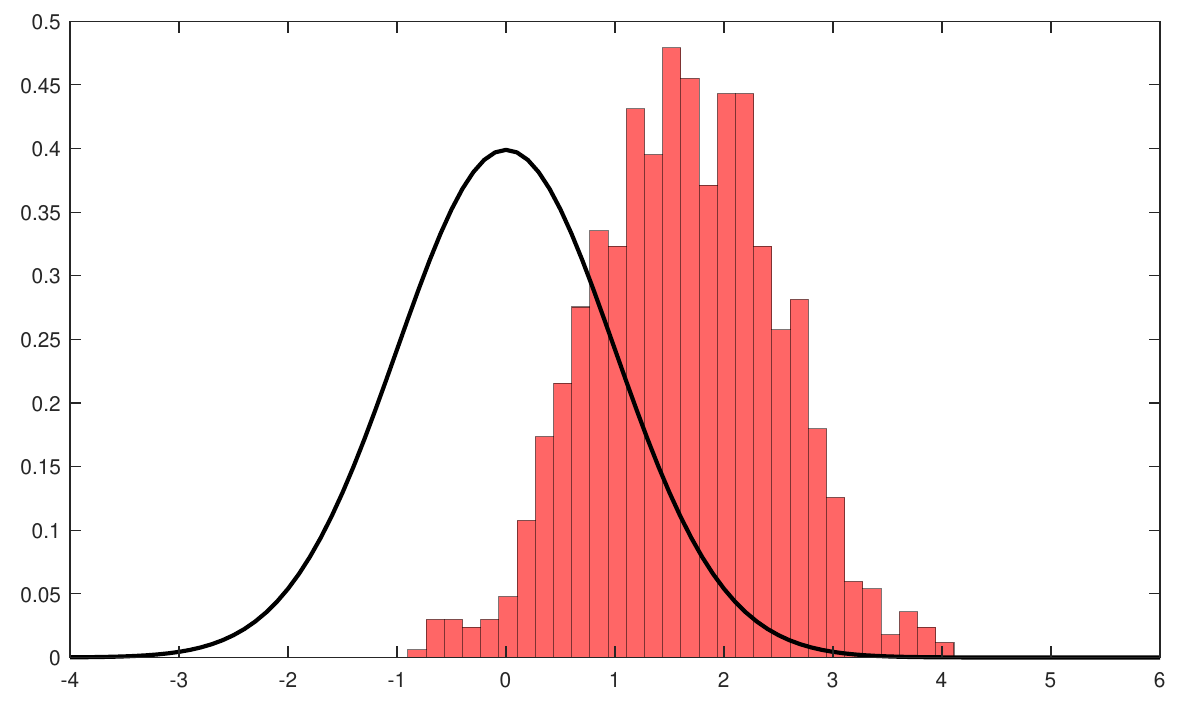}
\caption{Histograms of the GoF-DCMM metrics Experiment 3, where the true models are DCMM with misspecified $K$ (left), nonlinear DCMM (middle), and hierarchical triadic closure (right), respectively. In all panels, the black curve is probability density of $N(0,1)$.\label{Fig:simu-power}}
\end{figure}
\spacingset{1.5}


%

\subsection{Additional results on the LastFM network}

Among the 12 networks analyzed in our paper, LastFM is the only one that does not have a ground truth (or partial ground truth) of $K$. Therefore, we compute the GoF metrics of the four models (DCMM, DCBM, MMSBM, SBM) for each value of $K$ from $2$ to $13$. 
The results are presented in Table~\ref{tab:SNAPv1}. It suggests that the best fit is DCMM with $K=3$, where the corresponding GoF metric is much smaller than other values in the table. This is why we report the results associated with $K=3$ in the main paper.

\spacingset{1.2}
\begin{table}[!tb]\centering
\caption{GoF metrics on the LastFM network, for the four models with different $K$.\label{tab:SNAPv1}}
\begin{tabular}{l|rrrrrr}\hline
$K$ &2 &3 &4 &5 &6 &7 \\\hline
DCMM &162.6 &\textbf{2.204} &63.05 &68.53 &62.09 &-32.59 \\
DCBM &165.0 &141.7 &123.6 &109.1 &101.2 &106.8 \\
MMSBM &196.3 &193.1 &189.1 &185.2 &183.4 &181.9 \\
SBM &160.1 &140.0 &128.7 &133.3 &145.8 &114.5 \\
\hline\hline
$K$ &8 &9 &10 &11 &12 &13 \\\hline
DCMM &24.39 &-215.8 &-131.4 &42.73 &-213.5 &-556.9 \\
DCBM &106.6 &86.00 &81.45 &85.60 &76.09 &71.25 \\
MMSBM &180.2 &178.7 &176.7 &173.8 &172.7 &173.6 \\
SBM &138.2 &100.8 &149.6 &87.21 &109.7 &97.03 \\\hline
\end{tabular}
\end{table}
\spacingset{1.5}

At first glance, it is surprising that choosing $K=3$ is enough for fitting such a large network with over 7000 nodes. In fact, this is because DCMM allows for mixed membership, so that $\pi_i$ can take infinitely many values even for a small $K$. This also partially explains why DCBM and SBM fit poorly, even when $K$ is large. In these models, $\pi_i$'s can only take $K$ distinct values, so that it may require a large $K$ to fit a large network. 
Another interesting observation is that $T_n(\widehat{\Omega}^{\text{DCMM}})$ can detect not only under-specification but also over-specification of $K$. For example, the GoF-DCMM metrics with $K>3$ (i.e., $K$ is likely over-specified) 
are all large in magnitude. We observe a similar phenomenon in simulations. 
This case is however not covered by Lemma~\ref{lem:SNR-higher-rank}, and we leave it to future exploration.

\subsection{Robustness  to tuning parameters} \label{supp:Real}

We recall that aside from the vertex hunting steps, GoF-MSCORE (Algorithm~\ref{alg:afmSCORE}) is tuning-free. For real-data analysis, we use KNN-SP for vertex hunting. KNN-SP has two tuning parameters $(N, \alpha)$. In Section~\ref{subsec:GoF-MSCORE}, we have explained how we choose $(N, \alpha)$ in real-data results.  
In this subsection, we investigate the robustness our method to the choice of $(N, \alpha)$, using the UKfaculty network for example. By the default choice (see Section~\ref{subsec:GoF-MSCORE}), $N=8$ and $\alpha=20$ for UKfaculty.
We let $N$ range in $\{4, 5, 6,\ldots, 22\}$ and  $\alpha$ range in $\{5, 10, 15, 20\}$, and report the values of $T_n(\widehat{\Omega}^{\text{DCMM}})$ in Table~\ref{tab:tuning}. It suggests that our results are relatively insensitive to the choice of $(N, \alpha)$, especially the choice of $\alpha$.


\spacingset{1.2}

\begin{table}[!htp]\centering
\caption{The GoF metrics for DCMM (UKfaculty) for different tuning parameters $(\alpha,N)$ in the Vertex Hunting step.   The results are relatively insensitive different choices of $(\alpha,N)$.}\label{tab:tuning}
\scalebox{1}{
\begin{tabular}{c|rrrrrrrrrr}
\hline
$\alpha\backslash N$&$4$ &6 &8 &10 &12 &14 &16 &18 &20 & 22\\\hline
$5$ &1.700 &1.752 &1.767 &1.760 &1.753 &1.721 &1.688 &1.634 &1.578 & 1.552  \\
$10$ &1.732 &1.752 &1.767 &1.760 &1.753 &1.721 &1.688 &1.634 &1.578& 1.552  \\
$15$ &1.732 &1.752 &1.767 &1.760 &1.749 &1.721 &1.688 &1.634 &1.578 & 1.552  \\
$20$ &1.732 &1.752 &1.767 &1.763 &1.749 &1.721 &1.688 &1.634 &1.578 & 1.552  \\
\hline
\end{tabular}
} 
\end{table}

\spacingset{1.5}

\section{Proof sketch of the theoretical analysis} \label{sec:proof_sketch}

Given the theoretical analysis of normality for our GoF metrics is quite lengthy and involved, we provide a roadmap/proof sketch in this section. 


Recall from Section~\ref{subsec:SCCoracle} of the main article that our GoF metric is given by
\[
T_n(\widehat \Omega)  = U_{n,3} (\widehat \Omega) /\sqrt{6C_{n,3}},
\]
where
\[U_{n,3} (\widehat \Omega) = \sum_{i_1, i_2, i_3\, (dist)} (A - \widehat \Omega)_{i_1i_2} (A - \widehat \Omega)_{i_2i_3} (A - \widehat \Omega)_{i_3i_1}, \qquad C_{n,3} =  \sum_{i_1, i_2, i_3\, (dist)} A_{i_1i_2} A_{i_2i_3} A_{i_3i_1}\,. 
\] 
The goal is to show $T_n(\widehat \Omega) \to N(0,1)$ for different estimators $\widehat \Omega$ under different model assumptions. This is achieved by the following two steps:
\begin{itemize}[leftmargin=3.5em]
\item [Step 1.] For the ``oracle'' case where $\widehat \Omega = \Omega$, we prove that $T_n(\Omega)\to N(0,1)$ in law. This result is stated in Theorem~\ref{thm:SCC} of the main article. The main technical tool used in its proof is the martingale CLT \cite{hall2014martingale} via a careful construction of the martingale difference sequence and analyses of the conditional variance and moments using techniques in combinatorics. The detailed proof is given in Section~\ref{supp:SCC}. 

\item [Step 2.] Under different models (DCMM, SBM, MMSBM, DCBM), we prove that 
\begin{align}\label{new5}
\big| U_{n,3} (\widehat \Omega) -  U_{n,3} (\Omega)\big|/\sqrt{6C_{n,3}} = o_{p} (1),\qquad\text{as } n\to\infty.
\end{align}
To be more specific, we first claim $$C_{n,3}/\mathbb E (C_{n,3}) \to 1,\qquad\mathbb E (C_{n,3}) = {\rm tr} (\Omega^3 ) \{1 +o(1)\}.$$
This is achieved by examining the asymptotic order of $C_{n,3}$ and ${\rm tr} (\Omega^3 )$. The details are provided in  Lemma~\ref{lem:SCC} and the detailed proof in Section~\ref{supp:proof:SCC}. Consequently, it reduces to show that under the four modes in the block-model family
\begin{align}\label{new}
\big| U_{n,3} (\widehat \Omega) -  U_{n,3} (\Omega)\big| = o_{p}\{ {\rm tr} (\Omega^3 ) \},\qquad n\to\infty.
\end{align}
This is achieved by the following sub-steps:
\begin{enumerate}[leftmargin=2.8em]
\item [Step 2.1.]

Firstly, we decompose $U_{n,3} (\widehat \Omega) -  U_{n,3} (\Omega)$ into several terms, where each term is the trace of a matrix that is a functional of $\Delta = \widehat \Omega - \Omega$ and $W_1 = A - \Omega$. This is stated in Lemma~\ref{lem:diffU} of the main paper and proved in
Section~\ref{subsec:proof-diffU}
\item [Step 2.2.]

Secondly, as a direct consequence of Step 2.1, we bound 
\begin{align}\label{new3}
\big| U_{n,3} (\widehat \Omega) -  U_{n,3} (\Omega)\big| &\lesssim \Vert \Delta \Vert^3 + \Vert {\rm diag} (\Omega)\Vert \Vert \Delta\Vert^2 + \Vert {\rm diag} (\Omega)\Vert ^2 \Vert \Delta \Vert+ \Vert W_1 \Delta \Vert \Vert \Delta \Vert  \notag\\
&\quad + \Vert W_1^2 \Delta \Vert  +  \Vert {\rm diag} (W_1^2) \Delta\Vert + \Vert {\rm diag} (\Omega)\Vert \Vert W_1\Delta\Vert, 
\end{align}
where $\Delta = \widehat \Omega - \Omega$, the estimation error, varies across the four models in the block-model family.

\item[Step 2.3.] Finally, it suffices to investigate the high-probability upper bounds of the following four terms in equation~\eqref{new3}:
\begin{align}\label{new4}
\|\Delta \|,\quad \|W_1 \Delta\|,\quad \|W_1^2 \Delta \|,\quad \|{\rm diag}(W_1^2) \Delta \|,
\end{align}
under each of the four models in the block-model family. This is the most delicate and technical part of our analysis, where we employ some tricks in combinatorics. 

\end{enumerate} 
\end{itemize}

In the sequel, we use the GoF-SCORE for DCMM as an example to demonstrate how we prove the high-probability upper bounds for the terms in (\ref{new4}).  The analysis for SBM, DCBM, and MMSBM is similar but simpler, with different representations of $\widehat \Omega$, and the details are relegated in Section~\ref{supp:other3}. 

A key step in deriving the high-probability upper bounds for the four terms in \eqref{new4} is to represent $\widehat \Omega$ as a tractable function of $A$. By doing so, the deviation $\Delta = \widehat \Omega - \Omega$ can be reduced to some simple function of $W_1 = A- \Omega$, where $W_1$ is the only random factor. Notice that $\Delta$ is of low rank (at most $2K$), and the four key terms all involve $\Delta$. Consequently, upper bounding these terms is ultimately  simplified to bounding quadratic forms of certain functions of $W_1$. Since $W_1$ is a symmetric random matrix with independent upper triangular entries,  the analysis becomes feasible by calculating the related moments. More specifically, the proof is streamlined into the following steps:
\begin{itemize}[leftmargin=3.9em]
\item [Step A.] We prove that the output $\widehat H$ of the net-rounding step in Algorithm~\ref{alg:afmSCORE} satisfies $\widehat H = \Pi_0$ with probability $1- o(n^{-3})$, where $\Pi_0$ is the (deterministic) net-rounded version of the ground truth $\Pi$. To prove this, we first derive a sharp rate for the initial estimate $\widehat\Pi^{\rm MS}$ in Algorithm~\ref{alg:afmSCORE} by an entry-wise eigenvector analysis of MSCORE. Next, according to the net rounding procedure, under the assumption of  the ground truth $\Pi$, we obtain the concentration of $\widehat H$. More details are provided in Theorem~\ref{susec:pf-MSCORE} and its proof in Section~\ref{sec:MSCORE-supp}.

This step enables us to analyze our GoF-MSCORE metric on the event that $\widehat H = \Pi_0$. This greatly simplifies many intermediate quantities. For example, in the remaining steps, we only need to consider $\widehat R_H = {\rm diag} (A{\bf 1}_n)^{-1}AH$ in Algorithm~\ref{alg:afmSCORE} for a deterministic $H$.

\item[Step B.] 

In view of the definitions of the estimators of $\Pi$, $\Theta$, and $P$ in Algorithm~\ref{alg:afmSCORE}, we first represent $\widehat{\Omega}$ as an explicit function of $(A, H, \widehat{V})$. We then study the connection between $\widehat{V}$ and $\widehat R_H = {\rm diag} (A{\bf 1}_n)^{-1}AH$. Eventually, we obtain $\widehat{\Omega}$ as an explicit function of $(A, H)$. 
See more details in Lemma~\ref{lem:TvsH} and Section~\ref{subsec:TtoH} for its proof.


\item [Step C.] Using the alternative expression from Step B and applying Taylor expansion, we derive the dominating terms of $\Delta = \widehat \Omega - \Omega$, which are linear and quadratic functionals of $W_1 = A - \Omega$ and have rank $K$. Consequently, the upper bounds of the four terms in \eqref{new4} are reduced to bounding operator norms of matrices that are simple functionals of $W_1$ and $H$. These matrices are also of rank $K$. Therefore, it suffices to prove the entry-wise bounds by computing the asymptotic order of the mean and variance of their entries, utilizing some combinatorial techniques. We refer to Lemma~\ref{lem:Delta} and Section~\ref{supp:main-auxiliary} for a complete proof.

 \end{itemize}

\section{Proof of Theorem \ref{thm:Laplacian}}\label{supp:section6} 

For preparations, we first show that 
\begin{itemize} 
\item(a).  If exactly $m$ of the $K$ nonzero eigenvalues of $\Omega$ are negative, then 
exactly $m$ of the $K$ nonzero eigenvalues of $U^{-1/2} \Omega U^{-1/2}$ are negative.  
\item(b). $\tau_1 = 1$ and $\rho_1 = n^{-1/2} {\bf 1}_n$.  
\end{itemize} 
Part (a) follows directly by Sylvester’s law of inertia \cite{HornJohnson}.  
Consider (b). Recall that $U = \diag(u_1, u_2, \ldots, u_n)$ and 
$u_i = \sum_{j = 1}^n \Omega(i, j)$, $1 \leq i \leq n$.   
Let $u = (u_1, u_2, \ldots, u_n)'$.  
 By direct calculations,  $L {\bf 1}_n = U^{-1} \Omega {\bf 1}_n = U^{-1} u = {\bf 1}_n$, so $(1, n^{-1/2} {\bf 1}_n)$ is an eigen-pair of $L$. By Perron's theorem \cite{HornJohnson}, 
 $(1, n^{-1/2} {\bf 1}_n)$ must be the first eigen-pair of $L$ and the claim follows.

We now prove Theorem \ref{thm:Laplacian}.   By the arguments above,  all remains to show is the second claim. Also, by the arguments above,  the $k$-th eigen-pair of 
$U^{-1/2} \Omega U^{1/2}$ is $(\tau_k,  c_k U^{1/2} \rho_k)$, where $c_k  = (\rho_k' U  \rho_k)^{-1/2}$.  
Write $\eta_k = c_k U^{1/2} \rho_k$ for short. 
By \cite[Section 2]{NMF}, the NMF is solvable for $U^{-1/2} \Omega U^{-1/2}$ if either $K = 2$ or $K \geq 3$ but 
\[
\sum_{k = 2}^K \frac{\tau_k}{\tau_1} (\frac{\eta_k(i)}{\eta_1(i)})^2 \leq \frac{1}{K-1}. 
\] 
Since $\eta_k = c_k U^{1/2} \rho_k$ and $U = \diag(u_1, u_2, \ldots, u_n)$ is a diagonal matrix, 
it is seen that 
\[
\frac{\eta_k(i)}{\eta_1(i)} = \frac{c_k \sqrt{u_i}  \rho_k(i)}{c_1 \sqrt{u_i} \rho_1(i)} =  \frac{c_k \rho_k(i)}{c_1 \rho_1(i)} = \sqrt{n} (c_k / c_1)  \rho_k(i), 
\] 
where we have used $\rho_1 = n^{-1/2} {\bf 1}_n$. 
Therefore, the LHS reduces to 
\[
\sum_{k = 2}^K  \frac{|\tau_k|}{\tau_1}  \biggl(\frac{\rho_1' U  \rho_1}{\rho_k' U \rho_k}\biggr)  n   \rho_k^2(i)   =  
\sum_{k = 2}^K  |\tau_k|  \cdot  \frac{\bar{u}}{\rho_k' U  \rho_k}   \cdot   n   \rho_k^2(i) \equiv  \sum_{k = 2}^K  |\tau_k|  \cdot \omega_k   \cdot    (\sqrt{n}   \rho_k(i))^2, 
\] 
where we have used $\tau_1 = 1$,  $1/ c_1^2  =  \rho_1' U \rho_1 =  \bar{u}$,  $1/c_k^2 = \rho_k U \rho_k$, and $\omega_k = \bar{u}/(\rho_k' U \rho_k)$. 
The claim now follows from the assumption of $\sum_{k = 2}^K  |\tau_k| \cdot  \omega_k \cdot \|\sqrt{n} \rho_k\|_{\infty}^2 \leq \frac{1}{K-1}$.

\section{Analysis of the SCC statistic under a general model}\label{supp:SCC}

 In this section,  we prove Theorem~\ref{thm:SCC} and Corollary~\ref{cor:SCC}.
The proof of Theorem~\ref{thm:SCC}  relies on martingale central limit theorem and some combinatorics tricks. Corollary~\ref{cor:SCC} can be regarded as a special case of Theorem~\ref{thm:SCC}. To show it, one only need to verify the conditions in  Theorem~\ref{thm:SCC} hold for the setting in Corollary~\ref{cor:SCC}. Before the proofs, we provide a useful lemma below which will be employed in the proof of  Theorem~\ref{thm:SCC}.

We introduce some notations for simplicity. These notations will be used not only throughout this section but also throughout the subsequent sections. For any two sequences $a_n$ and $b_n$, $ a_n \asymp b_n$ means  $a_n \leq C b_n $ and $b_n\leq C' a_n$ for some constants $C, C' >0$;  $a_n \lesssim b_n$ means $a_n\leq C b_n$ for some constant $C>0$. We use $C, C', c, c'$ to represent some generic positive constants independent of dimension $n$, which may vary from line to line.  For any matrix $M$ of dimension $m \times n$, we use either $M_{ij}$ or $M(i,j)$ to denote its $(i,j)$-th entry.


\begin{lemma}\label{lem:SCC}
Fix an integer $m\geq 3$. Let $\Omega$ satisfy the conditions in  Theorem~\ref{thm:SCC}, then as $n\to \infty$,
\begin{align*}
{\rm tr}(\Omega^m) \to \infty, \qquad  \mathbb{E} C_{n,m} ={\rm tr}(\Omega^m) \big(1+ o(1) \big)\,
\end{align*}
Moreover, let $\bar{\Omega} = \Omega \circ (\mathbf{1}_n\mathbf{1}_n' -\Omega )$ where $\circ$ represents the Hadamard product and $\mathbf{1}_n$ is the all-one vector in $\mathbb{R}^n$. The results also hold for $\bar{\Omega}$.
\end{lemma}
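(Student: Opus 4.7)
My plan is to reduce all three assertions to the entry-wise bound $\Omega(i,j)\leq Cu_iu_j/(n\bar u)$ and the trace lower bound ${\rm tr}(\Omega^m)\geq C\|u\|^{2m}/(n\bar u)^m$ assumed in Theorem~\ref{thm:SCC}, supplemented by a combinatorial decomposition of index tuples. For the divergence ${\rm tr}(\Omega^m)\to\infty$, the assumption $n\bar u/\|u\|^2\to 0$ gives $\|u\|^2/(n\bar u)\to\infty$; raising this to the $m$-th power and combining with the trace lower bound yields ${\rm tr}(\Omega^m)\geq C\{\|u\|^2/(n\bar u)\}^m\to\infty$.

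Since the upper triangular entries of $A$ are independent Bernoulli variables with $\mathbb{E}A_{ij}=\Omega(i,j)$ for $i\neq j$, the distinctness of $i_1,\ldots,i_m$ forces the $m$ edges $\{i_k,i_{k+1}\}$ in a cycle to be distinct, so
\[
\mathbb{E}[C_{n,m}]=\sum_{i_1,\ldots,i_m\text{ distinct}}\Omega(i_1,i_2)\cdots\Omega(i_m,i_1),
\]
whereas ${\rm tr}(\Omega^m)$ is the same sum without the distinctness restriction. It therefore suffices to show that the ``non-distinct'' residual $R_n:=\sum_{i_1,\ldots,i_m\text{ not all distinct}}\Omega(i_1,i_2)\cdots\Omega(i_m,i_1)$ satisfies $R_n=o({\rm tr}(\Omega^m))$. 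I would partition $R_n$ by the equivalence pattern on $\{1,\ldots,m\}$ induced by coincidences in $(i_1,\ldots,i_m)$; each pattern is characterized by its number $L\in\{1,\ldots,m-1\}$ of distinct values $j_1,\ldots,j_L$ with positive multiplicities $n_1,\ldots,n_L$ summing to $m$. Applying the entry-wise bound and using that every index $i_k$ appears in exactly two factors of the closed walk, the contribution of a single pattern is at most
\[
C^m(n\bar u)^{-m}\!\!\sum_{j_1,\ldots,j_L\text{ distinct}}\!\!u_{j_1}^{2n_1}\cdots u_{j_L}^{2n_L}\leq C^m(n\bar u)^{-m}\prod_{l=1}^L\bigl(u_{\max}^{2n_l-2}\|u\|^2\bigr)=C^m(n\bar u)^{-m}u_{\max}^{2(m-L)}\|u\|^{2L},
\]
where the first inequality decouples the distinct-index sum into a product of single-index sums and the second uses $\sum_j u_j^{2n_l}\leq u_{\max}^{2n_l-2}\|u\|^2$ coordinate-wise. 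Dividing by the trace lower bound gives a ratio bounded by a constant times $(u_{\max}^2/\|u\|^2)^{m-L}$; since $u_{\max}^2/\|u\|^2=\{u_{\max}^2/(n\bar u)\}\cdot\{n\bar u/\|u\|^2\}\to 0$ and $m-L\geq 1$, each pattern contributes $o({\rm tr}(\Omega^m))$. The number of equivalence patterns depends only on $m$, so summing preserves the $o$-bound and yields $\mathbb{E}[C_{n,m}]={\rm tr}(\Omega^m)(1+o(1))$.

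For $\bar\Omega$, note that $0\leq\bar\Omega_{ij}=\Omega(i,j)\{1-\Omega(i,j)\}\leq\Omega(i,j)$, so the entry-wise upper bound $\bar\Omega_{ij}\leq Cu_iu_j/(n\bar u)$ transfers at once. Furthermore $\max_{i,j}\Omega(i,j)\leq Cu_{\max}^2/(n\bar u)\to 0$ by hypothesis, so $\bar\Omega_{ij}=\Omega(i,j)(1+o(1))$ uniformly in $(i,j)$. Substituting into the unrestricted sum defining ${\rm tr}(\bar\Omega^m)$ gives ${\rm tr}(\bar\Omega^m)={\rm tr}(\Omega^m)(1+o(1))$, which preserves both the divergence ${\rm tr}(\bar\Omega^m)\to\infty$ and the matching lower bound ${\rm tr}(\bar\Omega^m)\gtrsim\|u\|^{2m}/(n\bar u)^m$; the combinatorial argument of the previous paragraph then runs verbatim with $\bar\Omega$ in place of $\Omega$.

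The main obstacle is the combinatorial bookkeeping in the middle step, in particular verifying that the decoupling $\sum_{j_1,\ldots,j_L\text{ distinct}}\leq\prod_l\sum_{j_l}$ together with $\sum_j u_j^{2n_l}\leq u_{\max}^{2n_l-2}\|u\|^2$ is uniform over all partitions, and checking that the borderline pattern $L=m-1$ (exactly one pair of coincident indices) still produces a $o(1)$ ratio. This is the tightest case, providing only one factor of $u_{\max}^2/\|u\|^2$ to absorb, and it is rescued by the fact that under the hypotheses of Theorem~\ref{thm:SCC} this ratio is genuinely $o(1)$, not merely bounded.
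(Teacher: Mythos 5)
Your proposal is correct and follows essentially the same route as the paper's proof: both reduce $\mathbb{E}C_{n,m}-{\rm tr}(\Omega^m)$ to the sum over index tuples with at most $m-1$ distinct values, bound each coincidence pattern via $\Omega(i,j)\leq Cu_iu_j/(n\bar u)$ and $\sum_j u_j^{2a}\leq u_{\max}^{2a-2}\|u\|^2$, and absorb the deficit with $u_{\max}^2/(n\bar u)\to 0$ and $n\bar u/\|u\|^2\to 0$ (the paper splits these two factors where you combine them into $(u_{\max}^2/\|u\|^2)^{m-L}$, but the estimate is identical). The $\bar\Omega$ part likewise matches the paper's argument that $\bar\Omega(i,j)=\Omega(i,j)(1+o(1))$ uniformly.
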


\begin{proof}[Proof of Lemma~\ref{lem:SCC}]
First by the conditions in Theorem~\ref{thm:SCC}, we trivially have 
\begin{align*}
{\rm tr} (\Omega^m)\geq C\Vert u\Vert^{2m}/(n\bar{u})^{m}\to \infty\, . 
\end{align*}

Second, we prove that $\mathbb{E} C_{n,m} ={\rm tr}(\Omega^m) \big(1+ o(1) \big)$. We will show that ${\rm tr}(\Omega^m) - \mathbb{E} C_{n,m} = o(\Vert u\Vert^{2m}/(n\bar{u})^{m}) $. To do this, we first notice that ${\rm tr}(\Omega^m) = \sum_{i_1, \cdots, i_m=1}^n \Omega_{i_1i_2}\cdots  \Omega_{i_mi_1}$ and we can decompose $ \sum_{i_1, \cdots, i_m=1}^n$ into a combination of sums over $\widetilde{m}$ distinct indices, where $1\leq \widetilde{m}\leq m $, for instance, $\sum_{i_1, \cdots, i_{m-1} \, dist, i_m=i_1}$ as one sum. For simplicity, we write $\mathbf{I}_m : = \{i_1, \cdots, i_m\}$ and denote by $|\mathbf{I}_m|$ the cardinality of $\mathbf{I}_m$. We use the notation $[\![1,n]\!] : = \{1, \cdots, n\}$.  As a consequence, 
\begin{align*}
{\rm tr}(\Omega^m) - \mathbb{E} C_{n,m} & \leq  \sum_{\tilde{m} =1}^{m-1}\, \sum_{\substack{\mathbf{I}_m\in [\![1,n]\!]^m\\s.t. \, |\mathbf{I}_m| = \tilde{m} }}\Omega_{i_1i_2}\cdots  \Omega_{i_mi_1} \notag\\
& \lesssim  \sum_{\tilde{m} =1}^{m-1}\, \sum_{\substack{\mathbf{I}_m\in [\![1,n]\!]^m\\s.t. \, |\mathbf{I}_m| = \tilde{m} }} \Big( \frac{u_{i_1}}{\sqrt{n\bar{u}}} \Big)^2 \Big( \frac{u_{i_2}}{\sqrt{n\bar{u}}} \Big)^2 \cdots \Big( \frac{u_{i_m}}{\sqrt{n\bar{u}}} \Big)^2 
\end{align*}
where we used the condition $\Omega(i,j) \leq C u_iu_j/(n\bar{u})$. Furthermore, for a fixed $1\leq \tilde{m}\leq m-1$, 
\begin{align*}
&\sum_{\substack{\mathbf{I}_m\in [\![1,n]\!]^m\\s.t. \, |\mathbf{I}_m| = \tilde{m} }} \Big( \frac{u_{i_1}}{\sqrt{n\bar{u}}} \Big)^2\Big( \frac{u_{i_2}}{\sqrt{n\bar{u}}} \Big)^2 \cdots \Big( \frac{u_{i_m}}{\sqrt{n\bar{u}}} \Big)^2 
\\
&\lesssim \sum_{ \substack{a_1, \cdots, a_{\tilde{m}}\geq 1 \\ a_1+\cdots+ a_{\tilde{m}}= m } } \sum_{i_1, \cdots, i_{\tilde{m}}} \Big( \frac{u_{i_1}}{\sqrt{n\bar{u}}} \Big)^{2a_1} \Big( \frac{u_{i_2}}{\sqrt{n\bar{u}}} \Big)^{2a_2} \cdots \Big( \frac{u_{i_{\tilde{m}}}}{\sqrt{n\bar{u}}} \Big)^{2a_{\tilde{m}}} \notag\\
& \lesssim \sum_{ \substack{a_1, \cdots, a_{\tilde{m}}\geq 1 \\ a_1+\cdots+ a_{\tilde{m}}= m } } \frac{ \Vert u\Vert^{2\tilde{m}} } {(n\bar{u})^{\tilde{m}}} \cdot \Big(\frac{u_{\max}}{\sqrt{n\bar{u}}}\Big)^{2(a_1+ \cdots + a_{\tilde{m}}- \tilde{m})} \notag\\
& \ll \frac{ \Vert u\Vert^{2\tilde{m}} } {(n\bar{u})^{\tilde{m}}}  \ll \frac{ \Vert u\Vert^{2 {m}} } {(n\bar{u})^{{m}}} 
\end{align*}
following from the conditions that $u_{\max}^2/ (n\bar{u}) = o(1)$ and $n\bar{u}/\Vert u\Vert^2 = o(1) $. We therefore conclude that ${\rm tr}(\Omega^m) - \mathbb{E} C_{n,m}  = o\big( {\rm tr}(\Omega^m) \big)$, which is equivalent to $\mathbb{E} C_{n,m}   = {\rm tr}(\Omega^m)  (1+ o(1))$.

In the end, we claim the results also hold for $\bar{\Omega}$. It is worthy noting that $\bar{\Omega}(i,j) = \Omega(i,j) \big(1-   \Omega(i,j)\big)$. By the conditions in Theorem~\ref{thm:SCC}, we observe that $\Omega(i,j)= o(1)$ for all $1\leq i, j \leq n$. It follows that $\bar{\Omega}(i,j) = {\Omega}(i,j) (1+ o(1)) $, which implies that 
\begin{align*}
{\rm tr} \big(\bar{\Omega}^m  \big) &= \sum_{i_1, \cdots, i_m} \bar{\Omega}(i_1, i_2)\cdots\bar{\Omega}(i_m, i_1) 
\\
&= \sum_{i_1, \cdots, i_m}  \Omega (i_1, i_2)\cdots \Omega(i_m, i_1) (1+o(1)) = {\rm tr} \big(\Omega^m  \big) (1+ o(1))\, .
\end{align*}
Therefore, the results follows directly for $\bar{\Omega}$.
\end{proof}

\subsection{Proof of Theorem~\ref{thm:SCC}}\label{supp:proof:SCC}
Since the condition that $\widehat{\Omega} = \Omega$ with probability $1$, it suffices to show $\psi_{n, m}(\Omega)  \goto N(0,1)$. Recall the definition $\psi_{n, m}(\Omega) =  U_{n, m}({\Omega}) / \sqrt{2 m C_{n, m}}\, .$ To achieve the goal, we split the proof into two parts:
\begin{align} \label{SCC:2goal}
(1)\quad  \frac{U_{n,m}   - \mathbb{E} U_{n,m} }{\sqrt{{\rm var}(U_{n,m})}} \overset{d}{\longrightarrow} N(0,1), \qquad \text{and }  \qquad  (2)\quad  \frac{C_{n,m}}{\mathbb{E}C_{n,m}} \overset{p}{\longrightarrow} 1\,,
\end{align}
where we write $U_{n,m}\equiv U_{n, m}({\Omega})$. We denote $CC[1,k]\subset\{(i_1,i_2,\ldots,i_m):1\leq i_1,\ldots,i_m\leq k\}$ the set of $m$-cycles taking distinct values in $[\![1, k]\!]$. Therefore,  by definition, 
\begin{align*}
U_{n,m} = 2m \sum_{CC[1,n]} W_{i_1 i_2}{W}_{i_2 i_3}\ldots {W}_{i_m i_1}
\end{align*}
since each unique cycle corresponds to $2m$ different representations which can be construct by rotating and/or flipping it.  The two statements in (\ref{SCC:2goal}) indeed imply $\psi_{n, m}(\Omega)  \goto N(0,1)$ due to the  facts that $\mathbb{E}U_{n,m} = 0$ and the derivations that
\begin{align*}
 {\rm var}(U_{n,m}) &=(2m)^2 \sum_{CC[1,n]}\mathbb{E} {W}^2_{i_1 i_2} \mathbb{E}{W}^2_{i_2 i_3} \ldots \mathbb{E}{W}^2_{i_m i_1}  \notag\\
 &= (2m)^2 \sum_{CC[1,n]}\Omega_{i_1i_2}(1- \Omega_{i_1i_2})  \Omega_{i_2i_3}\ldots\Omega_{i_mi_1}(1-\Omega_{i_mi_1}) \notag\\
 & =  (2m)^2 \big[\sum_{CC[1,n]}\Omega_{i_1i_2}\Omega_{i_2i_3}\ldots\Omega_{i_mi_1}\big] (1+ o(1))= 2m\mathbb{E} (C_{n,m}) (1+o(1))
\end{align*}
Here  the third step is due to $\max_{i,j} \Omega(i,j) = o(1)$ by the assumptions in Theorem~\ref{thm:SCC}.

In the sequel, we first prove (1) in (\ref{SCC:2goal}). Recall the notation $\bar{\Omega} = \Omega \circ (\mathbf{1}_n\mathbf{1}_n' -\Omega )$ where $\circ$ stands for the Hadamard product and $\mathbf{1}_n$ is the all-one vector in $\mathbb{R}^n$. It is straightforward to observe from this notation that $\bar{\Omega}(i,j) = \Omega(i,j) (1-\Omega(i,j)) = {\rm var}(A_{ij}) = \mathbb{E} W_{ij}^2$ for all $1\leq i\neq j \leq n$. Hence, we can write $ {\rm var}(U_{n,m})  = (2m)^2 \sum_{CC[1,n]}\bar{\Omega}_{i_1i_2}\bar{\Omega}_{i_2i_3}\ldots\bar{\Omega}_{i_mi_1}$.  We follow the proof in \cite{GC}
and construct the $\sigma$-algebra $\mathcal{F}_{n,k} = \sigma\big( \{A_{ij}\}_{1\leq i <j \leq k}\big)$ and the martingale difference sequence 
\begin{align}\label{notes1:xnk}
X_{n,k} = S_{n,k} - S_{n,k-1}, \quad {\rm where } \quad S_{n,k} : = \frac{\sum_{CC[1, k]} W_{i_1i_2}\cdots W_{i_mi_1}}{\sqrt{\sum_{CC[1,n]} \bar{\Omega}_{i_1i_2}\cdots \bar{\Omega}_{i_mi_1}}}.
\end{align}
In order to show $S_{n,n} = U_{n,m}/\sqrt{{\rm var}(U_{n,m})} \overset{d}{\longrightarrow} N(0,1)$, we apply the martingale central limit theorem in
\cite{hall2014martingale} and verify that
\begin{align}
&\sum_{k=1}^n \mathbb{E} (X_{n,k}^2 | \mathcal{F}_{n,k-1}) \overset{p}{\to} 1,\label{notes1:1}\\
&\sum_{k=1}^n \mathbb{E}(X_{n, k}^2 1_{\{|X_{n, k}|>\epsilon\}} | \mathcal{F}_{n, k-1}) \stackrel{p}{\rightarrow} 0,\quad \text{for any } \epsilon>0.\label{notes1:2}
\end{align}
We first verify \eqref{notes1:1}. 
For simplicity, we denote $[\![a,b]\!]=\{a,a+1,\ldots,b\}$ for integers $a,b\in\mathbb N_+$ and we write for short $M_n: = \sum_{CC[1,n]} \bar{\Omega}_{i_1i_2}\cdots \bar{\Omega}_{i_mi_1}$. 
It follows from the definition of $X_{n,k}$ in \eqref{notes1:xnk} and direct calculations that $X_{n,k} = 0$ for $k<m$, if $k\geq m$, 
\begin{align*}
X_{n,k} & = \frac{1}{\sqrt{M_n} } \sum_{CC[1, k]\setminus CC[1,k-1] } W_{i_1i_2}\cdots W_{i_mi_1}= \frac{1}{\sqrt{M_n} } \sum_{1\leq i <j\leq k-1} W_{ki}W_{kj} Y_{kij},
\end{align*}
where
\begin{align}\label{def:Y_kij}
Y_{kij}= \sum_{\substack{1\leq\ell_1, \cdots, \ell_{m-3}\leq k-1\\
\ell_1, \cdots, \ell_{m-3}(dist)\\ \ell_1, \cdots, \ell_{m-3}\notin\{i,j\}}} W_{i\ell_1} W_{\ell_1\ell_2} \cdots W_{\ell_{m-3}j}.
\end{align}
Observe the fact that
\begin{align*}
\mathbb{E} (X_{n,k}^2 | \mathcal{F}_{n,k-1}) = \frac{1}{M_n} \sum_{1\leq i <j \leq k-1}  Y_{kij}^2 \bar{\Omega}_{ki} \bar{\Omega}_{kj};\quad \mathbb{E} Y_{kij}^2 = \sum_{\substack{1\leq\ell_1, \cdots, \ell_{m-3}\leq k-1\\
\ell_1, \cdots, \ell_{m-3}(dist)\\ \ell_1, \cdots, \ell_{m-3}\notin\{i,j\}}} \bar{\Omega}_{i\ell_1}\bar{\Omega}_{\ell_1\ell_2} \cdots \bar{\Omega}_{\ell_{m-3}j}.
\end{align*}
By direct calculations, we obtain from the above equation that
\begin{align*}
\mathbb{E}\bigg(\sum_{k=1}^n\mathbb{E} (X_{n,k}^2 | \mathcal{F}_{n,k-1})\bigg) & = \frac{1}{M_n} \sum_{k=m}^n \,  \sum_{CC[1, k]\setminus CC[1,k-1] }\bar{\Omega}_{i_1i_2}\bar{\Omega}_{i_2i_3}\ldots\bar{\Omega}_{i_mi_1}\notag\\
& = \frac{1}{M_n}\sum_{CC[1, n] }\bar{\Omega}_{i_1i_2}\bar{\Omega}_{i_2i_3}\ldots\bar{\Omega}_{i_mi_1} = 1\, .
\end{align*}
Then, in order to verify \eqref{notes1:1}, we will prove that
\begin{align*}
{\rm var} \Big( \sum_{k=1}^n \mathbb{E} (X_{n,k}^2 | \mathcal{F}_{n,k-1})\Big) \to 0,\qquad\text{as }n\to\infty. 
\end{align*}
For the sake of simplicity, we define the index set 
\begin{align*}
\mathcal{L}(k,i,j) : = \{\ell:= (\ell_1, \cdots \ell_{m-3}): \ell_1, \cdots \ell_{m-3} \in [\![1, k-1]\!]\setminus\{i,j\}  \text{ are distinct}\}.
\end{align*}
And we also introduce a shorthand notation $M_{i\to \ell\to j} := M_{i\ell_1} M_{\ell_1\ell_2}\cdots M_{\ell_{m-3} j_1}$ for a matrix $M\in\mathbb R^{n,n}$. Based on these notations, we can write $Y_{kij} = \sum_{\ell\in \mathcal{L}(k,i,j)}W_{i\to \ell\to j}$.
Then by some delicate but elementary derivations, we have the  representation 
\begin{align}
 \sum_{k=m}^n\Big( \mathbb{E} (X_{n,k}^2 | \mathcal{F}_{n,k-1})  - \mathbb{E} (X_{n,k}^2)\Big) &= \frac{1}{M_n} \sum_{k= m}^n \sum_{1\leq i<j<k } \, \sum_{\ell\in \mathcal{L}(k,i,j)} \big((W_{i\to \ell\to j})^2 - \bar{\Omega}_{i\to \ell\to j} \big)\bar{\Omega}_{ki} \bar{\Omega}_{kj} \notag\\
 &+ \frac{1}{M_n} \sum_{k= m}^n \sum_{1\leq i<j<k }\,  \sum_{\substack{\ell^1, \ell^2\in \mathcal{L}(k,i,j) \\ dist}} W_{i\to \ell^1\to j}W_{i\to \ell^2\to j} \bar{\Omega}_{ki} \bar{\Omega}_{kj}
 \notag\\
&=: I_a + I_b,
\label{202305190-1}
\end{align}
Since 
\begin{align}
{\rm var} \Big( \sum_{k=1}^n \mathbb{E} (X_{n,k}^2 | \mathcal{F}_{n,k-1})\Big) \leq 2{\rm var} (I_a)+ 2{\rm var} (I_b),
\end{align}
we need to verify that ${\rm var} (I_a)\to 0$ and $ {\rm var} (I_b)\to 0$ respectively. 

For the first term $I_a$ in \eqref{202305190-1}, we have
\begin{align*}
{\rm var} (I_a) &= \frac{1}{M_n^2} \sum_{k_1, k_2 = m}^n \,\sum_{\substack{1\leq i_1<j_1\leq k_1-1 \\1\leq i_2<j_2\leq k_2-1 }} \, \sum_{\ell^1, \ell^2\in \mathcal{L}(k,i,j) }\notag\\
& \mathbb{E}\Big[(W_{i_1\to \ell^1\to j_1})^2 (W_{i_2\to \ell^2\to j_2})^2 -\bar{\Omega}_{i_1\to \ell^1\to j_1}  \bar{\Omega}_{i_2\to \ell^2\to j_2} \Big] \bar{\Omega}_{k_1i_1}\bar{\Omega}_{k_1j_1}\bar{\Omega}_{k_2i_2}\bar{\Omega}_{k_2j_2}\Big\}.
\end{align*}

We can analyze the composition of each summand on the RHS by considering its associated graph structure, as determined by the indices present in the subscripts. 
It is important to note that each summand on the RHS of the above equation is nonzero if and only if the two paths, $i_1\to \ell^1\to j_1$ and $i_2\to \ell^2\to j_2$, share at least one edge. These two paths, along with the edges corresponding to $\bar{\Omega}_{k_1i_1}\bar{\Omega}_{k_1j_1}\bar{\Omega}_{k_2i_2}\bar{\Omega}_{k_2j_2}$, give rise to two intersecting cycles denoted as $\mathcal{L}_1$ and $\mathcal{L}_2$. As a consequence, the number of distinct nodes present in $\mathcal{L}_1$ and $\mathcal{L}_2$, or the effective indices engaged in the summation,  is bounded by $2m-2$. Every unique edge within the graph contributes to a corresponding entry in $\Omega$, for instance, $(i,j)$ corresponds to $\Omega(i,j)$.  
By applying the inequality $ \bar{\Omega}_{ij} \lesssim \Omega_{ij} \lesssim u_iu_j/(n\bar{u})$,  we can infer  that each node $i$ in $\mathcal{L}_1\cup \mathcal{L}_2$ entails a factor of $u_i/\sqrt{n\bar{u}}$ in the final pattern of the corresponding nonzero summand, and the exponent of this factor is determined by the total count of unique edges connecting to node $i$. Based on these arguments,  each nonzero summand is bounded by certain terms of the form 
\begin{align}\label{form:varI_a}
 \prod_{a=1}^{\tilde{m} } \Big( \frac{u_{\ell_a}}{\sqrt{n\bar{u}}}\Big)^{2+\beta_a}  
\end{align} 
for some $m\leq \tilde{m}\leq 2m-2$ and $\beta_\alpha\in \{0, 1, 2\}$. With a little ambiguity of notations, we do not adopt the original index notations in $\mathcal{L}_1\cup \mathcal{L}_2$,   and instead employ generic indices $\{\ell_a\}_{a=1}^{\tilde{m}}$. This prevents us from specifying which node in $\mathcal{L}_1$ aligns with a particular node in $\mathcal{L}_2$. Subsequently, we investigate the summation of the form (\ref{form:varI_a}). By considering the summation over unrestricted indices, it yields that 
\begin{align*}
{\rm var} (I_a) \lesssim M_n^{-2} \sum_{\tilde{m}=m}^{2m-2} \sum_{\ell_1, \cdots, \ell_{\tilde{m}}} \prod_{a=1}^{\tilde{m} } \Big( \frac{u_{\ell_a}}{\sqrt{n\bar{u}}}\Big)^{2+\beta_a}  &\lesssim M_n^{-2} \sum_{\tilde{m}=m}^{2m-2}  \bigg( \frac{\Vert u\Vert^2 }{n\bar{u}}\bigg)^{\tilde{m}} \cdot \Big(\frac{u_{\max}}{\sqrt{n\bar{u}}}\Big)^{\sum_{a=1}^{\tilde{m}} \beta_\alpha} \notag\\
& \lesssim \bigg( \frac{\Vert u\Vert^2 }{n\bar{u}}\bigg)^{-2}  = o(1)\, . 
\end{align*}
Here we used $M_n \asymp \mathbb{E}C_{n,m} \asymp {\rm tr} (\Omega^m) \geq C\big(\Vert u\Vert^2/(n\bar{u})\big)^m $ and  the conditions $u_{\max}^2/ (n\bar{u}) = o(1)$, $n\bar{u}/\Vert u\Vert^2 = o(1) $.

Next, for the second term $I_b$ in \eqref{202305190-1}, we have
\begin{align*}
{\rm var}(I_b) =  \frac{1}{M_n^2} \sum_{k_1, k_2 = m}^n& \sum_{\substack{1\leq i_1<j_1\leq k_1-1 \\1\leq i_2<j_2\leq k_2-1 }} \,\sum_{\substack{\ell^1, \widetilde{\ell}^1 \in \mathcal{L}(k_1, i_1, j_1) \, dist\\\ell^2,\widetilde{\ell}^2 \in \mathcal{L}(k_2, i_2, j_2) \, dist}} \notag\\
& \mathbb{E} (W_{i_1\to \ell^1\to j_1}W_{i_1\to \widetilde{\ell}^1\to j_1}W_{i_2\to \ell^2\to j_2}W_{i_2\to \widetilde{\ell}^2\to j_2})  \bar{\Omega}_{k_1i_1} \bar{\Omega}_{k_1j_1} \bar{\Omega}_{k_2i_2} \bar{\Omega}_{k_2j_2}.
\end{align*}
The summand on the RHS is nonzero if and only if the four paths eventually merge into two paths: $i_1\to j_1$ and $i_2\to j_2$, in which all edges are repeated. Consequently, the sum over $4m-6$ indices can be reduced to a sum over $2m$ indices. Furthermore, the two paths $i_1\to j_1$ and $i_2\to j_2$, including their endpoints, must share at least two nodes and may have overlapping edges. To see this, if $\{i_1, j_1\}= \{i_2, j_2\}$,  it is trivial to identify the two overlapping nodes.  Otherwise,  since $\ell_1$ and $\widetilde{\ell}_1$ are distinct, there must be an edge present in $i_1\to \ell_1\to  j_1$ that does not appear in $i_1\to \widetilde{\ell}_1\to  j_1$.  This edge must be present in either $i_2\to \ell_2\to j_2$ or $i_2\to \widetilde{\ell}_2\to j_2$; otherwise, the expectation on this single edge will be zero.  Thus, this edge results in the overlapped two nodes in the final two paths  $i_1\to j_1$ and $i_2\to j_2$. 

Based on the aforementioned arguments and the inclusion of $k_1, k_2$, the graph structure associated with the nonzero summand forms the two cycles which must share at least two nodes and may have overlapping edges. Moreover, the number of effective indices is bounded by $\leq 2m-2$. We can mimic the arguments in bounding the terms in ${\rm var}(I_a)$ by leveraging the condition $\Omega_{ij} \lesssim u_iu_j/(n\bar{u})$.  It yields that 
\begin{align*}
{\rm var} (I_b) \lesssim M_n^{-2} \bigg( \frac{\Vert u\Vert^2}{n\bar{u}}\bigg)^{2m-2} \Big(\frac{u_{\max}}{\sqrt{n\bar{u}}}\Big)^{4}  \lesssim  \bigg( \frac{\Vert u\Vert^2}{n\bar{u}}\bigg)^{-2} \Big(\frac{u_{\max}}{\sqrt{n\bar{u}}}\Big)^{4} = o(1)\, . 
\end{align*}


In order to finish the proof of (1) in \eqref{SCC:2goal}, we shall verify \eqref{notes1:2}.
Similarly to the arguments in the proof in \cite{GC}, it suffices to show that 
\begin{align*}
\sum_{k=1}^n \mathbb{E} (X_{n,k}^4)  = o(1)\, . 
\end{align*}
Towards that, we derive 
\begin{align*}
 \mathbb{E} (X_{n,k}^4|\mathcal{F}_{n, k-1}) \leq  & \frac{1}{M_n^2} \sum_{1\leq i<j \leq k-1} Y_{kij}^4 \bar{\Omega}_{ki} \bar{\Omega}_{kj} + \frac{C}{M_n^2} \sum_{ i<j , i'<j' (dist)} ^kY_{kij}^2Y_{ki'j'}^2 \bar{\Omega}_{ki} \bar{\Omega}_{kj}  \bar{\Omega}_{ki'} \bar{\Omega}_{kj'}\notag\\
 &  + \frac{C}{M_n^2} \sum_{i,j,j'(dist)}Y^2_{kij} Y^2_{kij'} \bar{\Omega}_{ki} \bar{\Omega}_{kj} \bar{\Omega}_{kj'}\,,
\end{align*}
which indicates 
\begin{align*}
\sum_{k=1}^n \mathbb{E} (X_{n,k}^4)\leq & \frac{1}{M_n^2} \sum_{k=m}^n\sum_{1\leq i<j \leq k-1} \mathbb{E}Y_{kij}^4  \bar{\Omega}_{ki} \bar{\Omega}_{kj} + \frac{C}{M_n^2}\sum_{k=m}^n \sum_{ i<j , i'<j'  (dist)} ^k \mathbb{E}Y_{kij}^2Y_{ki'j'}^2 \bar{\Omega}_{ki} \bar{\Omega}_{kj}  \bar{\Omega}_{ki'} \bar{\Omega}_{kj'}\notag\\
 &  + \frac{C}{M_n^2} \sum_{k=m}^n\sum_{i,j,j'(dist)} \mathbb{E}Y^2_{kij} Y^2_{kij'} \bar{\Omega}_{ki} \bar{\Omega}_{kj} \bar{\Omega}_{kj'}\, . 
\end{align*}
Recall the definition of $Y_{kij}$ in \eqref{def:Y_kij}. Each term on the RHS above can be represented as a sum with  a similar structure to ${\rm var} (I_b)$. Essentially, $Y_{kij}$ corresponds to a summation over a path connecting $i$ and $j$ of length $m-2$. The four paths constructed from $\mathbb{E}Y_{kij}^4$ or $\mathbb{E}Y_{kij}^2Y_{ki'j'}^2$ or $ \mathbb{E}Y^2_{kij} Y^2_{kij'}$ have to ultimately merge into two paths where all edges are repeated so that the expectation of the corresponding summand is nonzero. Furthermore, the final two cycles, which include the index $k$,  must share some nodes and possibly share some edges. This implies that the numbers of effective indices in the three summations are bounded by  $2m - 3$, $2m-1$ and $2m-2$ respectively.  By employing similar arguments used to bound ${\rm var}(I_a)$ and ${\rm var}(I_b)$ and the condition $\Omega(i,j) \lesssim u_iu_j/(n\bar{u})$, we can deduce that
\begin{align*}
\sum_{k=1}^n \mathbb{E} (X_{n,k}^4) \lesssim  \bigg( \frac{\Vert u\Vert^2}{n\bar{u}}\bigg)^{-2m} \bigg[ \bigg( \frac{\Vert u\Vert^2}{n\bar{u}}\bigg)^{2m-3}  + \bigg( \frac{\Vert u\Vert^2}{n\bar{u}}\bigg)^{2m-1}  + \bigg( \frac{\Vert u\Vert^2}{n\bar{u}}\bigg)^{2m-2} \bigg] = o(1)
\end{align*}
under the assumptions in Theorem~\ref{thm:SCC}.
We omit the details and finish the proof.

In the sequel, we verify (2) in \eqref{SCC:2goal}
under our conditions. Employing Lemma~\ref{lem:SCC},  it suffices to show that 
\begin{align*}
{\rm var} (C_{n, m}) \ll (\mathbb{E} C_{n,m} )^2 =  ({\rm tr} \Omega^m)^2 (1+ o(1))\, . 
\end{align*}
To proceed, we first have 
\begin{align*}
{\rm var} (C_{n, m}) &= (2m)^2 {\rm var} \Big( \sum_{CC[1,n]} A_{i_1i_2} \cdots A_{i_mi_1}\Big) \notag\\
& = (2m)^2 \sum_{CC[1,n]^2} \mathbb{E}\Big(A_{i_1i_2} \cdots A_{i_mi_1}A_{i'_1i'_2} \cdots A_{i'_mi'_1}\Big) -\Omega_{i_1i_2} \cdots \Omega_{i_mi_1}\Omega_{i'_1i'_2} \cdots \Omega_{i'_mi'_1}
\end{align*}
Each summand on the RHS above is nonzero if and only if the two cycle $(i_1, \cdots, i_m)$ and $(i'_1, \cdots, i'_m)$ share at least one edge. This again  implies the number of effective indices in each summand bounded by $2m-2$. In the same manner to previous analysis, we can get 
\begin{align*}
{\rm var} (C_{n, m}) \lesssim  \bigg( \frac{\Vert u\Vert^2}{n\bar{u}}\bigg)^{2m-2 } \ll ({\rm tr} \Omega^m)^2
\end{align*}
under the assumptions that $u_{\max}^2/ (n\bar{u}) = o(1)$, $n\bar{u}/\Vert u\Vert^2 = o(1) $. 
 We therefore finish the proof. 

\subsection{Proof of Corollary~\ref{cor:SCC}}\label{supp:cor:SCC}
We only need to check the conditions in Theorem~\ref{thm:SCC} hold under the assumptions in Corollary~\ref{cor:SCC}. First, by the assumption that $c_0 \theta_i \theta_j \leq \Omega(i, j) \leq c_1 \theta_i \theta_j $, it is easy to see that 
\begin{align*}
u_i = \sum_{j=1}^n \Omega(i, j) \asymp \theta_i \sum_{j=1}^n \theta_j = n\overline{\theta} \theta_i, \qquad \bar{u}  = \frac{1}{n}\sum_{i,j=1}^n \Omega(i, j)\asymp n\overline{\theta}^2\, .
\end{align*} 
Here we adopt the notation $a_n\asymp b_n$ to indicate that there is a constant $C>1$ such that $C^{-1} b_n\leq a_n \leq Cb_n$ for any  two sequences $a_n$ and $b_n$. Consequently,  $u_i/\sqrt{n\bar{u}}\asymp  \theta_i$ for all $1\leq i \leq n$. As a result, 
\begin{align*}
 \Omega(i, j)\leq c_1 \theta_i\theta_j \leq C u_iu_j/(n\bar{u})\,.
\end{align*}
Next, by definition, 
\begin{align*}
{\rm tr} (\Omega^m) = \sum_{i_1, \cdots, i_m} \Omega(i_1, i_2)\ldots \Omega(i_m, i_1)\geq c_1^m \big(\sum_{i}\theta_i^2\big)^m \asymp \Big( \sum_{i=1}^ n u_i^2/ (n\bar{u})\Big)^m =\frac{ \Vert u\Vert^{2m}}{(n\bar{u})^m}\,.
\end{align*}
Lastly, we have 
\begin{align*}
\frac{n\bar{u}}{\Vert u\Vert^2}\asymp \frac{(n\overline{\theta})^2}{(n\overline{\theta})^2 \sum_{i=1}^n \theta_i^2} = 1/\Vert \theta\Vert^2 \to 0, \qquad \frac{u^2_{\max}}{n\bar{u}}  \asymp {\theta_{\max}^2} \to 0\,  
\end{align*}
following from the conditions in Corollary~\ref{cor:SCC}. We thus finish the proof.

\subsection{Proof of Theorem~\ref{thm:SCC} for the special case of $m=3$}

In order to increase readability of our proof and provide better intuition and visualization of various terms in the proof of Theorem~\ref{thm:SCC} in Section~\ref{supp:proof:SCC}, in this section we present the proof of  Theorem~\ref{thm:SCC} for the special case of $m=3$. In this special case, we state and prove  the following corollary.

\begin{cor}[Parameter-free limiting null (oracle case, $m=3$)]\label{cor:SCC2}
As $n \goto \infty$, assume 
\begin{align*}
\Omega(i,j) \leq C u_iu_j/(n\bar{u}),\qquad {\rm tr} (\Omega^3)\geq C\Vert u\Vert^{6}/(n\bar{u})^{3},\qquad \max\{ n\bar{u} / \|u\|^2,\,  u^2_{\max}/ (n\bar{u}) \}  \goto 0.
\end{align*}
Then, $\psi_{n, 3}({\Omega})  \goto N(0,1)$.   

\end{cor}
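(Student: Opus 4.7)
The overall strategy is to split $\psi_{n,3}(\Omega) = U_{n,3}/\sqrt{6 C_{n,3}}$ into a ratio whose numerator, suitably normalized, is asymptotically standard normal and whose denominator is a consistent estimator of that normalization. Concretely, I would prove (i) $U_{n,3}/\sqrt{\mathrm{var}(U_{n,3})} \to N(0,1)$ in law and (ii) $C_{n,3}/\mathbb{E}[C_{n,3}] \to 1$ in probability, then invoke Slutsky. The two are tied together by the identity $\mathrm{var}(U_{n,3}) = 36\sum_{CC[1,n]} \bar{\Omega}_{i_1 i_2}\bar{\Omega}_{i_2 i_3}\bar{\Omega}_{i_3 i_1}$, where $\bar{\Omega}_{ij} = \Omega_{ij}(1-\Omega_{ij})$. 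Since the hypotheses force $\max_{ij}\Omega_{ij} = o(1)$ (because $\Omega_{ij} \le C u_i u_j/(n\bar u) \le C u_{\max}^2/(n\bar u)$), Lemma~\ref{lem:SCC} gives $\mathrm{var}(U_{n,3}) = 6\,\mathbb{E}[C_{n,3}](1+o(1)) = 6\,\mathrm{tr}(\Omega^3)(1+o(1))$, so once (i) and (ii) are established, the desired weak convergence follows.

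For (i), I would invoke the martingale central limit theorem of Hall--Heyde. Let $\mathcal{F}_{n,k} = \sigma(\{A_{ij}: 1\le i<j\le k\})$ and let $M_n := \sum_{CC[1,n]}\bar{\Omega}_{i_1 i_2}\bar{\Omega}_{i_2 i_3}\bar{\Omega}_{i_3 i_1}$. Define $S_{n,k} = M_n^{-1/2}\sum_{CC[1,k]} W_{i_1 i_2}W_{i_2 i_3}W_{i_3 i_1}$ and $X_{n,k} = S_{n,k}-S_{n,k-1}$. For $m=3$ the general formula collapses (there are no intermediate vertices $\ell$) to the clean expression
\[
X_{n,k} = \frac{6}{\sqrt{M_n}} \sum_{1\le i<j\le k-1} W_{ij}\,W_{ki}\,W_{kj},\qquad k\ge 3,
\]
which is a manifestly $\mathcal{F}_{n,k}$-measurable martingale difference. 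It then suffices to verify two conditions: (a) $\sum_{k} \mathbb{E}[X_{n,k}^2 \mid \mathcal{F}_{n,k-1}] \to_p 1$, and (b) the conditional Lindeberg condition, which I would upgrade to the stronger $\sum_{k}\mathbb{E}[X_{n,k}^4] \to 0$.

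For (a), a direct computation shows $\mathbb{E}[X_{n,k}^2\mid \mathcal{F}_{n,k-1}] = M_n^{-1}\sum_{i<j<k} W_{ij}^2 \bar{\Omega}_{ki}\bar{\Omega}_{kj}$ (times the appropriate symmetry factor), whose total expectation is exactly $1$. Decomposing $W_{ij}^2 = \bar{\Omega}_{ij} + (W_{ij}^2 - \bar{\Omega}_{ij})$ splits $\sum_k \mathbb{E}[X_{n,k}^2\mid \mathcal{F}_{n,k-1}]-1$ into terms $I_a$ and $I_b$ exactly as in \eqref{202305190-1}; after squaring and taking expectations, each nonzero summand corresponds to a graph on at most $2m-2=4$ distinct vertices where every edge is matched, and the hypothesis $\Omega_{ij} \le C u_i u_j/(n\bar u)$ attaches a factor of $u_\ell/\sqrt{n\bar u}$ per vertex. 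The leading order in both $\mathrm{var}(I_a)$ and $\mathrm{var}(I_b)$ is therefore $M_n^{-2}\cdot(\|u\|^2/(n\bar u))^{4}\cdot(u_{\max}^2/(n\bar u))^{\alpha}$ for some $\alpha\ge 0$, and using $M_n\asymp \mathrm{tr}(\Omega^3)\gtrsim(\|u\|^2/(n\bar u))^3$ together with $n\bar u/\|u\|^2=o(1)$ and $u_{\max}^2/(n\bar u)=o(1)$ forces this to be $o(1)$. For (b), $\mathbb{E}[X_{n,k}^4]$ expands into a sum over four triangles sharing vertex $k$ whose edges must pair up to produce a nonzero expectation; the tightest such pattern has at most $2m-3=3$ free indices in the indices other than $k$, yielding $\sum_k\mathbb{E}[X_{n,k}^4] = O\bigl((\|u\|^2/(n\bar u))^{-3}\bigr) = o(1)$.

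For (ii), I would show $\mathrm{var}(C_{n,3})/(\mathbb{E}[C_{n,3}])^2\to 0$. Writing $C_{n,3}$ as a sum of cycle indicators, the covariance between two cycles $(i_1,i_2,i_3)$ and $(i_1',i_2',i_3')$ is nonzero only when they share at least one edge; the number of distinct indices is then at most $2m-2=4$, so $\mathrm{var}(C_{n,3}) \lesssim(\|u\|^2/(n\bar u))^4$, while $(\mathbb{E}[C_{n,3}])^2 \asymp \mathrm{tr}(\Omega^3)^2 \gtrsim (\|u\|^2/(n\bar u))^6$; the ratio is $O(n\bar u/\|u\|^2)^{\!2} = o(1)$.

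The main obstacle is the combinatorial bookkeeping in establishing condition (a), and in particular the $I_b$ term, where two cycles sharing vertex $k$ must share additional vertices or edges for a nonzero contribution. Fortunately, the case $m=3$ removes the intermediate paths $Y_{kij}$ present for larger $m$, so the graph-matching argument reduces to enumerating pairs of triangles with a prescribed overlap pattern; the $u_i/\sqrt{n\bar u}$ bookkeeping then yields the desired $o(1)$ bound directly from $n\bar u/\|u\|^2 \to 0$ and $u_{\max}^2/(n\bar u)\to 0$.
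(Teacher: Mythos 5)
Your proposal is correct and follows essentially the same route as the paper's own proof: the same split into a martingale CLT for $U_{n,3}/\sqrt{\mathrm{var}(U_{n,3})}$ (with the filtration $\mathcal{F}_{n,k}=\sigma(\{A_{ij}:i<j\le k\})$, the conditional-variance check via the $W_{ij}^2=\bar\Omega_{ij}+(W_{ij}^2-\bar\Omega_{ij})$ decomposition, and the fourth-moment Lindeberg bound) plus the second-moment concentration of $C_{n,3}$ and Slutsky, with the identical vertex-counting bookkeeping using $\Omega_{ij}\le Cu_iu_j/(n\bar u)$. The only cosmetic difference is that for $m=3$ the cross-path term you call $I_b$ does not actually arise (there are no intermediate vertices), which you essentially acknowledge.
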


\begin{proof}[\bf Proof of Corollary~\ref{cor:SCC2}]

The proof of Corollary follows from verifying the following two conditions
\begin{align}
&(1)\quad  \frac{U_{n,3}   - \mathbb{E} U_{n,3} }{\sqrt{{\rm var}(U_{n,3})}} \overset{d}{\longrightarrow} N(0,1);\label{(1)} \\
&(2)\quad  \frac{C_{n,3}}{\mathbb{E}C_{n,3}} \overset{p}{\longrightarrow} 1,\label{(2)}
\end{align}
where
\begin{align*}
U_{n,3}= \sum_{i_1, i_2, i_3 (dist)} W_{i_1 i_2} W_{i_2 i_3} W_{i_3i_1}, \quad C_{n,3} =  \sum_{i_1, i_2, i_3 (dist)} A_{i_1 i_2} A_{i_2 i_3} A_{i_3i_1}\,. 
\end{align*}

We first prove \eqref{(1)}. Note that $\mathbb E(U_{n,3})=0$ and $\mathbb E(C_{n,3}) = \sum_{i_1, i_2, i_3 (dist)} \Omega_{i_1 i_2}  \Omega_{i_2 i_3}  \Omega_{i_3i_1}$. We also have
\begin{align*}
{\rm var}(U_{n,3})&=\sum_{i_1, i_2, i_3 (dist)}\mathbb{E} (W_{i_1 i_2}^2 W_{i_2 i_3}^2 W_{i_3i_1}^2)
\\
&=\{1+o(1)\}\sum_{i_1, i_2, i_3 (dist)} \Omega_{i_1 i_2} \Omega_{i_2 i_3} \Omega_{i_3i_1}
=\{1+o(1)\}\mathbb E(C_{n,3}).
\end{align*}
With an abuse of notation, let $CC[1,k]=\{(i_1,i_2,i_3):1\leq i_1,i_2,i_3\leq k; i_1,i_2,i_3 \text{ distinct}\}$ denote the set of $3$-cycles taking distinct values in $[\![1, k]\!]$.
Consider the $\sigma$-algebra $\mathcal{F}_{n,k} = \sigma\big( \{A_{ij}\}_{1\leq i <j \leq k}\big)$ and define the martingale difference sequence
\begin{align*}
X_{n,k} = S_{n,k} - S_{n,k-1}, \quad {\rm where } \quad S_{n,k} : = \frac{\sum_{CC[1,k]} W_{i_1 i_2} W_{i_2 i_3} W_{i_3i_1}}{\sqrt{\sum_{CC[1,n]} \bar{\Omega}_{i_1 i_2} \bar{\Omega}_{i_2 i_3} \bar{\Omega}_{i_3i_1}}},
\end{align*}
where $\bar\Omega_{ij}=\Omega_{ij}(1-\Omega_{ij})$.
In order to show $S_{n,n} = U_{n,3}/\sqrt{{\rm var}(U_{n,3})} \overset{d}{\longrightarrow} N(0,1)$, we apply the martingale central limit theorem in
\cite{hall2014martingale} and verify that
\begin{align}
&\sum_{k=1}^n \mathbb{E} (X_{n,k}^2 | \mathcal{F}_{n,k-1}) \overset{p}{\to} 1,\label{notes1:12}\\
&\sum_{k=1}^n \mathbb{E}(X_{n, k}^2 1_{\{|X_{n, k}|>\epsilon\}} | \mathcal{F}_{n, k-1}) \stackrel{p}{\rightarrow} 0,\quad \text{for any } \epsilon>0.\label{notes1:22}
\end{align}
We first verify \eqref{notes1:12}.
For simplicity, we write
\begin{align*}
M_n=\sum_{CC[1,n]} \bar{\Omega}_{i_1 i_2} \bar{\Omega}_{i_2 i_3} \bar{\Omega}_{i_3i_1}\asymp {\rm tr} (\Omega^3) \geq C\big(\Vert u\Vert^2/(n\bar{u})\big)^3 .
\end{align*}

 Note that $X_{n,k} = 0$ for $k<3$, and if $k\geq 3$,
\begin{align*}
X_{n,k}=\frac{\sum_{CC[1, k]\setminus CC[1,k-1]} W_{i_1 i_2} W_{i_2 i_3} W_{i_3i_1}}{\sqrt{M_n}}
=\frac{\sum_{1\leq i <j\leq k-1} W_{ki}W_{kj} W_{ij}}{\sqrt{M_n} }.
\end{align*}
Note that $Y_{kij}=W_{ij}$ in \eqref{def:Y_kij} in the case of $m=3$. Observe the fact that
\begin{align}\label{n1}
\mathbb{E} (X_{n,k}^2 | \mathcal{F}_{n,k-1}) = \frac{\sum_{1\leq i <j \leq k-1}  W_{ij}^2 \bar{\Omega}_{ki} \bar{\Omega}_{kj}}{M_n}.
\end{align}
By direct calculations, we obtain from the above equation that
\begin{align*}
\mathbb{E}\bigg(\sum_{k=1}^n\mathbb{E} (X_{n,k}^2 | \mathcal{F}_{n,k-1})\bigg) & = \frac{\sum_{k=3}^n\sum_{1\leq i <j \leq k-1}  \bar\Omega_{ij} \bar{\Omega}_{ki} \bar{\Omega}_{kj}}{M_n}
= 1\, .
\end{align*}
Then, in order to verify \eqref{notes1:1}, we will prove that
\begin{align}\label{n22}
{\rm var} \Big( \sum_{k=1}^n \mathbb{E} (X_{n,k}^2 | \mathcal{F}_{n,k-1})\Big) \to 0,\qquad\text{as }n\to\infty. 
\end{align}
Note that \eqref{n1} implies that
\begin{align*}
&{\rm var} \Big( \sum_{k=1}^n \mathbb{E} (X_{n,k}^2 | \mathcal{F}_{n,k-1})\Big)
= M_n^{-2}{\rm var}\Big( \sum_{i,j,k(dist)}  W_{ij}^2 \bar{\Omega}_{ki} \bar{\Omega}_{kj}\Big)
\\
&\qquad= M_n^{-2} \sum_{i,j,k(dist);i',j',k'(dist)}  \big[\mathbb{E}(W_{ij}^2W_{i'j'}^2)-\bar\Omega_{ij}\bar\Omega_{i'j'}\big] \bar{\Omega}_{ki} \bar{\Omega}_{kj}\bar{\Omega}_{k'i'} \bar{\Omega}_{k'j'}
\\
&\qquad= 2M_n^{-2} \sum_{i,j,k(dist);i,j,k'(dist)}  \big[\mathbb{E}(W_{ij}^4)-\bar\Omega_{ij}^2\big] \bar{\Omega}_{ki} \bar{\Omega}_{kj}\bar{\Omega}_{k'i} \bar{\Omega}_{k'j}
\\
&\qquad\lesssim \bigg( \frac{\Vert u\Vert^2}{n\bar{u}}\bigg)^{-6} \frac{\Vert u \Vert^4 \Vert u \Vert_3^6}{(n\bar u)^5} \leq \bigg( \frac{\Vert u\Vert^2}{n\bar{u}}\bigg)^{-6} \frac{\Vert u \Vert^8  u_{\max}^2}{(n\bar u)^5}  =  \bigg( \frac{\Vert u\Vert^2}{n\bar{u}}\bigg)^{-2} \frac{  u_{\max}^2}{n\bar u} =o(1),
\end{align*}
under the assumptions in Corollary~\ref{cor:SCC2}. Here in the third step, we used the fact that $\mathbb{E}(W_{ij}^2W_{i'j'}^2)-\bar\Omega_{ij}\bar\Omega_{i'j'} =0$ if $(i,j) \neq (i',j')$.
Thus, this proves \eqref{n22}.

In order to finish the proof of \eqref{(1)}, we shall verify \eqref{notes1:22}.
Similarly to the arguments in the proof in \cite{GC}, it suffices to show that 
\begin{align}\label{n2}
\sum_{k=1}^n \mathbb{E} (X_{n,k}^4)  = o(1)\, . 
\end{align}
Towards that, we derive 
\begin{align*}
 \mathbb{E} (X_{n,k}^4|\mathcal{F}_{n, k-1}) \leq  & \frac{1}{M_n^2} \sum_{1\leq i<j \leq k-1} W_{ij}^4 \bar{\Omega}_{ki} \bar{\Omega}_{kj} + \frac{C}{M_n^2} \sum_{ i<j , i'<j' (dist)} ^kW_{ij}^2W_{i'j'}^2 \bar{\Omega}_{ki} \bar{\Omega}_{kj}  \bar{\Omega}_{ki'} \bar{\Omega}_{kj'}\notag\\
 &  + \frac{C}{M_n^2} \sum_{i,j,j'(dist)}W^2_{ij} W^2_{ij'} \bar{\Omega}_{ki} \bar{\Omega}_{kj} \bar{\Omega}_{kj'}\,,
\end{align*}
which implies that, under assumptions,
\begin{align*}
\sum_{k=1}^n \mathbb{E} (X_{n,k}^4)&\leq  \frac{1}{M_n^2} \sum_{k=3}^n\sum_{1\leq i<j \leq k-1} \mathbb{E}W_{ij}^4  \bar{\Omega}_{ki} \bar{\Omega}_{kj} + \frac{C}{M_n^2}\sum_{k=3}^n \sum_{ i<j , i'<j'  (dist)} ^k \mathbb{E}W_{ij}^2W_{i'j'}^2 \bar{\Omega}_{ki} \bar{\Omega}_{kj}  \bar{\Omega}_{ki'} \bar{\Omega}_{kj'}\notag\\
 &  \qquad+ \frac{C}{M_n^2} \sum_{k=3}^n\sum_{i,j,j'(dist)} \mathbb{E}W^2_{ij} W^2_{ij'} \bar{\Omega}_{ki} \bar{\Omega}_{kj} \bar{\Omega}_{kj'}
\\
&\lesssim
\bigg( \frac{\Vert u\Vert^2}{n\bar{u}}\bigg)^{-6} \bigg[ \bigg( \frac{\Vert u\Vert^2}{n\bar{u}}\bigg)^{3}  + \bigg( \frac{\Vert u\Vert^2}{n\bar{u}}\bigg)^{5}  + \bigg( \frac{\Vert u\Vert^2}{n\bar{u}}\bigg)^{4} \bigg] = o(1).
\end{align*}
This proves \eqref{n2}, which concludes the proof of \eqref{(1)}.

Finally, we verify \eqref{(2)}. Applying Lemma~\ref{lem:SCC},  it suffices to show that 
\begin{align*}
{\rm var} (C_{n, 3}) \ll (\mathbb{E} C_{n,3} )^2 =  ({\rm tr} (\Omega^3))^2 (1+ o(1))\, . 
\end{align*}
To proceed, we first have 
\begin{align*}
&{\rm var} (C_{n, 3}) = 6^2 {\rm var} \Big( \sum_{i_1,i_2,i_3 (dist.)} A_{i_1i_2} A_{i_2i_3} A_{i_3i_1}\Big) \notag\\
& = 6^2 \sum_{i_1,i_2,i_3 (dist.)}\sum_{i_1',i_2',i_3' (dist.)} \Big[\mathbb{E}\Big(A_{i_1i_2} A_{i_2i_3} A_{i_3i_1}A_{i_1'i_2'} A_{i_2'i_3'} A_{i_3'i_1'}\Big) -\bar\Omega_{i_1i_2} \bar\Omega_{i_2i_3} \bar\Omega_{i_3i_1}\bar\Omega_{i_1'i_2'} \bar\Omega_{i_2'i_3'} \bar\Omega_{i_3'i_1'}\Big].
\end{align*}
Each summand on the RHS above is nonzero if and only if the two cycle $(i_1, i_2, i_3)$ and $(i'_1, i_2', i'_3)$ share at least one edge. Therefore, we further derive 
\begin{align*}
{\rm var} (C_{n, 3}) &  \lesssim \sum_{i_1,i_2,i_3, i_4 (dist)}\mathbb{E}\Big(A_{i_1i_2}^2 A_{i_2i_3} A_{i_3i_1}A_{i_1i_4} A_{i_2i_4} \Big)+ \sum_{i_1,i_2,i_3 (dist)}\mathbb{E}\Big(A_{i_1i_2}^2 A_{i_2i_3}^2 A_{i_3i_1}^2  \Big)\notag\\
&=  \sum_{i_1,i_2,i_3, i_4 (dist)}\mathbb{E}\Big(\bar \Omega_{i_1i_2} \bar \Omega_{i_2i_3} \bar \Omega_{i_3i_1}\bar \Omega_{i_1i_4} \bar \Omega_{i_2i_4} \Big)+ \sum_{i_1,i_2,i_3 (dist)}\mathbb{E}\Big(\bar \Omega_{i_1i_2} \bar \Omega_{i_2i_3} \bar \Omega_{i_3i_1}  \Big)\notag\\
& \lesssim  \bigg( \frac{\Vert u\Vert^2}{n\bar{u}}\bigg)^{4}  \frac{u_{\max}^2}{n\bar u} + \bigg( \frac{\Vert u\Vert^2}{n\bar{u}}\bigg)^{3}
\end{align*}
It follows from the conditions $u_{\max}^2/n\bar u = o(1)$ and $n\bar u/ \| u\|^2 = o(1)$ that
\begin{align*}
{\rm var} (C_{n, 3}) \ll  \bigg( \frac{\Vert u\Vert^2}{n\bar{u}}\bigg)^{4} \ll ({\rm tr} (\Omega^3))^2
\end{align*}
The proof is complete.

\end{proof}

\subsection{Proof of Corollary~\ref{cor:SCC} for the special case of $m=3$}
The proof is straightforward by substituting $m=3$ into the arguments in Section~\ref{supp:cor:SCC} and  calculating directly that
\begin{align*}
{\rm tr} (\Omega^3) = \sum_{i_1, i_2, i_3} \Omega(i_1, i_2)\Omega(i_2, i_3) \Omega(i_3, i_1)\geq c_1^3 \bigg(\sum_{i=1}^n\theta_i^2\bigg)^3 \asymp \Big( \sum_{i=1}^ n u_i^2/ (n\bar{u})\Big)^3 =\frac{ \Vert u\Vert^{6}}{(n\bar{u})^3}\,.
\end{align*}
Here we used $u_i/\sqrt{n\bar{u}}\asymp  \theta_i$ for all $1\leq i \leq n$, which follows from \begin{align*}
u_i = \sum_{j=1}^n \Omega(i, j) \asymp \theta_i \sum_{j=1}^n \theta_j = n\overline{\theta} \theta_i, \qquad \bar{u}  = \frac{1}{n}\sum_{i,j=1}^n \Omega(i, j)\asymp n\overline{\theta}^2\, .
\end{align*} 
We omit the remaining details as they are identical to those in Section~\ref{supp:cor:SCC}, thereby concluding the proof.

\section{Analysis of GoF-SCORE with a given $H$}\label{supp:GoF}
Consider the SCC statistic $T_n(\widehat{\Omega})$ as in \eqref{DefineTn}, where $\widehat{\Omega}$ is from GoF-SCORE (Algorithm~\ref{alg:afmSCORE}). In this section, we aim to prove that $T_n(\widehat{\Omega})\to N(0,1)$. 

Recall that $T_n(\widehat{\Omega})=U_{n,3}(\widehat{\Omega})/\sqrt{6C_{n,3}}$. The key is establishing the asymptotic normality of $U_{n,3}(\widehat{\Omega})$. Section~\ref{supp:SCC} has shown the asymptotic normality of $U_{n,3}(\Omega)$. It remains to bound
\[
\delta:=U_{n,3}(\widehat{\Omega})-U_{n,3}(\Omega), \qquad\mbox{where $\widehat{\Omega}$ is from GoF-SCORE with a given $H$}. 
\]
The analysis of $\delta$ can be roughly divided into two parts:
\begin{itemize}
\item {\it Re-expressing $\delta$ as a tractable function of $(\Omega, W, H)$}. 
First, in Section~\ref{subsec:proof-diffU}, we prove Lemma~\ref{lem:diffU}, which gives an expression of $\delta$ in terms of $(\Omega, W, \widehat{\Omega})$.   
Next, in Section~\ref{subsec:TtoH}, we study the GoF-SCORE procedure and further write $\widehat{\Omega}$ as an explicit function of $(A, H)$. Combining these results gives the desirable expression of $\delta$. 

\item {\it Large-deviation analysis of $\delta$}. 
When the $H$ in GoF-SCORE concentrates at a non-stochastic $H_0$, each term in the expression of $\delta$ is a polynomial of the upper triangular entries of $W$ (which are independent Bernoulli's). 
In Section~\ref{supp:main}, we conduct large-deviation analysis for each term in $\delta$ and prove the main result, Theorem~\ref{thm:main}.  

\end{itemize}


\subsection{Proof of Lemma~\ref{lem:diffU}} \label{subsec:proof-diffU}

Define $f(M)=\sum_{i_1,i_2,i_3 \text{(dist)}}M(i_1,i_2)M(i_2,i_3)M(i_3,i_1)$ for a symmetric matrix $M\in\mathbb{R}^{n,n}$. Using this notation, we have
\[
U_{n,3}(\widehat{\Omega}) - U_{n,3}(\Omega)= f(A-\widehat{\Omega}) - f(A-\Omega) = f(W_1+\Delta)-f(W_1), 
\] 
where the first equality is by definition, and the second equality is from $A=\Omega+W-\diag(\Omega)$. An equality about $f(\cdot)$ was given in Theorem 1.1 of  \cite{JKL2019}: $f(M) = \tr(M^3) - 3\tr(M\circ M^2) + 2\tr(M\circ M\circ M)$. 
Plugging it into the above equation, we can write 
\beq \label{proof-diffU-0}
U_{n,3}(\widehat{\Omega}) - U_{n,3}(\Omega) = I_{n,1}-3I_{n,2}+2I_{n,3},
\eeq
where 
\begin{align*} 
I_{n,1}&={\rm tr}\{(W_1-\Delta)^3\}-{\rm tr}(W_1^3), \cr
I_{n,2}&={\rm tr}  \{(W_1-\Delta)\circ (W_1-\Delta)^2  \}-{\rm tr}(W_1\circ W_1^2),\cr
I_{n,3}&={\rm tr} \{(W_1-\Delta)\circ (W_1-\Delta)\circ (W_1-\Delta)\}-{\rm tr}(W_1\circ W_1\circ W_1).
\end{align*}
We further simplify each term above. For any two square matrices $B$ and $D$, $\tr((B-D)^3)=\tr(B^3)-3\tr(B^2D)+3\tr(BD^2)-\tr(D^3)$ (this can be shown by expanding $(B-D)^3$ into 8 terms and recursively applying the circular property of the trace operator). It follows that
\beq \label{proof-diffU-1}
I_{n,1} = 3{\rm tr}(W_1\Delta^2)-3{\rm tr}(W_1^2\Delta)-{\rm tr}(\Delta^3). 
\eeq
For $I_{n,2}$, note that $(W_1-\Delta)^2=W_1^2-W_1\Delta-\Delta W_1+\Delta^2$. As a result, $(W_1-\Delta)\circ(W_1-\Delta)^2$ decomposes into 8 terms, each with the form $\tr(M_1\circ M_2M_3)$, for some symmetric matrices $M_k\in\{W_1,\Delta\}$. We also note that $\tr(M_1\circ M_2M_3)=\tr(M_1' \circ (M_2M_3)')=\tr(M_1\circ M_3M_2)$. This allows us to combine equal terms. The calculations yield  
\beq \label{proof-diffU-2}
I_{n,2}=2\tr(\Delta\circ W_1\Delta) -2\tr(W_1\circ W_1\Delta)+\tr(W_1\circ\Delta^2)-\tr(\Delta\circ W_1^2)-\tr(\Delta\circ\Delta^2). 
\eeq
For $I_{n,3}$, we note that the order of three matrices in the Hadamard product $M_1\circ M_2\circ M_3$ can be arbitrarily re-shuffled. It follows that  
\beq \label{proof-diffU-3}
I_{n,3}=3\tr(W_1\circ\Delta\circ\Delta)-3\tr(W_1\circ W_1\circ\Delta)-\tr(\Delta\circ\Delta\circ\Delta).
\eeq
The claim follows by plugging \eqref{proof-diffU-1}-\eqref{proof-diffU-3} into \eqref{proof-diffU-0}.\qed

\subsection{Proof of Lemma~\ref{lem:AFMoracle}} \label{subsec:AFMoracle}
First, we show the simplex structure and the claim about $R_H$. According to the definition, $R_H=[\diag(\Omega{\bf 1}_n)]^{-1}\Omega H$. By the DCMM model, $\Omega=\Theta\Pi P\Pi'\Theta$. It follows that
\beq \label{GoForacle-1}
R_H = [\diag(\Theta \Pi P\eta)]^{-1}\Theta\Pi PG_H, \qquad \mbox{where}\quad  \eta=\Pi'\Theta{\bf 1}_n,  \quad G_H=\Pi'\Theta H. 
\eeq
Recalling that $V_H=[\diag(P\eta)]^{-1}PG_H$ and noticing that $\Theta$ is a diagonal matrix, we deduce from the above equation that
\[
R_H = [\diag(\Pi P\eta)]^{-1}\Pi PG_H = \underbrace{[\diag(\Pi P\eta)]^{-1}\Pi [\diag(P\eta)]}_{:=W}V_H. 
\]
Write $V_H=[v_1,\ldots,v_K]'$ and $W=[w_1,\ldots,w_n]'$. The above equation $R_H=WV_H$ implies that $r_i=\sum_{k=1}^K w_i(k)v_k$, $1\leq i\leq n$.  Moreover, by definition of $W$, $w_i=\frac{1}{\pi_i'P\eta}\, [\diag(P\eta)]\pi_i$. Note that $[\diag(P\eta)]\pi_i =\pi_i\circ P\eta$, and $\pi_i'P\eta=(\pi_i\circ P\eta)'{\bf 1}_K=\|\pi_i\circ P\eta\|_1$ (the second equality is because $\pi_i\circ P\eta$ is a nonnegative vector). Hence, we can re-write $w_i=\frac{1}{\|\pi_i\circ P\eta\|_1}(\pi_i\circ P\eta)$. Combining these arguments gives 
\beq \label{GoForacle-2}
r_i=\sum_{k=1}^K w_i(k)v_k, \qquad w_i = \frac{1}{\|\pi_i\circ P\eta\|_1}(\pi_i\circ P\eta). 
\eeq
This shows that each $r_i$ is a convex combination of $v_1,v_2,\ldots,v_K$ (i.e., $r_i$ is contained in the simplex with $v_1,\ldots,v_K$ as the vertices), where $w_i$ is the vector of convex combination coefficients. 
In addition, we note that the $k$th entry of $P\eta$ is $\geq P(k,k)\eta_k=\sum_{i=1}^n\theta_i\pi_i(k)>0$, implying that $P\eta$ is a strictly positive vector. Therefore, $w_i=e_k$ if and only if $\pi_i=e_k$; in other words, $r_i$ falls at the vertex $v_k$ if and only if $i$ is a pure node of community $k$.  
This proves the simplex structure and the claim about $R_H$. 

Next, we show the claims about $P$ and $\Theta$. Using the definition of $\eta$ in \eqref{GoForacle-1}, we have 
\beq \label{GoForacle-3}
\Omega{\bf 1}_n = \Theta\Pi P\Pi'\Theta{\bf 1}_n = \Theta\Pi P\eta\qquad\Longleftrightarrow\qquad e_i'\Omega{\bf 1}_n = \theta_i\cdot \pi_i'P\eta = \theta_i\|\pi_i\circ P\eta\|_1. 
\eeq
This proves the claim about $\Theta$. By definition of $G_H$ in \eqref{GoForacle-1}, $H'\Omega H=H'\Theta \Pi P\Pi'\Theta H=G_H'PG_H$. 
Additionally, $V_H=[\diag(P\eta)]^{-1}PG_H$, or equivalently, $PG_H=[\diag(P\eta)] V_H$. It follows that
\[
H'\Omega H = G_H'PP^{-1}PG_H =V_H'[\diag(P\eta)]P^{-1} [\diag(P\eta)]V_H. 
\]
We plug it into the definition of $Z_H$ to get 
\beq \label{GoForacle-4}
Z_H = V_H(H'\Omega H)^{-1}V'_H = [\diag(P\eta)]^{-1}P[\diag(P\eta)]^{-1}. 
\eeq
It yields that $\diag(Z_H)=[\diag(P\eta)]^{-2}\diag(P)=[\diag(P\eta)]^{-2}$ (note that we have assumed the identifiability condition that $P$ has unit diagonal entries). Consequently, $\diag(P\eta)= [\diag(Z_H)]^{-1/2}$, and 
\[
P =  \diag(P\eta)\, Z_H \, \diag(P\eta) = [\diag(Z_H)]^{-1/2}Z_H[\diag(Z_H)]^{-1/2}. 
\]
This proves the claim about $P$. \qed

\subsection{An alternative expression of GoF-SCORE} \label{subsec:TtoH}

The next lemma re-expresses the $\widehat{\Omega}$ from GoF-SCORE as an explicit function of $(A, H)$. 

\begin{lemma}\label{lem:TvsH}
If $\widehat{\Omega}$ is from the GoF-SCORE algorithm, 
 then
\beq \label{T-form}
\widehat{\Omega} =  [\diag(\widehat{W}{\bf 1}_K)]^{-1}AH(H'AH)^{-1}H'A [\diag(\widehat{W}{\bf 1}_K)]^{-1}. 
\eeq
In addition, if each estimated vertex in $\widehat{V}$ is a linear combination of $\widehat{r}_1,\ldots,\widehat{r}_n$, then 
\beq \label{H-form}
\widehat{\Omega} =  AH(H'AH)^{-1}H'A. 
\eeq
\end{lemma}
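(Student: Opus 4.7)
My plan is to chain together the definitions in Steps~3--4 of Algorithm~\ref{alg:afmSCORE} and let the normalization factors cancel in pairs. Set $D=\diag(\widehat{P\eta})$; by construction $D^2=[\diag(\widehat Z_H)]^{-1}$, so $\widehat P=D\widehat Z_HD$. Writing $\alpha_i=\|\hat\pi_i^*\|_1^{-1}=\|D^{-1}\hat w_i\|_1^{-1}$, one has $\hat\pi_i=\alpha_iD^{-1}\hat w_i$, and the first observation is that $D$ and $D^{-1}$ telescope inside the bilinear form
\begin{equation*}
\hat\pi_i'\widehat P\hat\pi_j=\alpha_i\alpha_j\,\hat w_i'D^{-1}(D\widehat Z_HD)D^{-1}\hat w_j=\alpha_i\alpha_j\,\hat w_i'\widehat Z_H\hat w_j.
\end{equation*}
Substituting $\widehat Z_H=\widehat V(H'AH)^{-1}\widehat V'$, using the identity $\widehat V'\hat w_i=\hat r_i$ from Step~3.2, and finally $\hat r_i=(e_i'A{\bf 1}_n)^{-1}H'Ae_i$ from the definition of $\widehat R_H$ gives
\begin{equation*}
\hat\pi_i'\widehat P\hat\pi_j=\frac{\alpha_i\alpha_j\,[AH(H'AH)^{-1}H'A]_{ij}}{(e_i'A{\bf 1}_n)(e_j'A{\bf 1}_n)}.
\end{equation*}

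A second telescoping handles $\hat\theta_i$: the $k$-th entry of $\hat\pi_i\circ\widehat{P\eta}$ is $\alpha_i(\hat w_i(k)/\widehat{P\eta}(k))\cdot\widehat{P\eta}(k)=\alpha_i\hat w_i(k)$, so $\|\hat\pi_i\circ\widehat{P\eta}\|_1=\alpha_i\|\hat w_i\|_1$ and $\hat\theta_i=(e_i'A{\bf 1}_n)/(\alpha_i\|\hat w_i\|_1)$. Assembling $\widehat\Omega_{ij}=\hat\theta_i\hat\theta_j\hat\pi_i'\widehat P\hat\pi_j$ then cancels both the $\alpha$'s and the degree factors and leaves
\begin{equation*}
\widehat\Omega_{ij}=\frac{[AH(H'AH)^{-1}H'A]_{ij}}{\|\hat w_i\|_1\,\|\hat w_j\|_1}.
\end{equation*}
This is \eqref{T-form} after identifying $\|\hat w_i\|_1$ with the $i$-th entry of $\widehat W{\bf 1}_K$. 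The one delicate point is that the signed sum $\hat w_i'{\bf 1}_K$ agrees with the $\ell^1$-norm $\|\hat w_i\|_1$ only when $\hat w_i\geq 0$; this holds on the high-probability event $\{\widehat H=\Pi_0\}$ of Theorem~\ref{thm:MSCORE}, which is exactly the event on which the asymptotic argument of Theorem~\ref{thm:main} is carried out.

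For the second identity, I need $\widehat W{\bf 1}_K={\bf 1}_n$. The key is that any $H$ whose rows are weight vectors (including $\widehat H=\Pi_0$) satisfies $H{\bf 1}_K={\bf 1}_n$, so
\begin{equation*}
\widehat R_H{\bf 1}_K=[\diag(A{\bf 1}_n)]^{-1}AH{\bf 1}_K=[\diag(A{\bf 1}_n)]^{-1}A{\bf 1}_n={\bf 1}_n,
\end{equation*}
i.e.\ every row of $\widehat R_H$ sums to $1$. When each $\hat v_k$ is a combination of the $\hat r_i$ with coefficients summing to $1$---which holds trivially for SP, since each $\hat v_k$ equals one of the $\hat r_{i_k}$---the rows of $\widehat V$ sum to $1$ as well, hence $\widehat V{\bf 1}_K={\bf 1}_K$. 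Contracting $\widehat V'\hat w_i=\hat r_i$ against ${\bf 1}_K$ then gives $\hat w_i'{\bf 1}_K=\hat r_i'{\bf 1}_K=1$, so $\diag(\widehat W{\bf 1}_K)=I_n$ and \eqref{T-form} collapses to \eqref{H-form}. The entire proof is algebraic bookkeeping; the only non-mechanical ingredients are the $D$-telescoping observation and the affine-hull argument for the second claim, and the only real hazard is the sign convention in the $\ell^1$ norm.
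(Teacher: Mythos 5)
Your proof is correct and follows essentially the same route as the paper's: both chase the algebra of Steps 3--4, telescope the $\diag(\widehat{P\eta})$ factors, cancel the degree normalizations, and use the row-sums-to-one property of $\widehat{R}_H$ and $\widehat{V}_H$ to get the second identity; you simply work entrywise where the paper works in matrix form. Your two caveats (that the $\ell^1$-norm must coincide with the signed sum $\hat w_i'{\bf 1}_K$, and that the vertex combinations must be affine rather than merely linear) are points the paper's own proof glosses over, so they strengthen rather than weaken your argument.
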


\begin{proof}[Proof of Lemma~\ref{lem:TvsH}]
First, we prove the expression in \eqref{T-form}. We start from a step-by-step expression of the GoF-SCORE algorithm in matrix form:
\begin{itemize}
\item[(a)] The node embedding obtains $\widehat{R}_H=[\diag(A{\bf 1}_n)]^{-1}AH$. Let $\widehat{V}_H=[\widehat{v}_1,\ldots,\widehat{v}_K]'$ be the output of applying vertex hunting to rows of $\widehat{R}_H$.  If $\widehat{V}_H$ is singular, re-set $\widehat{V}_H = I_K$. 
\item[(b)] For each $1\leq i\leq n$, obtain $\widehat{w}_i=(\widehat{V}_H^{-1})'\widehat{r}_i$. Write $\widehat{W}=[\widehat{w}_1,\ldots,\widehat{w}_n]'$. It is equivalent to the matrix form: $\widehat{W} = \widehat{R}_H\widehat{V}_H^{-1}$. 
\item[(c)] Let $\widehat{Z}_H = \widehat{V}(H'AH)^{-1}\widehat{V}'$. Obtain $\widehat{P\eta}(k) = \big|\widehat{Z}_H (k,k) \big|^{-1/2}  $ if $ \big|\widehat{Z}_H (k,k) \big|\neq 0$ and $\widehat{P\eta}(k) = 1$ otherwise. 
\item[(d)] For each $1\leq i\leq n$, obtain $\widehat{\pi}_i^*=[\diag(\widehat{P\eta})]^{-1}\widehat{w}_i$ (note that the step of setting negative entries to zero is skipped; hence, $\widetilde{w}_i=\widehat{w}_i$). Write $\widehat{\Pi}^*=[\widehat{\pi}^*_1, \ldots,\widehat{\pi}^*_n]'$. The equivalent matrix form is $\widehat{\Pi}^* = \widehat{W}[\diag(\widehat{P\eta})]^{-1}$. 
\item[(e)] For each $1\leq i\leq n$, let $\widehat{\pi}_i=\|\widehat{\pi}_i^*\|_1^{-1}\widehat{\pi}_i^*$. Equivalently, $\widehat{\Pi}=[\diag(\widehat{\Pi}^*{\bf 1}_K)]^{-1}\widehat{\Pi}^*$.
\item[(f)] Obtain $\widehat{P}=|\diag(\widehat{Z}_H)|^{-1/2}\widehat{Z}_H|\diag(\widehat{Z}_H)|^{-1/2}$, in which if $|\diag(\widehat{Z}_H) (k,k)|=0$, re-set $|\diag(\widehat{Z}_H)(k,k) |= 1$. 
\item[(g)] For $1\leq i\leq n$, obtain $\theta_i$ by $e_i'A{\bf 1}_n/\|\widehat{\pi}_i\circ \widehat{P\eta}\|_1$. In matrix form, $\widehat{\theta} = [\diag(\widehat{\Pi}\widehat{P\eta})]^{-1} A{\bf 1}_n$. 
\item[(h)] Obtain $\widehat{\Omega}=\widehat{\Theta}\widehat{\Pi}\widehat{P}\widehat{\Pi}'\widehat{\Theta}$, where $\widehat{\Theta}=\diag(\widehat{\theta})$. 
\end{itemize}

We now use (a)-(h) to show \eqref{T-form}. Define $\widetilde{Z}_H = \widehat{Z}_H + {\rm diag} (\delta_{\widehat{Z}_H(1,1)}, \cdots,\delta_{\widehat{Z}_H(K,K)} )$ where $\delta_a$ takes value $1$ if $a= 0$ and $0$ otherwise. Based on this notation, it is easy to observe that ${\rm diag} (\widehat{P\eta} )=|\diag(\widetilde{Z}_H)|^{-1/2} $ in (c) and $\widehat{P}=|\diag(\widetilde{Z}_H)|^{-1/2}\widehat{Z}_H|\diag(\widetilde{Z}_H)|^{-1/2}$. 
Introduce a notation $\widehat{\gamma}= \widehat{W}|\diag(\widetilde{Z}_H)|^{1/2} {\bf 1}_K$. It follows from (a) that $\widehat{V}_H$ has a full rank so that $\widehat{W}$ is well-defined.    Using (c) and (d), we have $\widehat{\Pi}^*=\widehat{W}|\diag(\widetilde{Z}_H)|^{1/2}$, and $\widehat{\Pi}^*{\bf 1}_K = \widehat{\gamma}$. 
We plug them into (e) to get
\beq \label{TtoH-1}
\widehat{\Pi} =[\diag(\widehat{\Pi}^*{\bf 1}_K)]^{-1}\widehat{\Pi}^* = [\diag(\widehat{\gamma})]^{-1}\widehat{W}|\diag(\widetilde{Z}_H)|^{1/2}. 
\eeq
We combine \eqref{TtoH-1} with the expression that $\diag(\widehat{P\eta})=|\diag(\widetilde{Z}_H)|^{-1/2}$. 
It follows that
\beq \label{TtoH-2}
\widehat{\Pi}\widehat{P\eta} = \widehat{\Pi} \cdot \diag(\widehat{P\eta}){\bf 1}_K =   [\diag(\widehat{\gamma})]^{-1}\widehat{W}{\bf 1}_K. 
\eeq
By (g), $\widehat{\theta} = [\diag(\widehat{\Pi}\widehat{P\eta})]^{-1} A{\bf 1}_n$. Using \eqref{TtoH-2}, we have
\beq \label{TtoH-3}
\widehat{\Theta} =[\diag(\widehat{\Pi}\widehat{P\eta})]^{-1}\diag(A{\bf 1}_n) = \diag(A{\bf 1}_n)\, [\diag(\widehat{W}{\bf 1}_K)]^{-1}\diag(\widehat{\gamma}). 
\eeq
Combining the expression of $\widehat{\Pi}$ in \eqref{TtoH-1} and the expression of $\widehat{\Theta}$ in \eqref{TtoH-3}, we have
\beq \label{TtoH-4}
\widehat{\Theta}\widehat{\Pi}  = \diag(A{\bf 1}_n)[\diag(\widehat{W}{\bf 1}_K)]^{-1}\widehat{W}|\diag(\widetilde{Z}_H)|^{1/2}. 
\eeq 
Note that 
\[
\widehat{\Omega} =\widehat{\Theta}\widehat{\Pi}\widehat{P}\widehat{\Pi}'\widehat{\Theta}. 
\] 
We plug in the expression $\widehat{P}=|\diag(\widetilde{Z}_H)|^{-1/2}\widehat{Z}_H|\diag(\widetilde{Z}_H)|^{-1/2}$ from (f) and the expression of $\widehat{\Theta}\widehat{\Pi}$ from \eqref{TtoH-4}. 
It yields    
\beq \label{TtoH-5}
\widehat{\Omega}  =\diag(A{\bf 1}_n)[\diag(\widehat{W}{\bf 1}_K)]^{-1}\widehat{W}\widehat{Z}_H\widehat{W}'[\diag(\widehat{W}{\bf 1}_K)]^{-1}\diag(A{\bf 1}_n). 
\eeq
By (a) and (c), $\widehat{Z}_H=\widehat{V}_H(H'AH)^{-1}\widehat{V}_H'$ and $\widehat{W}=\widehat{R}_H\widehat{V}_H^{-1}=[\diag(A{\bf 1}_n)]^{-1}AH\widehat{V}_H^{-1}$. Hence, 
\beq \label{TtoH-6}
\widehat{W}\widehat{Z}_H\widehat{W}' =[\diag(A{\bf 1}_n)]^{-1}AH(H'AH)^{-1}H'A[\diag(A{\bf 1}_n)]^{-1}. 
\eeq
The claim \eqref{T-form} follows immediately by plugging \eqref{TtoH-6} into \eqref{TtoH-5} and noting that any two diagonal matrices are exchangeable in the product. 

Next, we prove the expression in \eqref{H-form} when each estimated vertex is a linear combination of $\widehat{r}_i$'s. In GoF-SCORE, the input matrix $H$ is such that each row is a weight vector. It follows that  $H{\bf 1}_K={\bf 1}_n$ and 
\beq \label{TtoH-7}
\widehat{R}_H{\bf 1}_K=[\diag(A{\bf 1}_n)]^{-1}AH{\bf 1}_K =[\diag(A{\bf 1}_n)]^{-1}A {\bf 1}_n={\bf 1}_n. 
\eeq
This means each $\widehat{r}_i$ satisfies $\widehat{r}_i'{\bf 1}_K=1$. When an estimated vertex $\widehat{v}_k$ is a linear combination of $\widehat{r}_i$'s, it also satisfies $\widehat{v}_k'{\bf 1}_K=1$. Consequently, $\widehat{V}_H{\bf 1}_K={\bf 1}_K$ for $\widehat{V}_H$ obtained from vertex hunting. If  $\widehat{V}_H$ is singular, re-set $\widehat{V}_H = I_K$ and it holds again that $\widehat{V}_H{\bf 1}_K = I_K {\bf 1}_K={\bf 1}_K$. This  further  yields that  ${\bf 1}_K=\widehat{V}_H^{-1}{\bf 1}_K$ for $\widehat{V}_H$ from GoF-SCORE. We plug this into \eqref{TtoH-7} and use the expression of $\widehat{W}$ in (b) to obtain
\beq \label{TtoH-8}
\widehat{W}{\bf 1}_K = \widehat{R}_H\widehat{V}_H^{-1}{\bf 1}_K =\widehat{R}_H{\bf 1}_K = {\bf 1}_n. 
\eeq
Combining \eqref{TtoH-8} with \eqref{H-form} gives \eqref{T-form}. 
\end{proof}

By Lemma~\ref{lem:TvsH}, we stick to the following simplified expression of $\widehat{\Omega}$ in the remaining proofs:
\[
\widehat{\Omega}=AH(H'AH)^{-1}H'A. 
\]

\subsection{Proof of Theorem~\ref{thm:main}}\label{supp:main}

Write $U_{n}(\cdot)=U_{n,3}(\cdot)$ and $C_n=C_{n,3}$ for short. Note that $T_n(\widehat{\Omega})=U_n(\widehat{\Omega})/\sqrt{6C_n}$. To show $T_n(\widehat{\Omega})\to N(0,1)$, it suffices to show that
\beq \label{main-proof-goal1}
T_n(\widehat{\Omega})/\sqrt{6\mathrm{tr}(\Omega^3)}\quad \overset{d}{\to}\quad N(0,1), 
\eeq
and
\beq \label{main-proof-goal2}
C_n/{\mathrm{tr}(\Omega^3)}\quad \overset{p}{\to}\quad 1.
\eeq
Given \eqref{main-proof-goal1}-\eqref{main-proof-goal2}, the claim follows immediately by applying the Slutsky's theorem. 

The proof of \eqref{main-proof-goal2} is shorter, hence, we consider it first. We hope to apply Lemma~\ref{lem:SCC} and the second claim in \eqref{SCC:2goal} with $m=3$. This requires verification of the conditions of Theorem~\ref{thm:SCC} for $m=3$, which reduce to the following statements: 
\[
\Omega(i,j) \leq C u_iu_j/(n\overline{u}),  \qquad  {\rm tr} (\Omega^3)\geq C\Vert u\Vert^{6}/(n\overline{u})^{3}, \qquad \max\{ n\overline{u} / \|u\|^2  ,\,  u^2_{\max}/n\overline{u} \} \goto 0. 
\]
In Theorem~\ref{thm:SCC}, $u=\Omega{\bf 1}_n$. In fact, the proof of this theorem only requires that the above holds for an arbitrary positive vector $u\in\mathbb{R}^n$. We choose $u=\|\theta\|_1\theta$ (as a result, $n\overline{u}=\|\theta\|_1^2$, $\theta_i=u_i/\sqrt{n\overline{u}}$, and $\|\theta\|^2=\|u\|^2/(n\overline{u})$). The above requirements become $\Omega(i,j)\leq C\theta_i\theta_j$, $\mathrm{tr}(\Omega^3)\geq C\|\theta\|^6$, and $\max\{\|\theta\|^{-2},\, \theta^2_{\max}\}\to 0$, which are guaranteed by  Condition~\ref{cond:afmSCORE}. In particular, to claim $\mathrm{tr}(\Omega^3)\geq C\|\theta\|^6$, we first notice that
$\tr(\Omega^3) = \|\theta\|^6 {\rm tr} \big((PG)^3\big)$. Since $P$ and $G$ are nonnegative matrices and ${\rm diag}(P)= I_K$, we can derive 
\begin{align}\label{trace-LB}
\tr(\Omega^3) \geq  \|\theta\|^6 \sum_{k=1}^K P_{kk}^3G_{kk}^3  = \|\theta\|^6 \sum_{k=1}^K G_{kk}^3 \geq K  \|\theta\|^6 \lambda_{\min} (G)^3 =  K  \|\theta\|^6 \Vert G^{-1}\Vert^{-3}
\geq C\|\theta\|^6\,. 
\end{align}
where the last inequality in \eqref{trace-LB} is based on (a) of Condition~\ref{cond:afmSCORE}. 
We can now quote Lemma~\ref{lem:SCC} and the second claim in \eqref{SCC:2goal} to get
\beq \label{main-proof-1}
\tr(\Omega^3)\to\infty, \qquad\mathbb{E}[C_n]=\mathrm{tr}(\Omega^3)[1+o(1)], \qquad C_n/\mathbb{E}[C_n]\overset{p}{\to} 1. 
\eeq
Then, \eqref{main-proof-goal2} follows immediately. 

We then consider \eqref{main-proof-goal1}. We quote the first claim in \eqref{SCC:2goal} to get
\beq \label{main-proof-2}
U_n(\Omega)/\sqrt{\mathrm{Var}(U_n(\Omega))}\quad \overset{d}{\to}\quad N(0,1). 
\eeq
By direct calculations (e.g., see \cite{JKL2019}), $\mathrm{Var}(U_n(\Omega))=6\sum_{i,j,k \text{(dist)}}\overline{\Omega}(i,j)\overline{\Omega}(j,k)\overline{\Omega}(k,i)$, where $\overline{\Omega}(i,j):=\Omega(i,j)[1-\Omega(i,j)]$. At the same time,
$\mathbb{E}[C_n]=\sum_{i,j,k \text{(dist)}}\Omega(i,j)\Omega(j,k)\Omega(k,i)$. Combining it with $\Omega(i,j)\leq C\theta_i\theta_j$, we have
\begin{align} \label{main-proof-3}
\Bigl|\mathrm{Var}&(U_n(\Omega))-6\mathbb{E}[C_n]\Bigr|  \leq C\sum_{i,j,k}[\Omega(i,j)]^2 \Omega(j,k)\Omega(k,\ell)\cr
&\leq C\sum_{i,j,k}\theta_i^3\theta_j^3\theta_k^2\leq C\|\theta\|_3^6\|\theta\|^2\leq C\theta_{\max}^2\|\theta\|^6 = o(\|\theta\|^6),  
\end{align}
where the last equality is from the assumption on $\theta_{\max}$ (see (a) of Condition~\ref{cond:afmSCORE}). 
Using \eqref{main-proof-3}, \eqref{trace-LB} and the fact that $\mathbb{E}[C_n]=\mathrm{tr}(\Omega^3)[1+o(1)]$ (see \eqref{main-proof-1}), we immediately have
\[
\mathrm{Var}(U_n(\Omega)) = 6\mathbb{E}[C_n] + o(1)\cdot\tr(\Omega^3) = 6\mathrm{tr}(\Omega^3)[1+o(1)]. 
\]
Combining it with \eqref{main-proof-2} gives
\[
U_n(\Omega)/\sqrt{6\tr(\Omega^3)}\quad\overset{d}{\to}\quad N(0,1). 
\]
Therefore, to show \eqref{main-proof-goal1}, it suffices to show $|U_n(\widehat{\Omega})-U_n(\Omega)|/\sqrt{\tr(\Omega^3)}\to 0$. In light of \eqref{trace-LB}, we only need to show that 
\beq \label{eq:finalgoal}
|U_n(\widehat{\Omega})-U_n(\Omega)|=o_{\mathbb{P}}(\|\theta\|^3). 
\eeq

It remains to show \eqref{eq:finalgoal}. 
We need some preparations. In Section~\ref{subsec:TtoH}, we have shown that $\widehat{\Omega}=AH(H'AH)^{-1}H'A$ with probability $1-o(1)$. Additionally,  using either Theorem~\ref{thm:MSCORE} or the assumption in  Theorem~\ref{thm:main} gives that $H=H_0 \text{ or } \Pi_0$, with probability $1-o(1)$. Hence, without loss of generality, we consider
\beq\label{main-proof-4}
\widehat{\Omega} = AH_0(H_0'AH_0)^{-1}H_0'A. 
\eeq
Let $W_1=W-\diag(\Omega)$ and $\Delta=\widehat{\Omega}-\Omega$. 
Lemma~\ref{lem:diffU} gives a decomposition: $U_n(\widehat{\Omega})-U_n(\Omega)=I_1+I_2+I_3$, with
\begin{align} \label{main-proof-5}
I_1 &= - {\rm tr} (\Delta^3) + 3 {\rm tr}( \Delta\circ  \Delta ^2) - 2 {\rm tr} (\Delta \circ \Delta \circ \Delta ), \cr
I_2 &= 3{\rm tr} ({W}_1\Delta^2) - 6{\rm tr} (\Delta \circ \Delta {W}_1)  -3 {\rm tr}( {W}_1^2 \Delta), \cr
I_3 &= - 3{\rm tr}( {W}_1 \circ \Delta^2)  + 6 {\rm tr}( {W}_1 \circ \Delta \circ \Delta)+ 3{\rm tr} ({W}_1^2 \circ \Delta ) \notag\\
& \quad + 6{\rm tr} ({W}_1\circ {W}_1\Delta) - 6{\rm tr} ({W}_1 \circ {W}_1 \circ \Delta ).  
\end{align}


We shall study each of the three terms. The analysis will rely on the following key technical lemma, which is proved in Section~\ref{proof:lem:Delta}: 

\begin{lemma}\label{lem:Delta}
Suppose the conditions of Theorem~\ref{thm:main} hold. Let $\widehat{\Omega}=AH_0(H_0'AH_0)^{-1}H_0'A$, where $H_0$ is a non-stochastic matrix satisfying $\kappa(\Pi'\Theta H_0)\leq C$.  Write $W_1=W-\diag(\Omega)$ and $\Delta=\widehat{\Omega}-\Omega$. Then, the following statements are true. 
\begin{itemize}
\item[(a)] $\Vert \Delta \Vert =o_{\mathbb{P}}( \Vert \theta\Vert )$;
\item[(b)] $ \Vert W_1\Delta\Vert =O_{\mathbb{P}}(\Vert \theta\Vert^2)$;
\item[(c)] $\Vert W_1^2 \Delta \Vert  = o_{\mathbb{P}}(\Vert \theta\Vert^3)$;
\item[(d)] $\Vert {\rm diag} (W_1^2) \Delta \Vert = o_{\mathbb{P}}(\Vert\theta\Vert^3)$.
\end{itemize}
 \end{lemma}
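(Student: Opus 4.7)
The plan is to expand $\widehat{\Omega}$ as a polynomial in $W_1$ around the oracle value $\Omega$, so that $\Delta$ becomes a sum of low-rank matrices whose spectral norms I can control via moment bounds on low-dimensional projections of $W_1$, and then to propagate these bounds through one, two, or (diagonal of) two extra factors of $W_1$ for claims (b)--(d).

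First I would set up the algebra. Set $B_0 = H_0'\Omega H_0$, $V = H_0' W_1 H_0$, and $M = H_0 B_0^{-1} H_0'$. Using $\Omega = \Theta\Pi P \Pi'\Theta$ and $G_{H_0} = \Pi'\Theta H_0$, the assumption $\kappa(G_{H_0})\leq C$ together with the unit diagonal of $P$ yields the oracle identity $\Omega M \Omega = \Omega$ and the simplification $\Omega H_0 B_0^{-1} = \Theta \Pi (G_{H_0}')^{-1}$. Writing $A = \Omega + W_1$ and expanding $(B_0 + V)^{-1}$ by Neumann series gives
\begin{equation*}
\Delta \;=\; \Omega M W_1 + W_1 M \Omega + W_1 M W_1 - \Omega M V M \Omega + R,
\end{equation*}
where $R$ collects higher-order Neumann remainders. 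Every displayed term factors through the $K$-dimensional image of $H_0$, hence has rank at most $K$, and its operator norm reduces to a product of norms of its low-dimensional factors.

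Second, for claim (a), I would bound each piece from this expansion. Condition~\ref{cond:afmSCORE} gives $\|B_0^{-1}\| \asymp \|\theta\|_1^{-2}$ and $\|\Theta \Pi (G_{H_0}')^{-1}\| \lesssim \|\theta\| / \|\theta\|_1$, while Bernstein-type moment estimates on the entries of $H_0' W_1$ and $V$ give $\|W_1 H_0\|_F = O_{\mathbb{P}}(\|\theta\| \sqrt{\|\theta\|_1})$ and $\|V\| = O_{\mathbb{P}}(\|\theta\|_1 \sqrt{\log n})$. Combining these yields that all four displayed terms are $o_{\mathbb{P}}(\|\theta\|)$, the worst being $\|W_1 M W_1\| \leq \|W_1 H_0\|^2 \|B_0^{-1}\| \lesssim \theta_{\max}\|\theta\|$. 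The Neumann remainder $R$ is handled by a crude bound $\|R\| \lesssim \|M\| \cdot \|W_1\|^2 \cdot \|V\| \cdot \|B_0^{-1}\|$ together with the sparse-matrix bound $\|W_1\| = O_{\mathbb{P}}(\sqrt{\theta_{\max}\|\theta\|_1})$. Claims (b)--(d) then follow by pre-multiplying the same expansion by $W_1$, $W_1^2$, and $\diag(W_1^2)$; because each dominant term of $\Delta$ carries a factor of $H_0$ on one side, these products reduce to bounds on $\|W_1^k H_0\|$ and $\|\diag(W_1^2) \cdot N H_0'\|$ for rank-$K$ matrices $N$, all of which can be obtained by first- and second-moment computations in the same combinatorial style as Section~\ref{supp:SCC}.

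The main obstacle will be claim (c). Products such as $W_1^2 \cdot (\Omega M W_1)$ expand into cubic forms in the upper-triangular Bernoullis whose expectation, unlike that of $W_1^3$, is not zero: it is driven by the diagonal of $W_1^2$, whose entries concentrate around the row sums of the variance matrix and scale like $\theta_i \|\theta\|_1$. Isolating this deterministic piece, verifying that its contraction with the rank-$K$ factor $\Omega M$ is $o(\|\theta\|^3)$, and then bounding the genuinely fluctuating remainder by a fourth-moment argument is the delicate combinatorial step that dominates the technical work, and it mirrors the martingale-CLT bookkeeping used in the proof of Theorem~\ref{thm:SCC}.
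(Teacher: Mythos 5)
Your strategy is the same as the paper's: expand $(H_0'AH_0)^{-1}$ by a Neumann series around $B_0=H_0'\Omega H_0$, use the identity $\Omega H_0B_0^{-1}H_0'\Omega=\Omega$ to write $\Delta$ as a sum of rank-$\le K$ pieces, and control each piece through moment bounds on $K$-dimensional projections of powers of $W_1$ (this is exactly Lemma~\ref{lem:1.7} and the decomposition $\Delta=\Delta_1+\Delta_1'+\Delta_2+\Delta_3$ in the paper, with your $-\Omega MVM\Omega$ and $W_1MW_1$ matching $\Delta_2$ and $\Delta_3$). You also correctly identify the hardest step: the non-centered cubic and quartic forms $\Pi'\Theta W_1^3H_0$ and $H_0'W_1^4H_0$ behind claim (c), which require isolating the $\diag(W_1^2)$ contribution and a fourth-moment combinatorial argument.

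Two concrete points need repair. First, your intermediate norms are misstated: $B_0^{-1}=G_{H_0}^{-1}P^{-1}(G_{H_0}')^{-1}$ carries a factor $\|P^{-1}\|\lesssim\beta_n^{-1}$, which may diverge under Condition~\ref{cond:afmSCORE}(b), so $\|B_0^{-1}\|\lesssim\beta_n^{-1}\|\theta\|_1^{-2}$ rather than $\asymp\|\theta\|_1^{-2}$; and $\|W_1H_0\|_F^2=\mathrm{tr}(H_0'W_1^2H_0)\asymp\|\theta\|_1^2$, not $\|\theta\|^2\|\theta\|_1$ (each diagonal entry of $W_1^2$ concentrates at $\asymp\theta_i\|\theta\|_1$). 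With the corrected sizes the dominant piece $W_1MW_1$ is $O_{\mathbb{P}}(\beta_n^{-1})$, still $o(\|\theta\|)$ by $\beta_n\|\theta\|\gg\sqrt{\log(n)}$, but your bookkeeping has to be redone. Second, and more seriously, the crude remainder bound $\|R\|\lesssim\|M\|\,\|W_1\|^2\|V\|\,\|B_0^{-1}\|$ separates $M=H_0B_0^{-1}H_0'$ and $W_1$ into standalone operator norms; since $\|H_0\|\asymp\sqrt{n}$ and $\|W_1\|\asymp\sqrt{\theta_{\max}\|\theta\|_1}$, cross terms such as $\Omega H_0B_0^{-1}VB_0^{-1}H_0'W_1$ bounded this way pick up a spurious factor $\sqrt{n\theta_{\max}/\|\theta\|_1}$ relative to the grouped bound, and under the degree heterogeneity permitted by Condition~\ref{cond:afmSCORE}(a) this can exceed $o(\|\theta\|)$. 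This is precisely the analytical hurdle of Section~\ref{subsec:SCChurdle}. The fix is what the paper does: never detach $H_0$ or $\Omega$ from $W_1$; always bound the structured factors $\Omega H_0B_0^{-1}=\Theta\Pi(G_{H_0}')^{-1}$ (norm $\lesssim\|\theta\|/\|\theta\|_1$) and $H_0'W_1(\cdot)$ as single objects, absorbing the Neumann tail into a factor $(I_K+{\cal T})$ with $\|{\cal T}\|=o_{\mathbb{P}}(1)$.
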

 
\noindent
In addition, we will frequently quote some linear algebra arguments, which are summarized in the following lemma and proved in Section~\ref{supp:lem:hada}.

\begin{lemma}\label{lem:hada}
For any $B,D\in\mathbb R^{n,n}$, it holds that ${\rm tr} (B\circ D) = {\rm tr}\{{\rm diag}(B) D\}$. In addition,
\begin{align} \label{hada}
&{\rm rank} (B\circ D) \leq  {\rm rank} (B)\times {\rm rank} ( D),\cr
&\Vert B\circ D\Vert \leq \min\{  {\rm rank} (B), {\rm rank} ( D)\}  \Vert B\Vert \Vert D\Vert\, .
\end{align}
\end{lemma}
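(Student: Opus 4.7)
\textbf{Proof plan for Lemma~\ref{lem:hada}.} All three statements are standard linear-algebra facts; the plan is to prove them by elementary direct arguments, organized in increasing level of difficulty.

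First, for the trace identity, I would simply compute both sides entrywise. The diagonal entries of $B\circ D$ are exactly $B_{ii}D_{ii}$, and the diagonal entries of $\mathrm{diag}(B) D$ are also $B_{ii}D_{ii}$ (since left-multiplication by $\mathrm{diag}(B)$ scales the $i$-th row of $D$ by $B_{ii}$). Summing the diagonals gives the claim in a couple of lines.

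Second, for the rank inequality, I plan to use a rank-one decomposition. Write $B=\sum_{k=1}^{r}u_{k}v_{k}'$ with $r=\mathrm{rank}(B)$ and $D=\sum_{\ell=1}^{s}x_{\ell}y_{\ell}'$ with $s=\mathrm{rank}(D)$. A direct computation shows that
\[
(u_{k}v_{k}')\circ(x_{\ell}y_{\ell}') \;=\; (u_{k}\circ x_{\ell})(v_{k}\circ y_{\ell})',
\]
so $B\circ D=\sum_{k=1}^{r}\sum_{\ell=1}^{s}(u_{k}\circ x_{\ell})(v_{k}\circ y_{\ell})'$ is a sum of at most $rs$ rank-one terms, yielding $\mathrm{rank}(B\circ D)\leq rs$.

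Third, for the spectral-norm inequality, I will combine the previous idea with the SVD. Take the SVD $B=\sum_{k=1}^{r}\sigma_{k}u_{k}v_{k}'$ with orthonormal $\{u_{k}\}$ and $\{v_{k}\}$ and $\sigma_{1}\geq\cdots\geq\sigma_{r}>0=\|B\|,\ldots$ Note that $(u_{k}v_{k}')\circ D=\mathrm{diag}(u_{k})\,D\,\mathrm{diag}(v_{k})$, so by the triangle inequality and the submultiplicativity of the spectral norm,
\[
\|B\circ D\|\;\leq\;\sum_{k=1}^{r}\sigma_{k}\,\|\mathrm{diag}(u_{k})\|\,\|D\|\,\|\mathrm{diag}(v_{k})\|\;\leq\;\|D\|\sum_{k=1}^{r}\sigma_{k}\,\|u_{k}\|_{\infty}\|v_{k}\|_{\infty}.
\]
Since $u_{k},v_{k}$ are unit vectors, $\|u_{k}\|_{\infty},\|v_{k}\|_{\infty}\leq 1$, and $\sum_{k=1}^{r}\sigma_{k}\leq r\sigma_{1}=\mathrm{rank}(B)\cdot\|B\|$. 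This gives $\|B\circ D\|\leq\mathrm{rank}(B)\,\|B\|\,\|D\|$. Swapping the roles of $B$ and $D$ (the Hadamard product is commutative) produces the symmetric bound, and taking the minimum of the two yields the stated inequality.

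There is no serious obstacle here; the only minor point requiring care is the step $\sum_{k}\sigma_{k}\leq r\sigma_{1}$, which is just the obvious bound of the nuclear norm by rank times operator norm. The rest is bookkeeping.
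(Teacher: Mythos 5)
Your proposal is correct. The trace identity and the rank bound are handled exactly as in the paper (the paper uses the SVD for the rank bound where you allow any rank-one decomposition, but the identity $(u v')\circ(x y')=(u\circ x)(v\circ y)'$ is the same engine in both). For the spectral-norm bound your route differs from the paper's: you expand $B$ in its SVD, use $(u_k v_k')\circ D=\mathrm{diag}(u_k)\,D\,\mathrm{diag}(v_k)$, and sum up via the triangle inequality together with $\sum_k\sigma_k\leq \mathrm{rank}(B)\,\|B\|$ and $\|u_k\|_\infty,\|v_k\|_\infty\leq 1$; you then obtain the $\min$ by invoking commutativity of the Hadamard product. The paper instead bounds $\sup_{\|\xi\|=\|\eta\|=1}|\xi'(B\circ D)\eta|$ by rewriting the bilinear form as $\mathrm{tr}\{\mathrm{diag}(\xi)B\,\mathrm{diag}(\eta)D'\}$ and applying $|\mathrm{tr}(M)|\leq \mathrm{rank}(M)\|M\|$, which delivers the $\min\{\mathrm{rank}(B),\mathrm{rank}(D)\}$ factor in a single symmetric step. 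The two arguments are of comparable length and rest on essentially equivalent facts (bounding a sum of singular values by rank times operator norm); yours is perhaps slightly more transparent about where each factor comes from, while the paper's avoids the final symmetrization step. The only blemish is the typo ``$\sigma_r>0=\|B\|$'' where you presumably meant $\sigma_1=\|B\|$; it does not affect the argument.
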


We now show \eqref{eq:finalgoal}. Consider $I_1$ in \eqref{main-proof-5}. Note that $\mathrm{rank}(\Delta)\leq\mathrm{rank}(\widehat{\Omega})+\mathrm{rank}(\Omega)\leq 2K$, and $\mathrm{rank}(\Delta^2)=\mathrm{rank}(\Delta)\leq 2K$. Moreover, by applying the first inequality in \eqref{hada}, $\mathrm{rank}(\Delta\circ\Delta^2)\leq 4K^2$ and $\mathrm{rank}(\Delta\circ\Delta\circ\Delta)\leq 8K^3$. Combining these observations with the inequality $|\tr(B)|\leq \mathrm{rank}(B)\|B\|$, we have
\[
|I_1|\leq C\bigl(\|\Delta\|^3 + \|\Delta \circ \Delta^2\| +\|\Delta\circ\Delta\circ\Delta\| \bigr)\leq C\|\Delta\|^3, 
\]
where the last inequality is from the second line of \eqref{hada}. By (a) of Lemma~\ref{lem:Delta}, i.e., $\|\Delta\|=o_{\mathbb{P}}(\Vert \theta\Vert)$, we immediately have 
\beq \label{main-proof-6}
|I_1|
=o_{\mathbb{P}}(\|\theta\|^3). 
\eeq

Consider $I_2$ in \eqref{main-proof-5}. 
Similarly, we can use the inequalities such as $|\tr(B)|\leq\mathrm{rank}(B)\|B\|$, $\mathrm{rank}(BD)\leq\min\{\mathrm{rank}(B),\mathrm{rank}(D)\}$, and those in Lemma~\ref{lem:hada} to obtain: 
\begin{align*}
|I_2|&\leq C\Bigl(|{\rm tr} ({W}_1\Delta^2)|  + |{\rm tr} (\Delta \circ \Delta {W}_1) |
+ |{\rm tr}( {W}_1 \Delta \circ \Delta) | + | {\rm tr}( {W}_1^2 \Delta)|\Bigr)\cr
&\leq C \bigl(\Vert W_1 \Delta \Vert \Vert \Delta \Vert + \Vert W_1^2 \Delta \Vert\bigr).
\end{align*}
We plug in the statements (a)-(c) in Lemma~\ref{lem:Delta} to get 
\beq \label{main-proof-7}
|I_2|
=o_{\mathbb{P}}(\|\theta\|^3). 
\eeq

Consider $I_3$ in \eqref{main-proof-5}. For $B, D\in\mathbb{R}^{n,n}$ and $m\geq 1$, 
\[
\Bigl|\tr\Bigl(\underbrace{B\circ \cdots \circ B}_m\circ D\Bigr)\Bigr|=\Bigl|\tr\Bigl(\underbrace{\diag(B)\circ \cdots \circ \diag(B)}_m\circ D\Bigr)\Bigr|\leq \mathrm{rank}(D)\cdot \|\diag(B)\|^m \|D\|.
\]
Using this inequality, we can obtain:
\begin{align*}
|I_3| &\leq C\Bigl(\Vert {\rm diag} (W_1)\Vert \Vert \Delta\Vert^2 +  \Vert {\rm diag} (W_1^2) \Delta\Vert + \Vert {\rm diag} (W_1)\Vert \Vert W_1\Delta\Vert  + \Vert {\rm diag} (W_1)\Vert ^2 \Vert \Delta \Vert \Bigr)\cr
&\leq C\Bigl(\Vert {\rm diag} (\Omega)\Vert \Vert \Delta\Vert^2 +  \Vert {\rm diag} (W_1^2) \Delta\Vert + \Vert {\rm diag} (\Omega)\Vert \Vert W_1\Delta\Vert  + \Vert {\rm diag} (\Omega)\Vert ^2 \Vert \Delta \Vert \Bigr), 
\end{align*}
where the second line is because $\diag(W_1)=-\diag(\Omega)$. We then apply (a)-(d) and use the naive bound $\|\diag(\Omega)\|\leq C\theta_{\max}^2$. It follows that
\begin{align} \label{main-proof-8}
|I_3| &= O_{\mathbb{P}}\bigl(\theta_{\max}^2\|\theta\|^{2}\bigr)+o_{\mathbb{P}}(\|\theta\|^3) + O_{\mathbb{P}}\bigl(\theta_{\max}^2\|\theta\|^{2}\bigr) + O_{\mathbb{P}}\bigl(\theta_{\max}^2\|\theta\|^{-1}\bigr)= o_{\mathbb{P}}(\|\theta\|^3)\,. 
\end{align}
Now, \eqref{eq:finalgoal} follows from \eqref{main-proof-6}-\eqref{main-proof-8}. The proof is complete. \qed


\subsection{Proof of auxiliary lemmas} \label{supp:main-auxiliary}

\subsubsection{Proof of Lemma \ref{lem:Delta}}\label{proof:lem:Delta}

The proof of Lemma \ref{lem:Delta} relies on the following lemma, which collects useful operator norm bounds, and is proved in Section~\ref{proof:lem:1.7}.
\begin{lemma}\label{lem:1.7}
Under the conditions of Lemma~\ref{lem:Delta}, for $\alpha=1, 2, 3$ and $\beta= 1,2$, let $H= H_0$,  we have
 \begin{align*}
&\Vert H' W_1 H\Vert =O_{\mathbb P}( \Vert \theta\Vert_1 \sqrt{\log(n)})\, ,  \qquad ~~~~~~~
\quad \Vert \Pi'\Theta W_1H\Vert =O_{\mathbb P}( \Vert \theta\Vert_3^{3/2} \Vert \theta\Vert_1^{1/2} \sqrt{\log(n)})\,  , \notag\\
  &\Vert \Pi'\Theta W_1^{2} H\Vert=O_{\mathbb P}( \Vert \theta\Vert^2 \Vert\theta\Vert_1),  \qquad ~~~~~~~~~~~~~\Vert \Pi'\Theta W_1^{3} H\Vert=o_{\mathbb P}( \|\theta\|^3\|\theta\|_1),\\
&\Vert\Pi'\Theta W_1^\beta \Theta H \Vert=O_{\mathbb P}(\Vert \theta\Vert^2 \Vert\theta\Vert_1^{\beta-1}),\qquad~~~~~~~~  \Vert H' W_1^{1+\alpha} H\Vert =O_{\mathbb P}( \Vert \theta\Vert_1^2 \Vert \theta\Vert^{2\max\{\alpha-2,0\}}),\notag\\
&\Vert \Pi'\Theta {\rm diag}(W_1^2) \Theta \Pi'\Vert=O_{\mathbb P}(\Vert \theta\Vert^2 \Vert \theta\Vert_1),\qquad~~~~\Vert \Pi'\Theta {\rm diag}(W_1^2) W_1H\Vert=o_{\mathbb P}( \|\theta\|^3\|\theta\|_1),
\notag\\
& \Vert H'W_1 {\rm diag}(W_1^2) W_1H\Vert=O_{\mathbb P}( \Vert \theta\Vert^2 \Vert \theta\Vert_1^2).
 \end{align*}

\end{lemma}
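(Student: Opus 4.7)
\textbf{Proof proposal for Lemma~\ref{lem:1.7}.}
The plan is to exploit the fact that every matrix on the left-hand side is of size $K\times K$, since $H\in\mathbb{R}^{n,K}$ and $\Pi'\Theta\in\mathbb{R}^{K,n}$ both have the short dimension $K$. This lets me replace operator-norm control by the $K$-times-entry-wise-maximum inequality $\Vert M\Vert\leq K\Vert M\Vert_{\max}$, and reduces every claim to an entry-wise high-probability bound on a polynomial of the independent centered upper-triangular Bernoullis $\{W_{ij}\}_{i<j}$. The uniform bounds $H_{ki}\in\{0,1\}$ (since $H=H_0=\Pi_0$, whose rows are standard basis vectors), $\pi_i(k)\in[0,1]$, together with $\bar\Omega_{ij}\leq\Omega_{ij}\leq C\theta_i\theta_j$, will drive every variance computation.

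For the two linear-in-$W_1$ statements, write the $(k,\ell)$-entry of $H'W_1 H$ as $\sum_{i<j}(H_{ki}H_{\ell j}+H_{kj}H_{\ell i})W_{ij}$ plus a deterministic diagonal contribution from $\mathrm{diag}(\Omega)$ of size $O(\theta_{\max}^2)$, which is negligible. The variance of this sum is bounded by $C\sum_{i,j}\bar\Omega_{ij}\leq C\Vert\theta\Vert_1^2$, and each increment is bounded by $1$, so Bernstein gives the entrywise bound $O(\Vert\theta\Vert_1\sqrt{\log n})$ with probability $1-O(n^{-c})$. The same Bernstein step applied to the entry of $\Pi'\Theta W_1 H$ produces variance $\leq C\sum_{i,j}\theta_i^2\bar\Omega_{ij}\leq C\Vert\theta\Vert_3^3\Vert\theta\Vert_1$, yielding the claimed rate. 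The bound on $\Vert\Pi'\Theta W_1\Theta H\Vert$ (the $\beta=1$ case of item~5) is analogous with one extra $\theta$-factor on the right.

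For the higher-order statements I will use the decomposition $W_1^2=W^2+R$ with $\Vert R\Vert$ of order $\theta_{\max}^2$, and split $W^2=\mathbb{E}[W^2]+(W^2-\mathbb{E}[W^2])$. The mean $\mathbb{E}[W^2]$ equals the diagonal matrix $U=\mathrm{diag}(u_1,\ldots,u_n)$ with $u_i\asymp\theta_i\Vert\theta\Vert_1$; this produces the leading deterministic contribution $\Vert\Pi'\Theta U H\Vert\leq C\Vert\theta\Vert^2\Vert\theta\Vert_1$ for item~3, and similarly handles items~5,6. The fluctuation $W^2-\mathbb{E}[W^2]$ is a centered degree-2 polynomial in $W$, which I will bound by computing the second moment of each scalar entry through a graph-enumeration argument: the admissible index configurations correspond to closed walks on a small graph, and the dominant walk is the one giving $\mathbb{E}[W^2]\approx U$; all other walks contribute strictly smaller variance. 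The bounds on $\Vert\mathrm{diag}(W_1^2)\Theta\Pi\Vert$ (item~7) follow from the simpler fact that $\mathrm{diag}(W_1^2)_{ii}=\sum_j W_{ij}^2$ concentrates around $u_i$ with deviation $O(\sqrt{u_i\log n})$, by Bernstein applied to bounded independent increments.

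The main obstacle is the cubic-in-$W_1$ statements (item~4 and the $\alpha=2$ case of item~6) together with the ``mixed'' statements $\Vert\Pi'\Theta\,\mathrm{diag}(W_1^2)W_1 H\Vert$ and $\Vert H'W_1\,\mathrm{diag}(W_1^2)W_1 H\Vert$; these demand sharp control of centered polynomials of $W$ of degree 3 or 4, where naive expansion produces many competing terms whose dependence structure has to be tracked combinatorially. The strategy is to expand each expression along all index patterns, classify patterns by which indices coincide (so that a nontrivial moment of a single $W_{ij}$ survives), and show that the ``degenerate'' (diagonal) patterns reproduce the deterministic scale $\Vert\theta\Vert^{\alpha}\Vert\theta\Vert_1^{\beta}$ stated in the lemma, while the ``non-degenerate'' patterns give fluctuations of strictly smaller order. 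This is precisely the kind of bookkeeping already carried out for the martingale CLT of Section~\ref{supp:SCC}, but unlike there, the quantities here are not self-normalized, so absolute variance bounds must be preserved throughout. The $o_P$ (rather than $O_P$) refinements in items~4 and~8 will come from the extra logarithmic slack afforded by the assumptions $\theta_{\max}\sqrt{\log n}\leq c$ and $(\Vert\theta\Vert_5^5\Vert\theta\Vert_1/\Vert\theta\Vert^6)\log n\to 0$ in Condition~\ref{cond:afmSCORE}, applied to the leading variance terms after the graph-enumeration is complete.
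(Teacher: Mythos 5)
Your proposal is correct and follows essentially the same route as the paper's proof (Lemmas~\ref{lem:w11}--\ref{lem:w14}): reduce each $K\times K$ operator norm to an entry-wise maximum, bound the linear-in-$W_1$ entries by Bernstein with variance $\lesssim\sum_{i,j}\theta_i^a\theta_j^b\bar\Omega_{ij}$, and handle the quadratic/cubic/quartic entries by computing means and variances via enumeration of index-coincidence patterns, with the $o_{\mathbb P}$ refinements coming from $\theta_{\max}\sqrt{\log n}\leq c$ and $(\|\theta\|_5^5\|\theta\|_1/\|\theta\|^6)\log n\to 0$. Your explicit centering $W^2=\mathbb{E}[W^2]+(W^2-\mathbb{E}[W^2])$ and the entry-wise concentration of $\mathrm{diag}(W_1^2)$ are only cosmetic reorganizations of the paper's direct mean-plus-Chebyshev computation, and the combinatorial bookkeeping you defer for the degree-3 and degree-4 terms is exactly what the paper carries out term by term.
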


Now, we prove Lemma~\ref{lem:Delta} using Lemma~\ref{lem:1.7}. To facilitate the presentation, with a little abuse of notation, we use $H$ in place of $H_0$. Nonetheless, it's important to note that throughout our analysis, $H$ is equivalent to $H_0$, maintaining its non-stochastic nature.  Recall from \eqref{GoForacle-1} the definition of $G_H=\Pi'\Theta H$. Observe the fact that $(H'\Omega H)^{-1}=G_H^{-1}P^{-1}(G_H')^{-1}$ and $(H'\Omega H)^{-1} H' \Omega = G_H^{-1}\Pi'\Theta$.
By Corollary~5.6.16 in \cite{HornJohnson}, it holds that
 \begin{align}\label{new1}
 &(H'AH)^{-1} =(H'\Omega H)^{-1}(H'\Omega H)(H'AH)^{-1}\notag\\
&=(H'\Omega H)^{-1}\Big[I_K+\sum_{j=1}^\infty\{I_K-(H'A H)(H'\Omega H)^{-1}\}^j\Big]=(H'\Omega H)^{-1}(I_K+\mathcal T),
 \end{align}
where
\begin{align}\label{new2}
\mathcal T&=\sum_{j=1}^\infty\{I_K-(H'A H)(H'\Omega H)^{-1}\}^j=\sum_{j=1}^\infty\{-H'W_1 H(H'\Omega H)^{-1}\}^j.
\end{align}
In addition, $(H'AH)^{-1} - (H'\Omega H)^{-1}= - (H'\Omega H)^{-1} (I_K + \mathcal{T} ) H'W_1 H (H'\Omega H)^{-1}.$

It can be claimed that $\|G_H^{-1}\|\lesssim \|\theta\|_1^{-1}$ based on $\kappa(\Pi'\Theta H)\leq C$. To elaborate, since $\kappa(\cdot)$ denotes the conditioning number of a matrix,  we have $s_{\min}(\Pi'\Theta H)/s_{max}(\Pi'\Theta H)\geq c$. It remains to show that $s_{max}(\Pi'\Theta H)\asymp \Vert \theta\Vert_1$. By direct computations and the fact that $H_0$ is weight matrix whose rows are weight vectors, 
\begin{align*}
\Vert \Pi'\Theta H\Vert_{\max} = \max_{k, \ell} |e_k'  \Pi\Theta H e_{\ell}| \leq {\bf 1}_n' \Theta {\bf 1}_n = \Vert \theta\Vert_1\, . 
\end{align*}
 It follows that 
\begin{align*}
s_{\max}(\Pi'\Theta H) \lesssim  \Vert \Pi'\Theta H \Vert_{F}\leq K \Vert \Pi'\Theta H \Vert_{\max} \lesssim \Vert \theta\Vert_1\, . 
\end{align*}
since $K$ is a fixed integer.
Further by definition of largest singular value, we can deduce that
\begin{align*}
s^2_{\max}(\Pi'\Theta H) & \geq K^{-1} {\bf 1}_K' \Pi' \Theta H_0 H_0' \Theta \Pi {\bf 1}_K = K^{-1} \Vert\theta' H_0 \Vert^2  \notag\\
&= K^{-2} \Vert\theta' H_0 \Vert^2  \Vert {\bf 1}_K\Vert^2 \geq K^{-2}( \theta' H_0 {\bf 1}_K)^2 = K^{-2} \Vert\theta\Vert_1^2\, . 
\end{align*}
Here we used the identity $\Pi{\bf 1}_K = {\bf 1}_n = H_0{\bf 1}_K$ and Cauchy-Schwarz inequality. Thus, we proved $s_{max}(\Pi'\Theta H)\asymp \Vert \theta\Vert_1$.  

Recalling the definition of $\beta_n$ in Condition~\ref{cond:afmSCORE}, in view of Lemma~\ref{lem:1.7} and $\|G_H^{-1}\|\lesssim \|\theta\|_1^{-1}$, we obtain
\begin{align*}
&\|H'W_1 H\|\|(H'\Omega H)^{-1}\|\leq\|H'W_1 H\|\|P^{-1}\|\|G_H^{-1}\|^{-2}=O_{\mathbb P} (\|\theta\|_1^{-1}\beta_n^{-1}\sqrt{\log(n)}).
\end{align*}
Therefore, due to the assumption that 
$\beta_n \Vert \theta \Vert \gg \sqrt{\log n} $ in (b) of Condition~\ref{cond:afmSCORE}, we deduce that
\begin{align*}
\|H'W_1 H\|\|(H'\Omega H)^{-1}\|=O_{\mathbb P}(\|\theta\|_1^{-1}\beta_n^{-1}\sqrt{\log(n)})=o_{\mathbb P}(1).
\end{align*}
The above equation implies that
\begin{align}\label{boundt}
\|\mathcal T\|\leq \sum_{j=1}^\infty\|H'W_1 H\|^j\|(H'\Omega H)^{-1}\|^j=o_{\mathbb P}(1).
\end{align}

Now, in order to bound $\|\Delta\|$, observing the definition of $\Delta$ and in view of \eqref{new1} and \eqref{new2}, we deduce from Lemma~\ref{lem:TvsH} that
\begin{align}\label{Delta1230}
\Delta&=\Omega H(H'A H)^{-1}H'W_1+W_1 H(H'A H)^{-1}H'\Omega\notag\\
&\quad-\Omega H\{(H'\Omega H)^{-1}-(H'A H)^{-1}\}H'\Omega+W_1H(H'A H)^{-1}H'W_1\notag\\
&=\Omega H(H'\Omega H)^{-1}(I_K+\mathcal T)H'W_1+W_1 H(I_K+\mathcal T)(H'\Omega H)^{-1}H'\Omega\notag\\
&\quad-\Omega H(H'\Omega H)^{-1} (I_K + \mathcal{T} ) H'W_1 H (H'\Omega H)^{-1}H'\Omega+W_1H(H'\Omega H)^{-1}(I_K+\mathcal T)H'W_1\notag\\
%
%
&=\Delta_1+\Delta_1'+\Delta_2+\Delta_3,
\end{align}
where
\begin{align}\label{Delta123}
&\Delta_1=\Theta\Pi (G_H')^{-1}(I_K+\mathcal T)H'W_1,\qquad\Delta_2=-\Theta\Pi (G_H')^{-1} (I_K + \mathcal{T} ) H'W_1 H G_H^{-1}\Pi'\Theta,\notag\\
&\Delta_3=W_1HG_H^{-1}P^{-1}(G_H')^{-1}(I_K+\mathcal T)H'W_1.
\end{align}
In view of \eqref{boundt} and Lemma~\ref{lem:1.7},  with probability $1-o(1)$,
\begin{align*}
& \Vert \Delta_1\Vert \leq \Vert H'W_1 \Theta \Pi \Vert \Vert G_H^{-1}\Vert \Vert I_K + \mathcal{T}\Vert \lesssim \Vert \theta\Vert_3^{3/2} \Vert\theta\Vert_1^{-1/2} \sqrt{\log n}\lesssim 1 ,\notag\\
& \Vert \Delta_2\Vert \leq \Vert \Pi'\Theta^2 \Pi\Vert \Vert H'W_1H\Vert \Vert G_H^{-1}\Vert^2 \Vert I_K + \mathcal{T}\Vert \lesssim \Vert \theta\Vert^2 \Vert\theta\Vert_1^{-1} \sqrt{\log n}\lesssim 1, \notag\\
& \Vert \Delta_3\Vert \leq \Vert H'W_1^2H  \Vert \Vert G_H^{-1}\Vert^2 \Vert I_K + \mathcal{T}\Vert \Vert P^{-1}\Vert \lesssim \beta_n^{-1} \ll \Vert \theta\Vert\,,
\end{align*}
where we used the facts $\Vert \theta\Vert_3^{3/2} \leq \Vert \theta\Vert_1^{1/2} \theta_{\max}$ and $\Vert \theta\Vert^2 \leq \Vert \theta\Vert_1 \theta_{\max}$, together with condition that $\beta_n \Vert \theta \Vert \gg \sqrt{\log n} $.
Using the above bounds, by the trivial inequality $\Vert \Delta \Vert\leq \Vert \Delta_1\Vert + \Vert \Delta_1'\Vert  + \Vert \Delta_2\Vert + \Vert \Delta_3\Vert$,  we conclude the proof of the first inequality (a) in Lemma \ref{lem:Delta}. 

For $\|W_1\Delta\|$ in (b), observe that  from \eqref{Delta1230} that $W_1 \Delta=W_1 \Delta_1+W_1 \Delta_1'+W_1 \Delta_2+W_1 \Delta_3$, where
\begin{align*}
&W_1 \Delta_1=W_1\Theta\Pi(G_H')^{-1}(I_K+\mathcal T)H'W_1,
\\
&W_1\Delta_2=-W_1\Theta\Pi(G_H')^{-1} (I_K + \mathcal{T} ) H'W_1 H G_H^{-1}\Pi'\Theta,
\\
&W_1\Delta_3=W_1^2H(G_H')^{-1}P^{-1}G_H^{-1}(I_K+\mathcal T)H'W_1.
\end{align*}
Similarly to the derivations above, by Lemma \ref{lem:1.7}, we obtain 
\begin{align*}
\Vert W_1 \Delta \Vert &\lesssim \Vert H'W_1^2 \Theta \Pi\Vert \Vert G_H^{-1}\Vert  + \Vert \Pi'\Theta W_1\Theta \Pi\Vert \Vert G_H^{-1}\Vert^2 \Vert H'W_1H\Vert + \Vert H'W_1^3H\Vert \Vert G_H^{-1}\Vert^2 \Vert P^{-1}\Vert \notag\\
& = O_{\mathbb P}(\Vert \theta\Vert^2) + O_{\mathbb P}(\Vert \theta\Vert^2\Vert \theta\Vert_1^{-1} \sqrt{\log n})  +O_{\mathbb P}( \beta_n^{-1})=O_{\mathbb P} (\Vert \theta\Vert^2).
\end{align*}
Here, the last step is due to $\Vert \theta\Vert_1>\Vert \theta\Vert^2\gg \log n$ and $\beta_n\Vert \theta\Vert \gg\sqrt{\log n} $ in view of Condition~\ref{cond:afmSCORE}. 

Last, for $\|W_1^2 \Delta\|$ and $\|{\rm diag} (W_1^2) \Delta\|$ in (c) and (d), by applying Lemma \ref{lem:1.7} and the condition that $\beta_n\gg \Vert\theta\Vert^{-1} \sqrt{\log n}$, we obtain
\begin{align*}
&\Vert W_1^2 \Delta \Vert \lesssim \Vert H'W_1^3 \Theta \Pi\Vert \Vert G_H^{-1}\Vert  + \Vert \Pi'\Theta W_1^2\Theta \Pi\Vert \Vert G_H^{-1}\Vert^2 \Vert H'W_1H\Vert + \Vert H'W_1^4H\Vert \Vert G_H^{-1}\Vert^2 \Vert P^{-1}\Vert \notag\\
&\qquad=o_{\mathbb P}(\|\theta\|^3)+O_{\mathbb P}\{\Vert \theta\Vert^2 (\beta_n^{-1} + \sqrt{\log n}\, ) \}=o_{\mathbb P} (\Vert\theta\Vert^3) ,\\
&\Vert {\rm diag} (W_1^2) \Delta \Vert 
\lesssim \Vert H'W_1{\rm diag} (W_1^2) \Theta \Pi\Vert \Vert G_H^{-1}\Vert  + \Vert \Pi'\Theta {\rm diag} (W_1^2)\Theta \Pi\Vert \Vert G_H^{-1}\Vert^2 \Vert H'W_1H\Vert \notag\\
&
+ \Vert H'W_1{\rm diag} (W_1^2)W_1 H\Vert \Vert G_H^{-1}\Vert^2 \Vert P^{-1}\Vert =o_{\mathbb P}(\|\theta\|^3)+O_{\mathbb P}\{ \Vert \theta\Vert^2 (\beta_n^{-1} + \sqrt{\log n})\} =o_{\mathbb P}( \Vert\theta\Vert^3) .
\end{align*}
The proof is therefore complete.

\subsubsection{Proof of Lemma~\ref{lem:hada}}\label{supp:lem:hada}

First, for $B,D\in\mathbb R^{n,n}$, it holds that ${\rm tr} (B\circ D)=\sum_{i=1}^n B_{ii}D_{ii} = {\rm tr}\{{\rm diag}(B) D\}$. Second, let $r_B={\rm rank}(B)$, $r_D={\rm rank}(D)$. Suppose $B=\sum_{i=1}^{r_B}\sigma_iu_iv_i'$ and $D=\sum_{j=1}^{r_D}\widetilde\sigma_j\widetilde u_j\widetilde v_j'$ are the SVD of $B$ and $D$, respectively. Then, 
\begin{align*}
B\circ D=\sum_{i=1}^{r_B}\sum_{j=1}^{r_D}\sigma_i\widetilde\sigma_j (u_i\circ\widetilde u_j)(v_i\circ\widetilde v_j)',
\end{align*}
so that ${\rm rank}(B\circ D)\leq r_Br_D={\rm rank}(B)\times{\rm rank}(D)$. Third, we have
\begin{align*}
\Vert B\circ D\Vert&=\sup_{\substack{\xi,\eta\in\mathbb R^n;\\ \|\xi\|=\|\eta\|=1}}|\xi'(B\circ D)\eta|=\sup_{\substack{\xi,\eta\in\mathbb R^n;\\ \|\xi\|=\|\eta\|=1}}\bigg|\sum_{i,j=1}^n\xi(i) B_{ij}D_{ij}\eta(j)\bigg|\\
&=\sup_{\substack{\xi,\eta\in\mathbb R^n;\\ \|\xi\|=\|\eta\|=1}}\big|\tr\big\{\diag(\xi)B\diag(\eta)D'\big\}\big|\\
&\leq \sup_{\substack{\xi,\eta\in\mathbb R^n;\\ \|\xi\|=\|\eta\|=1}}\Big[{\rm rank}\big\{\diag(\xi)B\diag(\eta)D'\big\}\times\|\diag(\xi)B\diag(\eta)D'\|\Big]\\
&\leq \min\{{\rm rank}(B),{\rm rank}(D)\}\times\|B\|\times\|D\|,
\end{align*}
where $\diag(\xi)$ denotes the diagonal matrix with diagonal entries being those of the $n$-vector $\xi$. The proof is therefore complete.

\subsubsection{Proof of Lemma \ref{lem:1.7}} \label{proof:lem:1.7}

We separate the proof of Lemma~\ref{lem:1.7} into Lemmas~\ref{lem:w11}, \ref{lem:w12}, \ref{lem:w13} and \ref{lem:w14} below, each consists of the results on the operator norm bound of the terms in Lemma~\ref{lem:1.7} with the power of $W_1$ equals to 1, 2, 3, and 4, respectively.

\begin{lemma}\label{lem:w11}
Under the conditions of Lemma~\ref{lem:Delta}, it holds that
\begin{align*}
&\Vert H' W_1 H\Vert=O_{\mathbb P}(\|\theta\|_1\sqrt{\log(n)}),~~~~~~~~
\Vert \Pi'\Theta W_1H\Vert=O_{\mathbb P}(\|\theta\|_3^{3/2}\|\theta\|_1^{1/2}\sqrt{\log(n)}),\\
&\Vert\Pi'\Theta W_1 \Theta H \Vert=O_{\mathbb P}(\|\theta\|^2).
\end{align*}
\end{lemma}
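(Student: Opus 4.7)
Each of the three matrices is $K\times K$ with $K$ fixed, so its operator norm is comparable (up to a constant) to its entrywise supremum. For a fixed pair of indices $(k,\ell)$, the corresponding entry has the form $\sum_{i,j} c_{ij}\,W_1(i,j)$, where the coefficients $c_{ij}$ are deterministic (since $H=H_0$, $\Pi$, and $\Theta$ are all non-stochastic) and the upper-triangular entries of $W_1$ are independent, mean-zero, and bounded in $[-1,1]$. The diagonal of $W_1$ is $-\diag(\Omega)$ and contributes only a deterministic $O(\theta_{\max}^2)$ piece that is easily absorbed. The plan is therefore to apply Bernstein's inequality to each entry, compute the variance proxy $\sigma^2=\sum_{i<j}c_{ij}^2\,\mathrm{Var}(W_1(i,j))$ and the uniform coefficient bound $b=\max_{i,j}|c_{ij}|$, take $t\asymp \log n$, and finally union-bound over the $K^2$ entries.

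The two key structural inputs are (i) $\mathrm{Var}(W_1(i,j))\leq \Omega(i,j)\lesssim \theta_i\theta_j$ from the DCMM form of $\Omega$ and Condition~\ref{cond:afmSCORE}(a), and (ii) $H_0$ and $\Pi$ have weight-vector rows, so entries lie in $[0,1]$ and in particular $H_0(i,k)^2\leq H_0(i,k)$ and $\Pi(i,k)^2\leq \Pi(i,k)$. For $H'W_1H$ these give $\sigma^2\lesssim \bigl(\sum_i H_0(i,k)\theta_i\bigr)\bigl(\sum_j H_0(j,\ell)\theta_j\bigr)\leq \|\theta\|_1^2$ and $b\leq 1$, so Bernstein yields $O_{\mathbb{P}}(\|\theta\|_1\sqrt{\log n})$. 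For $\Pi'\Theta W_1 H$, the extra $\theta_i$ inflates the variance to $\sigma^2\lesssim \|\theta\|_3^3\,\|\theta\|_1$ while $b\leq \theta_{\max}$; Bernstein then gives $O_{\mathbb{P}}\bigl(\|\theta\|_3^{3/2}\|\theta\|_1^{1/2}\sqrt{\log n}\bigr)$ after verifying that the sub-Gaussian term dominates the sub-exponential term, which follows from $\theta_{\max}\sqrt{\log n}=O(1)$.

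The third bound $\Vert\Pi'\Theta W_1\Theta H\Vert=O_{\mathbb{P}}(\|\theta\|^2)$ is the most delicate, since it carries \emph{no} $\sqrt{\log n}$ factor. Here $\sigma^2\lesssim \sum_{i,j}\theta_i^3\theta_j^3=\|\theta\|_3^6$ and $b\leq \theta_{\max}^2$, so Bernstein at level $t\asymp \log n$ produces a raw bound of order $\|\theta\|_3^3\sqrt{\log n}+\theta_{\max}^2\log n$. The plan is to apply $\|\theta\|_3^3\leq \theta_{\max}\|\theta\|^2$ together with $\theta_{\max}\sqrt{\log n}\leq c_3$ from Condition~\ref{cond:afmSCORE}(a) to bound the first term by $c_3\|\theta\|^2$, and to absorb the second term $\theta_{\max}^2\log n\leq c_3^2=O(1)\leq O(\|\theta\|^2)$ using $\|\theta\|\to\infty$ (implied by Condition~\ref{cond:afmSCORE}(b)). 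This trade of a $\sqrt{\log n}$ factor against a factor of $\theta_{\max}$ is the main technical subtlety of the lemma: the absence of logarithmic slack forces the variance computation to be saturated correctly, which is why the weight-vector property of $H_0$ and $\Pi$ (allowing the step $H_0(i,k)^2\leq H_0(i,k)$, $\Pi(i,k)^2\leq \Pi(i,k)$) must be used systematically rather than through the naive $\|\cdot\|_\infty$ bound that would cost an extra $n$ or $\|\theta\|_1$ factor.
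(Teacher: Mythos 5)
Your proposal is correct and follows essentially the same route as the paper's proof: reduce the $K\times K$ operator norm to the entrywise maximum, split off the deterministic $-\diag(\Omega)$ contribution, apply Bernstein's inequality to each entry with the variance proxies $\|\theta\|_1^2$, $\|\theta\|_3^3\|\theta\|_1$, and $\|\theta\|_3^6$ respectively, and use $\theta_{\max}\sqrt{\log n}\leq c_3$ to convert $\|\theta\|_3^3\sqrt{\log n}\leq\theta_{\max}\|\theta\|^2\sqrt{\log n}$ into $O(\|\theta\|^2)$ for the third bound. The only minor imprecision is describing the diagonal contribution as "$O(\theta_{\max}^2)$" (after summation it is of order $\|\theta\|^2$, $\|\theta\|_3^3$, and $\|\theta\|_4^4$ in the three cases), but as you note it is dominated by the stochastic term in each case, so the argument goes through.
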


\begin{proof}[Proof of Lemma~\ref{lem:w11}]

For $\|H'W_1H\|$, it suffices to bound $\max_{1\leq k,\ell\leq K}|\{H'W_1H\}_{k,\ell}|$. Observe that, for $1\leq k,\ell\leq K$,
\begin{align*}
\{H'W_1H\}_{k,\ell}=\sum_{r,s=1}^nH_{rk}H_{s\ell}(W_{rs}+\Omega_{rr}\delta_{rs}),
\end{align*}
where $\delta_{rs}$ denotes the Kronecker delta, more precisely, $\delta_{rs}=1$ if $r=s$ and $\delta_{rs}=0$ otherwise. We have
\begin{align*}
&|{\rm E}[\{H'W_1H\}_{k,\ell}]|=\bigg|\sum_{r=1}^nH_{rk}H_{r\ell}\Omega_{rr}\bigg|\leq C\|\theta\|^2,\qquad\sum_{r,s=1}^n{\rm var}[H_{rk}H_{r\ell}(W_{rs}+\Omega_{rr}\delta_{rs})]\leq C \|\theta\|_1^2.
\end{align*}
Moreover, we have $|H_{rk}H_{r\ell}(W_{rs}+\Omega_{rr}\delta_{rs})|\leq C \|H\|_{\max}\leq C$. Therefore, by Bernstein's inequality we obtain that, for $c>0$ large enough and for some constant $c'>0$,
\begin{align}\label{temp}
&{\rm P}\Big\{\max_{1\leq k,\ell\leq K}|\{H'W_1H\}_{k,\ell}-{\rm E}[\{H'W_1H\}_{k,\ell}]|> c\|\theta\|_1\sqrt{\log(n)}\Big\}\notag\\
&\leq 2K^2\exp \bigg\{\frac{-c'\|\theta\|_1^2\log(n)/2}{c'\|\theta\|_1^2+c'\|\theta\|_1\sqrt{\log(n)}\|H\|_{\max}^2/3} \bigg\}=o(1).
\end{align}
Therefore, we deduce that
\begin{align}\label{hw1h}
\hspace{-0.5em}\|H'W_1H\|\lesssim\hspace{-0.3em}\max_{1\leq k,\ell\leq K}|\{H'W_1H\}_{k,\ell}|\lesssim \|\theta\|^2+O_{\mathbb P}(\|\theta\|_1\sqrt{\log(n)})=O_{\mathbb P} (\|\theta\|_1\sqrt{\log(n)}).
\end{align}

%

For $\Vert \Pi'\Theta W_1H\Vert$, we have, for $1\leq k,\ell\leq K$,
\begin{align*}
\{\Pi'\Theta W_1H\}_{k,\ell}=\sum_{r,s=1}^n\Pi_{rk}\theta_rW_{rs}H_{s\ell}-\sum_{r=1}^n\Pi_{rk}\theta_r\Omega_{rr}H_{r\ell}.
\end{align*}
Observe that $|\Pi_{rk}\theta_rW_{rs}H_{s\ell}|\lesssim\theta_r$ and $\sum_{r,s=1}^n{\rm E}(\Pi_{rk}\theta_rW_{rs}H_{s\ell})^2\lesssim \|\theta\|_3^3\|\theta\|_1$. By Bernstein's inequality and similar derivation in \eqref{temp}, we obtain, with probability at least $1-o(n^{-3})$, $\max_{1\leq k,\ell\leq K}|\sum_{r,s=1}^n\Pi_{rk}\theta_rW_{rs}H_{s\ell}|\lesssim\|\theta\|_3^{3/2}\|\theta\|_1^{1/2}\sqrt{\log(n)}$. In addition, we have $\max_{1\leq k,\ell\leq K}|\sum_{r=1}^n\Pi_{rk}\theta_r\Omega_{rr}H_{r\ell}|\lesssim \|\theta\|_3^3$. By Bernstein's inequality and the assumption that $\theta_{\max}\sqrt{\log(n)}\leq c$, we obtain that
\begin{align*}
\max_{1\leq k,\ell\leq K}|\{\Pi'\Theta W_1H\}_{k,\ell}|\lesssim\|\theta\|_3^3+O_{\mathbb P}( \|\theta\|_3^{3/2}\|\theta\|_1^{1/2}\sqrt{\log(n)})=O_{\mathbb P}( \|\theta\|_3^{3/2}\|\theta\|_1^{1/2}\sqrt{\log(n)}).
\end{align*}

For $\Vert\Pi'\Theta W_1 \Theta H \Vert$, we have, for $1\leq k,\ell\leq K$,
\begin{align*}
\{\Pi'\Theta W_1 \Theta H\}_{k,\ell}=\sum_{r,s=1}^n\Pi_{rk}\theta_r\theta_sW_{rs}H_{s\ell}-\sum_{r=1}^n\Pi_{rk}\theta_r^2\Omega_{rr}H_{r\ell}.
\end{align*}
By Bernstein's inequality and similar derivation as in \eqref{temp}, it follows similar derivation that, with probability at least $1-o(1)$,
\begin{align*}
\max_{1\leq k,\ell\leq K}|\{\Pi'\Theta W_1\Theta H\}_{k,\ell}|\lesssim\|\theta\|_4^4+ O_{\mathbb P}(\|\theta\|_3^3\sqrt{\log(n)})=O_{\mathbb P}(\|\theta\|^2).
\end{align*}
The proof is complete.
\end{proof}


\begin{lemma}\label{lem:w12}
Under the conditions of Lemma~\ref{lem:Delta}, it holds that
\begin{align*}
&\Vert H' W_1^{2} H\Vert=O_{\mathbb P}(\|\theta\|_1^2),~~~~~~~~~~~~~~~~~~~~~\| \Pi'\Theta W_1^{2} H\|=O_{\mathbb P}(\|\theta\|_1^2),\\
& \Vert\Pi'\Theta W_1^2 \Theta H \Vert=O_{\mathbb P}(\|\theta\|_1\|\theta\|^2),~~~~~~~~~~~~
\Vert \Pi'\Theta {\rm diag}(W_1^2) \Theta \Pi\Vert=O_{\mathbb P}(\|\theta\|_1\|\theta\|^2).
\end{align*}

\end{lemma}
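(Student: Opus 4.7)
The plan is to reduce each of the four spectral-norm bounds to entrywise bounds, since the matrices in question are $K\times K$ (with $K$ fixed), so that $\Vert M\Vert\leq K\max_{k,\ell}|M(k,\ell)|$. This is the same strategy as in the proof of Lemma~\ref{lem:w11}, but the key new feature is that $W_1^2$ appears, so each entry is \emph{quadratic} rather than linear in the independent Bernoulli noise $\{W_{ij}\}_{i<j}$. My first step will be to substitute $W_1=W-\diag(\Omega)$ and expand each entry as the sum of (a) a deterministic mean term, (b) a linear-in-$W$ remainder (which can be handled exactly as in Lemma~\ref{lem:w11}), and (c) a pure quadratic form in $W$.

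For the mean parts, I will use the bounds $|\Omega_{ij}|\leq C\theta_i\theta_j$ and $|H_{ij}|,|\Pi_{ij}|\leq 1$. For instance, $\mathbb{E}\{(H'W_1^2H)_{k\ell}\}=\sum_{r\neq s}H_{rk}H_{r\ell}\Omega_{rs}(1-\Omega_{rs})+(\mbox{diag corrections})$, which is $O(\Vert\theta\Vert_1^2)$; similarly, inserting $\Theta$ on one or both sides gives an extra factor $\theta_r$ (or $\theta_r\theta_s$), yielding $O(\Vert\theta\Vert^2\Vert\theta\Vert_1)$. For the quadratic-in-$W$ stochastic part, I will split into its diagonal contribution $\sum_{r,s}d_{rs}W_{rs}^2$ (a sum of \emph{independent} bounded variables, controllable by Bernstein, as also exploited for the fourth bound) and the off-diagonal bilinear part $\sum_{r\neq t,s}c_{rt}W_{rs}W_{st}$, which is a degenerate quadratic form of the upper-triangular entries of $W$. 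The latter will be bounded by a Hanson--Wright-type inequality together with a direct second-moment calculation of $\mathbb{E}(\cdot)^2$, in which one counts the number of edge-overlapping configurations $(r,s,t,r',s',t')$ that give nonzero contributions---an argument analogous to, but simpler than, the combinatorial moment bounds used in Section~\ref{supp:SCC}.

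The fourth bound on $\Vert\Pi'\Theta\diag(W_1^2)\Theta\Pi\Vert$ is conceptually easier: since $\{\diag(W_1^2)\}_{ii}=\sum_{j\neq i}W_{ij}^2+\Omega_{ii}^2$, the entries are sums of \emph{independent} squared Bernoulli variables with bounded weights $\theta_i^2\Pi_{ik}\Pi_{i\ell}$. A direct Bernstein argument (as in Lemma~\ref{lem:w11}) with mean at most $C\Vert\theta\Vert^2\Vert\theta\Vert_1$ and deviation of smaller order (using $\theta_{\max}\sqrt{\log n}\leq c$ from Condition~\ref{cond:afmSCORE}) will yield the claimed rate. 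The main obstacle in the first three bounds is the genuine quadratic dependence on $W$: entries are not sums of independent summands, so Bernstein does not apply directly; the crux is to invoke Hanson--Wright on the appropriate coefficient matrix and to verify that both its Frobenius norm and its spectral norm scale as required to produce the stated $\Vert\theta\Vert_1^2$ (or $\Vert\theta\Vert^2\Vert\theta\Vert_1$) order once combined with the mean term.
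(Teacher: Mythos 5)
Your overall skeleton coincides with the paper's proof: reduce each $K\times K$ matrix to its worst entry, split each entry into a deterministic mean (bounded via $\Omega_{ij}\leq C\theta_i\theta_j$, giving $\|\theta\|_1^2$ or $\|\theta\|^2\|\theta\|_1$ as appropriate), a linear-in-$W$ remainder handled as in Lemma~\ref{lem:w11}, and a quadratic-in-$W$ fluctuation. The paper controls the quadratic fluctuation by computing its variance directly --- counting the edge-overlapping index configurations $(r,s,t,r',s',t')$ for which $\mathbb{E}(W_{rs}W_{st}W_{r's'}W_{s't'})\neq 0$ --- and then applying Chebyshev's inequality; the resulting deviations (e.g.\ $O_{\mathbb P}(\|\theta\|_1\|\theta\|\sqrt{\log n})$ for $H'W_1^2H$) are absorbed using $\|\theta\|^2\leq\theta_{\max}\|\theta\|_1$ and $\theta_{\max}\sqrt{\log n}\leq c$. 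Your "direct second-moment calculation of $\mathbb{E}(\cdot)^2$ by counting edge-overlapping configurations" is exactly this, and it is sufficient on its own; your treatment of the mean terms and of $\Pi'\Theta\diag(W_1^2)\Theta\Pi$ (independent summands, so a one-dimensional concentration bound suffices; the paper uses Chebyshev rather than Bernstein, but either works) also matches.

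The one genuine weak point is your closing claim that the crux for the off-diagonal bilinear part is Hanson--Wright. For a centered Bernoulli$(p)$ variable with $p=\Omega_{rs}\asymp\theta_r\theta_s\to 0$, the sub-Gaussian norm is of order $1/\sqrt{\log(1/p)}$, not $\sqrt{p}$: Hanson--Wright sees only that the variables are bounded and does not exploit their small variance. Applied to $\sum_{r\neq t,s}H_{rk}H_{t\ell}W_{rs}W_{st}$, whose coefficient matrix has Frobenius norm of order $n^{3/2}$, it yields a deviation of order $n^{3/2}/\log n$ regardless of sparsity, whereas the target $\|\theta\|_1^2$ is $n^2\theta_{\max}^2$ in the near-homogeneous case. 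Under Condition~\ref{cond:afmSCORE} the network may be sparse enough that $\theta_{\max}^2\ll n^{-1/2}$, in which case the Hanson--Wright bound exceeds the target and the argument fails. You should drop Hanson--Wright (or downgrade it to a remark for the dense regime) and rely on the second-moment-plus-Chebyshev route you already describe, whose variance bound $\lesssim\|\theta\|_1^2\|\theta\|^2$ correctly tracks the factors of $\theta$ coming from each edge and closes the proof across the full sparsity range the lemma must cover.
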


\begin{proof}[Proof of Lemma~\ref{lem:w12}]

For $\Vert H' W_1^{2} H\Vert$, we have
\begin{align*}
&\{H' W_1^{2} H\}_{k,\ell}=\sum_{r,s,t=1}^nH_{rk}H_{t\ell}\{W_{rs}-\Omega_{rr}\delta_{rs}\}\{W_{st}-\Omega_{ss}\delta_{st}\}\\
&=\sum_{r,s,t=1}^nH_{rk}H_{t\ell}W_{rs}W_{st}+\sum_{r=1}^nH_{rk}H_{r\ell}\Omega_{rr}^2-\sum_{r,s=1}^nH_{rk}H_{s\ell}W_{rs}\Omega_{ss}-\sum_{r,t=1}^nH_{rk}H_{t\ell}W_{rt}\Omega_{rr}.
\end{align*}
We have, for $1\leq k,\ell\leq K$, it holds that $|{\rm E}[\{H' W_1^{2} H\}_{k,\ell}]|\lesssim \|\theta\|_1^2+\|\theta\|_4^4\lesssim\|\theta\|_1^2$, and
\begin{align}\label{hw2h}
&{\rm var}[\{H' W_1^{2} H\}_{k,\ell}]\leq 4{\rm var}\bigg(\sum_{r,s=1}^nH_{rk}H_{r\ell}W_{rs}^2\bigg)+4{\rm var}\bigg(\sum_{1\leq r,s,t\leq n,t\neq r}H_{rk}H_{r\ell}W_{rs}W_{st}\bigg)\notag\\
&~~~~~~~~~~~~~~~~~~~~~~~~~~~~~~+4\sum_{r,s=1}^n{\rm var}(H_{rk}H_{s\ell}W_{rs}\Omega_{ss})+4\sum_{r,t=1}^n{\rm var}(H_{rk}H_{t\ell}W_{rt}\Omega_{rr})\notag\\
&\lesssim \sum_{r,s=1}^n{\rm var}(W_{rs}^2)+\sum_{\substack{r,s,t,r',s',t'\\t\neq r,t'\neq r'}}{\rm E}(W_{rs}W_{st}W_{r's'}W_{s't'})+\sum_{r,s=1}^n\Omega_{ss}^2{\rm var}(W_{rs})+\sum_{r,s=1}^n\Omega_{rr}^2{\rm var}(W_{rt})\notag\\
&\lesssim \|\theta\|_1^2+\|\theta\|_1^2\|\theta\|^2+\|\theta\|_5^5\|\theta\|_1\lesssim \|\theta\|_1^2\|\theta\|^2.
\end{align}
By Chebyshev's inequality, we obtain that
\begin{align*}
&\mathbb P\Big(\max_{1\leq k,\ell\leq K}|\{H' W_1^{2} H\}_{k,\ell}-\mathbb E[\{H' W_1^{2} H\}_{k,\ell}]|\geq \|\theta\|_1\|\theta\|\sqrt{\log(n)}\Big)\\
&\leq\sum_{k,\ell=1}^K \|\theta\|_1^{-2}\|\theta\|^{-2}\{\log(n)\}^{-1}{\rm var}(\{H' W_1^{2} H\}_{k\ell})=o(1).
\end{align*}
We therefore obtain from the above equation that
\begin{align*}
\max_{1\leq k,\ell\leq K}|\{H' W_1^{2} H\}_{k,\ell}|\lesssim \|\theta\|_1^2+O_{\mathbb P}(\|\theta\|_1\|\theta\|\sqrt{\log(n)})=O_{\mathbb P}(\|\theta\|_1^2),
\end{align*}
where we have used the assumption that $\theta_{\max}\sqrt{\log(n)}\lesssim 1$ in view of (a) of Condition~\ref{cond:afmSCORE}.

For $\Vert \Pi'\Theta W_1^{2} H\Vert$, we follow the similar derivation. Observe that
\begin{align*}
&\{\Pi'\Theta W_1^{2} H\}_{k,\ell}=\sum_{r,s,t=1}^n\Pi_{rk}H_{t\ell}\theta_r\{W_{rs} -\Omega_{rr}\delta_{rs}\}\{W_{st} - \Omega_{ss}\delta_{st}\}\\
&=\sum_{r,s,t=1}^n\Pi_{rk}H_{t\ell}\theta_rW_{rs}W_{st}
+\sum_{r,s=1}^n\Pi_{rk}H_{s\ell}\theta_rW_{rs}\Omega_{ss}
-\sum_{r,t=1}^n\Pi_{rk}H_{t\ell}\theta_r\Omega_{rr}W_{rt}
-\sum_{r=1}^n\Pi_{rk}H_{r\ell}\theta_r\Omega_{rr}^2.
\end{align*}
We have $|{\rm E}[\{\Pi'\Theta W_1^{2} H\}_{k,\ell}]|\lesssim \|\theta\|_1\|\theta\|^2+\|\theta\|_5^5\lesssim\|\theta\|_1\|\theta\|^2$. In addition, similar to the derivation in \eqref{hw2h}, we obtain
\begin{align*}
&{\rm var}[\{\Pi'\Theta W_1^{2} H\}_{k,\ell}]\lesssim \sum_{r,s=1}^n\theta_r^2{\rm var}(W_{rs}^2)+\sum_{\substack{r,s,t,r',s',t'\\t\neq r,t'\neq r'}}\theta_{r}^2\theta_{r'}^2{\rm E}(W_{rs}W_{st}W_{r's'}W_{s't'})\\
&~~~~~~~~~~~~~~~~~~~~~~~~~~~+\sum_{r,s=1}^n\theta_r^2\Omega_{ss}^2{\rm var}(W_{rs})+\sum_{r,t=1}^n\theta_r^2\Omega_{rr}^2{\rm var}(W_{rt})\\
&\lesssim\|\theta\|_1\|\theta\|_3^3+(\|\theta\|_1\|\theta\|^2\|\theta\|_5^5+\|\theta\|_3^6\|\theta\|^2)+\|\theta\|_3^3\|\theta\|_5^5+\|\theta\|_7^7\|\theta\|_1\lesssim \|\theta\|_1^2\|\theta\|_3^6.
\end{align*}
By Chebyshev's inequality, we obtain that
\begin{align*}
\max_{1\leq k,\ell\leq K}|\{H' \Theta W_1^{2} H\}_{k,\ell}|\lesssim \|\theta\|_1\|\theta\|^2+O_{\mathbb P}(\|\theta\|_1\|\theta\|_3^3\sqrt{\log(n)})=O_{\mathbb P}(\|\theta\|_1^2),
\end{align*}
where we used the assumption that $\theta_{\max}\sqrt{\log(n)}\leq c$ in view of (a) of Condition~\ref{cond:afmSCORE}.

For $\Vert\Pi'\Theta W_1^2 \Theta H \Vert$, we have, for $1\leq k,\ell\leq K$,
\begin{align*}
&\{\Pi'\Theta W_1^{2}\Theta H\}_{k,\ell}
%
=\sum_{r,s,t=1}^n\Pi_{rk}H_{t\ell}\theta_r\theta_tW_{rs}W_{st}
-\sum_{r,s=1}^n\Pi_{rk}H_{s\ell}\theta_r\theta_sW_{rs}\Omega_{ss}\\
&\hspace{4cm}-\sum_{r,t=1}^n\Pi_{rk}H_{t\ell}\theta_r\theta_t\Omega_{rr}W_{rt}
+\sum_{r=1}^n\Pi_{rk}H_{r\ell}\theta_r^2\Omega_{rr}^2.
\end{align*}
Observe that $|{\rm E}[\{\Pi'\Theta W_1^{2}\Theta H\}_{k\ell}]|\lesssim \|\theta\|_1\|\theta\|_3^3+\|\theta\|_6^6\lesssim\|\theta\|_1\|\theta\|_3^3$. In addition, following the derivation similar to the one in \eqref{hw2h}, we obtain
\begin{align*}
&{\rm var}[\{\Pi'\Theta W_1^{2}\Theta H\}_{k,\ell}]\lesssim \sum_{r,s=1}^n\theta_r^2\theta_t^2{\rm var}(W_{rs}^2)+\sum_{\substack{r,s,t,r',s',t'\\t\neq r,t'\neq r'}}\theta_{r}^2\theta_{r'}^2\theta_t^2\theta_{t'}^2{\rm E}(W_{rs}W_{st}W_{r's'}W_{s't'})\\
&~~~~~~~~~~~~~~~~~~~~~~~~~~~+\sum_{r,s=1}^n\theta_r^2\theta_s^2\Omega_{ss}^2{\rm var}(W_{rs})+\sum_{r,t=1}^n\theta_r^2\theta_t^2\Omega_{rr}^2{\rm var}(W_{rt})\\
&\lesssim\|\theta\|_1\|\theta\|^2\|\theta\|_3^3+ \|\theta\|^2\|\theta\|_5^{10}+\|\theta\|_7^7\|\theta\|_3^3\lesssim\|\theta\|_1\|\theta\|^2\|\theta\|_3^3.
\end{align*}
By Chebyshev's inequality, we obtain that
\begin{align*}
\max_{1\leq k,\ell\leq K}|\{H' \Theta W_1^{2}\Theta H\}_{k,\ell}|\lesssim \|\theta\|_1\|\theta\|_3^3+O_{\mathbb P}(\|\theta\|_1^{1/2}\|\theta\|\|\theta\|_3^{3/2}\sqrt{\log(n)})=O_{\mathbb P}(\|\theta\|_1\|\theta\|^2),
\end{align*}
where we used the assumption that $\theta_{\max}\sqrt{\log(n)}\leq c$ in view of (a) of Condition~\ref{cond:afmSCORE}.

For $\Vert \Pi'\Theta {\rm diag}(W_1^2) \Theta \Pi\Vert$, we have
\begin{align*}
\{\Pi'\Theta {\rm diag}(W_1^2) \Theta \Pi\}_{k,\ell}&=\sum_{r,s=1}^n\Pi_{rk}\Pi_{r\ell}\theta_r^2\{W_{rs}-\Omega_{rr}\delta_{rs}\}^2
\\
&=\sum_{r,s=1}^n\Pi_{rk}\Pi_{r\ell}\theta_r^2W_{rs}^2+\sum_{r=1}^n\Pi_{rk}\Pi_{r\ell}\theta_r^2\Omega_{rr}^2.
\end{align*}
Observe that, for $1\leq k,\ell\leq K$, it holds that $|{\rm E}\{\Pi'\Theta {\rm diag}(W_1^2) \Theta \Pi\}_{k,\ell}|\lesssim \|\theta\|_3^3\|\theta\|_1+\|\theta\|_6^6\lesssim\|\theta\|_3^3\|\theta\|_1$ and ${\rm var}[\{\Pi'\Theta {\rm diag}(W_1^2) \Theta \Pi'\}_{k,\ell}]\lesssim\|\theta\|_1\|\theta\|_5^5$.
By Chebyshev's inequality, we obtain
\begin{align*}
\max_{1\leq k,\ell\leq K}|\{\Pi'\Theta {\rm diag}(W_1^2) \Theta \Pi\}_{k,\ell}|\lesssim \|\theta\|_1\|\theta\|_3^3+O_{\mathbb P}(\|\theta\|_1^{1/2}\|\theta\|_5^{5/2}\sqrt{\log(n)})=O_{\mathbb P}(\|\theta\|_1\|\theta\|^2).
\end{align*}
The proof is complete.
\end{proof}


\begin{lemma}\label{lem:w13}
Under the conditions of Lemma~\ref{lem:Delta}, it holds that
\begin{align*}
&\Vert H' W_1^{3} H\Vert=O_{\mathbb P}(\|\theta\|_1^2),~~~~~~~~~~~~
\Vert \Pi'\Theta {\rm diag}(W_1^2) W_1H\Vert=o_{\mathbb P}(\|\theta\|^3\|\theta\|_1),\\
&\Vert \Pi'\Theta W_1^{3} H\Vert=o_{\mathbb P}(\|\theta\|^3\|\theta\|_1).
\end{align*}

\end{lemma}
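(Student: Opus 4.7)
The plan is to follow the template established in Lemmas~\ref{lem:w11}--\ref{lem:w12}: for each of the three matrices $M\in\{H'W_1^3H,\ \Pi'\Theta\,\mathrm{diag}(W_1^2)W_1H,\ \Pi'\Theta W_1^3H\}$, I would bound $\|M\|\lesssim\max_{k,\ell}|M_{k,\ell}|$ (since $K$ is fixed), control each entry by splitting it into mean plus fluctuation, and then apply Chebyshev/Bernstein together with the union bound over the $O(K^2)$ entries. The first step in each case is to substitute $W_1=W-\mathrm{diag}(\Omega)$ and expand, which produces a leading pure-$W$ term plus lower-order ``diagonal-correction'' terms; these correction terms carry extra factors of $\theta_{\max}^2$ or $\|\theta\|_\infty^2$ and, under Condition~\ref{cond:afmSCORE}(a) which forces $\theta_{\max}\sqrt{\log n}\leq c$, they are dominated by the main term (analogously to the diagonal cleanup already done in Lemma~\ref{lem:w12}).

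For the main pure-$W$ parts, I would write each entry as a summation over tuples of $W$-entries. For example,
\[
\{\Pi'\Theta W^3H\}_{k,\ell}=\sum_{r,s,t,u}\Pi_{rk}\theta_r H_{u\ell}\,W_{rs}W_{st}W_{tu},
\]
\[
\{\Pi'\Theta\,\mathrm{diag}(W^2)WH\}_{k,\ell}=\sum_{r,s,t}\Pi_{rk}\theta_r H_{t\ell}\,W_{rs}^2 W_{rt},
\]
and similarly for $\{H'W^3H\}_{k,\ell}$. The mean is controlled by using $\mathbb{E}W_{ij}=0$ and $\mathbb{E}W_{ij}^2=\bar\Omega_{ij}\leq C\theta_i\theta_j$ to reduce to contracted index sums bounded by $\|\theta\|_1\|\theta\|_3^3$ or $\|\theta\|^2\|\theta\|_1$, both of which are well within the claimed order. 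The variance requires expanding as an 8-index sum and using independence of off-diagonal upper-triangular $W_{ij}$'s, so that only tuples in which every $W$-factor is matched at least once contribute. I would enumerate the possible ``matching patterns'' (e.g.\ the six $W$-factors splitting into three pairs, or into one triple and one triple, etc.) and for each pattern use $\mathbb{E}W_{ij}^{2m}\lesssim\theta_i\theta_j$ plus bounds of the form $\sum_i\theta_i^a\lesssim\theta_{\max}^{a-2}\|\theta\|^2$ to collapse the sums. This is precisely the same combinatorial bookkeeping as in the variance calculation \eqref{hw2h} of Lemma~\ref{lem:w12}, only one power higher.

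The main obstacle will be this combinatorial enumeration for $\Pi'\Theta W^3H$: a naive expansion of $\mathrm{Var}(\{\Pi'\Theta W^3H\}_{k,\ell})$ is an 8-index sum $\sum\Pi_{rk}\Pi_{r'k}\theta_r\theta_{r'}H_{u\ell}H_{u'\ell}\,\mathbb{E}(W_{rs}W_{st}W_{tu}W_{r's'}W_{s't'}W_{t'u'})$, and one must verify that every admissible matching leads to a sum of order at most $\|\theta\|^6\|\theta\|_1^2/\log^2(n)$ (so that Chebyshev yields $o_{\mathbb P}(\|\theta\|^3\|\theta\|_1)$). The dominant pattern turns out to be the ``linear chain'' matching where the two paths $r{\to}s{\to}t{\to}u$ and $r'{\to}s'{\to}t'{\to}u'$ share the same three edges; this contributes $\lesssim\|\theta\|^2\|\theta\|_1^2\cdot(\sum_i\theta_i^3)^2\lesssim\theta_{\max}^2\|\theta\|^4\|\theta\|_1^2$, which is $o(\|\theta\|^6\|\theta\|_1^2)$ under Condition~\ref{cond:afmSCORE}(a). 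The remaining patterns (where the two chains share only a sub-path, or where edges repeat within a single chain) are strictly smaller by factors of $\theta_{\max}$ or $\|\theta\|_1/\|\theta\|^2$, and will be checked case-by-case.

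Finally, for $\Pi'\Theta\,\mathrm{diag}(W_1^2)W_1H$, because the factor $\mathrm{diag}(W_1^2)$ has entries that concentrate around $\sum_s\bar\Omega_{rs}=O(\|\theta\|\|\theta\|_1)$ for the $r$-th diagonal, I would first replace $\mathrm{diag}(W_1^2)$ by its mean $D:=\mathrm{diag}(\mathbb{E}W_1^2)$ plus a centered part, bounding $\|\Pi'\Theta D\,W_1H\|$ directly by Bernstein (it is now linear in $W$ with coefficients of order $\theta_r\cdot\|\theta\|\|\theta\|_1$) and handling the centered remainder via the same matched-sum variance argument as above. The bound $\|H'W_1^3H\|=O_{\mathbb P}(\|\theta\|_1^2)$ is the easiest of the three because the rows of $H$ have uniformly bounded entries, so it reduces to showing $\mathrm{Var}\lesssim\|\theta\|_1^4/\log n$, which follows from the same enumeration with $\Pi\theta$ replaced by the bounded vector $H'$. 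In all three cases I would conclude by $\max_{k,\ell}|M_{k,\ell}|\leq|\mathbb{E}M_{k,\ell}|+O_{\mathbb P}(\sqrt{K^2\mathrm{Var}\,\log n})$ and the operator-norm equivalence $\|M\|\lesssim\max_{k,\ell}|M_{k,\ell}|$.
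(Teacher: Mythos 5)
Your overall strategy is the same as the paper's: bound $\|M\|$ by $\max_{k,\ell}|M_{k,\ell}|$ for fixed $K$, expand $W_1=W-\diag(\Omega)$, control the mean directly and the variance by enumerating which index tuples give a nonzero expectation, then apply Chebyshev. The paper does exactly this, term by term, so the architecture of your argument is fine, and your treatment of $H'W_1^3H$ (where the coefficients are bounded and $\theta_{\max}\sqrt{\log n}\leq c$ suffices) goes through.

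There is, however, a concrete gap in your variance analysis for $\Pi'\Theta W_1^3H$ and $\Pi'\Theta\,\diag(W_1^2)W_1H$: the "linear chain" matching you identify as dominant is not the binding configuration. Consider the walks $r\to s\to r\to u$ (contributing $W_{rs}^2W_{ru}$) paired across the two copies with the same $r,u$ but different $s,s'$; this "star at $r$" pattern contributes $\sum_{r,s,s',u}\theta_r^2\,\mathbb{E}W_{rs}^2\,\mathbb{E}W_{rs'}^2\,\mathbb{E}W_{ru}^2\lesssim \|\theta\|_5^5\|\theta\|_1^3$ to the variance (and the same quantity reappears if you instead center $\diag(W_1^2)$ at its mean and apply Bernstein to the resulting linear term, whose coefficients are of order $\theta_r^2\|\theta\|_1$). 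This is \emph{not} dominated by your claimed leading term $\theta_{\max}^2\|\theta\|^4\|\theta\|_1^2$: one has $\|\theta\|_5^5\|\theta\|_1^3/(\theta_{\max}^2\|\theta\|^4\|\theta\|_1^2)$ of order $\theta_{\max}\|\theta\|_1/\|\theta\|^2\geq 1$ in general, so your statement that the remaining patterns are "strictly smaller by factors of $\theta_{\max}$ or $\|\theta\|_1/\|\theta\|^2$" is false (the second factor is $\geq 1$). Consequently the conclusion $o_{\mathbb P}(\|\theta\|^3\|\theta\|_1)$ cannot be derived from $\theta_{\max}\sqrt{\log n}\leq c$ alone; you must invoke the last item of Condition~\ref{cond:afmSCORE}(a), namely $(\|\theta\|_5^5\|\theta\|_1/\|\theta\|^6)\log(n)\to 0$, which is exactly what makes $\|\theta\|_5^{5/2}\|\theta\|_1^{3/2}\sqrt{\log(n)}=o(\|\theta\|^3\|\theta\|_1)$ and is the reason that condition appears in the paper. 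With the star pattern added to your enumeration and that condition cited, your proof closes and coincides with the paper's.
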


\begin{proof}[Proof of Lemma~\ref{lem:w13}]

For $\Vert H' W_1^{3} H\Vert$, we have
\begin{align*}
&\{H' W_1^{3} H\}_{k,\ell}=\sum_{r,s,t,u=1}^nH_{rk}H_{u\ell}\{W_{rs}-\Omega_{rr}\delta_{rs}\}\{W_{st}-\Omega_{ss}\delta_{st}\}\{W_{tu}-\Omega_{tt}\delta_{tu}\}.
\end{align*}
We have $|{\rm E}[\{H' W_1^{3} H\}_{k,\ell}]|\lesssim \|\theta\|_1^2$ and ${\rm var}[\{H' W_1^{3} H\}_{k,\ell}]\lesssim I_1+I_2+I_3+I_4+I_5+I_6$, where
\begin{align*}
&I_1={\rm var}\bigg(\sum_{r,s,t,u=1}^nH_{rk}H_{u\ell}W_{rs}W_{st}W_{tu}\bigg),\qquad I_2={\rm var}\bigg(\sum_{r,u=1}^nH_{rk}H_{u\ell}\Omega_{rr}^2W_{ru}\bigg),\\
&I_3={\rm var}\bigg(\sum_{r,s,u=1}^nH_{rk}H_{u\ell}W_{rs}\Omega_{ss}W_{su}\bigg),\qquad ~I_4={\rm var}\bigg(\sum_{r,t,u=1}^nH_{rk}H_{u\ell}W_{rt}\Omega_{rr}W_{tu}\bigg),\\
&I_5={\rm var}\bigg(\sum_{r,s=1}^nH_{rk}H_{s\ell}W_{rs}\Omega_{ss}^2\bigg),\qquad ~~~~~~~~~I_6={\rm var}\bigg(\sum_{r,t=1}^nH_{rk}H_{t\ell}W_{rt}\Omega_{rr}\Omega_{tt}\bigg).
\end{align*}
Direct calculations yield
\begin{align*}
&I_1\lesssim\|\theta\|_1^2\|\theta\|^4+\|\theta\|_1^3\|\theta\|_3^3,\qquad I_2\lesssim \|\theta\|_9^9\|\theta\|_1,\qquad I_3\lesssim\|\theta\|_6^6\|\theta\|_1^2,\\
&I_4\lesssim \|\theta\|_1\|\theta\|^2\|\theta\|_5^5,\qquad ~~~~~~~~~~I_5\lesssim \|\theta\|_1\|\theta\|_9^9,\qquad I_6\lesssim \|\theta\|_5^{10}.
\end{align*}
Therefore, by Chebyshev's inequality, we obtain that, with probability at least $1-o(1)$,
\begin{align*}
\max_{1\leq k,\ell\leq K}|\{H' W_1^{3} H\}_{k,\ell}|\lesssim \|\theta\|_1^2+(\|\theta\|_1\|\theta\|^2+\|\theta\|_1^{3/2}\|\theta\|_3^{3/2})\sqrt{\log(n)}\lesssim\|\theta\|_1^2.
\end{align*}

For $\Vert \Pi'\Theta {\rm diag}(W_1^2) W_1H\Vert$, we have, for $1\leq k,\ell\leq K$,
\begin{align*}
&\{\Pi'\Theta {\rm diag}(W_1^2) W_1H\}_{k,\ell}
=\sum_{r,s=1}^n\Pi_{rk}\theta_r H_{s\ell}W_{rs}^3+\sum_{1\leq r,s,t\leq n,t\neq s}\Pi_{rk}\theta_r H_{t\ell}W_{rs}^2W_{rt}\\
&\qquad~~~~~~~~~~~~~~~~~~~~~~-\sum_{r,s=1}^n\Pi_{rk}\theta_r H_{s\ell}W_{rs}^2\Omega_{ss}+\sum_{r,t=1}^n\Pi_{rk}\theta_r H_{t\ell}\Omega_{rr}^2W_{rt}-\sum_{r=1}^n\Pi_{rk}\theta_r H_{r\ell}\Omega_{rr}^3.
\end{align*}
We obtain that $|{\rm E}[\{\Pi'\Theta {\rm diag}(W_1^2) W_1H\}_{k\ell}]|\lesssim \|\theta\|^2\|\theta\|_1$ and
\begin{align*}
&{\rm var}[\{\Pi'\Theta {\rm diag}(W_1^2) W_1H\}_{k,\ell}]\\
&\lesssim {\rm var}\bigg(\sum_{r,s=1}^n\Pi_{rk}\theta_r H_{r\ell}W_{rs}^3\bigg)+{\rm var}\bigg(\sum_{1\leq r,s,t\leq n,t\neq s}\Pi_{rk}\theta_r H_{t\ell}W_{rs}^2W_{rt}\bigg)\\
&\quad +{\rm var}\bigg(\sum_{r,s=1}^n\Pi_{rk}\theta_r H_{t\ell}W_{rs}^2\Omega_{ss}\bigg)+ {\rm var}  \bigg(\sum_{r,t=1}^n\Pi_{rk}\theta_r H_{t\ell}\Omega_{rr}^2W_{rt}\bigg) \notag\\
&\lesssim \|\theta\|_3^3\|\theta\|_1+(\|\theta\|_1^2\|\theta\|_4^4 + \|\theta\|_3^6\Vert \theta\Vert_1^2+\|\theta\|_5^5\|\theta\|_1^3)+\|\theta\|_3^3\|\theta\|_5^5+ \Vert \theta\Vert_{11}^{11} \Vert \theta\Vert_1\lesssim\|\theta\|_5^5\|\theta\|_1^3.
\end{align*}
Therefore, by Chebyshev's inequality, we obtain that, with probability $1-o(1)$,
\begin{align*}
\max_{1\leq k,\ell\leq K}|\{\Pi'\Theta {\rm diag}(W_1^2) W_1H\}_{k,\ell}|\lesssim \Vert\theta\Vert^2 \Vert\theta\Vert_1+\|\theta\|_5^{5/2}\|\theta\|_1^{3/2}\sqrt{\log(n)}.
\end{align*}
Observing (a) in Condition~\ref{cond:afmSCORE}, we obtain $\|\theta\|_5^{5}\|\theta\|_1^{3}\log(n)/(\|\theta\|^6\|\theta\|_1^2)=o(1)$.
Hence, we conclude that, with probability $1-o(1)$, $\|\Pi'\Theta {\rm diag}(W_1^2) W_1H\|=o_{\mathbb P}(\|\theta\|^3\|\theta\|_1)$.

For $\Vert \Pi'\Theta W_1^{3} H\Vert$, we have, for $1\leq k,\ell\leq K$,
\begin{align*}
\{\Pi'\Theta W_1^{3} H\}_{k,\ell}=\sum_{r,s,t,u=1}^nH_{rk}H_{u\ell}\theta_r\{W_{rs}-\Omega_{rr}\delta_{rs}\}\{W_{st}-\Omega_{ss}\delta_{st}\}\{W_{tu}-\Omega_{tt}\delta_{tu}\}.
\end{align*}
Following similar calculations as $\Vert \Pi' W_1^{3} H\Vert$ and $\Vert \Pi'\Theta {\rm diag}(W_1^2) W_1H\Vert$, we obtain that 
\begin{align*}
|{\rm E}[\{\Pi'\Theta W_1^{3}H\}_{k,\ell}]|\lesssim\|\theta\|^2\|\theta\|_1,\qquad {\rm var}[\{\Pi'\Theta W_1^{3} H\}_{k,\ell}]\lesssim\|\theta\|_5^5\|\theta\|_1^3.
\end{align*}
We therefore deduce from the above equation that, with probability $1-o(1)$,
\begin{align*}
\Vert \Pi'\Theta W_1^{3} H\Vert\lesssim \Vert \theta\Vert^2 \Vert\theta\Vert_1+\|\theta\|_5^{5/2}\|\theta\|_1^{3/2}\sqrt{\log(n)}.
\end{align*}
which implies that $\Vert \Pi'\Theta W_1^{3} H\Vert=o_{\mathbb P}(\|\theta\|^3\|\theta\|_1)$ in view of (a) of Condition~\ref{cond:afmSCORE}.

\end{proof}


\begin{lemma}\label{lem:w14}
Under the conditions of Lemma~\ref{lem:Delta}, it holds that
\begin{align}\label{eq:fourW}
\Vert  H'W_1 {\rm diag} (W_1^2) W_1H\Vert   =O_{\mathbb P}( \Vert \theta\Vert_1^2 \Vert\theta  \Vert^2),\qquad\Vert H'W_1^4 H\Vert =O_{\mathbb P}( \Vert \theta\Vert_1^2 \Vert\theta  \Vert^2). 
\end{align}

\end{lemma}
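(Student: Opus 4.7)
The plan is to mimic the entry-wise strategy used in Lemmas \ref{lem:w11}--\ref{lem:w13}: since $K$ is fixed, $\|M\|\lesssim \max_{k,\ell}|M_{k,\ell}|$ for any $K\times K$ matrix $M$, so it suffices to control each $(k,\ell)$ entry of the two matrices in \eqref{eq:fourW} by computing its mean and its variance and then invoking Chebyshev's inequality together with a union bound over the $K^2$ choices of $(k,\ell)$. Throughout I will exploit $|H_{rk}|\leq 1$ (rows of $H=H_0$ are weight vectors), the boundedness of $P$ that yields $\Omega_{ij}\lesssim \theta_i\theta_j$ and hence $\bar\Omega_{ij}=\Omega_{ij}(1-\Omega_{ij})\asymp\Omega_{ij}$, and the moment bounds $\mathbb E|W_{ij}|^p\lesssim \Omega_{ij}$ for every $p\geq 2$.

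For $H'W_1^4H$, I will first write
\[
(H'W_1^4H)_{k,\ell}=\sum_{r,s,t,u,v}H_{rk}H_{v\ell}\prod\{W_{\cdot\cdot}-\Omega_{\cdot\cdot}\delta_{\cdot\cdot}\},
\]
expand in the same way as in Lemma \ref{lem:w13}, and separate the leading ``pure $W$'' piece $\sum H_{rk}H_{v\ell}W_{rs}W_{st}W_{tu}W_{uv}$ from lower-order remainders that carry at least one factor of $\Omega_{ii}\lesssim\theta_{\max}^2$. For the leading piece, $\mathbb E(W_{rs}W_{st}W_{tu}W_{uv})$ is nonzero only when the four edges pair up on the length-4 walk; enumerating the admissible multigraphs (the ``back-and-forth'' walks with at most $3$ free indices) and applying $\Omega_{ij}\lesssim \theta_i\theta_j$, one gets a mean of order $\|\theta\|^2\|\theta\|_1^2$. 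The variance is obtained by considering two such walks superposed, requiring every edge to appear with even multiplicity; the crude count yields at most $\|\theta\|^2\|\theta\|_1^4\,\theta_{\max}^2$-type terms, which after multiplication by $\sqrt{\log(n)}$ remain of smaller order than $\|\theta\|^2\|\theta\|_1^2$ thanks to the assumption $\theta_{\max}\sqrt{\log(n)}\leq c$ in Condition \ref{cond:afmSCORE}(a). The remainder terms (those involving $\Omega_{ii}$) are dominated by $\|\theta\|_1^2\|\theta\|^2$ by similar but shorter calculations already appearing in Lemma \ref{lem:w13}.

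The argument for $H'W_1\diag(W_1^2)W_1H$ follows the same scheme after using $\diag(W_1^2)_{ss}=\sum_t(W_1)_{st}^2$ to rewrite
\[
(H'W_1\diag(W_1^2)W_1H)_{k,\ell}=\sum_{r,s,t,u}H_{rk}H_{u\ell}(W_1)_{rs}(W_1)_{st}^2(W_1)_{su}.
\]
Because $(W_1)_{st}^2$ is already non-centred, the leading expectation is nonzero whenever $\{r,s\}=\{s,u\}$, i.e., $r=u$, and I get a main contribution of the order of $\sum_{r,s,t}\theta_r^2\theta_s^2\theta_t^2 \cdot(\text{number of free indices})\lesssim\|\theta\|_1^2\|\theta\|^2$ (using the bound $\bar\Omega_{rs}\lesssim\theta_r\theta_s$ four times). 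The variance is again controlled by a slightly more involved graph enumeration: one considers two 4-edge structures of the above type, insists each edge occurs with even multiplicity in the union, and bounds the number of free indices accordingly; this will produce a variance no larger than $\|\theta\|_1^4\|\theta\|^4\cdot\theta_{\max}^2/\|\theta\|^2$ up to polylogarithms, which Chebyshev converts into the desired $O_{\mathbb P}(\|\theta\|_1^2\|\theta\|^2)$ bound. The contributions from substituting $\Omega_{ii}\delta_{ii}$ for $W_{ii}$ are smaller by factors of $\theta_{\max}^2$ and are absorbed as in Lemma \ref{lem:w13}.

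The main obstacle, as in Lemma \ref{lem:w13} but more pronounced here, will be the variance computation: for $H'W_1^4H$ one has to enumerate all ways two length-4 walks can overlap so that every edge in the union has even multiplicity, and track how many free summation indices survive in each configuration to extract the correct power of $\|\theta\|_1$ and $\|\theta\|$. I plan to organise this by the number of distinct indices in the combined multigraph (which ranges from $3$ up to $8$) and by the multigraph topology (cycles, double edges, self-overlapping back-and-forths), bounding each family by a product of $\theta_{r}$-factors and applying the inequalities $\|\theta\|_p^p\leq \theta_{\max}^{p-2}\|\theta\|^2$ and $\theta_{\max}\sqrt{\log n}\lesssim 1$ to ensure that every configuration contributes at most $\|\theta\|^4\|\theta\|_1^4$ to the variance. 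Once that enumeration is in place, the statements in \eqref{eq:fourW} follow immediately from Chebyshev and the union bound, exactly parallel to how Lemmas \ref{lem:w12}--\ref{lem:w13} were concluded.
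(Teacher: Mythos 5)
Your proposal is correct and follows essentially the same route as the paper: expand $W_1=W-\mathrm{diag}(\Omega)$, absorb the terms carrying $\mathrm{diag}(\Omega)$ factors via the earlier lemmas, and control the pure-$W$ pieces entrywise by enumerating the index-coincidence patterns of the length-4 walks (even edge multiplicity for nonzero expectations), bounding each configuration with $\Omega_{ij}\lesssim\theta_i\theta_j$ and $\theta_{\max}\sqrt{\log n}\lesssim 1$, then concluding by Chebyshev and a union bound over the $K^2$ entries. (Only a cosmetic slip: the dominant mean contribution for the $\mathrm{diag}(W_1^2)$ term is $\sum_{r,s,t}\theta_r\theta_s^2\theta_t\lesssim\|\theta\|_1^2\|\theta\|^2$, not $\sum_{r,s,t}\theta_r^2\theta_s^2\theta_t^2$; your stated conclusion is unaffected.)
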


\begin{proof}[Proof of Lemma~\ref{lem:w14}]

We start with $\Vert  H'W_1 {\rm diag} (W_1^2) W_1H\Vert$. Recall the definition $W_1 = A- \Omega = W - {\rm diag}(\Omega)$. By the fact that ${\rm diag} (W)=0$, we have the following decomposition:
\begin{align*}
H'W_1 {\rm diag} (W_1^2) W_1H  &= H' \big(W - {\rm diag}(\Omega)\big){\rm diag} \big\{\big(W - {\rm diag}(\Omega)\big)^2\big\}\big(W - {\rm diag}(\Omega)\big) H\notag\\
& = I_0 + I_{1} +I_{1} ' + I_{21} + I_{22} + I_3 +I_3'+ I_4 ,
\end{align*} 
where
\begin{align}\label{imany}
& I_0: = H'\big({\rm diag} (\Omega)\big)^4H, \quad~~~~~~~~~~~~~~~~ I_{1}:= - H' W\big({\rm diag} (\Omega)\big)^3 H ,\notag\\
& I_{21}:=H' {\rm diag} (\Omega) {\rm diag}(W^2) {\rm diag} (\Omega) H, \quad~~~~   I_{22}:= H' W \big({\rm diag} (\Omega)\big)^2 WH,\notag\\ 
& I_{3}:= - H' W {\rm diag} (W^2) {\rm diag} (\Omega)H, \quad~~  I_4:= H'W{\rm diag} (W^2) W H\, .
\end{align} 
We proceed to show that each term in \eqref{imany} are of order $O_{\mathbb P}(\Vert \theta\Vert_1^2 \Vert\theta\Vert^2)$.

In the sequel, $k$ and $\ell$ take values $1,\ldots,K$.
For $I_0$ in \eqref{imany}, direct computation yields $\Vert I_0\Vert \lesssim \max_{k, \ell} |(I_0)_{k,\ell}|\lesssim \sum_{i=1}^n \Omega_{ii}^4\lesssim \Vert \theta\Vert_8^8\ll \Vert \theta\Vert^2 $. For $I_{1}$ in \eqref{imany}, it  only involves one $W$. Following the similar derivation as in the proof of Lemma~\ref{lem:w11}, observing the fact that $({\rm diag}(\Omega))_{ii} \asymp \theta_i^2 $ and the condition $\theta_{\max}\sqrt{\log n} \leq C$ for some constant $C>0$, in view of (a) of Condition~\ref{cond:afmSCORE}, we deduce that $\Vert I_{1}\Vert  =o_{\mathbb P}( \Vert \theta\Vert_1^{{1/2}} \Vert \theta\Vert).$
For the terms $I_{21}$ and $I_{22}$ in \eqref{imany}, following similar derivations in the proofs of $\Vert  \Pi'\Theta {\rm diag}(W_1^2) \Theta H\Vert $ and $\Vert  H'W_1^2 H\Vert $ in Lemma~\ref{lem:w12}, we obtain $\Vert I_{21}\Vert =o_{\mathbb P}( \Vert \theta\Vert_1\Vert\theta\Vert^2)$ and $\Vert I_{22}\Vert =o_{\mathbb P}( \Vert \theta\Vert_1\Vert\theta\Vert^2)$. Next for $I_3$ in \eqref{imany}, following similar computation in the proof for $\|H' W{\rm diag} (W^2)\Theta \Pi\|$ in Lemma~\ref{lem:w13}, we obtain that $\Vert I_{3}\Vert =o_{\mathbb P}( \Vert \theta\Vert_1 \Vert\theta\Vert^3)$. Next, we investigate the term $I_4$ in \eqref{imany}. Observe that
\begin{align*}
\big|(I_{4})_{k, \ell}\big| &= \bigg|\sum_{\substack{1\leq i,j,s,t\leq n\\ j\neq i, j\neq s, j\neq t}} H_{ik} W_{ij} W_{js}^2 W_{jt}H_{t\ell}\bigg|\leq  |\mathcal{I}_1| + |\mathcal{I}_2| + |\mathcal{I}_3| + | \mathcal{I}_4| + |\mathcal{I}_5|,
\end{align*}
where
\begin{align}\label{def:I1_5}
& \mathcal{I}_1= \sum_{i\neq j} H_{ik}H_{i\ell} W_{ij}^4, \qquad~~~~~~~\mathcal{I}_2=  \sum_{i\neq j\neq t}H_{ik}H_{t\ell}W_{ij}^3W_{jt},\qquad~~~~~~~ \mathcal{I}_3= \sum_{i\neq j\neq t}H_{ik}H_{t\ell}W_{ij}W_{jt}^3,\notag\\
&\mathcal{I}_4= \sum_{i\neq j\neq s} H_{ik} H_{i\ell}W_{ij}^2 W_{js}^2 ,\qquad \mathcal{I}_5= \sum_{i\neq j\neq s\neq t}^n H_{ik} W_{ij} W_{js}^2 W_{jt}H_{t\ell}.
\end{align}

We proceed to investigate the five terms $\mathcal{I}_1, \cdots, \mathcal{I}_5$ separately. For the first term $\mathcal{I}_1$ in \eqref{def:I1_5}, it follows that
\begin{align*}
\big| \mathbb{E} \mathcal{I}_1\big|  \lesssim \sum_{i,j} \theta_i \theta_j =\Vert \theta\Vert_1^2, \qquad {\rm var} (\mathcal{I}_1) \lesssim \sum_{i<j}{\rm var} (W_{ij}^4) \lesssim \sum_{i,j} \theta_i\theta_j = \Vert\theta\Vert_1^2
\end{align*} 
Then by Chebyshev's inequality, $|\mathcal{I}_1|\lesssim \Vert \theta\Vert_1^2 +O_{\mathbb P} (\Vert \theta\Vert_1 \sqrt{\log n} )=O_{\mathbb P} (\Vert \theta\Vert_1^2 )$. The second and third terms  $\mathcal I_2$ and $\mathcal I_3$ in \eqref{def:I1_5} are similar with the only difference in the coefficient which will not affect the estimation of the order of mean and variance. For simplicity, we only show the details for bounding $\mathcal I_2$. Note that $\mathbb{E} \mathcal{I}_2 = 0 $ and its variance be derived as 
\begin{align*}
{\rm var} (\mathcal{I}_2) = \mathbb{E} \Big( \sum_{i\neq j\neq t}H_{ik}H_{t\ell}W_{ij}^3W_{jt}\Big)^2  = \sum_{\substack{i\neq j\neq t\\ i'\neq j'\neq t'}} H_{ik}H_{t\ell} H_{i'k}H_{t'\ell} \, \mathbb{E}(W_{ij}^3 W_{i'j'}^3 W_{jt}  W_{j't'} ).
\end{align*}
The summand on the RHS above is nonzero if it takes one of the following forms: $\mathbb{E} (W_{ij}^4W_{i'j'}^4)$, $\mathbb{E} (W_{ij}^3W_{i'j}^3W_{jt}^2)$, $\mathbb{E} (W_{ij}^3W_{jt}^2W_{ti'}^3)$, or $\mathbb{E} (W_{ij}^6W_{jt}^2)$, without consideration for the coefficient in front. This results in 
\begin{align*}
{\rm var} (\mathcal{I}_2) &\lesssim \sum_{  i\neq j\neq t } \Big(\mathbb{E} (W_{ij}^4W_{jt}^4) + \mathbb{E} (W_{ij}^6W_{jt}^2 ) \Big) +  \sum_{i,j, i', t \, (dist)} \Big(\mathbb{E}( W_{ij}^3W_{i'j}^3W_{jt}^2) + \mathbb{E} (W_{ij}^3W_{jt}^2W_{ti'}^3)\Big) \notag\\
&\lesssim \Vert \theta\Vert^4 \Vert \theta\Vert_1^2 + \Vert \theta\Vert_3^3\Vert\theta\Vert_1^3
\ll \Vert \theta\Vert_1^4.
\end{align*}
Therefore, we deduce that $|\mathcal{I}_2 | =O_{\mathbb P}( \Vert\theta\Vert_1^2 \sqrt{\log n})$. For the fourth term $\mathcal{I}_4$ in \eqref{def:I1_5}, we compute $\mathbb{E} (\mathcal{I}_4)  \lesssim \sum_{i\neq j \neq s} \theta_i \theta_j^2 \theta_s \leq  \Vert \theta\Vert_1^2 \Vert \theta\Vert^2$ and
\begin{align*}
 {\rm var} (\mathcal{I}_4)  &= \sum_{\substack{i\neq j \neq s\\ i'\neq j' \neq s'}}H_{ik}H_{i'k} H_{i\ell}H_{i'\ell} \Big(\mathbb{E} \big(W_{ij}^2W_{js}^2 W_{i'j'}^2W_{j's'}^2\big) - \mathbb{E}W_{ij}^2W_{js}^2 \cdot \mathbb{E}W_{i'j'}^2W_{j's'}^2\Big)\notag\\
 & \lesssim \sum_{i,j, s,s' \,  (dist)} \Big( \mathbb{E}(W_{ij}^4W_{js}^2 W_{js'}^2) + \mathbb{E}(W_{ij}^2W_{js}^4 W_{ss'}^2)  \Big) + \sum_{i\neq j\neq s}\mathbb{E} (W_{ij}^4W_{js}^4)   \notag\\
 & \lesssim  \sum_{i,j, s,s' \,  (dist)} (\theta_i\theta_j^3\theta_s\theta_{s'}+ \theta_i \theta_j^2\theta_s^2\theta_{s'}) + \sum_{i\neq j\neq s} \theta_i\theta_j^2\theta_s\lesssim \Vert \theta\Vert_1^3 \Vert \theta\Vert^2 \theta_{\max} ,
\end{align*}
where in the second step we used the fact that each  summand in the first line is nonzero if and only if the two paths, $i\to j\to s$ and $i'\to j'\to s'$, have an overlap in at least one edge. As a result, we obtain $|\mathcal{I}_4|=O_{\mathbb P}( \Vert \theta\Vert_1^2 \Vert \theta\Vert^2)$. Last, for $\mathcal{I}_5$,  its mean is zero and its variance can be computed as below.
\begin{align*}
{\rm var}(\mathcal{I}_5) &=  \sum_{\substack{i, j, s, t \, (dist)\\i', j', s', t' \, (dist)}}^n H_{ik} H_{t\ell}H_{i'k'} H_{t'\ell'}\mathbb{E} (W_{ij} W_{js}^2 W_{jt}W_{i'j'} W_{j's'}^2 W_{j't'})\notag\\
&\lesssim \sum_{i,j,s,t, s'\, (dist)}\mathbb{E}( W^2_{ij} W_{js}^2 W^2_{jt}W_{js'}^2 )+ \sum_{i,j,s,t \, (dist)}\mathbb{E} (W^3_{ij} W_{js}^3 W^2_{jt} )+\mathbb{E} (W^2_{ij} W_{js}^4 W^2_{jt})\notag\\
& \lesssim\sum_{i,j,s,t,s' \, (dist)} \theta_i\theta_s\theta_t\theta_{s'} \theta_j^4  + \sum_{i,j,s,t \, (dist)} \theta_i\theta_s\theta_t \theta_j^3 \lesssim \Vert\theta\Vert_1^4 \Vert \theta\Vert^2 \theta_{\max}^2.
\end{align*}
Hence, $|\mathcal{I}_5|=O_{\mathbb P}( \Vert \theta\Vert_1^2 \Vert \theta\Vert \theta_{\max} \sqrt{\log n} )=o_{\mathbb P}(\Vert \theta\Vert_1^2 \Vert \theta\Vert^2)$, where we used the condition that $\theta_{\max}\sqrt{\log n} = o(1) $.  Combining the estimates of $\mathcal{I}_1,\ldots,\mathcal I_5$ in \eqref{def:I1_5} yields that $\Vert I_4\Vert =O_{\mathbb P} (\Vert\theta\Vert_1^2\Vert \theta\Vert^2)$. The proof for $\Vert  H'W_1 {\rm diag} (W_1^2) W_1H\Vert$ is therefore complete by combining the bounds the terms in \eqref{imany}.

Next, we prove the bound for  $\Vert H'W_1^4 H\Vert $ in (\ref{eq:fourW}). Observe that
\begin{align*}
H'W_1^4H = H'\big(W- {\rm diag} (\Omega)\big)^4H  =\sum_{\alpha_1, \cdots, \alpha_4\in \{0,1\}} H'\prod_{u=1}^4 W^{\alpha_u} \big(- {\rm diag} (\Omega)\big)^{1-\alpha_u} H\,.
\end{align*}
If $\sum_{u=1}^4 \alpha_u  = 0$, the summand on the RHS becomes $H' [{\rm diag} (\Omega)]^4 H$ and it can be trivially  bounded by $\sum_{i=1}^n \theta_i^8 = o(\Vert \theta\Vert_1)$. 
For those summands such that $\sum_{u=1}^4 \alpha_u =1, 2, 3$, their bounds are similar to that of $\Vert H'W^aH\Vert$, for $a=1, 2,3$, since ${\rm diag} (\Omega)$ is deterministic and its diagonal entries are all bounded by $\theta_{\max}^2$. We therefore obtain that
\begin{align*}
&\Big\Vert \sum_{\alpha_1+ \cdots + \alpha_4=1} H'\prod_{u=1}^4 W^{\alpha_u} \big(- {\rm diag} (\Omega)\big)^{1-\alpha_u} H \Big\Vert =o_{\mathbb P} (\Vert \theta\Vert_1)=o_{\mathbb P}( \Vert \theta\Vert_1^2 \Vert \theta\Vert^2)\,, \notag\\
&\Big\Vert \sum_{\alpha_1+ \cdots + \alpha_4=2} H'\prod_{u=1}^4 W^{\alpha_u} \big(- {\rm diag} (\Omega)\big)^{1-\alpha_u} H \Big\Vert = o_{\mathbb P}(\Vert \theta\Vert_1^2)=o_{\mathbb P}( \Vert \theta\Vert_1^2  \Vert \theta\Vert^2)\,, \notag\\
&\Big\Vert \sum_{\alpha_1+ \cdots + \alpha_4=3} H'\prod_{u=1}^4 W^{\alpha_u} \big(- {\rm diag} (\Omega)\big)^{1-\alpha_u} H \Big\Vert=o_{\mathbb P}( \Vert \theta\Vert_1^2) =o_{\mathbb P}( \Vert \theta\Vert_1^2  \Vert \theta\Vert^2)\,,
\end{align*}
where we used the condition that $\theta_{\max} = o(1/\sqrt{\log n}\, ) $.
Next, we bound $\Vert H' W^4 H \Vert $, which corresponds to the case $\alpha_1+ \cdots + \alpha_4=4$. It suffices to claim that for a fixed $(k, \ell)$, $\big| (H' W^4H)_{k, \ell}\big|=O_{\mathbb P} (\Vert \theta\Vert_1^2 \Vert \theta\Vert^2)$.
  To see this, we first write 
  \begin{align*}
  (H' W^4H)_{k, \ell} &= \sum_{i,j,s,t,q=1}^nH_{ik} H_{q\ell} W_{ij}W_{js}W_{st}W_{tq}.
   \end{align*}
Direct calculations yield
  \begin{align*}
  \mathbb{E} \{(H' W^4H)_{k, \ell}\}  & = \sum_{i,j,s,t,q=1}^nH_{ik} H_{q\ell} \mathbb{E} W_{ij}W_{js}W_{st}W_{tq} = \sum_{i,j,s=1}^nH_{ik} H_{i\ell} \big( \mathbb{E} W_{ij}^4 + W_{ij}^2W_{it}^2+  W_{ij}^2W_{js}^2\big)
 \notag\\
  & \lesssim \sum_{i,j}\theta_i\theta_j + \sum_{i,j,s} \theta_i\theta_j^2 \theta_s \lesssim \Vert \theta\Vert_1^2 \Vert \theta\Vert^2
  \end{align*}
In addition, observing the definitions of $\mathcal I_1,\ldots,\mathcal I_5$ in \eqref{def:I1_5}, we have
\begin{align*}
{\rm var} \{(H' W^4H)_{k, \ell}\} &\lesssim \sum_{a=1}^5 {\rm var} (\mathcal{I}_a)+ {\rm var} \Big(\sum_{i,j,s,t\, (dist) }^nH_{ik} H_{i\ell}  W_{ij}^2W_{it}^2 \Big)
\\
&\quad+{\rm var} \Big(\sum_{i,j,s,t\, (dist) }^nH_{ik} H_{s\ell}  W_{ij}^2W_{it}W_{ts} \Big)\\
& \quad  +   {\rm var} \Big(\sum_{i,j,s,t, q\, (dist) }^nH_{ik} H_{q\ell} \mathbb{E} W_{ij}W_{js}W_{st}W_{tq}\Big) .
\end{align*}
It is worthy noting that the calculations of $ {\rm var} \Big(\sum_{i,j,s,t\, (dist) }^nH_{ik} H_{i\ell}  W_{ij}^2W_{it}^2 \Big)$ are essentially similar to ${\rm var}(\mathcal{I}_4)$. Therefore, we can obtain that $ {\rm var} \Big(\sum_{i,j,s,t\, (dist) }^nH_{ik} H_{i\ell}  W_{ij}^2W_{it}^2 \Big) = o(\Vert \theta\Vert_1^3 \Vert \theta\Vert^2)$.
We also observe that
\begin{align*}{\rm var} 
&\Big(\sum_{i,j,s,t\, (dist) }^nH_{ik} H_{s\ell}  W_{ij}^2W_{it}W_{ts} \Big) \lesssim \sum_{\substack {i,j,s,t\, (dist) \\i',j',s',t'\, (dist) }} \mathbb{E}(W_{ij}^2W_{it}W_{ts} W_{i'j'}^2W_{i't'}W_{t's'}) \notag\\
& \lesssim \sum_{i,j,t,s \, (dist)}\theta_i^2\theta_j\theta_t^2 \theta_s + \sum_{i,j,s,t,s', (dist) } (\theta_i\theta_j^2 \theta_s^2 \theta_t^2 \theta_{s'} + \theta_i^3\theta_j^2 \theta_s\theta_t\theta_{s'})\lesssim \Vert \theta\Vert_1^4\Vert \theta\Vert^{2}\theta_{\max}^2 .
\end{align*}
Here we get the second step since each summand is nonzero if and only if the two paths, $ j\to i \to t\to s$ and $ j'\to i' \to t'\to s'$, merge into one of the following cases:
(a) a directed tree: $j\to i\to t\to s$; (b) a directed tree: $j\to i\to t\to s\to s'$; (c) a tree structure rooted at $i$ with three leaves $i\to t\to s$, $i\to j$ and $i\to j'$. Each edge mentioned  must be a multiple edge. And each multiple edge, denoted by $a\to b$,  contributes to $\theta_a\theta_b$. Thereby, the three cases correspond to the three summations on the second line of the above inequality.
In addition,
\begin{align*}
&{\rm var} \bigg\{\sum_{i,j,s,t, q\, (dist) }^n\hspace{-0.5em} H_{ik} H_{q\ell} \mathbb{E} (W_{ij}W_{js}W_{st}W_{tq})\bigg\} \lesssim \hspace{-0.5em} \sum_{\substack{i,j,s,t, q\, (dist)\\ i',j',s',t', q'\,  (dist)} }\hspace{-1em}\mathbb{E} (W_{ij}W_{js}W_{st}W_{tq}W_{i'j'}W_{j's'}W_{s't'}W_{t'q'})\notag\\
& \lesssim \sum_{i,j,s,t,q\, (dist)}\mathbb{E} (W_{ij}^2W_{js}^2W_{st}^2W_{tq}^2)\lesssim \sum_{i,j,s,t,q} \theta_i \theta_j^2 \theta_s^2 \theta_t^2 \theta_q \ll \Vert \theta\Vert_1^2 \Vert \theta\Vert^6\, .
\end{align*}
In conclusion, we obtain that ${\rm var}\{ (H' W^4H)_{k, \ell}\}\ll \Vert \theta\Vert_1^4 \Vert \theta\Vert^2 $. The proof is therefore complete by applying Chebyshev's inequality.

\end{proof}

\section{Analysis of MSCORE and the $H$ from net-rounding} \label{sec:MSCORE-supp}
In this section, we aim to present the proof of Theorem~\ref{thm:MSCORE}. We first improve the results in \cite{MSCORE} and provide sharp entry-wise large deviation bounds for  eigenvectors in Section~\ref{subsec:entrywise-eigen}. The complete proof of 
Theorem~\ref{thm:MSCORE} is given in Section~\ref{susec:pf-MSCORE}.  

\subsection{Entry-wise eigenvector analysis of $A$} \label{subsec:entrywise-eigen}
The node-wise error bounds for Mixed-SCORE heavily rely on entry-wise large deviation bounds for the eigenvectors of $A$. We collect the results in the following lemma. 
\begin{lemma}\label{lem:entrywise-eigen}
Suppose the assumptions in Theorem~\ref{thm:MSCORE} hold. Let $(\lambda_k, \xi_k)$ and  $(\hat{\lambda}_k, \hat{\xi}_k)$ be the $k$-th eigen-pairs of $\Omega$ and $A$, respectively. Write $\Xi_1:= (\xi_2, \cdots, \xi_K)$ and similarly for $\widehat{\Xi} _1$. With probability $1- o(n^{-3})$, there exists $\omega \in \{1, -1\}$ and an orthogonal matrix $O_1\in \mathbb{R}^{(K-1)\times (K-1)}$ such that simultaneously for all $1\leq i \leq n$, 
\begin{align*}
\big| \hat{\xi}_1(i) - \omega\xi_1(i) \big| & \lesssim \frac{\sqrt{n\overline{\theta} \theta_{\max}} \, \theta_i}{ \Vert \theta\Vert^{3}} + \frac{\Vert\theta\Vert_3^{3/2}\sqrt{\theta_i \log n}  }{\Vert \theta\Vert^3 } + \frac{\theta_{\max} \log n}{\Vert \theta\Vert^3}  + \frac{\theta_{\max} \sqrt{n\overline{\theta} \theta_i \log n} }{\Vert \theta\Vert^4} , \notag\\
\Vert \widehat{\Xi}_1(i) - \Xi_1(i) O_1\Vert &\lesssim \frac{\sqrt{n\overline{\theta} \theta_{\max}} \, \theta_i}{ \beta_n  \Vert \theta\Vert^{3}} + \frac{\Vert\theta\Vert_3^{3/2}\sqrt{\theta_i \log n}  }{ \beta_n \Vert \theta\Vert^3 } + \frac{\theta_{\max} \log n}{\beta_n\Vert \theta\Vert^3}  + \frac{\theta_{\max} \sqrt{n\overline{\theta} \theta_i \log n} }{\beta_n^2\Vert \theta\Vert^4} \,.  
\end{align*}
where $\widehat{\Xi}_1(i)$ represents $i$th row of $\widehat{\Xi}$, and similarly for $\Xi(i)$. 
\end{lemma}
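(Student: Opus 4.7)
\medskip
\noindent
\textbf{Proof proposal for Lemma~\ref{lem:entrywise-eigen}.} My plan is to combine the standard first-order linearization for eigenvectors with a leave-one-out construction, in the spirit of the row-wise eigenvector analysis of Abbe--Fan--Wang--Zhong. The starting identity is $A\hat{\xi}_k = \hat{\lambda}_k \hat{\xi}_k$, which upon writing $A = \Omega + W - \diag(\Omega)$ and rearranging gives, for each $k$,
\begin{equation*}
\hat{\xi}_k = \hat{\lambda}_k^{-1}\bigl(\Omega\hat{\xi}_k + W\hat{\xi}_k - \diag(\Omega)\hat{\xi}_k\bigr).
\end{equation*}
Projecting this identity onto the (signed) population eigenvector direction and using the eigen-equation $\Omega\xi_k = \lambda_k\xi_k$, I will obtain for the leading eigenvector a decomposition of the form
\begin{equation*}
\hat{\xi}_1(i) - \omega\,\xi_1(i) \;=\; \lambda_1^{-1}(W\xi_1)(i) \;+\; R_1(i) \;+\; R_2(i),
\end{equation*}
where $R_1(i)$ is a bias/alignment term that can be bounded deterministically by $O(\|W\|^2/\lambda_1^2)\cdot|\xi_1(i)|$ together with a $\diag(\Omega)$-contribution of size $O(\theta_{\max}^2/\lambda_1)$, and $R_2(i)$ is a quadratic remainder that I will handle by the leave-one-out trick described below.

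The three ingredients needed are (i) a sharp operator norm bound $\|W\| \lesssim \sqrt{n\bar\theta\theta_{\max}} + \log n$ with probability $1-o(n^{-3})$, which follows from Bandeira--van Handel / matrix Bernstein given $\theta_{\max}\sqrt{\log n}\leq c$; (ii) the spectral-gap estimates $\lambda_1(\Omega)\asymp \|\theta\|^2$ and $|\lambda_K(\Omega)|\gtrsim \beta_n\|\theta\|^2$, which follow from the identity $\lambda_k(\Omega) = \|\theta\|^2\lambda_k(PG)$ together with Condition~\ref{cond:afmSCORE}(b); and (iii) the fact that $|\xi_1(i)| \asymp \theta_i/\|\theta\|$ (and $\|\Xi_1(i)\|\lesssim \theta_i/\|\theta\|$), which comes from the eigen-structure of $\Omega = \Theta\Pi P\Pi'\Theta$ and has already been used implicitly in the MSCORE analysis. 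Given (i)--(iii), Davis--Kahan directly yields the $\ell^2$-level bounds $\|\hat\xi_1-\omega\xi_1\|\lesssim \|W\|/\lambda_1$ and $\|\widehat{\Xi}_1 - \Xi_1 O_1\|\lesssim \|W\|/|\lambda_K|$ for an appropriate $O_1\in O(K-1)$; the novelty is in sharpening these to the row level.

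For the row-wise bound, I will introduce, for each $i$, the leave-one-out matrix $A^{(i)}$ obtained by replacing the $i$-th row and column of $A$ by their expectations, so that $A^{(i)}$ is independent of the $i$-th row of $W$. Let $\hat{\xi}_k^{(i)}$ denote its $k$-th eigenvector and $\widehat{\Xi}_1^{(i)}$ the analog of $\widehat{\Xi}_1$. Because $\|A-A^{(i)}\|$ is controlled by the $i$-th row of $W$ (which is of order $\sqrt{n\bar\theta\theta_i\log n}$ by Bernstein), Davis--Kahan applied to $A$ versus $A^{(i)}$ yields $\|\hat{\xi}_1 - \hat{\xi}_1^{(i)}\|\lesssim \sqrt{n\bar\theta\theta_i\log n}/\lambda_1$ and its $\widehat{\Xi}_1$-analog with $|\lambda_K|$ in the denominator. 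Substituting $\hat{\xi}_1^{(i)}$ for $\hat{\xi}_1$ inside $W\hat{\xi}_1$ and into $\widehat{\Xi}_1$ then makes $(W\hat{\xi}_1^{(i)})(i)=\sum_j W_{ij}\hat{\xi}_1^{(i)}(j)$ a sum of independent mean-zero terms (conditional on $A^{(i)}$), to which Bernstein applies. The resulting conditional variance is of order $\theta_i \sum_j\theta_j[\hat{\xi}_1^{(i)}(j)]^2$, and after replacing $\hat{\xi}_1^{(i)}(j)$ by the population value $\xi_1(j)\asymp\theta_j/\|\theta\|$ the variance reduces to $\theta_i\|\theta\|_3^3/\|\theta\|^2$, which after normalizing by $\lambda_1\asymp\|\theta\|^2$ and adding the logarithmic Bernstein correction yields precisely the second and third terms in the stated bound. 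The first term comes from $\|W\|/\lambda_1 \cdot |\xi_1(i)|$ via the deterministic $R_1$, and the fourth term comes from the quadratic remainder $R_2$, whose control involves $\|W\|\cdot\|\hat\xi_1-\hat\xi_1^{(i)}\|/\lambda_1$. The $\widehat{\Xi}_1$-bound is identical in structure except that each occurrence of $\lambda_1$ (resp.\ $\lambda_1^2$) is replaced by $|\lambda_K|\asymp\beta_n\|\theta\|^2$ (resp.\ its square), producing the extra factors of $\beta_n^{-1}$ and $\beta_n^{-2}$.

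The main obstacle will be the $\widehat{\Xi}_1$ part: the rotation $O_1$ is not diagonal, so the linearization does not cleanly factor through a single eigenvector but rather through the projector $\Xi_1\Xi_1'$, and the leave-one-out eigenspace $\widehat{\Xi}_1^{(i)}$ must be aligned both with $\widehat{\Xi}_1$ and with $\Xi_1$ simultaneously. I will handle this by writing $\widehat{\Xi}_1(i) - \Xi_1(i)O_1 = e_i'(A-\Omega)\Xi_1\Lambda_K^{-1}O_1 + \text{higher order}$, using the invariance of the leading term under orthogonal rotation within the eigenspace, and then applying the leave-one-out and Bernstein argument entry-by-entry as above. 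A secondary technical point is that $\hat{\lambda}_k/\lambda_k = 1 + o(1)$ must be controlled uniformly, but this follows from Weyl's inequality together with (i). A union bound over $1\leq i\leq n$ upgrades each $1-o(n^{-3})$ estimate to the simultaneous statement in the lemma.
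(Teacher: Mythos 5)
Your proposal follows essentially the same route as the paper's proof: both start from the eigen-equation $\hat{\xi}_1=\hat{\lambda}_1^{-1}A\hat{\xi}_1$, split off the alignment/bias terms via Weyl and sine-theta, introduce the leave-one-out matrix $A^{(i)}$ to decouple the $i$-th row of $W$ so that Bernstein's inequality applies to $e_i'W\xi_1$ and $e_i'W(\hat{\xi}_1^{(i)}-\xi_1)$, close the resulting self-bounding recursion in $\|\hat{\xi}_1-\xi_1\|_\infty$, and handle the non-commutativity of $\Lambda_1$ with the eigenspace alignment for the $\widehat{\Xi}_1$ part exactly as you describe. The only bookkeeping slip is attributing the fourth term to $\|W\|\cdot\|\hat{\xi}_1-\hat{\xi}_1^{(i)}\|/\lambda_1$ (which would be too large); it actually arises from the Bernstein conditional variance of $e_i'W(\hat{\xi}_1^{(i)}-\xi_1)$, i.e., $\sqrt{\theta_i\theta_{\max}\log n}\,\|\hat{\xi}_1^{(i)}-\xi_1\|$, a computation your plan already contains.
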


\begin{remark} Recall $\delta_n = {\theta_{\min} \Vert \theta\Vert }/({ \theta_{\max}^{3/2} \Vert \theta\Vert_1^{1/2})}$. Under the condition $\delta_n \cdot \beta_n \Vert \theta\Vert /\sqrt{\log (n)} \to \infty$, it is easy to obtain that $\Vert \theta\Vert^2 \geq C \sqrt{n\overline{\theta}\theta_{\max} \log (n)}$. Based on this, some elementary computations give that the upper bounds presented in Lemma~\ref{lem:entrywise-eigen} are dominated by $(\delta_n \beta_n \Vert \theta\Vert)^{-1} \sqrt{\log (n)} \cdot \theta_i/\Vert \theta\Vert$. We remark that compared to Lemma D.2 of \cite{MSCORE}, the results in Lemma~\ref{lem:entrywise-eigen} are much stronger in the sense that it gives precisely  the sharp entry-wise bounds instead of `` $2$-to-infinity'' bounds. We also remark  that the proof of Lemma~\ref{lem:entrywise-eigen} is similar to part of the entry-wise eigenvector analysis in  \cite{ke2022optimal}, more specifically,  the entry-wise large deviation bounds between $(\hat{\xi}, \widehat{\Xi}_1)$ and  $( \widetilde{\xi}_1, \widetilde{\Xi}_1)$. Here  $\hat{\xi}_1, \widehat{\Xi}_1$ are generated from Laplacian matrix, while $ \widetilde{\xi}_1, \widetilde{\Xi}_1$ are  obtained from a partial leave-one-out proxy. More formal definitions can be found in \cite{ke2022optimal}. 
\end{remark}

\begin{proof}[Proof of Lemma~\ref{lem:entrywise-eigen}]
 We recall the spectral properties of the population matrix $\Omega$ in \cite{MSCORE}. For the eigenvalues,  by Lemma C.2 of \cite{MSCORE}
\begin{align*}
\lambda_1 \asymp \Vert \theta\Vert^2, \quad |\lambda_2(\Omega)| \leq  (1-c) \lambda_1(\Omega), \quad |\lambda_K(\Omega) |\asymp \beta_n\Vert \theta\Vert^2
\end{align*}
for some fixed constant $c\in (0,1)$. For the associated eigenvectors,   Lemma C.3 of \cite{MSCORE} gives that  for all $1\leq i \leq n$, 
\begin{align}\label{2023061601}
\xi_1(i) \asymp \theta_i \Vert \theta\Vert^{-1}, \qquad  \Vert \Xi_1 (i) \Vert \lesssim  \theta_i \Vert \theta\Vert^{-1}\, . 
\end{align}

In the sequel, we focus on the details of the proof of $\hat{\xi}_1(i) - \xi_1(i)$. And the analysis of the row-wise error rate of $\widehat{\Xi}_1$ can be derived by the same technique, along with a similar technical trick used in \cite{ke2022optimal} addressing the issue arising from the non-commutativity of matrix product. More precisely, unlike $\xi_1'\hat{\xi}_1 \lambda_1 - \lambda_1\xi_1'\hat{\xi}_1=0$, $\Lambda_1 \Xi'_1 \widehat{\Xi}_1 - \Xi'_1 \widehat{\Xi}_1 \Lambda_1\neq 0$ where we write $\Lambda_1 = {\rm diag} (\lambda_2, \cdots, \lambda_K)$. To resolve this, we use the decomposition $\Lambda_1 \Xi'_1 = \Xi_1' \Omega = \Xi_1'A  + \Xi_1' (\Omega - A)  $. The first term further contributes to $\Xi_1' A\widehat{\Xi}_1 = \Xi_1' \widehat{\Xi}_1 \widehat{\Lambda}_1 $ which gives the desired form and the perturbation from $\Lambda_1$ to $\widehat{\Lambda}_1$ is easy to cope with. To control the other term stemming from the second term in the aforementioned decomposition, we simply use the operator norms of both $\Omega - A $ and $\Lambda_1$.  Another minor complexity in the analysis of $\widehat{\Xi}_1$ comes from the presence of the orthogonal matrix $O_1$. We occasionally need to write $O_1$ as $\Xi_1'\widehat{\Xi}_1  +( O_1 - \Xi_1'\widehat{\Xi}_1)$ to overcome the non-commutativity of matrix product and facilitate the analysis. We refer the readers to Section~C.4-C.7 in \cite{ke2022optimal} for more details and omit the  proof for  $\Vert \widehat{\Xi}_1(i) - \Xi_1(i) O_1\Vert $ for the sake of simplicity.

 Without loss of generality, we assume $\omega=1$ since we can always choose $\xi_1$ and $\hat{\xi}_1$ with their first components non-negative.   By the fact that $\hat{\xi}_1 = \hat{\lambda}_1^{-1} A\hat{\xi}_1$, we can derive 
\begin{align*}
\hat{\xi}_1(i) - \xi_1(i)  & = \hat{\lambda}_1^{-1}  e_i' \Omega \hat{\xi}_1 - \xi_1(i) +   \hat{\lambda}_1^{-1}  e_i' (A-\Omega) \hat{\xi}_1 \notag\\
&= \big(\lambda_1 \hat{\lambda}_1^{-1} \xi_1' \hat{\xi}_1 - 1\big)\xi_1(i) + \sum_{k=2}^K \lambda_k \hat{\lambda}_1^{-1}  \xi_k' \hat{\xi}_1 \xi_k(i)  + \widehat{\lambda}_1^{-1} e_i' \big(W- {\rm diag} (\Omega)\big) \hat{\xi}_1
\end{align*}
Employing Weyl's inequality and sine-theta theorem, it is not hard to bound 
\begin{align*}
\big| \big(\lambda_1 \hat{\lambda}_1^{-1} \xi_1' \hat{\xi}_1 - 1\big)\xi_1(i)\big| & \lesssim \big(|\lambda_1 \hat{\lambda}_1^{-1} - 1| + |\xi_1' \hat{\xi}_1 - 1|\big)|\xi_1(i)| \notag\\
& \lesssim  \frac{\Vert W - {\rm diag}(\Omega)\Vert }{\Vert \theta\Vert^2}  \cdot |\xi_1(i)|\lesssim \sqrt{n\overline{\theta} \theta_{\max}} \, \theta_i \Vert \theta\Vert^{-3}
\end{align*}
and 
\begin{align*}
\Big| \sum_{k=1}^K \lambda_k \hat{\lambda}_1^{-1}  \xi_k' \hat{\xi}_1 \xi_k(i)  \Big| \leq \Vert \Xi_1'\hat{\xi}_1\Vert\cdot \Vert \Xi_1(i)\Vert \lesssim \frac{\Vert W - {\rm diag}(\Omega)\Vert }{\Vert \theta\Vert^2}  \cdot  \Vert \Xi_1(i)\Vert \lesssim  \sqrt{n\overline{\theta} \theta_{\max}} \, \theta_i \Vert \theta\Vert^{-3}
\end{align*}
with probability at least $1- o(n^{-4})$,  where we used the result that $\Vert W - {\rm diag}(\Omega)\Vert \leq C \sqrt{n\overline{\theta}\theta_{\max} }$ in the proof of Lemma D.1 in \cite{MSCORE} and  applied (\ref{2023061601}) in the last steps. We further bound 
\begin{align*}
|\hat{\lambda}_1^{-1}e_i'{\rm diag} (\Omega) \hat{\xi}_1| \lesssim \theta_i^2\Vert \theta\Vert^{-2} |\hat{\xi}_1(i)| \leq \theta_i^2\Vert \theta\Vert^{-2} |{\xi}_1(i)| + \theta_i^2\Vert \theta\Vert^{-2} |\hat{\xi}_1(i) - \xi(i)|\, . 
\end{align*}
Thereby, we arrive at 
\begin{align*}
\big| \hat{\xi}_1(i) - \xi_1(i) \big| \lesssim \sqrt{n\overline{\theta} \theta_{\max}} \, \theta_i \Vert \theta\Vert^{-3} + \Vert \theta\Vert^{-2}\big|e_i' W\hat{\xi}_1 \big|+ \theta_i^2\Vert \theta\Vert^{-2} |\hat{\xi}_1(i) - \xi(i)|\,. 
\end{align*}
Rearranging both sides gives
\begin{align}\label{2023061500}
\big| \hat{\xi}_1(i) - \xi_1(i) \big| \lesssim \sqrt{n\overline{\theta} \theta_{\max}} \, \theta_i \Vert \theta\Vert^{-3} + \Vert \theta\Vert^{-2}\big|e_i' W\hat{\xi}_1 \big|\, . 
\end{align}

Next,  we bound $\big|e_i' W\hat{\xi}_1 \big|$. We decompose it as 
\begin{align}\label{2023061501}
\big|e_i' W\hat{\xi}_1 \big| \leq | e_i' W {\xi}_1|  + | e_i' W (\widetilde{\xi}_1 - \xi_1) | +  |e_i' W (\hat{\xi}_1 - \widetilde{\xi}_1) |
\end{align}
where $(\widetilde{\xi}, \widetilde{\Xi}_1)$ is the eigenvectors associated with the top $K$ eigenvalues of $A^{(i)} = \Omega - {\rm diag} (\Omega) + W^{(i)}$. Here $W^{(i)}$ is the random matrix obtained by zeroing out the $i$-th row and column of $W$. In addition, we choose $\widetilde{\xi}_1$ whose first component is non-negative. By this choice, we have both ${\rm sgn}(\hat{\xi}_1'\widetilde{\xi}_1) =1$ and ${\rm sgn}(\widetilde{\xi}_1'\xi_1) =1$.  Note that $\widetilde{\xi}_1 - \xi_1$, as well as ${\xi}_1$,  is independent of  $W_i$, $i$-th row or column of $W$.  We can then restrict to the randomness of $W_i$ and apply Bernstein inequality to the first two terms on the RHS of (\ref{2023061501}). It yields that with probability at least $1- o(n^{-4})$, 
\begin{align}
& | e_i' W {\xi}_1|  = \big| \sum_{j\neq i} W_{ij} \xi_1(j)\big| \lesssim \frac{\Vert\theta\Vert_3^{3/2}\sqrt{\theta_i \log n}  }{\Vert \theta\Vert } + \frac{\theta_{\max} \log n}{\Vert \theta\Vert} \label{2023061502}\\
&  | e_i' W (\widetilde{\xi}_1 - \xi_1) |  = \Big|  \sum_{j\neq i} W_{ij} \big(\widetilde{\xi}_1(i)- \xi_1(i) \big)\Big| \lesssim \sqrt{\theta_i \theta_{\max} \Vert \widetilde{\xi}_1 - \xi_1\Vert^2 \log n} + \Vert \widetilde{\xi}_1   - \xi_1 \Vert_{\infty} \log n \notag
\end{align}
Further applying sine-theta theorem to $\Vert \widetilde{\xi}_1 - \xi_1\Vert^2$ and decomposing $\Vert \widetilde{\xi}_1   - \xi_1 \Vert_{\infty}$, we obtain
\begin{align}\label{2023061503}
| e_i' W (\widetilde{\xi}_1 - \xi_1) |\lesssim \frac{\theta_{\max} \sqrt{n\overline{\theta} \theta_i \log n} }{\Vert \theta\Vert^2}   +  \log n\cdot \big( \Vert \hat{\xi}_1 - \widetilde{\xi}_1\Vert + \Vert \hat{\xi}_1 - \xi_1\Vert_\infty \big)\,.
\end{align}
For the last term on the RHS of (\ref{2023061501}), we crudely bound 
\begin{align}\label{2023061504}
|e_i' W (\hat{\xi}_1 - \widetilde{\xi}_1) | \lesssim \sqrt{n\overline{\theta} \theta_i } \, \Vert \hat{\xi}_1 - \widetilde{\xi}_1 \Vert \, .
\end{align}
We bound $\Vert \hat{\xi}_1 - \widetilde{\xi}_1 \Vert $ by sine-theta theorem as below. 
\begin{align*}
\Vert \hat{\xi}_1 - \widetilde{\xi}_1 \Vert  \lesssim \frac{\Vert (A^{(i)}  - A)\hat{\xi} \Vert }{\Vert \theta\Vert^2} &= \frac{\Vert e_i W_i'\hat{\xi}_1 + W_ie_i' \hat{\xi}_1\Vert }{\Vert \theta\Vert^2} \notag\\
&\lesssim \Vert \theta\Vert^{-2} |e_i'W\hat{\xi}_1| + \sqrt{n\overline{\theta}\theta_i} \, \Vert \theta\Vert^{-2} \big(|\xi_1(i)| + |\hat{\xi}_1(i) - {\xi}_1(i)|\big) \notag\\
&\lesssim \Vert \theta\Vert^{-2} |e_i'W\hat{\xi}_1| + \sqrt{n\overline{\theta}\theta_i} \, \theta_i\Vert \theta\Vert^{-3}  +  \sqrt{n\overline{\theta}\theta_i} \, \Vert \theta\Vert^{-2} |\hat{\xi}_1(i) - {\xi}_1(i)|
\end{align*}
where we denote the column vector $W_i$ the $i$-th row of $W$. Combing this with (\ref{2023061502}) - (\ref{2023061504}) into (\ref{2023061501}), we get 
\begin{align*}
\big|e_i' W\hat{\xi}_1 \big| \lesssim & \frac{\Vert\theta\Vert_3^{3/2}\sqrt{\theta_i \log n}  }{\Vert \theta\Vert } + \frac{\theta_{\max} \log n}{\Vert \theta\Vert}  + \frac{\theta_{\max} \sqrt{n\overline{\theta} \theta_i \log n} }{\Vert \theta\Vert^2}   +  \log n\cdot  \Vert \hat{\xi}_1 - \xi_1\Vert_\infty  \notag\\
& + \frac{n\overline{\theta}\theta_i^2 }{\Vert\theta\Vert^3} +\big(\sqrt{n\overline{\theta}\theta_i} + \log n\big)  \sqrt{n\overline{\theta}\theta_i} \, \Vert \theta\Vert^{-2} |\hat{\xi}_1(i) - {\xi}_1(i)| + \frac{\sqrt{n\overline{\theta}\theta_i} + \log n}{\Vert \theta\Vert^2 } \big|e_i' W\hat{\xi}_1 \big|\, .
\end{align*}
Rearranging the terms and using the bounds $\Vert \theta\Vert^2 \geq C \sqrt{n\overline{\theta}\theta_{\max} \log (n)}$ mentioned in the remark of Lemma~\ref{lem:entrywise-eigen} and $\beta_n \Vert \theta\Vert /\sqrt{\log (n)} \to \infty$ in (b) of Condition~\ref{cond:afmSCORE}, we arrive at 
\begin{align*}
\big|e_i' W\hat{\xi}_1 \big| \lesssim & \frac{\Vert\theta\Vert_3^{3/2}\sqrt{\theta_i \log n}  }{\Vert \theta\Vert } + \frac{\theta_{\max} \log n}{\Vert \theta\Vert}  + \frac{\theta_{\max} \sqrt{n\overline{\theta} \theta_i \log n} }{\Vert \theta\Vert^2}   +  \log n\cdot  \Vert \hat{\xi}_1 - \xi_1\Vert_\infty  \notag\\
& + \frac{n\overline{\theta}\theta_i^2 }{\Vert\theta\Vert^3} +\big(\sqrt{n\overline{\theta}\theta_i} + \log n\big)  \sqrt{n\overline{\theta}\theta_i} \, \Vert \theta\Vert^{-2} |\hat{\xi}_1(i) - {\xi}_1(i)|\, . 
\end{align*}
We now substitute the above inequality to (\ref{2023061500}) and rearrange the terms. It follows that 
\begin{align*}
\big| \hat{\xi}_1(i) - \xi_1(i) \big| &\lesssim \frac{\sqrt{n\overline{\theta} \theta_{\max}} \, \theta_i}{ \Vert \theta\Vert^{3}} + \frac{\Vert\theta\Vert_3^{3/2}\sqrt{\theta_i \log n}  }{\Vert \theta\Vert^3 } + \frac{\theta_{\max} \log n}{\Vert \theta\Vert^3} 
\\
& \qquad+ \frac{\theta_{\max} \sqrt{n\overline{\theta} \theta_i \log n} }{\Vert \theta\Vert^4}   +  \frac{\log n}{\Vert \theta\Vert^2}\cdot  \Vert \hat{\xi}_1 - \xi_1\Vert_\infty\, . 
\end{align*}
We then take maximum over $i$ for both sides and get the bound 
\begin{align*}
\Vert \hat{\xi}_1 - \xi_1\Vert_\infty \lesssim \frac{\sqrt{n\overline{\theta} \theta_{\max}} \, \theta_{\max}}{ \Vert \theta\Vert^{3}} + \frac{\Vert\theta\Vert_3^{3/2}\sqrt{\theta_{\max} \log n}  }{\Vert \theta\Vert^3 } + \frac{\theta_{\max} \log n}{\Vert \theta\Vert^3}  + \frac{\theta_{\max} \sqrt{n\overline{\theta}\theta_{\max} \log n} }{\Vert \theta\Vert^4}  \, .  
\end{align*}
Combining the above two inequalities, by $\Vert \theta\Vert^2 \geq C \sqrt{n\overline{\theta}\theta_{\max} \log (n)}$ and $\beta_n \Vert \theta\Vert /\sqrt{\log (n)} \to \infty$ again,  we finally conclude that with probability at least $1- o(n^{-4})$, 
\begin{align*}
\big| \hat{\xi}_1(i) - \xi_1(i) \big| \lesssim \frac{\sqrt{n\overline{\theta} \theta_{\max}} \, \theta_i}{ \Vert \theta\Vert^{3}} + \frac{\Vert\theta\Vert_3^{3/2}\sqrt{\theta_i \log n}  }{\Vert \theta\Vert^3 } + \frac{\theta_{\max} \log n}{\Vert \theta\Vert^3}  + \frac{\theta_{\max} \sqrt{n\overline{\theta} \theta_i \log n} }{\Vert \theta\Vert^4}  \, . 
\end{align*}
Then we conclude the proof by combining all $1\leq i \leq n$ together.

\end{proof}

\subsection{Proof Theorem~\ref{thm:MSCORE}} \label{susec:pf-MSCORE}
We finish the proof of Theorem~\ref{thm:MSCORE} in this section. First, we claim the node-wise error bounds for MSCORE. Write $\hat{R}: = {\rm diag}(\hat{\xi_1})^{-1} \widehat{\Xi}_1$ and $R =  {\rm diag}({\xi_1})^{-1} {\Xi}_1$. Employing Lemma~\ref{lem:entrywise-eigen} and  \eqref{2023061601},
 it is not hard to obtain that 
\begin{align*}
\Vert \widehat{R}(i) - R(i)O\Vert &\lesssim   \frac{\sqrt{n\overline{\theta} \theta_{\max}} \, }{ \beta_n  \Vert \theta\Vert^{2}} + \frac{\Vert\theta\Vert_3^{3/2}\sqrt{ \log n}  }{ \beta_n \Vert \theta\Vert^2\sqrt{\theta_i}} + \frac{\theta_{\max} \log n}{\beta_n\Vert \theta\Vert^2\theta_i }  + \frac{\theta_{\max} \sqrt{n\overline{\theta}  \log n} }{\beta_n^2\Vert \theta\Vert^3\sqrt{\theta_i}}  \notag\\
&\lesssim (\delta_n \beta_n \Vert \theta\Vert)^{-1} \sqrt{\log (n)} \, . 
\end{align*}
Based on the above error rate,  further with effective vertex hunting algorithm, we can use the same arguments in the proof of the rate of convergence of Mixed-SCORE in \cite{MSCORE} and finally get the node-wise error rate of $\widehat{\Pi}^{\rm MS}$, which is $(\delta_n \beta_n \Vert \theta\Vert)^{-1} \sqrt{\log (n)}$. Since the remaining proof is simply a copy of the proof in Section F.1 of \cite{MSCORE}, we skip the details and conclude the proof. Moreover, under (e) of Condition~\ref{cond:MSCORE}, under the high probability event that the rate of Mixed-SCORE holds,   we can claim that $\hat{\pi}_{0i} = \pi_{0i} $ simultaneously for all $1\leq i \leq n$. To see this, let us suppose that $\pi_{0i^*} = e_{k}'$ and $\hat{\pi}_{0i^*} = e_{k^*}'$ for some $1\leq i^* \leq n$  and $1\leq k\neq k^* \leq n$. Under Condition~\ref{cond:MSCORE}, it is seen that $\pi_{i^*}(k) - \max_{i\neq k} \pi_{i^*}(i) \geq (\delta_n \beta_n \Vert \theta\Vert)^{-1} \log (n)$. If follows from the rate of Mixed-SCORE that 
\begin{align*}
\hat{\pi}_{i^*}(k) &\geq  \pi_{i^*}(k) -C(\delta_n \beta_n \Vert \theta\Vert)^{-1} \sqrt{\log (n)}\notag\\
& \geq  \max_{i\neq k} \pi_{i^*}(i) + C (\delta_n \beta_n \Vert \theta\Vert)^{-1} \log (n) \notag\\
& \geq \max_{i\neq k} \hat{\pi}_{i^*}(i)  - \Vert \hat{\pi}_{i^*}- \pi_{i^*}\Vert + C (\delta_n \beta_n \Vert \theta\Vert)^{-1} \log (n) \notag\\
& >\max_{i\neq k} \hat{\pi}_{i^*}(i),
\end{align*} 
then $\hat{\pi}_{0i^*} = e_{k}'$ which is a contradiction. Consequently, $\widehat{\Pi}_0  = \Pi_0$ with probability $1- o(n^{-3})$. This finishes the proof of Theorem~\ref{thm:MSCORE}. 
%
\subsection{A corollary for MMSBM \& SBM \& DCBM}
For convenience, we offer the entry-wise eigenvector large deviation bounds for the three block models: MMSBM, SBM and DCBM. These bounds are derived through the adaptation of Lemma~\ref{lem:entrywise-eigen} with its proof. It is worthy noting that DCBM varies from DCMM in the structure of membership vectors, this distinction doesn't impact the eigenvector analysis. Hence,  the eigenvector results for DCBM are identical to those of DCMM, encompassing both the regularity conditions and the error rates of $\hat{\xi}_1$ and $\widehat{\Xi}_1$. 

In contrast, MMSBM and SBM have no degree parameter, resulting in the use of SCORE algorithm unnecessary. Due to this fact, there is no need to seperate the first eigenvector $\hat{\xi}_1$ from the other eigenvectors $\widehat{\Xi}_1$. Consequently, fewer regularity conditions are needed for MMSBM and SBM,  and the error rate expression will be much simplified. Let $\widehat{\Xi} = (\hat{\xi}_1, \cdots, \hat{\xi}_K)  $ be the eigenvectors of $A$ associated with the first $K$ largest eigenvalues (in magnitude). Similarly, $\Xi$ are the top $K$ eigenvectors of $\Omega$. We denote by $\widehat{\Xi}(i), \Xi(i)$, the $i$-th row of $\widehat{\Xi}$ and $\Xi$, respectively. Based on these notations, we have the following corollary for MMSBM and SBM.
\begin{cor}\label{cor:eigenMMSBM}
Fixed $K\geq 1$. Consider MMSBM  \eqref{MMSBM} or SBM (each $\pi_i$ in \eqref{MMSBM} ranges in $\{e_1,\ldots,e_K\}$).  Let $\lambda_K$ be the $K$-th  largest  (in magnitude) right eigenvalue of $n^{-1}P\Pi'\Pi $. Suppose that $\Vert P\Vert_{\max} \leq c_1$, $n(\Pi'\Pi)^{-1}\leq c_1$ for some constant $c_1>0$ and $|\lambda_K|\sqrt{n\alpha_n/\log (n)}\to \infty$. Then, with probability $1- o(n^{-3})$, there exists an orthogonal matrix $O\in \mathbb{R}^{K \times K}$ such that simultaneously for all $1\leq i \leq n$, 
\begin{align*}
\Vert \widehat{\Xi}(i) - \Xi(i) O\Vert &\leq  \frac{C\sqrt{\log (n)}}{ |\lambda_K| n \sqrt{\alpha_n}  } 
\end{align*}
for some fixed constant $C>0$. 
\end{cor}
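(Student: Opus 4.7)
The plan is to mirror the entry-wise eigenvector analysis that established Lemma~\ref{lem:entrywise-eigen}, exploiting the fact that in the MMSBM/SBM setting every $\theta_i$ collapses to $\sqrt{\alpha_n}$, so the relevant parameters simplify dramatically. In particular, $\|\theta\|^2 = n\alpha_n$, $\theta_{\max} = \sqrt{\alpha_n}$, $\bar{\theta} = \sqrt{\alpha_n}$, and the matrix $G = \|\theta\|^{-2}\Pi'\Theta^2\Pi$ reduces to $n^{-1}\Pi'\Pi$; under the assumption $\|n(\Pi'\Pi)^{-1}\|\leq c_1$, this gives $|\lambda_K(\Omega)|\asymp n\alpha_n|\lambda_K|$ and $\|\Xi(i)\|\lesssim 1/\sqrt{n}$ uniformly in $i$. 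Because there is no degree heterogeneity, SCORE normalization is unnecessary and there is no reason to separate $\hat{\xi}_1$ from $\widehat{\Xi}_1$; the entire block $\widehat{\Xi}$ can be treated uniformly, which also removes the $\beta_n^2$ factor that arose in the non-leading block of Lemma~\ref{lem:entrywise-eigen}.

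I would first verify the population spectral properties above from the factorization $\Omega = \alpha_n\Pi P\Pi'$, then use the eigen-equation $\widehat{\Xi}\hat{\Lambda} = A\widehat{\Xi}$ to write, for each fixed $i$,
\begin{equation*}
\widehat{\Xi}(i) - \Xi(i)O \;=\; \text{(low-order perturbation)} \;+\; \hat{\Lambda}^{-1}\,e_i'(W - \mathrm{diag}(\Omega))\,\widehat{\Xi},
\end{equation*}
where the $K\times K$ orthogonal rotation $O$ is chosen, as in \cite{ke2022optimal}, to handle the non-commutativity in the multi-dimensional eigen-system. The perturbation term is controlled by combining Weyl's inequality, the Davis--Kahan sine-theta theorem, and the standard bound $\|W\|\lesssim\sqrt{n\alpha_n}$ (valid with probability $1-o(n^{-3})$ under $\alpha_n\log(n)\lesssim 1$), which renders it negligible compared with the stochastic contribution.

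The core step, exactly as in the proof of Lemma~\ref{lem:entrywise-eigen}, is to bound $\|e_i'W\widehat{\Xi}\|$. I would introduce the leave-one-out proxy $\widetilde{A}^{(i)}$ obtained by zeroing out the $i$-th row and column of $W$, let $\widetilde{\Xi}$ be the corresponding top-$K$ eigenvectors, and split
\begin{equation*}
e_i'W\widehat{\Xi} \;=\; e_i'W\Xi \;+\; e_i'W(\widetilde{\Xi} - \Xi\widetilde{O}) \;+\; e_i'W(\widehat{\Xi} - \widetilde{\Xi}\widetilde{O}^{-1}O).
\end{equation*}
The first two summands are independent of the $i$-th row of $W$ by construction, so Bernstein's inequality applied entry-wise with variance proxy $\alpha_n\sum_{j}\|\Xi(j)\|^2 \lesssim \alpha_n$ and uniform bound $\|\Xi\|_{\max}\lesssim 1/\sqrt{n}$ yields a bound of order $\sqrt{\alpha_n\log(n)}$ with probability $1-o(n^{-4})$. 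The third summand is controlled via sine-theta on the pair $(A,\widetilde{A}^{(i)})$ by $\|W\|\cdot\|e_i'W\widehat{\Xi}\|/|\lambda_K(\Omega)|$, which becomes self-referential. Dividing through by $|\lambda_K(\Omega)|\asymp n\alpha_n|\lambda_K|$ and taking a union bound over $i$ produces the claimed rate $\sqrt{\log(n)}/(|\lambda_K|n\sqrt{\alpha_n})$.

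The main obstacle I anticipate is the self-bounding step for the third leave-one-out term: it succeeds only when the coefficient $\|W\|/|\lambda_K(\Omega)|\asymp (n\alpha_n)^{-1/2}|\lambda_K|^{-1}$ is $o(1)$, which is exactly what the signal-strength hypothesis $|\lambda_K|\sqrt{n\alpha_n/\log(n)}\to\infty$ provides (and with room to spare, given the $\sqrt{\log(n)}$ factor). A secondary bookkeeping issue is tracking the extra orthogonal matrices $\widetilde{O}$ introduced by aligning $\widetilde{\Xi}$ with $\Xi$ and $\widehat{\Xi}$; I would absorb these by choosing $O$ at the outset to minimize $\|\widehat{\Xi} - \Xi O\|_F$ and writing each intermediate alignment as a perturbation of $O$, exactly as in the DCMM proof. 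Apart from these two points, the rest of the argument is a streamlined copy of the calculation behind Lemma~\ref{lem:entrywise-eigen}, with the degree-heterogeneity-driven terms $\sqrt{n\bar{\theta}\theta_{\max}}\,\theta_i/\|\theta\|^3$, $\|\theta\|_3^{3/2}\sqrt{\theta_i\log(n)}/\|\theta\|^3$, and $\theta_{\max}\log(n)/\|\theta\|^3$ all collapsing to the single clean term $\sqrt{\log(n)}/(|\lambda_K|n\sqrt{\alpha_n})$ under the uniform $\theta_i=\sqrt{\alpha_n}$.
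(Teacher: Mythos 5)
Your proposal is correct and follows essentially the same route as the paper, which proves this corollary by specializing the leave-one-out, entry-wise eigenvector analysis of Lemma~\ref{lem:entrywise-eigen} to $\theta_i\equiv\sqrt{\alpha_n}$, treating the whole block $\widehat{\Xi}$ uniformly (no SCORE separation of $\hat{\xi}_1$) and resolving the self-referential third term via the condition $|\lambda_K|\sqrt{n\alpha_n/\log(n)}\to\infty$. One phrasing nit: it is the coefficient vectors $\Xi$ and $\widetilde{\Xi}-\Xi\widetilde{O}$ (not the summands $e_i'W\Xi$ themselves) that are independent of the $i$-th row of $W$, which is what licenses the conditional Bernstein bound you then correctly apply.
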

The proof of this Corollary closely resembles  the proof of  $\Vert \widehat{\Xi}_1(i) - \Xi_1(i) O_1\Vert$ in Lemma~\ref{lem:entrywise-eigen} by taking $\theta_i = \bar{\theta} = \sqrt{\alpha_n}$  for all $1\leq i \leq n $ and employing the condition $|\lambda_K|\sqrt{n\alpha_n/\log (n)}\to \infty$. Hence, we opt to skip the redundant details.

\section{GoF for SBM, MMSBM, and DCBM}\label{supp:other3}

In this section, we show the proof of Theorems~\ref{thm:MMSBM}, \ref{thm:DCBM}, and \ref{thm:SBM} which follow the same outline as Theorem~\ref{thm:main}.
\subsection{Proof of Theorem~\ref{thm:MMSBM} (MMSBM)} \label{supp:pfMMSBM}


Recall that under MMSBM, $\Omega^{\rm MMSBM}=\alpha_n\Pi P\Pi'$. In the sequel, for simplicity, we drop the superscript ``MMSBM''. 
%
%
%
%
%
Based on the  Algorithm in Section~\ref{subsec:MMSBM}, we conclude from $\hat\pi_i=\hat w_i=(\widehat V_H^{-1})'\hat r_i$ that $\widehat \Pi=A\widehat H\widehat  V_H^{-1}$. Further, $\widehat \Pi'\widehat \Pi=(\widehat V_H')^{-1} \widehat H'A^2 \widehat H\widehat V_H^{-1}$ and $\widehat\Pi' A\widehat \Pi=(\widehat V_H')^{-1}(\widehat H'A^3 \widehat H)\widehat V_H^{-1}$.
%
 We therefore deduce that
\begin{align}\label{omegahat}
\widehat \Omega = \widehat\Omega^{\rm MMSBM}&=\widehat\Pi(\widehat\Pi'\widehat\Pi)^{-1}\widehat\Pi'A\widehat\Pi(\widehat\Pi'\widehat\Pi)^{-1}\widehat\Pi'\notag
\\
&=A\widehat H( \widehat H'A^2 \widehat H)^{-1}(\widehat H'A^3 \widehat H)(\widehat H'A^2 \widehat H)^{-1} \widehat H'A.
\end{align}
Observing that $\Omega=\Pi P\Pi'$, we obtain by direct calculations that
\begin{align}\label{omega}
\Omega=\Omega \widehat H( \widehat H'\Omega^2 \widehat H)^{-1}( \widehat H'\Omega^3 \widehat H)( \widehat H'\Omega^2  \widehat H)^{-1} \widehat H'\Omega.
\end{align}

We first claim $\widehat H=\Pi_0$ with probability $1- o(1)$. 
Recall the procedures by which we get $\widehat H$ in Section~\ref{subsec:MMSBM}. Using Corollary~\ref{cor:eigenMMSBM}, we get that 
\begin{align*}
\max_{1\leq i\leq n } \Vert \widehat{\Xi}(i) - \Xi(i)O\Vert \lesssim \frac{\sqrt{\log n}}{\beta_n\sqrt{n\alpha_n} } \cdot \frac{1}{\sqrt{n}}\, . 
\end{align*} 
By our vertex hunting algorithm,  $\max_{1\leq k\leq K} \Vert O \hat{v}^*_k - {v}^*_k \Vert \lesssim {\sqrt{\log n}}/{(\beta_nn \sqrt{\alpha_n})  }$ (for simplicity, we assume a perfect alignment between $\{\hat{v}^*_k\}$ and $\{{v}^*_k\}$, thereby excluding the consideration of potential permutations of $\widehat{\Pi}^{\rm MS_0}$ for approximating $\Pi$).  Note that $\tilde{\pi}_i = (\widehat{V}^*)^{-1}\widehat{\Xi}'(i) $. It follows that with probability $1- o(n^{-3})$, simultaneously for all $1\leq i \leq n$, 
\begin{align} \label{2023081901}
 \Vert \tilde{\pi}_i -  \pi_i \Vert \lesssim \Vert (V^*)^{-1} \Vert \Vert  \widehat{\Xi}(i) - \Xi_1(i)O\Vert + \Vert (V^*)^{-1} - (O\widehat{V}^*)^{-1} \Vert \Vert \Xi(i)\Vert \lesssim \frac{\sqrt{\log n}}{\beta_n\sqrt{n\alpha_n} },
\end{align}
where we employed the facts that $V^*(V^*)' = G^{-1}$ so that $ \Vert (V^*)^{-1} \Vert  \asymp n^{1/2}$ and $\Vert \Xi(i)\Vert \lesssim 1/\sqrt{n}$. Next, we consider truncation and renormalization on $\tilde{\pi}_i$'s. Let $\tilde{\pi}_i^*$ be the truncated version of $\tilde{\pi}_i$. It is worthy noting that if $\tilde{\pi}_i (k)<0$ for some $1\leq k \leq K$, by (\ref{2023081901}) and the fact that ${\pi}_i (k)\geq 0$, it holds that $|\tilde{\pi}_i^*(k) - {\pi}_i (k) |  =  {\pi}_i (k) \leq |\tilde{\pi}_i^*(k) - {\pi}_i^*(k)|$. Consequently, 
$$\Vert \tilde{\pi}_i^*- \pi_i \Vert_1\leq\Vert \tilde{\pi}_i -  \pi_i \Vert_1 \leq \sqrt{K} \Vert \tilde{\pi}_i -  \pi_i \Vert \lesssim \frac{\sqrt{\log n}}{\beta_n\sqrt{n\alpha_n} }. $$
By definition, $\hat{\pi}_i^{{\rm MS}_0} = \tilde{\pi}_i^*/\Vert \tilde{\pi}_i^*\Vert_1$ for all $1\leq i \leq n $.  It is straightforward to derive 
\begin{align*}
\Vert \hat{\pi}_i^{{\rm MS}_0}  - \pi_i\Vert_1 = \sum_{k=1}^K \big|  \hat{\pi}_i^{{\rm MS}_0}(k)  - \pi_i(k)\big| &\leq \sum_{k=1}^K  \frac{1}{\Vert \tilde{\pi}_i^*\Vert_1 } \big|  \tilde{\pi}_i^*(k) - \pi_i(k)\big| + \pi_i(k) \Big|\frac{1}{\Vert \tilde{\pi}_i^*\Vert_1} - \frac{1}{\Vert {\pi}_i\Vert_1}\Big| \notag\\
& \leq \frac{\Vert \tilde{\pi}_i^*- \pi_i \Vert_1 + \big|\Vert \tilde{\pi}_i^*\Vert_1 - \Vert {\pi}_i\Vert_1 \big|}{\Vert \tilde{\pi}_i^*\Vert_1} \notag\\
& \leq \frac{2\Vert \tilde{\pi}_i^*- \pi_i \Vert_1}{\Vert \tilde{\pi}_i^*\Vert_1} \lesssim   \frac{\sqrt{\log n}}{\beta_n\sqrt{n\alpha_n} },
\end{align*}
where we use the fact that $\Vert \tilde{\pi}_i^*\Vert_1 \geq \Vert {\pi}_i\Vert_1- \Vert \tilde{\pi}_i^*- \pi_i \Vert_1  = 1- o_{\mathbb{P}}(1)  $. 
 Furthermore, using (e) of Condition~\ref{cond:MSCORE}, and employing the identical reasoning for $\widehat{\Pi}_0 = \Pi_0$ in  the proof Theorem~\ref{thm:MSCORE} in Section~\ref{susec:pf-MSCORE}, we are able to show that obtaining  $\widehat H$ by applying net-rounding to $ \widehat \Pi^{\rm MS_0}$, it holds that $\mathbb P(\widehat H=\Pi_0)\to1$, as $n\to\infty$. Here $\Pi_0$ is defined in a similar way as in Condition~\ref{cond:MSCORE} for DCMM where $\Theta$ is taken as $\sqrt{\alpha_n}I_n$.

We proceed to show the asymptotic normality $T_n(\widehat \Omega)\to N(0,1)$ using arguments similar to those used in the proof of Theorem~\ref{thm:main}.
Since $\widehat H=\Pi_0$ with probability $1- o(1)$, we can replace $\widehat H$ by a deterministic $\Pi_0$ in both ${\Omega} $ and $\widehat{\Omega}^{\rm MMSBM}$ and study the normality. For notation simplicity, we use $H$ in the following analysis. However, one should keep in mind that it actually represents a deterministic $n\times K$ matrix each row of which is a K-dimensional weight vector.

We will follow the proof of Theorem~\ref{thm:main} by considering $\Delta:= \widehat{\Omega }^{\rm MMSBM} - \Omega$ and proving each term in Lemma~\ref{lem:diffU} of order $o_{\mathbb{P}}({\rm tr} (\Omega^3)) = o_{\mathbb{P}}(n^3\alpha_n^3) $ by viewing $\Theta = \sqrt{\alpha_n}\, I_n  $ so that $\Vert \theta\Vert \asymp \sqrt{n\alpha_n}$. The reason behind this is that MMSBM, as a special case of DCMM, satisfies Corollary~\ref{cor:SCC}, leading to $T_n(\Omega) \to N(0,1)$. All remains to check $U_{n, 3}(\widehat{\Omega}^{\rm MMSBM}) - U_{n,3} (\Omega) \ll \sqrt{{\rm tr} (\Omega^3)} $.  To do so, it suffices to prove a key lemma analogue to Lemma~\ref{lem:Delta} by taking $\Vert \theta \Vert \asymp \sqrt {n\alpha_n} $. More specifically, we shall show that 
\begin{itemize}
\item[(a1)] $\Vert \Delta \Vert =o_{\mathbb{P}}( n^{1/2}\alpha_n^{1/2})$;
\item[(b1)] $ \Vert W_1\Delta\Vert =O_{\mathbb{P}}(n\alpha_n)$;
\item[(c1)] $\Vert W_1^2 \Delta \Vert  = o_{\mathbb{P}}(n^{3/2}\alpha_n^{3/2})$;
\item[(d1)] $\Vert {\rm diag} (W_1^2) \Delta \Vert = o_{\mathbb{P}}(n^{3/2}\alpha_n^{3/2})$.
\end{itemize}
Based on these estimates, similar to the proof in Section~\ref{supp:main}, we can complete the proof of normality by 
\begin{align*}
\big| U_{n, 3}(\widehat{\Omega}) - U_{n,3} (\Omega)  \big|& \lesssim \Vert \Delta\Vert^3 +\Vert W_1\Delta\Vert \Vert \Delta \Vert + \Vert W_1^2 \Delta\Vert +\Vert {\rm diag} (\Omega)\Vert   \big(\Vert \Delta\Vert^2 + \Vert W_1\Delta\Vert \big)  \notag\\
&\quad + \Vert {\rm diag} (W_1^2) \Delta\Vert  + \Vert {\rm diag} (\Omega)\Vert^2 \Vert \Delta\Vert \notag\\
& = o_{\mathbb{P}}(n^3\alpha_n^3) \,.
\end{align*}

The proofs of the estimates (a1)-(d1) are similar to the proofs of Lemma~\ref{lem:Delta} by leveraging Lemma~\ref{lem:1.7} with $\Vert \theta\Vert_1 \asymp n\sqrt{\alpha_n}$, $\Vert \theta\Vert \asymp \sqrt{n\alpha_n} $ and $ \Vert \theta\Vert_3\asymp n^{1/3}\sqrt{\alpha_n}$. However, in contrast to $A- \Omega = W_1$, the difference between square or cube terms involves more sub-terms. This renders the overall analysis more tedious. For convenience, we list out some significant estimates that will be frequently used in the later analysis. These estimates can be deduced from Lemma~\ref{lem:1.7} by letting $\Vert \theta\Vert_1 \asymp n\sqrt{\alpha_n}$, $\Vert \theta\Vert \asymp \sqrt{n\alpha_n} $ and $ \Vert \theta\Vert_3\asymp n^{1/3}\sqrt{\alpha_n}$. Therefore, we skip their proofs.  
\begin{align} \label{2023080201}
& \Vert \Pi' W_1\Pi \Vert  = O_{\mathbb{P}}\big( n\alpha_n^{1/2}\sqrt{ \log (n)}\big),\qquad \Vert \Pi' W_1 H \Vert = O_{\mathbb{P}}\big( n\alpha_n^{1/2}\sqrt{\log (n)} \big),\notag\\
&\Vert H' W_1^2 H\Vert  = O_{\mathbb{P}}( n^2\alpha_n),\qquad~~~~~~~~~\Vert \Pi' W_1^2 H\Vert  = O_{\mathbb{P}}( n^2 \alpha_n) , \notag\\
&\Vert \Pi' W_1^2\Pi \Vert  = O_{\mathbb{P}}( n^2\alpha_n), \qquad ~~~~~~~~~~\Vert \Pi' {\rm diag} (W_1^2) \Pi \Vert  = O_{\mathbb{P}}( n^2\alpha_n) , \notag \\
&\Vert H' W_1^3 H\Vert  = O_{\mathbb{P}}( n^2\alpha_n ),\quad\Vert \Pi' W_1^3 H\Vert  = o_{\mathbb{P}}( n^{5/2}\alpha_n^{3/2}),\quad\Vert \Pi'{\rm diag} (W_1^2)W_1 H\Vert  = o_{\mathbb{P}}( n^{5/2} \alpha_n^{3/2}),\notag\\
& \Vert H' W_1^4 H\Vert  = O_{\mathbb{P}}( n^3\alpha_n^{2}), \quad \Vert H' W_1 {\rm diag} (W_1^2)W_1 H\Vert  = O_{\mathbb{P}}( n^3\alpha_n^{2}).
\end{align}
Under the assumptions in Theorem~\ref{thm:MMSBM}, we also have 
\begin{align}\label{2023080301}
\Vert H' \Omega^2 H\Vert \lesssim n^3\alpha_n^2 , \qquad \Vert (H' \Omega^2 H)^{-1}\Vert \lesssim n^{-3}\alpha_n^{-2}\beta_n^{-2}\, .
\end{align}
Hereafter, our analysis is based on (\ref{2023080201}). All the bounds there hold under an event whose probability is $1- o(1) $, therefore, all of our derivations below hold under this event. For the sake of convenience, we will often derive the upper bounds without specifying the high probability statement. 

Define $\mathcal{T}_2 = \sum_{j=1}^{\infty} \big(\big[ H'(\Omega^2 - A^2 )H \big] (H'\Omega^2H)^{-1}\big)^j  $. Let us start with some useful decompositions:
\begin{align} 
&(H'A^2H)^{-1} = (H'\Omega^2H)^{-1}  + (H'\Omega^2H)^{-1} (I_K + \mathcal{T}_2) \big[ H'(\Omega^2 - A^2 )H \big] (H'\Omega^2H)^{-1}\, , \label{eq:deHA2H}\\
 &H'(A^2 - \Omega^2 )H  = H' W_1^2 H + H' \Omega W_1 H + H' W_1 \Omega H \, , \label{eq:deHA2-O2H}
\end{align}
and 
\begin{align}\label{eq:deHA3H}
H'A^3H = H' \Omega^3 H + H'W_1^3 H + H' W_1 \Omega W_1 H  + \sum_{\substack{a_1+a_2+a_3 = 3;\\ a_2 =1, 2\, a_1, a_3\geq 0}}H'\Omega^{a_1}  W_1^{a_2} \Omega^{a_3} H \,. 
\end{align}
With a little ambiguity of the notations, throughout this  subsection, we define 
\begin{align*}
G_H: = \Pi' H, \qquad G = \Pi'\Pi \, . 
\end{align*}
Under the assumptions of Theorem~\ref{thm:MMSBM}, we have the estimates $\Vert G_H\Vert \asymp n$,  $\Vert G_H^{-1} \Vert \asymp 1/n$, $\Vert G\Vert \asymp n$, and $\Vert G^{-1} \Vert \asymp 1/n$.  Based on these shorthand notations, we can write 
\begin{align}\label{2023080203}
H' \Omega^{a} H = \alpha_n^{a}G_H'P (G P)^{a-1} G_H, \qquad \text{for } a=1, 2, 3\, . 
\end{align}
%
 It follows from (\ref{eq:deHA2-O2H}), (\ref{2023080301}) and the first estimates in the first and second line of  (\ref{2023080201}) that 
\begin{align*}
\Vert [ H'(\Omega^2 - A^2 )H ] (H'\Omega^2H)^{-1} \Vert &\leq \big(\Vert H' W_1^2 H \Vert + \alpha_n\Vert \Pi' W_1 H\Vert \Vert G_H' P\Vert \big) \Vert (H'\Omega^2H)^{-1} \Vert
\\
& \lesssim \frac{\sqrt{\log (n)}}{n \alpha_n \beta_n^2}.
\end{align*}
As a result, $\Vert \mathcal{T}_2 \Vert  = o_{\mathbb{P}}(1) $. 
%

In view of (\ref{eq:deHA2H}) and (\ref{eq:deHA3H}), we obtain that $\Delta = \Delta_1 + \Delta_2 + \Delta_3$, where 
\begin{align*}
&\Delta_1:= AH(H'A^2H)^{-1}(H'A^3H - H' \Omega^3 H)(H'A^2H)^{-1}H'A \, , \notag\\
& \Delta_{2}: = (A-\Omega ) H(H'A^2H)^{-1}H' \Omega^3 H (H'A^2H)^{-1}H'A  \notag\\
&\qquad+ \Omega H(H'A^2H)^{-1}H' \Omega^3 H (H'A^2H)^{-1}H'(A- \Omega) \, ,  \notag\\
 & \Delta_{3} : = \Omega H (H'\Omega^2H)^{-1} (I_K + \mathcal{T}_2) \big[ H'(\Omega^2 - A^2 )H \big] (H'\Omega^2H)^{-1}H' \Omega^3 H (H'A^2H)^{-1} H' \Omega \notag\\
 &\quad \quad +  \Omega H (H'\Omega^2H)^{-1}H' \Omega^3 H (H'\Omega^2H)^{-1} (I_K + \mathcal{T}_2) \big[ H'(\Omega^2 - A^2 )H \big] (H'\Omega^2H)^{-1} H' \Omega .
\end{align*}
We will claim below that $\Delta_a$ satisfies (a1)--(d1), where $\Delta$ is substituted by $\Delta_a$, for each $a=1,2,3$. Then, by triangle inequality, (a1)--(d1) is proved. 

To study $\Delta_1$,  we need to bound the generic form $\Vert \Gamma \Delta_1 \Vert  $ for $\Gamma  = I_n, W_1, W_1^2, {\rm diag}(W_1^2)$. We first use the fact, for arbitrary matrix $B\in \mathbb{R}^{K\times K} $,
\begin{align*}
&\Vert (H'A^2H)^{-1}B \Vert = \Vert B(H'A^2H)^{-1}\Vert\\
&\leq  \Vert B(H'\Omega^2H)^{-1}\Vert+\|B(H'\Omega^2H)^{-1} (I_K + \mathcal{T}_2) [ H'(\Omega^2 - A^2 )H ] (H'\Omega^2H)^{-1}\|\\
&\leq \Vert B(H'\Omega^2H)^{-1}\Vert\times\big(1+\|(I_K + \mathcal{T}_2) [ H'(\Omega^2 - A^2 )H ] (H'\Omega^2H)^{-1}\|\big) \lesssim \Vert (H'\Omega^2H)^{-1}B \Vert,
\end{align*}
where we used (\ref{eq:deHA2H}) and the estimate that $\Vert \mathcal{T}_2 \Vert  = o_{\mathbb{P}}(1)$ and $\|[ H'(\Omega^2 - A^2 )H ] (H'\Omega^2H)^{-1}\|=o_{\mathbb P}(1)$. As a result,
\begin{align*}
\Vert \Gamma \Delta_1 \Vert & = \Vert H'A \Gamma AH(H'A^2H)^{-1}(H'A^3H - H' \Omega^3 H)(H'A^2H)^{-1} \Vert  \notag\\
& \lesssim  \Vert H'A \Gamma AH(H'A^2H)^{-1}(H'A^3H - H' \Omega^3 H)(H' \Omega^2H)^{-1} \Vert \notag\\
&= \Vert (H'A^2H)^{-1}(H'A^3H - H' \Omega^3 H)(H' \Omega^2H)^{-1} H'A \Gamma AH\Vert \notag\\
& \lesssim \Vert (H'\Omega^2H)^{-1}(H'A^3H - H' \Omega^3 H)(H' \Omega^2H)^{-1} H'A \Gamma AH\Vert \, . 
\end{align*}
We further plug in the representation (\ref{2023080203}) and get 
\begin{align}\label{eq:GammaDelta1}
&\Vert \Gamma \Delta_1 \Vert \lesssim \alpha_n^{-4}\Vert(PG_H)^{-1} G^{-1}  (G_H'P)^{-1} (H'A^3H - H' \Omega^3 H)(PG_H)^{-1} G^{-1}  (G_H'P)^{-1} H'A \Gamma AH\Vert \notag\\
& = \alpha_n^{-4}\Vert G^{-1}  (G_H'P)^{-1} (H'A^3H - H' \Omega^3 H)(PG_H)^{-1} G^{-1}  (G_H'P)^{-1} H'A \Gamma AH (PG_H)^{-1} \Vert \notag\\
& \lesssim  \alpha_n^{-4}\Vert G^{-1} \Vert^2  \Vert (G_H'P)^{-1} (H'A^3H - H' \Omega^3 H)(PG_H)^{-1}\Vert \Vert (G_H'P)^{-1} H'A \Gamma AH (PG_H)^{-1} \Vert  \notag\\
&  \lesssim n^{-2}\alpha_n^{-4} \Vert (G_H'P)^{-1} (H'A^3H - H' \Omega^3 H)(PG_H)^{-1}\Vert \Vert (G_H'P)^{-1} H'A \Gamma AH (PG_H)^{-1} \Vert \, . 
\end{align}
We write for short
\begin{align*}
 \mathbb{T}_1: = \Vert (G_H'P)^{-1} (H'A^3H - H' \Omega^3 H)(PG_H)^{-1}\Vert, \quad \mathbb{T}_2(\Gamma) : = \Vert (G_H'P)^{-1} H'A \Gamma AH (PG_H)^{-1} \Vert \, . 
 \end{align*}
 In the sequel, we show the upper bounds of $\mathbb{T}_1$ and $\mathbb{T}_2(\Gamma) $ for $\Gamma  = I_n, W_1, W_1^2, {\rm diag}(W_1^2)$ by leveraging (\ref{eq:deHA3H}) and (\ref{2023080201}). 

By (\ref{eq:deHA3H}) and triangle inequality, we first bound 
\begin{align*}
\mathbb{T}_1&\leq \Vert (G_H'P)^{-1} H'W_1^3 H(PG_H)^{-1}\Vert + \Vert (G_H'P)^{-1} H'W_1\Omega W_1H (PG_H)^{-1}\Vert  \notag\\
& \quad + 2 \Vert (G_H'P)^{-1} H'W_1\Omega ^2 H (PG_H)^{-1}\Vert  + \Vert (G_H'P)^{-1} H'\Omega W_1\Omega H (PG_H)^{-1}\Vert \notag\\
& =: \mathbb{T}_{11} + \mathbb{T}_{12} + 2\mathbb{T}_{13} + \mathbb{T}_{14}
\end{align*}
Using the definition $\Omega = \Pi P \Pi'$ and (\ref{2023080201}), we can bound $\mathbb{T}_{1a}$ for $a= 1,2 ,3, 4$ as follows. 
\begin{align*}
& \mathbb{T}_{11} \leq \Vert (PG_H)^{-1} \Vert^2 \Vert H'W_1^3 H\Vert \lesssim \frac{\alpha_n}{ \beta_n^2}, \notag\\
& \mathbb{T}_{12} \leq \alpha_n \Vert (PG_H)^{-1} \Vert^2 \Vert H'W_1\Pi \Vert^2 \Vert P\Vert \lesssim \frac{\alpha_n^2\log (n) }{ \beta_n^2} ,\notag\\
& \mathbb{T}_{13} \leq \alpha_n^2\Vert (PG_H)^{-1} \Vert \Vert H'W_1\Pi \Vert \Vert PG\Vert \lesssim \frac{n\alpha_n^{5/2}\sqrt{\log (n)} }{ \beta_n},\notag\\
& \mathbb{T}_{14} \leq  \alpha_n^2\Vert \Pi'W_1\Pi \Vert  \lesssim n \alpha_n^{5/2}\sqrt{\log(n)} .
\end{align*}
Here to bound $\mathbb{T}_{13}$ and $\mathbb{T}_{14}$, we used the identity $\Omega H (PG_H)^{-1} = \Pi P G_H  (PG_H)^{-1} = \Pi$. From the above inequalities and the condition $\beta_n \sqrt{n\alpha_n} \gg\sqrt{\log (n)} $ , we conclude that with probability $1- o(1) $, 
\begin{align}\label{est:T1}
\mathbb{T}_1 \lesssim \frac{n\alpha_n^{5/2} \sqrt{\log (n)} }{ \beta_n}\, . 
\end{align}

Next, for $\mathbb{T}_2(\Gamma) $, we have
\begin{align*}
\mathbb{T}_2(\Gamma) &\leq \Vert (G_H'P)^{-1} H'W_1\Gamma W_1H (PG_H)^{-1} \Vert +2 \Vert (G_H'P)^{-1} H'\Omega\Gamma W_1H (PG_H)^{-1} \Vert \notag\\
& \quad + \Vert (G_H'P)^{-1} H'\Omega\Gamma \Omega H (PG_H)^{-1} \Vert \notag\\
&=:  \mathbb{T}_{21}(\Gamma) + 2\mathbb{T}_{22}(\Gamma) + \mathbb{T}_{23} (\Gamma)\, . 
\end{align*}
Direct calculations yield
\begin{align}\label{t23}
&\mathbb{T}_{21}(\Gamma)  \lesssim \Vert (PG_H)^{-1} \Vert^2 \Vert H'W_1\Gamma W_1H\Vert \lesssim \frac{1}{n^2 \beta_n^2 } \cdot \left\{
\begin{array}{ll}
n^2\alpha_n, &\Gamma =I_n, W_1\\
n^3\alpha_n^2,&\Gamma= W_1^2, {\rm diag}(W_1^2)
\end{array}
\right. \notag\\
&\mathbb{T}_{22}(\Gamma)  \lesssim \alpha_n\Vert (PG_H)^{-1} \Vert \Vert \Pi'\Gamma W_1H\Vert \lesssim \frac{\alpha_n}{n \beta_n } \cdot \left\{
\begin{array}{ll}
n\alpha_n^{1/2}\sqrt{\log n} , &\Gamma =I_n\\
n^2\alpha_n, &\Gamma =W_1\\
o_{\mathbb{P}} (n^{5/2}\alpha_n^{3/2}),&\Gamma=W_1^2, {\rm diag}(W_1^2)
\end{array}
\right. \notag\\
&\mathbb{T}_{23}(\Gamma) \leq \alpha_n^2\Vert \Pi' \Gamma\Pi\Vert \lesssim  \alpha_n^2 \cdot \left\{
\begin{array}{ll}
n , &\Gamma =I_n\\
n\alpha_n^{1/2}\sqrt{\log n}\, , &\Gamma =W_1\\
n^2\alpha_n,&\Gamma=W_1^2, {\rm diag}(W_1^2)
\end{array}
\right. 
\end{align}
which follows from (\ref{2023080201}). 
Combining these together with the condition $\beta_n \sqrt{n\alpha_n} \gg\sqrt{\log (n)} $,  we arrive at 
\begin{align*}
&\mathbb{T}_{2}(I_n) \lesssim n\alpha_n^2, \qquad ~~~~ \mathbb{T}_{2}(W_1) \lesssim n\alpha_n^2\beta_n^{-1} + n\alpha_n^{5/2}\sqrt{\log (n)},
\\
&\mathbb{T}_{2}(W_1^2) \lesssim n^2\alpha_n^3, \quad~~~~ \mathbb{T}_{2}({\rm diag}(W_1^2)) \lesssim n^2\alpha_n^3 \, . 
\end{align*}
We further substitute the above inequalities and (\ref{est:T1}) into (\ref{eq:GammaDelta1}) and conclude the proof of (a1)-(d1) for $\Delta_1$. 

We proceed to prove  (a1)-(d1) for $\Delta_2$ by bounding the generic form $\Vert \Gamma \Delta_2\Vert$ for $\Gamma  = I_n, W_1, W_1^2, {\rm diag}(W_1^2)$. Similarly to how we deal with $\Vert \Gamma \Delta_1\Vert$, we first derive 
\begin{align*}
&\Vert \Gamma \Delta_2\Vert \leq \Vert H'A\Gamma W_1H (H'A^2H)^{-1} H' \Omega^3 H (H'A^2 H)^{-1} \Vert \\
&\qquad\qquad+  \Vert H'W_1\Gamma \Omega H (H'A^2H)^{-1} H' \Omega^3 H (H'A^2 H)^{-1} \Vert \notag\\
& \lesssim \Vert H'A\Gamma W_1H (H'\Omega^2H)^{-1} H' \Omega^3 H (H'\Omega^2 H)^{-1} \Vert \hspace{-0.3em}+ \hspace{-0.3em} \Vert H'W_1\Gamma \Omega H (H'\Omega^2H)^{-1} H' \Omega^3 H (H'\Omega^2 H)^{-1} \Vert \notag\\
& \lesssim \alpha_n^{-1} \Vert H'A\Gamma W_1H (G_H)^{-1} (G_H'P)^{-1} \Vert +  \Vert H'W_1\Gamma \Pi (G_H')^{-1}  \Vert ,
\end{align*}
where we frequently used (\ref{2023080203}). Recalling that $A=\Omega+W_1$, we further have 
\begin{align*}
\Vert \Gamma \Delta_2\Vert&  \lesssim 
 \alpha_n^{-1} \Vert  (G_H'P)^{-1}H'\Omega \Gamma W_1H (G_H)^{-1}  \Vert
\\
&\qquad+  \alpha_n^{-1}\Vert  (G_H'P)^{-1}H'W_1\Gamma W_1H (G_H)^{-1}  \Vert+ \Vert H'W_1\Gamma \Pi (G_H')^{-1}  \Vert \\
& \leq  \Vert  \Pi' \Gamma W_1H\Vert \Vert  (G_H)^{-1}  \Vert +  \alpha_n^{-1}\Vert  (G_H'P)^{-1}\Vert \Vert H'W_1\Gamma W_1H \Vert \Vert  (G_H)^{-1}  \Vert\notag\\
& \lesssim \left\{
\begin{array}{ll}
\sqrt{\alpha_n \log n} + \beta_n^{-1} & \Gamma = I_n \\
n\alpha_n  & \Gamma = W_1\\
o_{\mathbb{P}}(n^{3/2}\alpha_n^{3/2})& \Gamma = W_1^2, {\rm diag} (W_1^2)
\end{array}
\right. 
\end{align*}
where the last step follows from (\ref{2023080201}). Therefore, with the condition that $\beta_n\sqrt{n\alpha_n} \gg \sqrt{\log n} $,  we finish the proof for $\Delta_2$. 

Lastly, we consider $\Delta_3$. Analogously to $\Vert \Gamma \Delta_1 \Vert $ and $\Vert \Gamma \Delta_2 \Vert $, we derive 
\begin{align*}
\Vert \Gamma \Delta_3\Vert &\lesssim \Vert H'\Omega \Gamma \Omega H (H'\Omega^2H)^{-1} (I_K + \mathcal{T}_2) \big[ H'(\Omega^2 - A^2 )H \big] (H'\Omega^2H)^{-1} H' \Omega^3 H (H'\Omega^2 H)^{-1} \Vert \notag\\
& \quad +  \Vert H'\Omega \Gamma \Omega H (H'\Omega^2H)^{-1}H' \Omega^3 H (H'\Omega^2H)^{-1} (I_K + \mathcal{T}_2) \big[ H'(\Omega^2 - A^2 )H \big] (H'\Omega^2H)^{-1}  \Vert \notag\\
& \lesssim \alpha_n^{-1} \Vert \Pi' \Gamma \Pi G^{-1} (G_H'P)^{-1} (I_K + \mathcal{T}_2)[ H'(\Omega^2 - A^2 )H ] G_H^{-1} \Vert \notag\\
& \quad +  \alpha_n^{-1}\Vert \Pi' \Gamma \Pi (G_H')^{-1} (I_K + \mathcal{T}_2)[ H'(\Omega^2 - A^2 )H ] (PG_H)^{-1}G^{-1} \Vert.
\end{align*}
Notice that 
\begin{align*}
& (G_H'P)^{-1}\mathcal{T}_2 =   \sum_{j=1}^{\infty} \Big( (G_H'P)^{-1}\big[ H'(\Omega^2 - A^2 )H \big] (PG_H)^{-1}G^{-1}\Big)^j  (G_H'P)^{-1} =: \widetilde{\mathcal{T}}_2 (G_H'P)^{-1}\notag\\
&  (G_H')^{-1}\mathcal{T}_2 =   \sum_{j=1}^{\infty} \Big( (G_H')^{-1}\big[ H'(\Omega^2 - A^2 )H \big] (PG_H)^{-1}G^{-1}P^{-1} \Big)^j  (G_H')^{-1} =: \overline{\mathcal{T}}_2 (G_H')^{-1}
\end{align*}
 Similar arguments for proving $\Vert \mathcal{T}_2\Vert  = o_{\mathbb{P}}(1)$ also hold for $\Vert \widetilde{\mathcal{T}}_2\Vert = o_{\mathbb{P}}(1)$ and $\Vert \overline{\mathcal{T}}_2\Vert = o_{\mathbb{P}}(1)$. We refrain ourselves from repeated details. Consequently, we further bound 
\begin{align*}
\Vert \Gamma \Delta_3\Vert & \lesssim  \alpha_n^{-1} \Vert \Pi' \Gamma \Pi \Vert \Vert G^{-1} \Vert \Vert \big(\Vert I_K + \overline{\mathcal{T}}_2 \Vert + \Vert I_K + \widetilde{ \mathcal{T}} _2 \Vert \big) \Vert (G_H')^{-1}\big[ H'(\Omega^2 - A^2 )H \big] (PG_H)^{-1}\Vert  \notag\\
& \lesssim n^{-1} \alpha_n^{-1}  \Vert \Pi' \Gamma \Pi\Vert  \Vert (G_H')^{-1} \big[ H'(\Omega^2 - A^2 )H \big] (PG_H)^{-1}\Vert .
\end{align*}
We then employ (\ref{2023080201}) and (\ref{eq:deHA2-O2H}), which yields
\begin{align*}
&\Vert (G_H')^{-1} \big[ H'(\Omega^2 - A^2 )H \big] (PG_H)^{-1}\Vert  \\
& \leq \Vert (G_H')^{-1} H'W_1^2 H  (PG_H)^{-1} \Vert  + 2\Vert (G_H')^{-1} H'W_1\Omega H (PG_H)^{-1}\Vert \notag\\
& \lesssim  n^{-2} \beta_n^{-1}\Vert H' W_1^2 H\Vert  + 2n^{-1} \alpha_n  \Vert H'W_1\Pi\Vert \lesssim \alpha_n \beta_n^{-1} + \alpha_n^{3/2} \sqrt{\log (n)}\, . 
\end{align*}
This, together with \eqref{t23} leads to (a1)--(d1) for $\Delta_3$.  We therefore complete the proof.

%

\subsection{Proof of Theorem~\ref{thm:DCBM} (DCBM)} \label{supp:subsec_DCBM}
In this subsection, we provide the proof of Theorem~\ref{thm:DCBM} (DCBM) whose proof outline is the same as Theorem~\ref{thm:main}. Moreover, since DCBM is a special case of DCMM, analogous to the proofs in Sections~\ref{supp:pfMMSBM}  and \ref{supp:pfSBM}, we only need to verify Lemma~\ref{lem:Delta} under the assumptions in Theorem~\ref{thm:DCBM}. With a little ambiguity of notations, throughout this subsection, we still use $\Delta$ and $\Delta_a$'s with the clarification that they all relate to the setting of DCBM.

 We start with the derivation for the matrix form of $ \widehat{\Omega}^{\text{DCBM}}$.  To distinguish from $\{e_i\}_{i=1}^n$, the standard basis of $\mathbb{R}^n$, we use $\{\tilde{e}_k\}_{k=1}^K$ to represent the standard basis of $\mathbb{R}^K$. Note that $\widehat{\Pi}{\bf 1}_K = \mathbf{1}_n$.  Based on the proposed algorithm in Section~\ref{subsec:DCBM}, suppose that $\hat \pi_i = \tilde{e}_k$ and $\hat \pi_j = \tilde{e}_t $, we see that 
\begin{align*}
 \widehat{\Omega}^{\text{DCBM}} (i,j)
  & = \frac{e_i' A \mathbf{1}_n \cdot e_j' A \mathbf{1}_n }{\hat \pi_i' \widehat \Pi' A\mathbf 1_n \cdot \hat \pi_j' \widehat \Pi' A\mathbf 1_n} \sqrt{\hat \pi _i' M \hat \pi_i } \, \hat \pi_i'  \big[{\rm diag}( M )\big]^{-{1/2}} M \big[{\rm diag}( M )\big]^{-{1/2}} \hat \pi_j  \sqrt{\hat \pi _j' M \hat \pi_j } & \notag\\
  & = \frac{e_i' A \mathbf{1}_n \cdot e_j' A \mathbf{1}_n }{\hat \pi_i' \widehat \Pi' A\mathbf 1_n \cdot \hat \pi_j' \widehat \Pi' A\mathbf 1_n} \hat \pi_i ' M \hat \pi_j \notag\\
  & = e_i' \, {\rm diag} (A \mathbf{1}_n) \widehat \Pi \, \big[{\rm diag} \big(\widehat \Pi' A\mathbf 1_n\big)\big]^{-1} M\,  \big[ {\rm diag} \big(\widehat \Pi' A\mathbf 1_n\big) \big]^{-1}  \widehat \Pi' \, {\rm diag} (A \mathbf{1}_n) \, e_j
\end{align*}
where we used the identity $\sqrt{\hat \pi _i' M \hat \pi_i } \, \hat \pi_i'  \big[{\rm diag}( M )\big]^{-{1/2}} = M_{kk}^{1/2} \tilde{e}_k'  \big[{\rm diag}( M )\big]^{-{1/2}}  =\tilde{ e}_k' = \hat \pi _i'$. This further gives rise to 
\begin{align} \label{eq:hOmegaDCBM}
\widehat{\Omega}= \widehat{\Omega}^{\text{DCBM}}  & = {\rm diag} (A \mathbf{1}_n) \widehat \Pi \, \big[{\rm diag} \big(\widehat \Pi' A\mathbf 1_n\big) \big]^{-1}  \widehat\Pi' A \widehat \Pi\, \big[  {\rm diag} \big(\widehat \Pi' A\mathbf 1_n\big) \big]^{-1}  \widehat \Pi' \, {\rm diag} (A \mathbf{1}_n)\notag\\
& ={\rm diag} (A \mathbf{1}_n)  \Pi \, \big[ {\rm diag} \big( \Pi' A\mathbf 1_n\big) \big]^{-1} \Pi' A  \Pi\,  \big[ {\rm diag} \big( \Pi' A\mathbf 1_n\big) \big]^{-1}  \Pi' \, {\rm diag} (A \mathbf{1}_n)  \, ,
\end{align}
with probability $1- o(1)$,
since the clustering step in the algorithm achieves exact recovery $\mathbb{P} (\widehat \Pi = \Pi ) = 1- o(1) $. The proof of exact recovery follows from the arguments for DCMM and SBM  in Sections~\ref{susec:pf-MSCORE} and \ref{supp:pfSBM}. We need first have $\Vert\hat{r}_i - r_i\Vert \lesssim (\delta_n \beta_n \Vert \theta\Vert)^{-1} \sqrt{\log (n)}$  from Section~\ref{susec:pf-MSCORE}. Second,  $R = \Pi V$ with  $V= (v_1', \cdots , v_K')'$, we have the inequality $\max_{k\neq \ell} \Vert v_k - v_\ell\Vert \gg  (\delta_n \beta_n \Vert \theta\Vert)^{-1} \sqrt{\log (n)}$ following from Lemma C.4 of \cite{MSCORE}. Based on these two inequalities,
further with the fact that $\pi \in \{e_1,\cdots, e_K\}$, the standard basis of $\mathbb{R}^{K}$, we therefore achieve exact accuracy with high probability by k-means algorithm. To see this, assume $\pi_i = e_{k^*} $, it follows that $\Vert \hat r_i - v_{k^*}\Vert  = \Vert \hat r_i  - r_i \Vert \ll \max_{k\neq k^*} \| v_k - v_{k^*}\|$. As a result, for $1\leq k \neq k^*\leq K$, $\Vert \hat r_i  - v_{k}\Vert \geq     \| v_k - v_{k^*}\|  - \Vert \hat r_i - v_{k^*}\Vert \gg \Vert\hat r_i  - v_{k^*}\Vert$. Therefore, $\hat{\pi}_i = e_{k^*} $.

By direct computations, under current DCBM setting, we also have 
\begin{align} \label{eq:OmegaDCBM}
\Omega  = {\rm diag} (\Omega  \mathbf{1}_n)  \Pi \, \big[ {\rm diag} \big( \Pi' \Omega\mathbf 1_n\big) \big]^{-1}  \Pi' \Omega  \Pi\,  \big[ {\rm diag} \big(\Pi' \Omega\mathbf 1_n\big)\big]^{-1}    \Pi' \, {\rm diag} (\Omega \mathbf{1}_n) \,. 
\end{align}
%
Unlike the DCMM and MMSBM model, where the forms $H'A^aH$ and $AH$ play a crucial role by exploiting the estimates in Lemma~\ref{lem:1.7}, (\ref{eq:hOmegaDCBM}) and (\ref{eq:OmegaDCBM}) involves  certain diagonal matrices. To facilitate the proofs, we require an additional auxiliary lemma, analogous to  Lemma~\ref{lem:1.7}.  

Before presenting the auxiliary lemma, we introduce some notations for convenience. We define a sequence of column vectors $\widetilde{\bf 1}_k \in \mathbb{R}^n$ for $1\leq k \leq K$ such that $\widetilde{\bf 1}_k(i ) = 1$ if $\pi_i =\tilde{ e}_k$ and  $0$ otherwise, where $1\leq i\leq n $. We observe  that $\widetilde{\bf 1}_k = \Pi \tilde{e}_k$. Additionally, we denote $\mathcal{C}_k$ the index set of nodes in the $k$-th community for $1\leq k \leq K$.  Furthermore, we introduce a sequence of diagonal matrices $\mathbb{I}_{n, k}  :=  \sum_{i\in \mathcal{C}_k}e_i e_i'$ for $1\leq k \leq K$. 

The following lemma is essential in our analysis. Since its proof is similar to that of Lemma~\ref{lem:1.7}, we will briefly discuss the proof at the end of this section.
\begin{lemma}\label{lem:1.7DCBM}
Under the conditions of Theorem~\ref{thm:DCBM}, the following estimates hold for any fixed $1\leq k, \ell\leq K$ and $\beta=1, 2$, it holds that
\begin{align}
&|\widetilde{\bf 1}_k' W_1 {\bf 1}_n|  = O_{\mathbb{P}} (\Vert \theta\Vert_1\sqrt{\log (n)}). \label{est:DCBM1}
\end{align}
In addition, we have
\begin{align}
&\Vert \Pi' \Theta \mathbb{I}_{n, k}W_1^{\beta} \mathbb{I}_{n, \ell} \Theta\Pi \Vert =  O_{\mathbb{P}} (\Vert\theta\Vert^2\Vert \theta\Vert_1^{\beta - 1}), \quad ~~~~~\Vert \Pi' \Theta \mathbb{I}_{n, k}{\rm diag}(W_1^2)\mathbb{I}_{n, \ell} \Theta\Pi \Vert =  O_{\mathbb{P}} (\Vert\theta\Vert^2 \Vert \theta\Vert_1); \notag\\
& \Vert  \mathbf{1}_n' W_1 \mathbb{I}_{n, k} \Theta\Pi\Vert = O_{\mathbb{P}} (\Vert\theta\Vert_3^{3/2} \Vert \theta\Vert_1^{1/2}\sqrt{\log(n)}\, ) ,
~~ 
 \Vert  \mathbf{1}_n' W_1 \mathbb{I}_{n, k}{\rm diag}(W_1^2) \mathbb{I}_{n, \ell}\Theta\Pi\Vert=o_{\mathbb{P}} (\Vert\theta\Vert^3 \Vert \theta\Vert_1),\notag\\
 & \Vert  \mathbf{1}_n' W_1 \mathbb{I}_{n, k}W_1^2 \mathbb{I}_{n, \ell}\Theta\Pi\Vert =o_{\mathbb{P}} (\Vert\theta\Vert^3 \Vert \theta\Vert_1), \qquad\Vert  \mathbf{1}_n' W_1 \mathbb{I}_{n, k}W_1 \mathbb{I}_{n, \ell}\Theta\Pi\Vert =O_{\mathbb{P}} (\Vert\theta\Vert^2 \Vert \theta\Vert_1) 
  ; \label{est:DCBM3}
\end{align}
Furthermore, we have
\begin{align}
 & |\mathbf{1}_n' W_1 \mathbb{I}_{n, k}W_1^2 \mathbb{I}_{n, \ell}W_1{\bf 1}_n | = O_{\mathbb{P}} ( \Vert \theta\Vert^2\Vert \theta\Vert_1^2), 
 \quad ~|\mathbf{1}_n' W_1 \mathbb{I}_{n, k}{\rm diag}(W_1^2) \mathbb{I}_{n, \ell}W_1{\bf 1}_n | = O_{\mathbb{P}} ( \Vert \theta\Vert^2 \Vert \theta\Vert_1^2),\notag\\
 & |\mathbf{1}_n' W_1 \mathbb{I}_{n, k}W_1{\bf 1}_n | = O_{\mathbb{P}} ( \Vert \theta\Vert_1^2),
 \quad~~~~~~~~~~~~~~ |\mathbf{1}_n' W_1 \mathbb{I}_{n, k}W_1\mathbb{I}_{n, \ell}W_1{\bf 1}_n |= O_{\mathbb{P}} ( \Vert \theta\Vert_1^2). \label{est:DCBM4}
\end{align}
\end{lemma}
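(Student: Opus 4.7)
The plan is to mirror, essentially line by line, the moment-based strategy already used to establish Lemma~\ref{lem:1.7} (see Lemmas~\ref{lem:w11}--\ref{lem:w14}). Since $\mathbb{I}_{n,k}$ and $\widetilde{\bf 1}_k$ are deterministic and only restrict the indices of summation to the community $\mathcal C_k$, inserting them into a matrix product does not change the probabilistic nature of each summand; it merely shrinks the index range. Throughout, I will use that all bounding matrices (e.g.\ $\Pi'\Theta\mathbb{I}_{n,k}W_1^\beta\mathbb{I}_{n,\ell}\Theta\Pi$) live in $\mathbb{R}^{K\times K}$ with $K$ fixed, so that the operator norm is comparable to the entrywise maximum, and it suffices to bound a generic $(r,s)$ entry.

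For the scalar estimate \eqref{est:DCBM1}, I would write $\widetilde{\bf 1}_k'W_1{\bf 1}_n=\sum_{i\in\mathcal C_k,\,j\neq i}W_{ij}-\sum_{i\in\mathcal C_k}\Omega_{ii}$, whose expectation is $O(\|\theta\|^2)$ and whose variance is $O(\|\theta\|_1^2)$; Bernstein's inequality together with $\theta_{\max}\sqrt{\log n}\lesssim 1$ then delivers the claim. For the bounds in \eqref{est:DCBM3}, I would expand each entry as a multi-index sum, split by the number of distinct indices and by how many $W_{ij}$-factors appear. For example, a typical entry of $\Pi'\Theta\mathbb{I}_{n,k}W_1^2\mathbb{I}_{n,\ell}\Theta\Pi$ reads $\sum_{r\in\mathcal C_k,\,s,\,t\in\mathcal C_\ell}\Pi_{r\cdot}\theta_r\theta_t\Pi_{t\cdot}(W_{rs}-\Omega_{rr}\delta_{rs})(W_{st}-\Omega_{ss}\delta_{st})$; its mean is $O(\|\theta\|^2\|\theta\|_1)$ and, after a direct variance computation essentially identical to that of $\|\Pi'\Theta W_1^2\Theta H\|$ in Lemma~\ref{lem:w12}, its standard deviation is of a smaller order. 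Chebyshev finishes the argument. The remaining items in \eqref{est:DCBM3} follow from the same template, and their leading orders match those produced by the corresponding estimates in Lemmas~\ref{lem:w11}--\ref{lem:w13}, since the insertion of $\mathbb{I}_{n,k}$ can only decrease (or at most not change) the effective number of summation indices.

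The main technical burden, and the step I expect to be most involved, is \eqref{est:DCBM4}, where up to three independent copies of $W_1$ appear. Here each quantity is a scalar, so after centering I need delicate variance (and in some cases fourth-moment) computations. The nontrivial ingredient is the combinatorics: a product such as $W_{a_1b_1}W_{a_2b_2}W_{a_3b_3}W_{a_4b_4}W_{a_5b_5}W_{a_6b_6}$ has nonzero expectation only when its edges decompose into pairs (or higher multiplicities), and one must enumerate the admissible edge-matchings of the underlying "double 4-path with tied endpoints". I would proceed exactly as in the treatments of $I_1,\ldots,I_5$ inside the proof of Lemma~\ref{lem:w14} and of ${\rm var}(\mathcal I_2)$ in Lemma~\ref{lem:w13}: each matching contributes a factor $\prod\theta_i^{\mathrm{deg}(i)}$, and the surviving sums reduce to products of $\|\theta\|^\alpha$ and $\|\theta\|_1^\beta$. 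Together with $\theta_{\max}\sqrt{\log n}\lesssim 1$ and the identifiability/balance conditions on $(\Theta,\Pi)$, each such sum is dominated by the stated rate; e.g.\ for $\mathbf{1}_n'W_1\mathbb I_{n,k}W_1^2\mathbb I_{n,\ell}W_1\mathbf{1}_n$ the leading-order matchings yield mean $O(\|\theta\|_1^2\|\theta\|^2)$ and variance of order $\|\theta\|^2\|\theta\|_1^4\theta_{\max}^2$, so Chebyshev gives the target bound.

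Putting the three steps together, taking a union bound over the $O(K^2)$ entries (which is $O(1)$ since $K$ is fixed), and using $\|M\|\lesssim\max_{r,s}|M(r,s)|$ for $M\in\mathbb R^{K\times K}$, yields all the inequalities in Lemma~\ref{lem:1.7DCBM}. The chief obstacle remains the careful bookkeeping of admissible edge-matchings for the three-$W_1$ forms in \eqref{est:DCBM4}; once that combinatorial enumeration is in hand, every probabilistic input (Bernstein and Chebyshev) is standard and already used in Section~\ref{supp:main-auxiliary}.
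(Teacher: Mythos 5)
Your proposal is correct and follows essentially the same route as the paper: the paper's own proof of Lemma~\ref{lem:1.7DCBM} likewise reduces everything to the entrywise mean/variance computations of Lemmas~\ref{lem:w11}--\ref{lem:w14}, using the commutation $\Theta\mathbb{I}_{n,k}=\mathbb{I}_{n,k}\Theta$ and the observation that the indicator weights $\delta_k(r,r)\in\{0,1\}$ can be carried along until the expectations are taken and then crudely bounded by $1$, so each quantity inherits exactly the bound of its unrestricted counterpart. Your extra bookkeeping for the three-$W_1$ terms in \eqref{est:DCBM4} is consistent with the paper's treatment of $\Vert H'W_1^4H\Vert$ and $\Vert H'W_1\,{\rm diag}(W_1^2)\,W_1H\Vert$ in Lemma~\ref{lem:w14}, so no new ideas are needed.
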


We will also use the following facts: for $1\leq k\leq K$,
\begin{align}\label{2023080401}
\widetilde{\bf 1}_k' \Omega\mathbf 1_n  \asymp \Vert \theta\Vert _1^2,\qquad \big\Vert \big[{\rm diag} \big( \Pi' \Omega\mathbf 1_n\big) \big]^{-1} \big\Vert  \asymp \Vert \theta\Vert_1^{-2}\, . 
\end{align}
To prove \eqref{2023080401}, first we notice that  $\lambda_{\min}(\Pi'\Theta \Pi)/\lambda_{\max}(\Pi'\Theta \Pi) \geq c_3$ and $\tilde{e}_k'  \Pi'\Theta \mathbf 1_n = \widetilde{\bf 1}_k' \Theta {\bf 1}_n =  \widetilde{\bf 1}_k'\Pi'\Theta \Pi  {\bf 1}_k$. Therefore, $\Pi' \Theta \Pi$ is a $K\times K$ diagonal matrix with diagonal entries  of order $\Vert \theta\Vert_1 $ and the components of $ \Pi'\Theta \mathbf 1_n$ are of order $\Vert \theta\Vert_1 $. Hence, \eqref{2023080401} is proved since
\begin{align*}
\widetilde{\bf 1}_k' \Omega\mathbf 1_n  = \tilde{e}_k' (\Pi' \Theta \Pi) P \Pi'\Theta \mathbf 1_n \asymp \Vert \theta\Vert_1  \tilde{e}_k' P  \Pi'\Theta \mathbf 1_n \asymp \Vert \theta\Vert_1^2 \tilde{e}_k' P\mathbf{1}_K  \asymp \Vert \theta\Vert_1^2.
\end{align*}

In the sequel, we will validate Lemma~\ref{lem:Delta} leveraging the representations (\ref{eq:hOmegaDCBM}) and (\ref{eq:OmegaDCBM}). Our analysis will be conducted under the intersection of all the good events where the estimates in Lemma~\ref{lem:1.7DCBM} hold. It can be observed that the probability of this intersected event is $1- o(1)$. To maintain conciseness,  we will avoid explicitly stating ``with high probability'' arguments.
 
Similarly to our analysis on the terms $(H'AH)^{-1} $ and $(H'A^2 H)^{-1}$ in the proofs of Theorems~\ref{thm:main} and \ref{thm:MMSBM}, we have $[{\rm diag} ( \Pi' A\mathbf 1_n) ]^{-1}=\mathcal D_0-\mathcal D_1$, where
\begin{align*}
&\mathcal{D}_0=[{\rm diag} ( \Pi' \Omega\mathbf 1_n) ]^{-1},\qquad\mathcal D_1=[{\rm diag} ( \Pi' \Omega\mathbf 1_n) ]^{-1} (I_K + \mathcal{T}_3) {\rm diag} ( \Pi' W_1\mathbf 1_n) [{\rm diag} ( \Pi' \Omega\mathbf 1_n) ]^{-1},\\
&\mathcal{T}_3=\sum_{j=1}^{\infty} \big({\rm diag} ( \Pi' (-W_1)\mathbf 1_n) [{\rm diag} ( \Pi' \Omega\mathbf 1_n) ]^{-1} \big)^j.
\end{align*}
Applying (\ref{est:DCBM1}), we get $\Vert {\rm diag} ( \Pi' W_1\mathbf 1_n)  \Vert = O_{\mathbb{P}}(\Vert \theta \Vert_1 \sqrt{\log (n)})$, which, together with (\ref{2023080401}), leads to $\Vert {\rm diag} ( \Pi' (-W_1)\mathbf 1_n) [{\rm diag} ( \Pi' \Omega\mathbf 1_n) ]^{-1} \Vert = o_{\mathbb{P}}(1)$, since $\Vert \theta\Vert_1\gg \sqrt{\log (n)}$. Consequently, $\Vert \mathcal{T}_3\Vert = o_{\mathbb{P}}(1) $ and 
\begin{align}\label{2023080305}
\Vert \mathcal{D}_1\Vert \lesssim \Vert \theta\Vert_1^{-3}\sqrt{\log (n)}\,,  \quad 
\Vert [{\rm diag} ( \Pi' A\mathbf 1_n) ]^{-1}  \Vert  \lesssim \Vert \mathcal{D}_0\Vert + \Vert \mathcal{D}_0\Vert ^2 \lesssim \Vert \theta\Vert_1^{-2}\,. 
\end{align}
We now decompose 
$
\Delta = \Delta_1 + \Delta_2 + \Delta_3
$
where 
\begin{align*}
&\Delta_1 : = {\rm diag} (A \mathbf{1}_n)  \Pi \, [{\rm diag} ( \Pi' A\mathbf 1_n) ]^{-1} \Pi' W_1   \Pi\,  [ {\rm diag} ( \Pi' A\mathbf 1_n)  ]^{-1}\Pi' \, {\rm diag} (A \mathbf{1}_n) \notag\\
&\Delta_2 : =  {\rm diag} (W_1 \mathbf{1}_n)  \Pi \, [ {\rm diag} ( \Pi' A\mathbf 1_n)  ]^{-1} \Pi' \Omega   \Pi\, [ {\rm diag} ( \Pi' A\mathbf 1_n) ]^{-1} \Pi' \, {\rm diag} (A \mathbf{1}_n)  \notag\\
& \qquad +  {\rm diag} (\Omega  \mathbf{1}_n)  \Pi \,[ {\rm diag} ( \Pi' A\mathbf 1_n)  ]^{-1} \Pi' \Omega   \Pi\, [ {\rm diag} ( \Pi' A\mathbf 1_n)  ]^{-1} \Pi' \, {\rm diag} (W_1 \mathbf{1}_n) 
 \notag\\
&\Delta_3: =  {\rm diag} (\Omega  \mathbf{1}_n)  \Pi \, \mathcal{D}_1\Pi' \Omega   \Pi\,   [ {\rm diag} ( \Pi' A\mathbf 1_n) ]^{-1} \Pi' \, {\rm diag} (\Omega \mathbf{1}_n)  \notag\\
& \qquad +  {\rm diag} (\Omega  \mathbf{1}_n)  \Pi \,\mathcal{D}_0 \Pi' \Omega   \Pi\,  \mathcal{D}_1\Pi' \, {\rm diag} (\Omega  \mathbf{1}_n) 
\end{align*}
It suffices to claim that (a)-(d) in Lemma~\ref{lem:Delta}  hold for $\Delta_1, \Delta_2, \Delta_3$. In the same manner to Section~\ref{supp:pfMMSBM}, we will study the generic form $\Vert \Gamma \Delta_a \Vert $ for $\Gamma = I_n, W_1, W_1^2, {\rm diag} (W_1^2)$ and $a=1,2, 3$. For simplicity, we denote 
\begin{align*}
&\mathbb{J}_1(\Gamma):= \Vert \Pi' {\rm diag}(\Omega \mathbf{1}_n) \Gamma {\rm diag}(\Omega  \mathbf{1}_n) \Pi\Vert,   \notag\\
& \mathbb{J}_2(\Gamma): =\Vert \Pi' {\rm diag}(W_1 \mathbf{1}_n) \Gamma {\rm diag}(\Omega  \mathbf{1}_n) \Pi\Vert , \notag\\
&\mathbb{J}_3(\Gamma):= \Vert \Pi' {\rm diag}(W_1 \mathbf{1}_n) \Gamma {\rm diag}(W_1 \mathbf{1}_n) \Pi\Vert. 
\end{align*}
First, by applying (\ref{2023080305}) and the first estimate in Lemma~\ref{lem:1.7}, we deduce that
\begin{align*}
\Vert \Gamma \Delta_1 \Vert  &\leq \Vert \Pi' {\rm diag}(A \mathbf{1}_n) \Gamma {\rm diag}(A \mathbf{1}_n) \Pi\Vert \times \Vert [{\rm diag} ( \Pi' A\mathbf 1_n) ]^{-1} \Vert ^2\times \Vert \Pi' W_1   \Pi\Vert \notag\\
&\lesssim \Vert \theta\Vert_1^{-3} \sqrt{\log (n)}\times \big(\mathbb{J}_1( \Gamma ) + \mathbb{J}_2( \Gamma ) + \mathbb{J}_3( \Gamma )\big) , \notag\\
\Vert \Gamma \Delta_2 \Vert  &\leq \big(\Vert \Pi' {\rm diag}(A \mathbf{1}_n) \Gamma {\rm diag}(W_1\mathbf{1}_n) \Pi\Vert + \Vert \Pi' {\rm diag}(\Omega\mathbf{1}_n) \Gamma {\rm diag}(W_1\mathbf{1}_n) \Pi\Vert\big)  \\
&\quad\times\Vert [{\rm diag} ( \Pi' A\mathbf 1_n) ]^{-1} \Vert ^2 \times\Vert \Pi' \Omega   \Pi\Vert\lesssim \Vert \theta\Vert_1^{-2} \times[\mathbb{J}_2(\Gamma )+ \mathbb{J}_3( \Gamma ) ],   \notag\\
\Vert \Gamma \Delta_3 \Vert  &\leq \Vert \Pi' {\rm diag}(\Omega \mathbf{1}_n) \Gamma {\rm diag}(\Omega\mathbf{1}_n) \Pi\Vert \times \Vert [{\rm diag} ( \Pi' \Omega\mathbf 1_n) ]^{-1} \Vert \times\Vert \mathcal{D}_1\Vert  \times\Vert \Pi' \Omega   \Pi\,\Vert \notag\\
&\ll \Vert \theta\Vert_1^{-3} \sqrt{\log (n)} \times \mathbb{J}_1( \Gamma )  .
\end{align*}
We therefore deduce from the above equation that
\begin{align}\label{form:GammaDelta}
\Vert \Gamma \Delta \Vert \lesssim \Vert \theta\Vert_1^{-2} [\mathbb{J}_2(\Gamma )+ \mathbb{J}_3( \Gamma ) ] + \Vert \theta\Vert_1^{-3} \sqrt{\log (n)}  \, \cdot  \mathbb{J}_1( \Gamma ) \, . 
\end{align}

We proceed to investigate $\mathbb{J}_1(\Gamma) , \mathbb{J}_2(\Gamma), \mathbb{J}_3(\Gamma)$, 
for $\Gamma = I_n, W_1, W_1^2, {\rm diag} (W_1^2)$\,. Since the matrices $\mathbb{J}_1(\Gamma) , \mathbb{J}_2(\Gamma), \mathbb{J}_3(\Gamma)$ are all $K$-by-$K$, by observing the fact that $\Vert \mathbf{1}_n'  \Theta \Pi P \Vert \asymp  \Vert \theta \Vert_1 $, we obtain
\begin{align*}
\mathbb{J}_1(\Gamma) &\lesssim   \max_{1\leq k,\ell \leq K} |\widetilde{\bf 1}_k ' {\rm diag}(\Omega \mathbf{1}_n)  \Gamma {\rm diag}(\Omega {\bf 1}_n ) \widetilde{\bf 1}_{\ell} |=\max_{1\leq k,\ell \leq K}|\mathbf{1}_n' \Omega \mathbb{I}_{n, k}\Gamma \mathbb{I}_{n, \ell}\Omega {\bf 1}_n |\\
& \lesssim \Vert \theta\Vert_1^2 \cdot  \max_{1\leq k,\ell \leq K}\Vert \Pi' \Theta\mathbb{I}_{n, k}\Gamma \mathbb{I}_{n, \ell}\Theta\Pi \Vert.
\end{align*}
Following similar arguments, we obtain that
\begin{align*}
&\mathbb{J}_2(\Gamma)\lesssim \Vert \theta\Vert_1\cdot \max_{1\leq k,\ell \leq K}  \Vert  \mathbf{1}_n' W_1 \mathbb{I}_{n, k}\Gamma \mathbb{I}_{n, \ell}\Theta\Pi\Vert, \qquad
 \mathbb{J}_3(\Gamma)\lesssim \max_{1\leq k,\ell \leq K} |\mathbf{1}_n' W_1 \mathbb{I}_{n, k}\Gamma \mathbb{I}_{n, \ell}W_1{\bf 1}_n |\, . 
\end{align*}
Next, we employ Lemma~\ref{lem:1.7DCBM}. In particular, when $\Gamma = I_n$, 
\begin{align*}
\max_{1\leq k,\ell \leq K}  \big\Vert \Pi' \Theta \mathbb{I}_{n, k} \mathbb{I}_{n, \ell} \Theta\Pi \big\Vert  = \max_{1\leq k \leq K}  \big\Vert \Pi' \Theta \mathbb{I}_{n, k} \Theta\Pi \big\Vert = \max_{1\leq k \leq K} \sum_{i\in \mathcal{C}_k} \theta_i^2 \asymp \Vert \theta\Vert^2 
\end{align*}
It is straightforward to conclude from (\ref{est:DCBM3}) and  (\ref{est:DCBM4}) that
\begin{align*}
\mathbb{J}_1(\Gamma) &=  \left\{
\begin{array}{ll}
O_{\mathbb{P}} (\Vert \theta\Vert_1^2   \Vert \theta\Vert^2),  &\qquad\Gamma = I_n, W_1;\\
O_{\mathbb{P}} ( \Vert \theta\Vert_1^3   \Vert \theta\Vert^2), &\qquad\Gamma =W_1^2, {\rm diag} (W_1^2). 
\end{array}\right.
\\
\mathbb{J}_2(\Gamma)& = \left\{
\begin{array}{ll}
O_{\mathbb{P}} (\Vert\theta\Vert_3^{3/2} \Vert \theta\Vert_1^{1/2}\sqrt{\log(n)}),  &\qquad\Gamma = I_n;\\
O_{\mathbb{P}} ((\Vert \theta\Vert_1^2   \Vert \theta\Vert^2), &\qquad\Gamma =W_1; \\
o_{\mathbb{P}} (\Vert \theta\Vert_1^2   \Vert \theta\Vert^3), &\qquad\Gamma =W_1^2, {\rm diag} (W_1^2) .
\end{array}\right.
\\
\mathbb{J}_3(\Gamma) &=  \left\{
\begin{array}{ll}
O_{\mathbb{P}} (\Vert \theta\Vert_1^2 )  ,  &\qquad\Gamma = I_n, W_1; \\
O_{\mathbb{P}} (\Vert \theta\Vert_1^2   \Vert \theta\Vert^2), &\qquad\Gamma =W_1^2, {\rm diag} (W_1^2) .
\end{array}\right.
\end{align*}
To proceed, we plug in the above estimates back into (\ref{form:GammaDelta}). Simple computations and the condition $\Vert \theta\Vert \gg \sqrt{\log n}\, $ yield (a)-(d) in Lemma~\ref{lem:Delta}. We thus finish the proof.

It then remains to prove Lemma~\ref{lem:1.7DCBM}. The proof is similar to that of Lemma~\ref{lem:1.7} in Section~\ref{proof:lem:1.7}, which for simplicity is briefly stated as follows.
\begin{proof}[Proof of Lemma~\ref{lem:1.7DCBM}]

The proofs of the estimates in Lemma~\ref{lem:1.7DCBM} follow the same principle as in Section~\ref{proof:lem:1.7}, which boils down to bounding the entries in a $K\times K$ matrix which is a functional of $W_1=A-\Omega$.
For example, $|\widetilde{\bf 1}_j'W_1{\bf 1}_n|$ shares the same upper bound with $\Vert H'W_1H\Vert $; since $\Theta \mathbb{I}_{n, k} = \mathbb{I}_{n, k} \Theta  $,  we have $\Vert \Pi \Theta \mathbb{I}_{n, k}\Gamma \mathbb{I} _{n,\ell} \Theta\Pi \Vert  =\Vert \Pi \mathbb{I}_{n, k} \Theta \Gamma \Theta \mathbb{I} _{n,\ell} \Pi \Vert $ for $\Gamma = W_1, W_1^2, {\rm diag} (W_1^2)$ and $\Vert  \mathbf{1}_n' W_1 \mathbb{I}_{n, k} \Theta\Pi\Vert= \Vert  \mathbf{1}_n' W_1 \Theta\mathbb{I}_{n, k} \Pi\Vert$. Therefore, $\Vert \Pi \mathbb{I}_{n, k} \Theta \Gamma \Theta \mathbb{I} _{n,\ell} \Pi \Vert$ and $\Vert  \mathbf{1}_n' W_1 \mathbb{I}_{n, k} \Theta\Pi\Vert$  shares the same upper bound as $\|\Pi' \Theta \Gamma \Theta H\|$ and $\Vert H' W_1\Theta\Pi\Vert$, respectively. 

The main challenge in handling the remaining quantities compared to those in Lemma~\ref{lem:1.7} lies in the presence of diagonal matrices $\mathbb{I}_{n, k}$ sandwiched between $W_1$ and $\Gamma$, for $\Gamma = W_1, W_1^2, {\rm diag}(W_1)$. However,  this  difference does not affect the analysis significantly thanks to the special form of $\mathbb{I}_{n, k} $. Consider $ {\bf 1}_n' W_1 \mathbb{I}_{n,k} W_1 {\bf 1}_n $ for an example, and the other quantities in (\ref{est:DCBM3}) of Lemma~\ref{lem:1.7DCBM} can be treated similarly.  We write
\begin{align*}
 {\bf 1}_n' W_1 \mathbb{I}_{n,k} W_1 {\bf 1}_n  =  \sum_{i,r, j=1}^n \delta_{k}(r,r) \cdot W_1(i,r)W_1(r,j) 
\end{align*}
where $\delta_{k}(r,r) = 1$ if $r\in \mathcal{C}_k$ and $0$ otherwise. We aim to analyze the order of mean and variance of the sum on the RHS concerning the randomness in $W_1$. To achieve this, we keep the coefficients $\delta_{k}(r,r)$'s untouched until we eliminate all  randomness in the sum by taking expectation. Only in the step of upper bounding the absolute value of mean and variance, we crudely bound $\delta_{k}(r,r) = 1$ for all $r$. Fortunately, the results will be exactly the same as if we had pretended $\mathbb{I}_{n, k} = I_n$ from the beginning. This means that bounding $| {\bf 1}_n' W_1 \mathbb{I}_{n,k} W_1 {\bf 1}_n | $ is essentially the same as bounding $| {\bf 1}_n' W_1^2 {\bf 1}_n |$  which is equivalent  to bounding $\Vert H' W_1^2 H \Vert$.


\end{proof}

\subsection{Proof of Theorem~\ref{thm:SBM} (SBM)} \label{supp:pfSBM} 

The SBM model, which is a special case of MMSBM, automatically satisfies $\Vert G^{-1}\Vert \leq c$ for some constant $c>0$, where $G =n^{-1} \Pi'\Pi$, given the condition $\max_k\{n_k\} \leq C \min_k \{n_k\}$. Let $\beta_n$ denote the order of $|\lambda_k|$. Recall Corollary~\ref{cor:eigenMMSBM}. It gives that  $\Vert \widehat{\Xi}(i) - \Xi(i)O\Vert \lesssim {\sqrt{\log n}}/{(\beta_n n \sqrt{\alpha_n} ) } \ll n^{-1/2} $ under the condition that $\beta_n \sqrt{n\alpha_n}/\sqrt{\log (n)}\to \infty$. 
Since $\Omega=\alpha_n\Pi P\Pi'=\Xi\Lambda\Xi'$, it holds that $\Xi=\Pi B$ for some full-rank matrix $B=[b_1,b_2,\ldots,b_K]'\in\mathbb R^{K,K}$. Then, the rows of $\Xi$ only consist of $K$ distinct rows $b_1',\ldots,b_K'$. In addition, it holds that $BB'=(\Pi'\Pi)^{-1} = {\rm diag}(1/n_1, \cdots, 1/n_K)$, so  it is true that $\min_{1\leq k\neq \ell\leq K}\|b_k-b_\ell\|\gtrsim n^{-1/2}$. By these arguments, 
further with the fact that $\pi \in \{e_1,\cdots, e_K\}$, the standard basis of $\mathbb{R}^{K}$, we therefore achieve exact accuracy with high probability by k-means algorithm. To see this, assume $\pi_i = e_{k^*} $, it follows that $\Vert \widehat{\Xi}(i) - b_{k^*}\Vert  = \Vert \widehat{\Xi}(i) - \Xi(i) \Vert \ll \max_{k\neq k^*} \| b_k - b_{k^*}\|$. As a result, for $1\leq k \neq k^*\leq K$, $\Vert \widehat{\Xi}(i) - b_{k}\Vert \geq     \| b_k - b_{k^*}\|  - \Vert \widehat{\Xi}(i) - b_{k^*}\Vert \gg \Vert \widehat{\Xi}(i) - b_{k^*}\Vert$. Therefore, $\hat{\pi}_i = e_{k^*} $. 
%
%
We conclude that  $\mathbb{P} (\widehat{\Pi} = \Pi) = 1- o(1)$. Then, it suffices to restrict the proof on the event $\{\widehat{\Pi} = \Pi\}$.
%
It holds that $\widehat\Omega^{\rm SBM}=\Pi(\Pi'\Pi)^{-1}\Pi'A\Pi(\Pi'\Pi)^{-1}\Pi'$. In addition, observe that under SBM, ${\Omega} = \Pi(\Pi'\Pi)^{-1}\Pi\Omega\Pi(\Pi'\Pi)^{-1}\Pi'$. It follows that
\begin{align}\label{deltasbm}
\Delta^{\rm SBM}=\widehat\Omega^{\rm SBM}-\Omega=\Pi(\Pi'\Pi)^{-1}\Pi'W_1\Pi(\Pi'\Pi)^{-1}\Pi'\, . 
\end{align}
In the sequel, we use the notation $\Delta$ to substitute $\Delta^{\rm SBM}$ for simplicity. By the assumption that $\sum_{k=1}^K \lambda_k^3 \geq C^{-1}\lambda_1^3$, it follows that ${\rm tr} (\Omega^3)={\rm tr}\{(n\alpha_nPG )^3\}\asymp  n^3 \alpha_n^3$, since $c_1<\lambda_1(PG)<c_2$ for some constants $c_1,c_2>0$. The estimates in (\ref{2023080201}) also hold for SBM. In particular, 
\begin{align*}
&\|\Pi'W_1\Pi\|=O_{\mathbb P}( n\alpha_n^{1/2} \sqrt{\log(n)}\, ),\qquad \quad \|\Pi'W_1^2\Pi\|=O_{\mathbb P}( n^2\alpha_n ),\\
&\|\Pi'W_1^4\Pi\|=O_{\mathbb P}(n^3\alpha_n^2),\quad~~~~~~~~~~~~~~~~ \|\Pi'\diag(W_1^2)\Pi\|=O_{\mathbb P}(n^2\alpha_n).
\end{align*}
We therefore deduce from the above equation that
\begin{align*}
&\|\Delta\|\lesssim n^{-1}\|\Pi'W_1\Pi\|=O_{\mathbb P}(\sqrt{\alpha_n \log(n)}\, ),\notag\\
&\|W_1\Delta\|
\lesssim n^{-2}\|\Pi'W_1\Pi\|^2=O_{\mathbb P}( \alpha_n\sqrt{\log(n)}\,),\notag\\
&\|W_1^2\Delta\|\lesssim n^{-2}\|\Pi'W_1^2\Pi\|\|\Pi'W_1\Pi\|=O_{\mathbb P}( n\alpha_n^{3/2}\sqrt{\log(n)}\,),\notag\\
&\|{\rm diag} (W_1^2)\Delta\|\lesssim n^{-2}\|\Pi'\diag(W_1^2)\Pi\|\|\Pi'W_1\Pi\|=O_{\mathbb P}( n\alpha_n^{3/2}\sqrt{\log(n)}\,).
\end{align*}
which satisfy (a1)-(d1) in Section~\ref{supp:pfMMSBM} by the condition $\beta_n \sqrt{n\alpha_n}\gg \sqrt{\log (n)}$. This implies 
\begin{align*}
\big| U_{n, 3}(\widehat{\Omega}) - U_{n,3} (\Omega)  \big|& \lesssim \Vert \Delta\Vert^3 +\Vert W_1\Delta\Vert \Vert \Delta \Vert + \Vert W_1^2 \Delta\Vert +\Vert {\rm diag} (\Omega)\Vert   \big(\Vert \Delta\Vert^2 + \Vert W_1\Delta\Vert \big)  \notag\\
&\quad + \Vert {\rm diag} (W_1^2) \Delta\Vert  + \Vert {\rm diag} (\Omega)\Vert^2 \Vert \Delta\Vert \notag\\
& = o_{\mathbb{P}}(n^3\alpha_n^3) \,.
\end{align*}
As a consequence, $T_n(\widehat{\Omega}^{\rm SBM})  = T_n(\Omega) + O_{\mathbb{P}}(\big| U_{n, 3}(\widehat{\Omega}) - U_{n,3} (\Omega)  \big|/\sqrt{n^3\alpha_n^3}\, )$. This eventually gives $T_n(\widehat{\Omega}^{\rm SBM}) \to N(0,1)$.  

\section{Power analysis}\label{sec:power}

This section on power analysis is organized as follows: We provide the proofs of Theorem~\ref{thm:SCC-power-general}, Lemmas~\ref{lem:SNR-higher-rank} - \ref{lem:DCBMvsDCMM}   in Sections~\ref{subsec:thm3.6}-\ref{app:lem:DCBMvsDCMM}, respectively. The theoretical details for Theorem~\ref{thm:power1-mainpaper} of the main paper are presented  in Section~\ref{app:thm:power1-mainpaper} where an auxiliary lemma (Lemma~\ref{lem:power-DCBM}) is also introduced. Finally, in Section~\ref{app:power:2}, we revisit the setting DCBM versus DCMM, as discussed in Lemma~\ref{lem:DCBMvsDCMM}, and extend the case from $K=2$ to the general case of $K\geq 2$.

\subsection{Proof of Theorem~\ref{thm:SCC-power-general}} \label{subsec:thm3.6}

Recall our assumptions that 
\beq \label{thm3.6-proof-conditions}
|\Omega_{ij}-\widetilde{\Omega}_{ij}|\leq C\alpha_nu_iu_j/(n\bar{u}), \qquad  |\tr((\Omega-\widetilde\Omega)^m)|\asymp  C\alpha^m_n \|u\|^{2m}/(n\bar{u})^m. 
\eeq
We first use the above conditions to connect $\alpha_n$ with SNR. By definition, 
\beq \label{thm3.6-proof-order-SNR}
\mathrm{SNR}_{n,m}(\Omega)\asymp \frac{|\tr((\Omega-\widetilde\Omega)^m)|}{\sqrt{\tr(\Omega^m)}} \asymp \frac{\alpha_n^m\|u\|^{2m}/(n\bar{u})^m}{\sqrt{\|u\|^{2m}/(n\bar{u})^m}}\asymp \frac{\alpha_n^{m}\|u\|^m}{(n\bar{u})^{m/2}}. 
\eeq
As a result, $\mathrm{SNR}_{n,m}(\Omega) \to\infty$ if and only if 
\beq \label{thm3.6-proof-order-alpha}
\alpha_n\|u\|/(n\bar{u})^{1/2}\to\infty. 
\eeq

Write $\widetilde \Delta = \Omega - \widetilde \Omega $. We now study $\psi_{n,m}(\widetilde \Omega)$ by decomposing
\begin{align*}
\psi_{n,m}(\widetilde \Omega)
& = \frac{\sum_{i_1, i_2, \cdots, i_m (dist)} (W - \widetilde \Delta) _{i_1i_2} (W - \widetilde \Delta) _{i_2i_3} \cdots  (W - \widetilde \Delta) _{i_mi_1}}{\sqrt{2m C_{n,m}}} \notag\\
& = \frac{\sum_{i_1, i_2, \cdots, i_m (dist)} (W - \widetilde \Delta) _{i_1i_2} (W - \widetilde \Delta) _{i_2i_3} \cdots  (W - \widetilde \Delta) _{i_mi_1}}{\sqrt{2m {\rm tr}(\Omega^m) }} \cdot \sqrt{\frac{{\rm tr}(\Omega^m) }{C_{n,m}} }
\notag\\
& = (\mathcal M +  \mathcal R) \cdot \sqrt{\frac{{\rm tr}(\Omega^m) }{C_{n,m}} }
\end{align*}
where $\mathcal M$ and  $\mathcal R$ represent the alternative mean and variance terms, respective. In particular, 
\begin{align*}
\mathcal M  & = (-1)^m  \frac{\sum_{i_1, i_2, \cdots, i_m (dist)}  \widetilde \Delta _{i_1i_2}  \widetilde \Delta _{i_2i_3} \cdots   \widetilde \Delta _{i_mi_1}}{\sqrt{2m {\rm tr}(\Omega^m)}} \notag\\
& =  (-1)^m  \frac{{\rm tr}(\widetilde \Delta^m ) + ({\rm Bias})}{\sqrt{2m {\rm tr}(\Omega^m)}} 
\end{align*}
The $({\rm Bias})$ term corresponds the sum such that two indices from $\{i_1, i_2, \cdots, i_m\}$ are identical. Therefore, we can bound 
\[
| ({\rm Bias})|\leq C \sum_{k=1}^{m-1}{\rm tr} (\widetilde\Delta^k \circ \widetilde \Delta^{m-k} )
\]
Our assumption says that $|\widetilde{\Delta}_{ij}|\leq C\alpha_nu_iu_j/(n\bar{u})$. It follows that
\begin{align*}
\frac{|(\mathrm{Bias})|}{{\rm tr}(\Omega^m) } & \leq \frac{C\sum_{i_1=i_2,i_3,\ldots,i_{m}}[\alpha_n^mu_{i_1}^4u^2_{i_3}\ldots u^2_{i_m}/(n\bar{u})^{2m}]}{\sqrt{\|u\|^{2m}/(n\bar{u})^m}}\cr
&\leq \frac{C\alpha_n^m\|u\|^{2m-4}\|u\|^4_4/(n\bar{u})^{2m}}{\sqrt{\|u\|^{2m}/(n\bar{u})^m}}\cr
&\leq \frac{C\alpha_n^m\|u\|^{2m-2}u_{\max}^2/(n\bar{u})^{2m}}{\sqrt{\|u\|^{2m}/(n\bar{u})^m}}\cr
&\leq C(u_{\max}^2/\|u\|^2)\cdot \frac{\alpha_n^m\|u\|^m}{(n\bar{u})^{m/2}} \quad =\quad  o(1)\cdot \mathrm{SNR}_{n,m}, 
\end{align*}
where the second line is due to $\|u\|_4^4\leq u_{\max}^2\|u\|^2$, and the last line follows from \eqref{thm3.6-proof-order-SNR} and the assumptions about $u$ in Theorem~\ref{thm:SCC}. Consequently, 
\[
\mathcal M = (-1)^m {\rm SNR}_{n,m}(\Omega) \big(1+ o(1)\big)
\]

To proceed, we analyze the remaining variance term $\mathcal R$. 
Consider the case that $m=3$ first. We have 
\begin{align*}
\big| \mathcal R \big| & \lesssim \frac{\Big| \sum_{i,j,k (dist)} W_{ij} \widetilde \Delta_{jk} \widetilde \Delta_{ki}\Big|   }{\sqrt{6 {\rm tr }(\Omega^3)}} + \frac{\Big| \sum_{i,j,k (dist)} W_{ij} W_{jk} \widetilde \Delta_{ki}\Big|   }{\sqrt{6 {\rm tr }(\Omega^3)}}+ \frac{\Big| \sum_{i,j,k (dist)} W_{ij} W_{jk} W_{ki}\Big|   }{\sqrt{6 {\rm tr }(\Omega^3)}}\notag\\
& =:(| \mathcal R_1| + | \mathcal R_2| +  | \mathcal R_3| ) 
\end{align*}
The means of $\mathcal  R_1$,  $\mathcal  R_2$ and $\mathcal  R_3$ are all $0$ and their variances can be computed as follows.
\begin{align*}
{\rm var} (\mathcal R_1) & = \frac{ \mathbb E\big( \sum_{i,j,k (dist)} W_{ij} \widetilde \Delta^*_{jk} \widetilde \Delta^*_{ki}\big)^2 }{6 {\rm tr }(\Omega^3)} = \frac{\sum_{i\neq j} \Omega_{ij}\big(\sum_{k\neq i,j} \widetilde \Delta^*_{jk} \widetilde \Delta^*_{ki}\big)^2 }{6 {\rm tr }(\Omega^3)}\notag\\
& \lesssim \frac{\alpha_n^4\| u\|^4 \|u\|_3^6 /(n\bar u)^{5}}{6 {\rm tr }(\Omega^3)}
\end{align*}
and 
\begin{align*}
{\rm var} (\mathcal R_2) & = \frac{ \mathbb E\big( \sum_{i,j,k (dist)} W_{ij} W_{jk} \widetilde \Delta^*_{ki}\big)^2 }{6 {\rm tr }(\Omega^3)} = \frac{\sum_{i, j,k (dist)} \Omega_{ij}\Omega_{jk} (\widetilde \Delta^*_{ki})^2  }{6 {\rm tr }(\Omega^3)}\notag\\
& \lesssim \frac{\alpha_n^2\|u \|^2 \|u\|_3^6/(n\bar u)^4}{6 {\rm tr }(\Omega^3)}
\end{align*}
and 
\begin{align*}
{\rm var} (\mathcal R_3) & = \frac{ \mathbb E\big( \sum_{i,j,k (dist)} W_{ij} W_{jk} W_{ki}\big)^2 }{6 {\rm tr }(\Omega^3)} = \frac{\sum_{i, j,k (dist)} \Omega_{ij}\Omega_{jk}\Omega_{ki}  }{6 {\rm tr }(\Omega^3)}\notag\\
& \lesssim \frac{\|u \|^6 /(n\bar u)^3}{6 {\rm tr }(\Omega^3)}
\end{align*}
It follows from $\alpha_n^2\| u\|^2/n\bar u \gg 1$ (see (\ref{thm3.6-proof-order-alpha})) that ${\rm var} (\mathcal R_2)= o({\rm var} (\mathcal R_1))$. Using ${\rm tr }(\Omega^3)\geq \|u\|^6/(n\bar u)^3$, it yields that 
\[
{\rm var}(\mathcal R) \lesssim {\rm var}(\mathcal R_1) + {\rm var} (\mathcal R_3)\lesssim 1+ \frac{\alpha_n^4\|u\|_3^6}{(n\bar u)^2 \|u\|^2}
\]
We now  compare the orders of mean and standard deviation. It gives that 
\begin{align*}
\frac{{\rm SNR}_{n,3}(\Omega) }{\sqrt{{\rm var}(\mathcal R)  }}\asymp \frac{\alpha_n^3 \|u\|^3}{(n\bar u)^{3/2}} + \frac{\alpha_n \|u\|^4}{(n\bar u)^{1/2}\|u\|_3^3} 
\to \infty
\end{align*}
Here to obtain the RHS, we used (\ref{thm3.6-proof-order-alpha}) and the derivation 
\[
\frac{\|u\|^3 }{\|u\|_3^3} \geq  \frac{\|u\|^3 }{\|u\|^2 u_{\max}}\gg \frac{\sqrt{n\bar u} }{u_{\max}}\gg 1
\]
following from the conditions $\|u \|^2/ n\bar u\to \infty$ and $u_{\max}^2/n\bar u = o(1)$.
In addition, by Lemma~\ref{lem:SCC} and (2) of (\ref{SCC:2goal}), 
\[
\sqrt{\frac{{\rm tr}(\Omega^m) }{C_{n,m}} } = 1+o_p(1)
\]
This further implies 
\begin{align*}
\psi_{n,3}(\widetilde \Omega) =\big[ {\rm SNR}_{n,3}(\Omega) ( - 1+ o(1)) +  \mathcal R(\Omega )\big] (1+o_p(1)) 
\end{align*}
where the ${\rm var }(  \mathcal R/ {\rm SNR}_{n,3}(\Omega)) \to 0$. 
Therefore, we conclude if ${\rm SNR}_{n,3}(\Omega)\to \infty$, $|\psi_{n,3}(\widetilde \Omega)| \to \infty$.

In the sequel, we consider the case $m=4$. We first bound
\begin{align*}
\big| \mathcal R \big| & \lesssim \frac{\Big| \sum_{i,j,k,\ell (dist)} W_{ij} \widetilde \Delta^*_{jk} \widetilde \Delta^*_{k\ell} \widetilde \Delta^*_{\ell i}\Big|   }{\sqrt{8 {\rm tr }(\Omega^4)}} + \frac{\Big| \sum_{i,j,k, \ell (dist)} W_{ij} W_{jk} \widetilde \Delta^*_{k\ell } \widetilde \Delta^*_{\ell i}\Big|   }{\sqrt{8 {\rm tr }(\Omega^4)}} +  \frac{\Big| \sum_{i,j,k,\ell (dist)} W_{ij}  \widetilde \Delta^*_{jk} W_{k\ell } \widetilde \Delta^*_{\ell i}\Big|   }{\sqrt{8 {\rm tr }(\Omega^4)}} \notag\\
& \qquad 
+  \frac{\Big| \sum_{i,j,k,\ell (dist)} W_{ij}  W_{jk} W_{k\ell } \widetilde \Delta^*_{\ell i}\Big|   }{\sqrt{8 {\rm tr }(\Omega^4)}}
+  \frac{\Big| \sum_{i,j,k,\ell (dist)} W_{ij}  W_{jk} W_{k\ell } W_{\ell i}\Big|   }{\sqrt{8 {\rm tr }(\Omega^4)}}
 \notag\\
& =: \sum_{i=1}^{5} | R_i|
\end{align*}
with high probability. Similarly to the case of $m=3$. The means of $R_i$'s are zero. We compute their variance below. 
\begin{align*}
{\rm var} (R_1) &\lesssim \frac{\sum_{i\neq j} \Omega_{ij}\big(\sum_{k, \ell (dist) \neq i,j} \widetilde \Delta^*_{jk} \widetilde \Delta^*_{k\ell}\widetilde \Delta^*_{\ell i}\big)^2 }{8 {\rm tr }(\Omega^4)}\lesssim \frac{\alpha_n^6\| u\|^{8} \|u\|_3^6/(n\bar u)^{7}}{8 {\rm tr }(\Omega^4)} \notag\\
{\rm var} (R_2)& \lesssim \frac{\sum_{i, j, k (dist)} \Omega_{ij} \Omega_{jk} \big(\sum_{\ell \neq i,j,k} \widetilde \Delta^*_{k\ell}\widetilde \Delta^*_{\ell i}\big)^2 }{8 {\rm tr }(\Omega^4)}\lesssim \frac{\alpha_n^4\| u\|^{6} \|u\|_3^6/(n\bar u)^{6}}{8 {\rm tr }(\Omega^4)} = o({\rm var} (R_1) )\notag\\
{\rm var} (R_3)& \lesssim \frac{\sum_{i, j, k , \ell(dist)} \Omega_{ij} \Omega_{k\ell} \big[\big(\widetilde \Delta^*_{jk}\widetilde \Delta^*_{\ell i}\big)^2 + \widetilde \Delta^*_{jk}\widetilde \Delta^*_{\ell i}\widetilde \Delta^*_{j\ell}\widetilde \Delta^*_{k i}\big] }{8 {\rm tr }(\Omega^4)}\lesssim \frac{\alpha_n^4\|u\|_3^{12}/(n\bar u)^{6}}{8 {\rm tr }(\Omega^4)} = o(1)\notag\\
{\rm var} (R_4)& \lesssim \frac{\sum_{i, j, k, \ell (dist)} \Omega_{ij} \Omega_{jk}\Omega_{k\ell} \big( \widetilde \Delta^*_{\ell i}\big)^2 }{8 {\rm tr }(\Omega^4)}\lesssim \frac{\alpha_n^2\| u\|^{4} \|u\|_3^6/(n\bar u)^{5}}{8 {\rm tr }(\Omega^4)} = o(1) \notag\\
{\rm var} (R_5)& \lesssim \frac{\sum_{i, j, k,\ell (dist)} \Omega_{ij} \Omega_{jk}  \Omega_{k\ell}  \Omega_{\ell i} }{8 {\rm tr }(\Omega^4)}\lesssim \frac{\| u\|^{8} / (n\bar u)^{4}}{8 {\rm tr } (\Omega^4)} = O(1) 
\end{align*}
Here we used  (\ref{thm3.6-proof-order-alpha}) and conditions $\|u\|^2/n\bar u \to \infty $and $u_{\max}^2/n\bar u \to 0$.
Consequently, 
\[
{\rm var }(\mathcal R) \lesssim {\rm var} (R_1) + {\rm var} (R_5) \lesssim 1+ \frac{\alpha_n^6\|u\|_3^6}{(n\bar u)^3}
\]
In the same way, we can derive 
\begin{align*}
\frac{{\rm SNR}_{n,4}(\Omega) }{\sqrt{{\rm var}(\mathcal R)  }}\asymp \frac{\alpha_n^4 \|u\|^4}{(n\bar u)^2} + \frac{\alpha_n \|u\|^4}{(n\bar u)^{1/2}\|u\|_3^3} 
\to \infty
\end{align*}
Therefore, 
\begin{align*}
\psi_{n,4}(\widetilde \Omega) = \big[{\rm SNR}_{n,4}(\Omega) ( 1 + o(1))+  \mathcal R/ {\rm SNR}_{n,4}(\Omega) \big] (1+o_p(1)) 
\end{align*}
and we conclude similarly that $\psi_{n,4}(\widetilde \Omega) \to \infty$ if ${\rm SNR}_{n,4}(\Omega) \to \infty$. 

\subsection{Proof of Lemma~\ref{lem:SNR-higher-rank}} \label{subsec:lem3.3}
For any symmetric $n\times n$ matrix $M$, let $\lambda^*_j(M)$ denote the $j$th largest eigenvalue of $M$, for $1\leq j\leq n$. The meaning of $\lambda^*_j(\cdot)$ is different from $\lambda_j(\cdot)$ in our main paper, where the latter represents the $j$th largest eigenvalue {\it in magnitude}. The following lemma is the well-known Weyl's inequality (e.g., see \cite[Theorem 4.3.1]{HornJohnson}):
\begin{lemma}[Weyl's inequality]\label{lem:Weyl's}
Let $M, N\in\mathbb{R}^{n\times n}$ be symmetric matrices. For $1\leq i\leq n$, the following statements hold: 
\[
\lambda_{i}(M+N)\leq \lambda_{i-j}(M)+\lambda_{j+1}(N), \qquad 0\leq j\leq i-1,
\]
and
\[
\lambda_{i}(M+N)\geq \lambda_{i+j-1}(M)+\lambda_{n-j+1}(N), \qquad 1\leq j\leq n+1-i. 
\]
\end{lemma}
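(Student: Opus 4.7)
The plan is to prove Weyl's inequality by combining the Courant--Fischer (min--max / max--min) characterization of eigenvalues with a dimension-counting argument on carefully chosen invariant subspaces. Let $u_1,\ldots,u_n$ and $v_1,\ldots,v_n$ denote orthonormal eigenbases of $M$ and $N$, respectively, indexed so the corresponding eigenvalues are in decreasing order. Recall that, for any symmetric $A\in\mathbb{R}^{n\times n}$,
\[
\lambda_i(A)=\min_{\dim S=n-i+1}\,\max_{0\neq x\in S}\frac{x'Ax}{\|x\|^2}=\max_{\dim S=i}\,\min_{0\neq x\in S}\frac{x'Ax}{\|x\|^2}.
\]

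For the first (upper-bound) inequality, fix $0\leq j\leq i-1$ and define
\[
W_M=\mathrm{span}\{u_{i-j},u_{i-j+1},\ldots,u_n\},\qquad W_N=\mathrm{span}\{v_{j+1},v_{j+2},\ldots,v_n\},
\]
of dimensions $n-i+j+1$ and $n-j$. The dimension formula gives $\dim(W_M\cap W_N)\geq n-i+1$. For any nonzero $x\in W_M\cap W_N$, expanding $x$ in the eigenbasis of $M$ yields $x'Mx\leq \lambda_{i-j}(M)\,\|x\|^2$ since $x$ lies in the span of eigenvectors of $M$ with eigenvalues at most $\lambda_{i-j}(M)$; analogously $x'Nx\leq \lambda_{j+1}(N)\,\|x\|^2$. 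Choosing any $(n-i+1)$-dimensional subspace inside $W_M\cap W_N$ and invoking the min--max characterization delivers $\lambda_i(M+N)\leq \lambda_{i-j}(M)+\lambda_{j+1}(N)$.

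For the second (lower-bound) inequality, fix $1\leq j\leq n+1-i$ and set instead
\[
W'_M=\mathrm{span}\{u_1,\ldots,u_{i+j-1}\},\qquad W'_N=\mathrm{span}\{v_1,\ldots,v_{n-j+1}\},
\]
of dimensions $i+j-1$ and $n-j+1$, so that $\dim(W'_M\cap W'_N)\geq i$. On this intersection, $x'Mx\geq \lambda_{i+j-1}(M)\,\|x\|^2$ and $x'Nx\geq \lambda_{n-j+1}(N)\,\|x\|^2$. Applying the max--min characterization of $\lambda_i(M+N)$ to any $i$-dimensional subspace of $W'_M\cap W'_N$ yields the desired lower bound. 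Alternatively, one can deduce the second inequality from the first by applying it to $-M$ and $-N$ and using $\lambda_k(-A)=-\lambda_{n-k+1}(A)$.

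There is essentially no obstacle beyond careful bookkeeping: the only delicate points are (i) verifying that the two subspaces have dimensions summing to at least $n+(\text{required dim})$ so that the intersection has the dimension needed to invoke the variational principle, and (ii) matching the indices of the eigenvectors appearing in each subspace to the indices $i-j$, $j+1$, $i+j-1$, $n-j+1$ in the conclusion. Since this is a classical result (see, e.g., \cite[Thm.~4.3.1]{HornJohnson}), the argument is standard.
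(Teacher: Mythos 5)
Your proof is correct, and the index bookkeeping checks out: in the first inequality the subspaces have dimensions $(n-i+j+1)+(n-j)=2n-i+1$, giving an intersection of dimension at least $n-i+1$ as needed for the min--max characterization, and the second inequality follows symmetrically (or by the $\lambda_k(-A)=-\lambda_{n-k+1}(A)$ duality). Note that the paper does not actually prove this lemma --- it simply quotes it as the classical Weyl inequality with a citation to \cite[Theorem 4.3.1]{HornJohnson} --- and your Courant--Fischer argument is precisely the standard proof given in that reference, so there is nothing to compare beyond observing that you have supplied the details the paper omits.
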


For every $1\leq s\leq n-K$, we apply the first inequality of Lemma~\ref{lem:Weyl's} to the matrices $M=\widetilde{\Omega}$ and $N=\Omega-\widetilde{\Omega}$, with $i=s+K$ and $j=s-1$. It follows that
\[
\lambda_{K+s}(\Omega)\leq \lambda_{K+1}(\widetilde{\Omega}) + \lambda_{s}(\Omega-\widetilde{\Omega}). 
\]
Since $\widetilde{\Omega}$ has a rank at most $K$, we have $\lambda_{K+1}(\widetilde{\Omega})=0$. It follows that
\beq \label{lem-highrank-1}
\lambda_{s}(\Omega-\widetilde{\Omega})\geq \lambda_{K+s}(\Omega), \qquad\mbox{for all}\quad 1\leq s\leq n-K. 
\eeq
By our assumption, $\Omega$ is positive semi-definite. \eqref{lem-highrank-1} implies that the first $(n-K)$ eigenvalues of $\Omega-\widetilde{\Omega}$ are nonnegative (i.e., $\Omega-\widetilde{\Omega}$ has at most $K$ negative eigenvalues).  Additionally, by our assumption, for a constant $c'>0$, 
\[
\biggl|\sum_{s: \lambda_s(\Omega-\widetilde{\Omega})<0}\lambda_s^3(\Omega-\widetilde{\Omega})\biggr| \leq (1-c')\cdot |\tr((\Omega-\widetilde{\Omega})^3)|. 
\]
It follows that
\beq  \label{lem-highrank-2}
\tr((\Omega-\widetilde{\Omega})^3)\geq c'\sum_{s: \lambda_s(\Omega-\widetilde{\Omega})\geq 0}\lambda_s^3(\Omega-\widetilde{\Omega}) =\sum_{s=1}^{n-K}\lambda_s^3(\Omega-\widetilde{\Omega}). 
\eeq
We combine \eqref{lem-highrank-1}-\eqref{lem-highrank-2} to obtain: 
\begin{align}
\mathrm{SNR}_{n,3}&=\frac{\tr[(\Omega-\widetilde{\Omega})^3]}{\sqrt{6\tr(\Omega^3)}} \geq \frac{\sum_{s=1}^{n-K}\lambda_s^3(\Omega-\widetilde{\Omega})}{\sqrt{6\sum_{j=1}^{K_0}\lambda^3_j(\Omega)}}\geq \frac{\sum_{s=1}^{K_0-K}\lambda_{K+s}^3(\Omega)}{\sqrt{6\sum_{j=1}^{K_0}\lambda^3_j(\Omega)}}\label{lem-highrank-3}\\
&\geq \frac{\lambda^3_{K+1}(\Omega)}{\sqrt{6K_0\lambda^3_1(\Omega)}} \geq \frac{1}{\sqrt{6}}\biggl(\frac{\lambda_{K+1}(\Omega)}{\sqrt{\lambda_1(\Omega)}}\biggr)^3. \label{lem-highrank-4}
\end{align}
The first claim follows from \eqref{lem-highrank-3}, and the second claim is by \eqref{lem-highrank-4} and the assumption that $K_0$ is finite.\qed

\subsection{Proof of Lemma~\ref{lem:SBMvsDCBM} }\label{app:lem:SBMvsDCBM}
We focus on the setting that the true model is1-community DCMM with $\Omega= \theta\theta'$ but the assumed model is a 1-community SBM.
When $K=1$, by definition, $\widetilde{\Omega}=\tilde{\rho}{\bf 1}_n{\bf 1}_n'$, where $\tilde{\rho}=n^{-2}{\bf 1}_n'\Omega{\bf 1}_n=n^{-2}\|\theta\|_1^2=\bar{\theta}^2$. It follows that
\[
\Omega - \widetilde{\Omega} = \theta\theta' - \bar{\theta}^2{\bf 1}_n{\bf 1}_n'. 
\]
By direct calculations, 
\begin{align*}
(\Omega - \widetilde{\Omega})^2 &= (\theta\theta' - \bar{\theta}^2{\bf 1}_n{\bf 1}_n')(\theta\theta' - \bar{\theta}^2{\bf 1}_n{\bf 1}_n')\cr
&= \|\theta\|^2\theta\theta' - n\bar{\theta}^3 ({\bf 1}_n\theta' +\theta {\bf 1}_n') + n\bar{\theta}^4 {\bf 1}_n{\bf 1}_n', 
\end{align*}
and 
\begin{align*}
(\Omega - &\widetilde{\Omega})^3 =  (\Omega - \widetilde{\Omega})^2(\theta\theta' - \bar{\theta}^2{\bf 1}_n{\bf 1}_n')\cr
&=  \|\theta\|^4\theta\theta' - n\bar{\theta}^3 \bigl(\|\theta\|^2{\bf 1}_n\theta' + n\bar{\theta} \theta\theta'\bigr) + n^2\bar{\theta}^5 {\bf 1}_n\theta'\cr
&\qquad - n\bar{\theta}^3 \|\theta\|^2\theta{\bf 1}_n' + n\bar{\theta}^5 \bigl(n\bar{\theta}{\bf 1}_n{\bf 1}_n' + n \theta {\bf 1}_n'\bigr) - n^2\bar{\theta}^6 {\bf 1}_n{\bf 1}_n'\cr
&= (\|\theta\|^4-n^2\bar{\theta}^4)\cdot \theta\theta' - n\bar{\theta}^3(\|\theta\|^2-n\bar{\theta}^2)\cdot ({\bf 1}_n\theta' + \theta {\bf 1}_n)'. 
\end{align*}
It follows that 
\begin{align*}
\mathrm{trace}([\Omega-\widetilde{\Omega}^3]) &= (\|\theta\|^4-n^2\bar{\theta}^4)\|\theta\|^2 - 2n\bar{\theta}^3(\|\theta\|^2-n\bar{\theta}^2)n\bar{\theta}\cr
&=(\|\theta\|^2-n\bar{\theta}^2)\bigl[(\|\theta\|^2+n\bar{\theta}^2)\|\theta\|^2 - 2n^2\bar{\theta}^4\bigr]\cr
&= (\|\theta\|^2-n\bar{\theta}^2)\bigl[ (\|\theta\|^2-n\bar{\theta}^2)\|\theta\|^2 +2n\bar{\theta}^2(\|\theta\|^2-n\bar{\theta}^2)\bigr]\cr
&= (\|\theta\|^2-n\bar{\theta}^2)^2(\|\theta\|^2+2n\bar{\theta}^2). 
\end{align*}
Therefore, the SNR is equal to 
\[
\frac{(\|\theta\|^2-n\bar{\theta}^2)^2(\|\theta\|^2+2n\bar{\theta}^2)}{\sqrt{6\|\theta\|^6}} \geq  \frac{\|\theta\|^3}{\sqrt{6}}v^2, \qquad v:= \frac{\|\theta\|^2-n\bar{\theta}^2}{\|\theta\|^2}. 
\]
The proof is complete.

\subsection{Proof of Lemma~\ref{lem:DCBMvsDCMM}}\label{app:lem:DCBMvsDCMM}
For simplicity, we write $b\equiv b_n$ and $a\equiv a_n$. We also write
\[
P=\begin{bmatrix}1 & b\\b& 1\end{bmatrix}, \qquad \pi_i=
\begin{cases}
(1,0)', & i\in L_1, \\
(1-a, a)', & i\in L_2, \\
(0,1)', & i\in L_3,\\
(a, 1-a), & i\in L_4,
\end{cases}\qquad h_i = 
\begin{cases}
(1,0)', & i\in L_1\cup L_2, \\
(0,1)', & i\in L_3\cup L_4. 
\end{cases}
\]
where $L_1, L_2, L_3, L_4$ denote four groups and $h_i$'s are the memberships by mistakenly assuming DCBM with $K=2$ communities. 
The trick here is that we want to re-write $\Omega$ as a DCBM with $K=4$. By computing $\pi_i'P\pi_j$ for $4\times 4$ different cases, we obtain a matrix
{\small
\beq \label{newP}
P^* = 
\begin{bmatrix}
1 & (1-a)+ab& b & a+(1-a)b\\
(1-a)+ab&[(1-a)^2+a^2]+2a(1-a)b &a+(1-a)b& 2a(1-a)+[a^2+(1-a)^2]b\\
b&a+(1-a)b&1& (1-a)+ab\\
a+(1-a)b&2a(1-a)+[a^2+(1-a)^2]b&(1-a)+ab& [(1-a)^2+a^2]+2a(1-a)b
\end{bmatrix}. 
\eeq
}
Define $\Pi^*$ by 
\[
\pi^*_i=
\begin{cases}
(1,0,0,0)', & i\in L_1, \\
(0,1,0,0)', & i\in L_2, \\
(0,0,1,0)', & i\in L_3,\\
(0,0,0,1)', & i\in L_4,
\end{cases}
\]
Then, we can re-write
\[
\Omega=\Theta\Pi^*P^*(\Pi^*)'\Theta. 
\]
For notation simplicity, we remove superscript $*$ in the following derivations.  

By \eqref{tOmega-form-2}-\eqref{tP}, $\widetilde{\Omega} = \Theta \Pi\widetilde{P}\Pi'\Theta$, 
where
\[
\widetilde{P}:= \diag(Pg)Q[\diag(Q'GPg)]^{-1}Q'GPGQ[\diag(Q'GPg)]^{-1}Q'\diag(Pg).
\]
Here with a little abuse of notation, we write $g: = \Vert \theta\Vert^{-1} \Pi'\Theta {\bf 1}_n$. 
However, $Q$ is different now:
\[
Q' = \begin{bmatrix} 1 & 1 \\ & & 1 & 1\end{bmatrix}\;\;\in\;\;\mathbb{R}^{2\times 4}. 
\]
Let $g$, $x$ and $y$ be the same as before. It is seen that
\[
\diag(Q'GPg) = \diag\bigl( y_1+y_2,\; y_3+y_4\bigr). 
\]
In addition, for any matrix $\Delta=\diag(\delta_1,\delta_2)$, 
\[
Q\Delta Q'  =
\begin{bmatrix}
\delta_1{\bf 1}_2{\bf 1}_2'\\
& \delta_2{\bf 1}_2{\bf 1}_2' \end{bmatrix}
\quad\in\quad\mathbb{R}^{4\times 4}. 
\] 
It follows that
\[
Q[\diag(Q'GPg)]^{-1}Q' = 
\begin{bmatrix}
\frac{1}{y_1+y_2}{\bf 1}_2{\bf 1}_2' \\
&  \frac{1}{y_3+y_4}{\bf 1}_2{\bf 1}_2'\end{bmatrix}. 
\]
Write $g=(\bar{g}', \tilde{g})'$, with $\bar{g}\in\mathbb{R}^2$ containing the first two coordinates and $\tilde{g}\in\mathbb{R}^2$ containing the first two coordinates. Define $\bar{x}$ and $\tilde{x}$ similarly. Also, note that $y=x\circ g$. It follows that 
\[
 \diag(Pg)Q[\diag(Q'GPg)]^{-1}Q' G
=
\begin{bmatrix}
\frac{1}{\bar{x}'\bar{g}}\bar{x}\bar{g}' & \\
&  \frac{1}{\tilde{x}'\tilde{g}} \tilde{x}\tilde{g}'\end{bmatrix}. 
\]
Let the block devision of $P$ be the same as in \eqref{block}. We have
\begin{align*}
\widetilde{P} & = \begin{bmatrix}
\frac{1}{\bar{x}'\bar{g}}\bar{x}\bar{g}' & \\
&  \frac{1}{\tilde{x}'\tilde{g}} \tilde{x}\tilde{g}'\end{bmatrix}
\begin{bmatrix}
P_0 & Z\\
Z' &  P_1 \end{bmatrix}\begin{bmatrix}
\frac{1}{\bar{x}'\bar{g}}\bar{g}\bar{x}' & \\
&  \frac{1}{\tilde{x}'\tilde{g}} \tilde{g}\tilde{x}'\end{bmatrix}=\begin{bmatrix}
\frac{\bar{g}'P_0\bar{g}}{(\bar{x}'\bar{g})^2}\bar{x}\bar{x}' & \frac{\bar{g}'Z\tilde{g}}{(\bar{x}'\bar{g})(\tilde{x}'\tilde{g})}\bar{x}\tilde{x}'  \\
\frac{\bar{g}'Z\tilde{g}}{(\bar{x}'\bar{g})(\tilde{x}'\tilde{g})}\tilde{x}\bar{x}'  &  \frac{\tilde{g}'P_1\tilde{g}}{(\tilde{x}'\tilde{g})^2} \tilde{x}\tilde{x}'\end{bmatrix}. 
\end{align*}
In this example, we have exact symmetry: $P_0=P_1$, $\bar{g}=\tilde{g}\propto {\bf 1}_2$, and $\bar{x}=\tilde{x}\propto {\bf 1}_2$. It follows that
\[
\widetilde{P}  = \begin{bmatrix}
\frac{{\bf 1}_2'P_0{\bf 1}_2}{4}{\bf 1}_2{\bf 1}_2' & \frac{{\bf 1}_2'Z{\bf 1}_2}{4}{\bf 1}_2{\bf 1}_2'   \\
 \frac{{\bf 1}_2'Z{\bf 1}_2}{4}{\bf 1}_2{\bf 1}_2'  &  \frac{{\bf 1}_2'P_0{\bf 1}_2}{4}{\bf 1}_2{\bf 1}_2'  \end{bmatrix}. 
\]
Combining it with \eqref{block} gives
\[
P -\widetilde{P} = 
\begin{bmatrix}
P_0 - \frac{{\bf 1}_2'P_0{\bf 1}_2}{4}{\bf 1}_2{\bf 1}_2' & Z - \frac{{\bf 1}_2'Z{\bf 1}_2}{4}{\bf 1}_2{\bf 1}_2'   \\
Z' - \frac{{\bf 1}_2'Z{\bf 1}_2}{4}{\bf 1}_2{\bf 1}_2'  &  P_0 - \frac{{\bf 1}_2'P_0{\bf 1}_2}{4}{\bf 1}_2{\bf 1}_2'  \end{bmatrix}.
\]
By \eqref{newP},
\begin{align*}
& P_0 = \begin{bmatrix}
1 & (1-a)+ab& \\
(1-a)+ab&[(1-a)^2+a^2]+2a(1-a)b
\end{bmatrix},\cr
& \frac{{\bf 1}_2'P_0{\bf 1}_2}{4}= 1-a+ab +\frac{1}{2}a^2(1-b).
\end{align*}
Therefore,
\begin{align*}
 P_0 &- \frac{{\bf 1}_2'P_0{\bf 1}_2}{4}{\bf 1}_2{\bf 1}_2'  =
\begin{bmatrix}
a - ab - \frac{1}{2}a^2(1-b) & - \frac{1}{2}a^2(1-b) \\
- \frac{1}{2}a^2(1-b) & -a+ab + \frac{3}{2}a^2(1-b) 
\end{bmatrix} \cr
&= a(1-b)
\begin{bmatrix}
1 - a/2 & -a/2\\
-a/2 & -1 +3a/2
\end{bmatrix}= a(1-b)\begin{bmatrix}1 \\ & -1\end{bmatrix} + \frac{a^2(1-b)}{2}\begin{bmatrix}-1 & -1 \\ -1 & 3\end{bmatrix}. 
\end{align*}
Also, by \eqref{block}, 
\begin{align*}
& Z=  
\begin{bmatrix}
b & a+(1-a)b\\
a+(1-a)b& 2a(1-a)+[a^2+(1-a)^2]b
\end{bmatrix}= 
\begin{bmatrix}
b & b+a(1-b)\\
b+a(1-b)& b+2a(1-b)(1-a)
\end{bmatrix}.\cr
&\frac{{\bf 1}_2'Z{\bf 1}_2}{4} = b +\frac{a(1-b)(2-a)}{2}.
\end{align*}
Therefore,
\begin{align*}
 Z - &\frac{{\bf 1}_2'Z{\bf 1}_2}{4}{\bf 1}_2{\bf 1}_2'  =
\begin{bmatrix}
-\frac{a(1-b)(2-a)}{2} &  \frac{a(1-b)a}{2}\\
\frac{a(1-b)a}{2} & a(1-b)(1-\frac{3a}{2})
\end{bmatrix}\cr
&=a(1-b)
\begin{bmatrix}
-1+a/2 & a/2\\
a/2 & 1-3a/2
\end{bmatrix}= a(1-b)\begin{bmatrix}-1 \\ & 1\end{bmatrix} + \frac{a^2(1-b)}{2}\begin{bmatrix}1 & 1 \\ 1 & -3\end{bmatrix}. 
\end{align*}
It follows that
\beq
P-\tilde{P}= 
a(1-b) 
\begin{bmatrix}
1 & & -1\\
& -1 & & 1\\
-1 & & 1\\
& 1&&-1
\end{bmatrix} + \frac{a^2(1-b)}{2}
\begin{bmatrix}
-1 & -1 & 1 & 1\\
-1 & 3 & 1 & -3\\
1 & 1 & -1 & -1\\
1 & -3 &-1 &3
\end{bmatrix}.
\eeq
It is not hard to see that $(1, 0,1,0)'$ and $(0,1,0,1)'$ are two eigenvectors associated with the zero eigenvalue. 
Consider an eigenvector of the form $(\epsilon,1, -\epsilon, -1)$: 
\begin{align*}
(P-\widetilde{P})\begin{bmatrix}\epsilon\\1\\-\epsilon\\-1\end{bmatrix} &= a(1-b) \left(
\begin{bmatrix}
2\epsilon\\ -2\\-2\epsilon\\ 2
\end{bmatrix} + \frac{a}{2}
\begin{bmatrix}
-2\epsilon-2\\ -2\epsilon+6 \\ 2\epsilon+2 \\ 2\epsilon +6
\end{bmatrix}\right)= a(1-b) 
\begin{bmatrix}
(2-a)\epsilon-a\\ -2-a\epsilon+3a \\-(2-a)\epsilon+a\\ 2+a\epsilon+6a
\end{bmatrix}.
\end{align*}
For this to be a valid eigenvector, we need $[(2-a)\epsilon-a]/\epsilon=-2-a\epsilon+3a$. It yields
\[
a\epsilon^2+4(1-a)\epsilon-a=0. 
\]
It has two solutions 
\[
\epsilon_1=\frac{-2(1-a)+ \sqrt{4(1-a)^2+a^2}}{a}, \qquad \epsilon_2=\frac{-2(1-a)- \sqrt{4(1-a)^2+a^2}}{a}.
\]
The corresponding eigenvalues are
\begin{align*}
&\lambda_1=a(1-b)(3a-2-a\epsilon_1) =a(1-b)[ a-\sqrt{4(1-a)^2+a^2}],
\\
 &\lambda_2=a(1-b)[ a+\sqrt{4(1-a)^2+a^2}]. 
\end{align*}
It follows that
\[
\mathrm{trace}([P-\widetilde{P}]^3) = \lambda_1^3+\lambda_2^3=8a^4(1-b)^3[a^2+3(1-a)^2].
\]

Furthermore, since $\Pi'\Theta^2 \Pi = \Vert \theta\Vert^2 I_4/4 $ in this example, it can be easily derived that
\begin{align*}
{\rm SNR}_{n,3}(\Omega) = \Big(\frac{ \Vert \theta\Vert^2 }{4}\Big)^3 \frac{ \mathrm{trace}([P-\widetilde{P}]^3)}{\sqrt{6 C_{n,3}}} \geq C^{-1}a^4(1-b)^3 \Vert\theta\Vert^3.
\end{align*}
The proof is complete.

\subsection{Theoretical details for Theorem~\ref{thm:power1-mainpaper}}\label{app:thm:power1-mainpaper}

\subsubsection{Regularity conditions for Theorem~\ref{thm:power1-mainpaper}}\label{app:thm:power1-mainpaper:cond}

Given any integers $1\leq L\leq K$, let ${\cal U}_L(K)$ be the collection of $\Omega$ from a DCBM with $K$ communities such that 
\begin{enumerate}
\item[(i)]$|\lambda_{L}(PG)|-|\lambda_{L+1}(PG)|\geq c_2\beta_n$, where $G=\|\theta\|^{-2}\Pi'\Theta^2\Pi$ and $\beta_n\|\theta\|/\sqrt{\log(n)}\to\infty$.

\item[(ii)] $\lambda_{\min}(\Pi'\Theta\Pi)\geq c_5\Vert \theta\Vert_1$.

\item[(iii)] $\Omega$ satisfies Condition~\ref{cond:afmSCORE}(a) and Condition~\ref{cond:MSCORE}(b)-(c) of the main paper.
\end{enumerate}

\subsubsection{A useful lemma (Lemma~\ref{lem:power-DCBM})}

Fixing $K_0 \geq 1$, under the setting of  Theorem~\ref{thm:power1-mainpaper}, we assume the true model is a DCBM with $K = (K_0+1)$ communities, 
but we mis-specify it as a DCBM with $K_0$ communities.  Let $\Omega = \Theta \Pi P \Pi' \Theta$ be the Bernoulli probability matrix for the true model. We can cast this as the problem of testing   $K = K_0+1$ versus $K = K_0$.  The setting is closely related to the problem of estimating $K$ (e.g., \cite{EstK} and Section \ref{subsec:DCBM}). Let $T_n(\widehat{\Omega}^{{\rm DCBM}})$ be the GoF-SCORE metric 
for DCBM (e.g., Section~\ref{subsec:DCBM}).  For any $\alpha\in (0,1)$, consider the GoF-SCORE test that rejects the null if $|T_n(\widehat{\Omega}^{\mathrm{DCBM}})| \geq z_{\alpha}$, where $\mathbb{P}(|N(0,1)| \geq z_{\alpha/2}) = \alpha$.  Now, under the assumed model (a DCBM with $K = K_0$), we apply SCORE and use it to estimate $\Omega$, both assuming that there are only $K_0$ communities.  
Following \cite{EstK}, we can show that SCORE 
has the so-called Non-Splitting Property (NSP): except for a small probability, each resultant cluster by SCORE is either a true community or the merge of two true communities. 
Without loss of generality, we can focus on the event where SCORE keeps the first $(K_0-1)$ communities the same but merges last two communities. Over this event, $\widehat{\Omega} = \mathbb{M}(A)$ for a mapping $\mathbb{M}$. 
Let $\widetilde{\Omega} = \mathbb{M}(\Omega)$.  Recall that  $\Omega = \Theta \Pi P \Pi' \Theta$.  Partition $P$ by $P=\bigl[\begin{smallmatrix}P_0&Z\\Z'&P_1\end{smallmatrix}\bigr]$ where $P_1 \in \mathbb{R}^{2\times 2}$ and $Z\in\mathbb{R}^{(K_0-1)\times 2}$. Define $(K_0+1)\times 1$ vectors $\tilde{s}=\Vert \theta\Vert^{-2}\Pi'\Theta^2{\bf 1}_n$, $\tilde{q}=\Vert\theta\Vert_1^{-1}\Pi'\Theta{\bf 1}_n$, $\tilde{x}=P\tilde{q}$, and let  $s, q, x \in\mathbb{R}^{2}$ be the respective sub-vectors restricted to last two entries.  
\begin{lemma} \label{lem:power-DCBM}
With the same notations as above, it holds that 
$\widetilde{\Omega}=\Theta\Pi \widetilde{P}\Pi'\Theta$, and $$\mathrm{trace}([\Omega - \widetilde{\Omega}]^3) =\Vert \theta\Vert^6\cdot \mathrm{trace}\bigl([(P-\widetilde{P})\diag(\tilde{s})]^3\bigr),$$ 
where $\mathrm{rank}(\widetilde{P}) = K_0$ and 
\beq 
P - \widetilde{P} = 
\begin{bmatrix}
{\bf 0}_{(K-2)\times (K-2)} & Z( I_2 - \frac{1}{x'q} qx') \\ 
( I_2 - \frac{1}{x'q} xq')Z' &  P_1 - \frac{q'P_1q}{(x'q)^2} xx' \end{bmatrix}. 
\eeq
\end{lemma}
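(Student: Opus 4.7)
\textbf{Proof proposal for Lemma~\ref{lem:power-DCBM}.} The plan is to compute exactly what the GoF-SCORE algorithm of Section~\ref{subsec:DCBM} outputs when fed the noise-free $A=\Omega$, on the event dictated by the Non-Splitting Property of SCORE where (without loss of generality) the last two of the $K_0+1$ true communities are merged. Introduce the merge matrix
\[
Q=\begin{bmatrix} I_{K_0-1} & 0\\ 0 & \mathbf{1}_2\end{bmatrix}\in\mathbb{R}^{(K_0+1)\times K_0},
\]
so the clustering step produces $\widetilde\Pi=\Pi Q$. Write $D:=\Pi'\Theta\Pi=\diag(d_1,\ldots,d_{K_0+1})$ with $d_k=\sum_{i\in\mathcal{C}_k}\theta_i$, and partition $D=\diag(\bar D,D_1)$ and $P=\bigl[\begin{smallmatrix}P_0&Z\\Z'&P_1\end{smallmatrix}\bigr]$ conformally with $P_1\in\mathbb{R}^{2\times 2}$. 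A direct block calculation then yields
\[
M_0:=\widetilde\Pi'\Omega\widetilde\Pi=Q'DPDQ=\begin{bmatrix}\bar DP_0\bar D & \bar DZd\\ d'Z'\bar D & d'P_1d\end{bmatrix},
\]
where $d:=D_1\mathbf{1}_2=(d_{K_0},d_{K_0+1})'=\|\theta\|_1 q$.

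The identifiability constraint $P_{kk}=1$ is now used crucially: it makes $\diag(M_0)=\diag(d_1^2,\ldots,d_{K_0-1}^2,d'P_1d)$, which simplifies
\[
\widetilde P_0:=[\diag(M_0)]^{-1/2}M_0[\diag(M_0)]^{-1/2}=\begin{bmatrix}P_0 & Zd/\sqrt{d'P_1d}\\ d'Z'/\sqrt{d'P_1d} & 1\end{bmatrix}.
\]
For the degree estimate, $e_i'\Omega\mathbf{1}_n=\theta_i\|\theta\|_1\tilde x_{c(i)}$; using the explicit entries of $M_0$ and again $P_{kk}=1$ I plan to verify case-by-case that $\widetilde\theta_i=\theta_i$ for $c(i)\le K_0-1$, whereas for $c(i)=K_0+k'-1$, $k'\in\{1,2\}$, substituting $d=\|\theta\|_1 q$ gives
\[
\widetilde\theta_i=\theta_i\cdot\alpha_{k'},\qquad \alpha_{k'}:=x_{k'}\sqrt{q'P_1q}\big/(x'q).
\]

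The per-community rescaling above is exactly a right-multiplication of $\Theta\Pi$ by $\Lambda:=\diag(I_{K_0-1},\alpha_1,\alpha_2)$, so $\widetilde\Theta\Pi=\Theta\Pi\Lambda$ and hence $\widetilde\Omega=\widetilde\Theta\widetilde\Pi\widetilde P_0\widetilde\Pi'\widetilde\Theta=\Theta\Pi\widetilde P\Pi'\Theta$ with $\widetilde P=(\Lambda Q)\widetilde P_0(\Lambda Q)'$. Noting that $\Lambda Q=\bigl[\begin{smallmatrix}I_{K_0-1}&0\\0&\alpha\end{smallmatrix}\bigr]$ with $\alpha=\sqrt{q'P_1q}/(x'q)\cdot x$, and that $d/\sqrt{d'P_1d}=q/\sqrt{q'P_1q}$ kills the $\|\theta\|_1$ factors, the multiplication gives
\[
\widetilde P=\begin{bmatrix}P_0 & Zqx'/(x'q)\\ xq'Z'/(x'q) & \tfrac{q'P_1q}{(x'q)^2}xx'\end{bmatrix},
\]
from which the stated block form of $P-\widetilde P$ follows immediately. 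Since $\Lambda Q$ has full column rank $K_0$ and $\widetilde P_0$ is $K_0\times K_0$ of full rank, $\mathrm{rank}(\widetilde P)=K_0$.

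For the trace identity, I will use that for a DCBM $\Pi'\Theta^2\Pi=\|\theta\|^2\diag(\widetilde s)$ is diagonal, so
\[
\Omega-\widetilde\Omega=\Theta\Pi(P-\widetilde P)\Pi'\Theta\quad\Longrightarrow\quad \tr\bigl((\Omega-\widetilde\Omega)^3\bigr)=\|\theta\|^6\tr\Bigl(\bigl[(P-\widetilde P)\diag(\widetilde s)\bigr]^3\Bigr)
\]
by two applications of the cyclic property of the trace. The only real obstacle in the plan is the block bookkeeping in the third paragraph; in particular the non-obvious cancellation giving $\widetilde\theta_i=\theta_i$ on the unmerged communities (which hinges on $P_{kk}=1$) and the simultaneous use of the merging matrix $Q$ and the rescaling $\Lambda$ to fold $\widetilde P_0$ back into an expression on the original $(K_0+1)$-community scale. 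Everything else is linear algebra.
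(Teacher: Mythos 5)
Your proposal is correct and follows essentially the same route as the paper's proof: both hinge on the NSP merge matrix $Q$, the identity that lets a diagonal matrix of the form $\mathrm{diag}(\Pi v)$ be pulled through $\Pi$ (your $\widetilde{\Theta}\Pi=\Theta\Pi\Lambda$ is the paper's key simplification step), and the cyclic-trace argument using the diagonality of $\Pi'\Theta^2\Pi$ under DCBM. The only difference is bookkeeping — you trace the algorithm's intermediate outputs $M_0$, $\widetilde{P}_0$, $\widetilde{\theta}_i$ and refold them via $\Lambda Q$, while the paper simplifies the closed-form matrix expression for $\widetilde{\Omega}=\mathbb{M}(\Omega)$ directly — and both yield the identical block formula for $P-\widetilde{P}$.
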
 
The proof of Lemma~\ref{lem:power-DCBM} is deferred to Section~\ref{app:lem:power-DCBM}.

\subsubsection{Proof of Theorem~\ref{thm:power1-mainpaper}} \label{supp:susecpower1}
Recall our statistics $T_n(\widehat \Omega) = U_{n,3}(\widehat \Omega)/\sqrt{6C_{n,3}}$ with 
\begin{align*}
\widehat \Omega =  {\rm diag} (A{\bf 1}_n) H [{\rm diag}(H'A{\bf 1}_n)]^{-1} H'AH  [{\rm diag}(H'A{\bf 1}_n)]^{-1} H'{\rm diag} (A{\bf 1}_n), \qquad H = \Pi.
\end{align*}
 Under null, $T_n(\widehat\Omega) \to N(0,1)$. Consequently, for the type I error
 \begin{align*}
 \mathbb{P} (|T_n(\widehat\Omega)|>\alpha_n|\text{ true model is }\Omega_0)  = 1- 2\Phi (-\alpha_n/2) + o(1) = o(1)
 \end{align*} 
if $\alpha_n \gg 1$.

Under alternative $\Omega = \Omega_1$ where $K= K_0+1$, according to \cite{EstK}, $H$ satisfies the Non-splitting Property. Specifically, there exist finite many configurations of $H$ such that with probability $1- o(1)$, $\Pi$ equals to one of them. Therefore, without loss of generality, we assume the last two communities merge. It gives 
\beq
H = \Pi Q, \qquad\mbox{where}\quad  Q' =
\begin{bmatrix}
1&  \\ 
 &\ddots &  \\
 & &1& 1
\end{bmatrix} \; \in \mathbb{R}^{(K-1)\times K}. 
\eeq
In this case, $T_n (\widehat \Omega) - T_n ( \Omega)$ may not be negligible. We analyze its behavior below. Write $\Delta = \widehat{\Omega} - \Omega = (\widetilde \Omega - \Omega ) + (\widehat{\Omega}- \widetilde{\Omega}) =: \widetilde \Delta + \widehat\Delta$ with
$$\widetilde \Omega ={\rm diag} (\Omega {\bf 1}_n) H [{\rm diag}(H' \Omega {\bf 1}_n)]^{-1} H'\Omega H  [{\rm diag}(H'\Omega {\bf 1}_n)]^{-1} H'{\rm diag} (\Omega {\bf 1}_n).$$
 Notice that when $H = \Pi$, $\widetilde \Delta = 0 $; in the case that $H = \Pi Q$, $\widetilde \Delta \neq 0$. We employ  Lemma~\ref{lem:diffU} and write $U_{n,3}(\widehat \Omega) -  U_{n,3}(\Omega) = f(W_1, \Delta)$.  In particular, the analysis of $f(W_1, \widehat \Delta)$ is the same as that  of $f(W_1, \Delta)$ in  the null case.  What remains is to study the additional terms in $ f(W_1, \Delta) -  f(W_1, \widehat \Delta)$ that contain $\widetilde \Delta$. By elementary derivations, it is easy to obtain 
the additional terms containing $\widetilde{\Delta}$ can be classified into following groups (with the coefficients igonored)
 \begin{itemize}
 \item [(1)] ${\rm tr} (\widetilde{\Delta}\circ \widetilde{\Delta} \circ \widetilde{\Delta})$, ${\rm tr} (\widetilde{\Delta}\circ \widetilde{\Delta} \circ \widehat{\Delta})$, ${\rm tr} (\widetilde{\Delta}\circ \widehat{\Delta} \circ \widehat{\Delta})$, ${\rm tr} (W_1 \circ W_1 \circ \widetilde{\Delta})$, ${\rm tr} (W_1\circ \widetilde{\Delta} \circ \widetilde{\Delta})$, ${\rm tr} (W_1\circ \widetilde{\Delta} \circ \widehat{\Delta})$, $ {\rm tr} (\widetilde{\Delta} \circ \widetilde{\Delta} \widehat{\Delta}) $,  $ {\rm tr} (\widetilde{\Delta} \circ  \widehat{\Delta}^2) $, $ {\rm tr} (\widehat{\Delta} \circ \widetilde{\Delta} \widehat{\Delta}) $, ${\rm tr}(\widetilde{\Delta} \widehat{\Delta}^2)$, $ {\rm tr} ( W_1\circ \widetilde{\Delta} \widehat{\Delta}) $;
 \item [(2)]  $ {\rm tr} (\widehat{\Delta} \circ \widetilde{\Delta}^2) $, $ {\rm tr} (\widetilde{\Delta} \circ W_1 \widetilde{\Delta}) $, $ {\rm tr} (\widetilde{\Delta} \circ W_1 \widehat{\Delta}) $, $ {\rm tr} (\widehat{\Delta} \circ W_1 \widetilde{\Delta}) $, $ {\rm tr} ( W_1\circ W_1\widetilde{\Delta} ) $, ${\rm tr} (W_1\widetilde{\Delta}\widehat{\Delta})$ ; 
 \item [(3)] ${\rm tr} (\widetilde{\Delta}^3)$, ${\rm tr} (\widetilde{\Delta}^2\widehat{\Delta})$, $ {\rm tr} (\widetilde{\Delta} \circ \widetilde{\Delta}^2) $,  ${\rm tr} (W_1\widetilde{\Delta}^2)$, ${\rm tr} (W_1\circ \widetilde{\Delta}^2)$, ${\rm tr} (\widetilde{\Delta} \circ W_1^2) - {\rm tr} (W_1^2\widetilde{\Delta})$.
 \end{itemize}
 
 In the sequel, we first claim that all the terms in (1) and (2) are of order $o_\mathbb{P}(\Vert\theta \Vert^3)$. Recall the proofs in Section~\ref{supp:subsec_DCBM}, where $\Delta$ is equivalent to $\widetilde \Delta$ here.  We have shown that 
 \begin{align*}
 \Vert \widehat{\Delta} \Vert = o_{\mathbb{P}} (1).
 \end{align*}
For $\widetilde \Delta$, we have the crude bounds that 
\begin{align*}
\Vert  \widetilde \Delta \Vert = \Vert P - \widetilde P\Vert \cdot \Vert \Pi' \Theta^2 \Pi\Vert  \lesssim \Vert \theta\Vert^2\qquad \text{and } \qquad  \big|\widetilde \Delta(i,i)\big|\lesssim \theta_i^2 \text{ for all $1\leq i \leq n$}.
\end{align*}
Further with $\diag(W_1) = \diag(\Omega)$ and Lemma~\ref{lem:hada}, we can bound the terms in the first line of (1) by 
\begin{align*}
\Vert \widetilde \Delta\Vert \cdot \Big( \Vert \diag(\widetilde \Delta)\Vert + \Vert \diag(\widehat \Delta)\Vert  + \Vert \diag(W_1)\Vert\Big)^2 \lesssim \Vert \theta\Vert^2.
\end{align*}
For the left  terms in (1), we have 
\begin{align*}
 &\big|{\rm tr} (\widetilde{\Delta} \circ \widetilde{\Delta} \widehat{\Delta}) \big| + \big| {\rm tr} (\widetilde{\Delta} \circ  \widehat{\Delta}^2)\big| + \big| {\rm tr} ( W_1\circ \widetilde{\Delta} \widehat{\Delta}) \big| 
\\
&\hspace{1cm}\leq \big(\Vert \diag(\widetilde \Delta)\Vert + \Vert \diag(W_1)\Vert \big) \Vert \widehat \Delta\Vert  \big( \Vert \widetilde \Delta\Vert  + \Vert \widehat \Delta\Vert \big)\lesssim \Vert \theta\Vert^2, \notag \\
& \big| {\rm tr} (\widehat{\Delta} \circ \widetilde{\Delta} \widehat{\Delta}) \big| + 
  \big| {\rm tr}(\widetilde{\Delta} \widehat{\Delta}^2) \big| \lesssim 2\Vert \widetilde\Delta \Vert \Vert \widehat \Delta \Vert^2  \lesssim \Vert \theta\Vert^2. 
\end{align*}

Next, we bound the terms in (2).  Using Lemma~\ref{lem:hada}, we derive 
\begin{align*}
& \big| {\rm tr} (\widetilde{\Delta} \circ W_1 \widetilde{\Delta}) \big| + \big|{\rm tr} (\widehat{\Delta} \circ W_1 \widetilde{\Delta}) \big| + \big| {\rm tr} (W_1 \circ W_1 \widetilde{\Delta})  \big|  + \big| {\rm tr} (W_1\widetilde{\Delta}\widehat{\Delta})\big| \notag\\
&\lesssim \Vert W_1 \widetilde\Delta \Vert \Big( \Vert {\rm diag}(\widetilde \Delta) \Vert + \Vert {\rm diag}(\widehat  \Delta )\Vert  + \Vert {\rm diag} (W_1) \Vert + \Vert \widehat \Delta\Vert  \Big)\notag\\
& \lesssim \Vert W_1 \widetilde\Delta \Vert = \Vert \Pi' \Theta W_1 \Theta \Pi (P- \widetilde P) \Vert.
\end{align*}
Employing the first estimate in the third line in Lemma~\ref{lem:1.7} and the crude bound $\Vert P - \widetilde{P} \Vert \lesssim 1$, we finally have the RHS above is of order  $O_{\mathbb{P}} (\Vert \theta\Vert^2)$. For the other terms in (2), we have the following derivations.
\begin{align*}
&\big|{\rm tr} (\widehat{\Delta} \circ \widetilde{\Delta}^2) \big| \lesssim \Vert \diag(\widetilde{\Delta}^2) \widehat{\Delta}\Vert  \lesssim \Vert \diag(\widetilde{\Delta}^2)\Vert = \max_{i} \theta_i^2 |\pi_i' (P-\widetilde P) \Pi' \Theta^2\Pi (P-\widetilde P) \pi_i| \lesssim \Vert\theta\Vert^2 , \notag\\
& \big| {\rm tr} (\widetilde{\Delta} \circ W_1 \widehat{\Delta})\big|   \lesssim \Vert \diag(\widetilde{\Delta})\Vert \Vert W_1 \widehat{\Delta}\Vert \lesssim \Vert \theta\Vert^2 \, .
\end{align*}
Here in the last step of the first equation above, we used the fact that $\Vert \pi_i' (P-\widetilde P) \Vert = O(1) $ and $\Vert \Pi'\Theta^2 \Pi\Vert \lesssim \Vert \theta\Vert^2$; for the last equation, we applied (b) of Lemma~\ref{lem:Delta} in which $\Delta$ is equivalent to  our $\widetilde \Delta$ here. 

To proceed, we analyze the terms in (3). Notably, ${\rm tr} (\widetilde \Delta^3) = {\rm tr} \big( [\widetilde \Omega - \Omega]^3 \big) $. Recall the assumption in Theorem~\ref{thm:power1-mainpaper} that $\big|  {\rm tr} \big( [\widetilde \Omega - \Omega]^3 \big)\big| \gg \Vert P - \widetilde P \Vert \cdot \Vert \theta \Vert^4$. We aim to bound the terms in (3) except for ${\rm tr} (\widetilde \Delta^3)$   by $\Vert P - \widetilde P \Vert \cdot \Vert \theta \Vert^4$ up to some constant. Towards this, by introducing $\tau_n:= \Vert P - \widetilde P\Vert$, we deduce 
\begin{align*}
 &
 {\rm tr} (\widetilde{\Delta}^2\widehat{\Delta}) \lesssim \Vert\widetilde{\Delta} \Vert^2 \Vert\widehat{\Delta} \Vert\lesssim   \Vert P - \widetilde P\Vert^2 \Vert \theta\Vert^4 \lesssim \Vert P - \widetilde P \Vert \cdot \Vert \theta \Vert^4 ,\notag\\
 &
 {\rm tr} (\widetilde{\Delta} \circ \widetilde{\Delta}^2) \lesssim \Vert \diag(\widetilde \Delta) \Vert \Vert \widetilde \Delta\Vert^2 \lesssim \Vert P - \widetilde P\Vert^2 \Vert \theta\Vert^4 \lesssim \Vert P - \widetilde P \Vert \cdot \Vert \theta \Vert^4,\notag\\
 & 
 {\rm tr} (W_1 \circ \widetilde{\Delta}^2)  \lesssim \Vert {\rm diag}(  \Omega ) \Vert \Vert \widetilde \Delta\Vert^2 \lesssim  \Vert P - \widetilde P\Vert^2 \Vert \theta\Vert^4 \lesssim \Vert P - \widetilde P \Vert \cdot \Vert \theta \Vert^4\, . 
 \end{align*}
 For ${\rm tr} (W_1 \widetilde{\Delta}^2) $, using Lemma~\ref{lem:1.7}, it is not hard to derive 
 \begin{align*}
{\rm tr} (W_1 \widetilde{\Delta}^2) \lesssim \Vert \Pi'\Theta W_1 \Theta \Pi\Vert \Vert P - \widetilde P\Vert   \Vert \Pi' \Theta^2 \Pi\Vert \lesssim  \Vert P - \widetilde P \Vert \cdot \Vert \theta \Vert^4 \,.
 \end{align*}
 Lastly, we estimate ${\rm tr} (\widetilde{\Delta} \circ W_1^2) - {\rm tr} (W_1^2\widetilde{\Delta})$. Notice that $\mathbb{E} W_1^2 = {\rm diag} (\Omega {\bf 1_n} )-{\rm diag} (\Omega) $. It follows that 
 \begin{align*}
 {\rm tr} (\widetilde{\Delta} \circ \mathbb{E} W_1^2) - {\rm tr} (\mathbb{E}W_1^2\widetilde{\Delta}) = {\rm tr} (\widetilde{\Delta} \mathbb{E} W_1^2) - {\rm tr} (\mathbb{E}W_1^2\widetilde{\Delta}) =0\,. 
 \end{align*}
 It remains to bound ${\rm tr} (\widetilde{\Delta} \circ (W_1^2- \mathbb E W_1^2) ) - {\rm tr} ( (W_1^2- \mathbb E W_1^2)\widetilde{\Delta})$.
 \begin{align*}
 &{\rm tr} (\widetilde{\Delta} \circ (W_1^2- \mathbb E W_1^2) )  - {\rm tr} ( (W_1^2- \mathbb E W_1^2)\widetilde{\Delta}) \notag\\
 &= \sum_{i=1}^n \widetilde \Delta_{ii} \sum_{j\neq i} (W_{ij}^2 - \mathbb{E} W_{ij}^2)  - \sum_{i,j, t} \widetilde \Delta_{ij}[ W_1(i,t) W_1(t,j) - \mathbb EW_1(i,t) W_1(t,j) ]\notag\\
 &  = -   \sum_{i\neq j, t}   \widetilde \Delta_{ij}[ W_1(i,t) W_1(t,j) - \mathbb EW_1(i,t) W_1(t,j) ]
 = O_{\mathbb{P}}(\Vert P - \widetilde P \Vert \Vert \theta\Vert_3^3 \Vert \theta\Vert \sqrt{\log n}\,  ),
 \end{align*}
 where to obtain the last step, we computed the order of the variance of the sum and applied Chebyshev's inequality.
 Since $\Vert \theta\Vert ^3 \sqrt{\log n}\leq \Vert \theta\Vert^2 \theta_{\max} \sqrt{\log n} \lesssim \Vert \theta\Vert^2$, we then conclude that 
 \begin{align*}
 {\rm tr} (\widetilde{\Delta} \circ W_1^2) - {\rm tr} (W_1^2\widetilde{\Delta}) = O_{\mathbb{P}}(\Vert P - \widetilde P \Vert  \Vert \theta\Vert^3 \sqrt{\log n}\,  ).
 \end{align*}
 
 Combining the analysis of all the terms in (1) - (3) together, we therefore conclude that 
 \begin{align*}
 U_{n,3} (\widehat \Omega ) - U_{n,3} (\Omega) =  {\rm tr} \big( [ \Omega - \widetilde \Omega]^3 \big) + o_{\mathbb{P}} (\Vert \theta\Vert^3) +  O_{\mathbb{P}} (\Vert P - \widetilde P \Vert \cdot \Vert \theta\Vert^4).
 \end{align*}
Note that $C_{n,3} = {\rm tr} (\Omega^3) (1+ o(1))\asymp \Vert \theta\Vert^6 $. We also recall ${\rm SNR}_{n,3}(\Omega ) =  {\rm tr} \big( [ \Omega - \widetilde \Omega]^3 \big)/\sqrt{6{\rm tr} (\Omega^3) }$. If ${\rm SNR}_{n,3}(\Omega )\geq \gamma_n \gg \sqrt{\log (n)}$, then it follows from the condition $\big|  {\rm tr} \big( [\widetilde \Omega - \Omega]^3 \big)\big| \gg \Vert P - \widetilde P \Vert \cdot \Vert \theta \Vert^4$ that 
\begin{align} \label{eq:2023090601}
T_n(\widehat{\Omega} ) &= T_n(\Omega ) + \frac{{\rm tr} \big( [ \Omega - \widetilde \Omega]^3 \big) }{\sqrt{6C_{n,3}}} (1+ o_{\mathbb{P}}(1)) + o(1)  \notag\\
& = T_n(\Omega ) + {\rm SNR}_{n,3}(\Omega )(1+ o_{\mathbb{P}}(1)) + o(1) \gg \sqrt{\log (n)}
\end{align}
with probability $1- o(1)$. In summary, let $\alpha_n = \sqrt{\log (n)}$, it yields that 
\begin{align*}
 \mathbb{P} \Big(|T_n(\widehat\Omega)|>\alpha_n\big| \text{ true model is } \Omega_0\Big)  +  \mathbb{P} \Big(|T_n(\widehat\Omega) \leq \alpha_n\big| \text{ true model is } \Omega_1\Big)  =  o(1)\,. 
\end{align*}

We now turn to show the claim for the case $\gamma_n\to 0$. In particular, we focus on the case $Z=0^{(K_0-1)\times 2}$ where $Z$ is the $(K_0-1)\times 2$ upper right block in $P$. Under this settings, by some elementary computations, it is easy to obtain 
\begin{align*}
{\rm tr} \big( [\Omega - \widetilde \Omega]^3\big) \asymp (1-b)^3\Vert \theta\Vert^6 
\end{align*}
and further 
\begin{align*}
{\rm SNR}_{n,3}(\Omega) \asymp (1-b)^3\Vert \theta\Vert^3 \geq C \big(|\lambda_{K_0+1}(\Omega)|/ \sqrt{\lambda_1(\Omega)}\, \big)^3\,. 
\end{align*}
Then, if ${\rm SNR}_{n,3}\to 0$, it implies immediately that $|\lambda_{K_0+1}(\Omega)|/ \sqrt{\lambda_1(\Omega)}\to 0$ which is the region \cite{EstK} studied. Particularly,  \cite{EstK}
studied a least-favorable setting for testing $K = K_0$ and $K = K_0 +1$ and proved a lower bound when $|\lambda_{K_0+1}(\Omega)|/ \sqrt{\lambda_1(\Omega)}\to 0$. In equation (3.8) of  \cite{EstK},  letting $m=1$ and $\beta= 0$, we observe that their least-favorable setting (under proper scaling which does not affect the analysis) happens to fit our setting. Consequently, their lower bound can be extended to our case. We conclude the proof and refer readers to \cite{EstK} for more details.

\subsubsection{Proof of Lemma~\ref{lem:power-DCBM}}
\label{app:lem:power-DCBM}

We first prove Lemma~\ref{lem:power-DCBM} and discuss a special case for Lemma~\ref{lem:power-DCBM} in the end. 
Under the assumptions in Lemma~\ref{lem:power-DCBM}, it is seen that $H$ satisfies the Non-Splitting Property (NSP). Without loss of generality, we assume in Lemma~\ref{lem:power-DCBM} that the last two communities are merged. Then, 
\beq
H = \Pi Q, \qquad\mbox{where}\quad  Q'=
\begin{bmatrix}
1&  \\ 
 &\ddots &  \\
 & &1& 1
\end{bmatrix} \; \in \mathbb{R}^{(K-1)\times K}. 
\eeq
Our GoF-DCBM algorithm gives that 
\begin{align*}
\widetilde \Omega  = {\rm diag} (\Omega{\bf 1}_n) H [{\rm diag}(H'\Omega{\bf 1}_n)]^{-1} H'\Omega H  [{\rm diag}(H'\Omega{\bf 1}_n)]^{-1} H'{\rm diag} (\Omega{\bf 1}_n).
\end{align*}
 With a little abuse of notation, throughout this subsection, we write $G=\Vert \theta\Vert_1^{-1}\Pi'\Theta\Pi$. We also notice that $\tilde{q}=\Vert \theta\Vert_1^{-1} \Pi'\Theta {\bf 1}_n$. Since $\Omega=\Theta\Pi P\Pi'\Theta$ and $H=\Pi Q$, it follows that 
\[ 
\diag(\Omega{\bf 1}_n) = \Theta \diag(\Pi P \tilde{q}), \quad H'\Omega H =\Vert \theta\Vert_1^2 Q'GPGQ,  \quad \diag(H'\Omega {\bf 1}_n)=\Vert \theta\Vert_1^2 \diag(Q'GP \tilde{q}). 
\]

We plug them into the formula of $\widetilde{\Omega}$ to get
\beq \label{tOmega-form-1}
 \widetilde{\Omega} = \Theta \diag(\Pi P \tilde{q}) \Pi M\Pi' \diag(\Pi P \tilde{q})\Theta, 
\eeq
where 
\[
M:= Q[\diag(Q'GP \tilde{q})]^{-1}Q'GPGQ[\diag(Q'GP \tilde{q})]^{-1}Q'
\]
To simplify \eqref{tOmega-form-1}, we note that the $n$-dimensional vector $\Pi P \tilde{q}$ takes only $K$ distinct values: When $\pi_i=e_k$, the $i$th entry of $\Pi P \tilde{q}$ is equal to the $k$th entry of $P \tilde{q}$. Consequently, when $\pi_i=e_k$, 
the $i$th row of $\Theta \diag(\Pi P  \tilde{q}) \Pi$ is equal to $\theta_i\cdot (P \tilde{q})_k\cdot \pi_i$. It follows that 
\beq \label{key1}
\Theta \diag(\Pi P  \tilde{q}) \Pi  = \Theta \Pi\diag(P  \tilde{q}). 
\eeq
Combining \eqref{tOmega-form-1}-\eqref{key1} gives
\beq \label{tOmega-form-2}
\widetilde{\Omega} = \Theta \Pi\widetilde{P}\Pi'\Theta, 
\eeq
where
\beq \label{tP}
\widetilde{P}:= \diag(P \tilde{q})Q[\diag(Q'GP \tilde{q})]^{-1}Q'GPGQ[\diag(Q'GP \tilde{q} )]^{-1}Q'\diag(P \tilde{q}). 
\eeq

It remains to study $\widetilde{P}$. Since $\Pi'{\bf 1}_K={\bf 1}_n$ and $G$ is a diagonal matrix, we have 
\[
 \tilde{q}= \Pi'\Theta {\bf 1}_n=\Pi'\Theta\Pi {\bf 1}_K= G{\bf 1}_K=\diag(G).
\]  
It follows that $\diag(Q'GP \tilde{q})=\diag(Q'GPG{\bf 1}_K)$. Recall the notation $\tilde{x} = P \tilde{q}$. Introduce the short-hand notations
\beq \label{y}
X=\diag(\tilde{x}), \quad  \tilde{y} = GP \tilde{q}=  \tilde{q}\circ \tilde{x},\quad\mbox{and}\quad Y=\diag(\tilde{y}).  
\eeq
From the expression of $Q'$, we immediately have
\[
\diag(Q'GP \tilde{q}) = \diag\bigl( \tilde{y}_1,\; \ldots,\; \tilde{y}_{K-2},\; \tilde{y}_{K-1}+ \tilde{y}_K\bigr). 
\]
We note that for any matrix $\Delta=\diag(\delta_1,\ldots,\delta_{K-1})$, 
\[
Q\Delta Q'  =
\begin{bmatrix}
\diag(\delta_{1:(K-2)})\\
&  \delta_{K-1} {\bf 1}_2{\bf 1}_2'\end{bmatrix}
\quad\in\quad\mathbb{R}^{K\times K},
\] 
where we denote by $\delta_{1:(K-2)} = (\delta_1, \cdots, \delta_{K-2})'$. 
Combining the above gives
\[
Q[\diag(Q'GP \tilde{q}|)]^{-1}Q' = 
\begin{bmatrix}
Y_{1:(K-2)}^{-1}\\
&  \frac{1}{\tilde{y}_{K-1}+ \tilde{y}_K} {\bf 1}_2{\bf 1}_2'\end{bmatrix}. 
\]
by denoting $Y_{1:(K-2)} = {\rm diag}(\tilde{y}_{1:(K-2)})$.
Recall $(\tilde{x}, X, y, Y)$ in \eqref{y}. With these notations, we write $\diag(P  \tilde{q})=\diag(\tilde{x}) = X$. We also recall ${x}=(x_1, x_2) = (\tilde{x}_{K-1}, \tilde{x}_K)\in\mathbb{R}^2$ and ${q}=(q_1, q_2)= ( \tilde{q}_{K-1},  \tilde{q}_{K})\in\mathbb{R}^2$. Furthermore, we write ${y}= (y_1, y_2)=(\tilde{y}_{K-1}, \tilde{y}_K)\in\mathbb{R}^2$.
It is seen that 
\[
 \diag(P \tilde{q})Q[\diag(Q'GP  \tilde{q})]^{-1}Q' G
=
\begin{bmatrix}
(XY^{-1}G)_{1:(K-2)} & \\
&  \frac{1}{{\bf 1}_2'{y}} {x}{q}'\end{bmatrix}. 
\]
The matrix $XY^{-1}G$ is diagonal. By \eqref{y}, $Y=GX$. This implies that $XY^{-1}G$ is equal to the identify matrix. In addition, ${\bf 1}_2' {y}= {x}' {q}$. Combining these gives
\beq \label{key2}
 \diag(P  \tilde{q})Q[\diag(Q'GP  \tilde{q})]^{-1}Q' G
=
\begin{bmatrix}
I_{K-2} & \\
&  \frac{1}{{x}'{g}} {x}{q}'\end{bmatrix}. 
\eeq
Plug this into the formula of $\widetilde P$ in (\ref{tP}) and
recall the block-division of $P$  as
\beq \label{block}
P = \begin{bmatrix}
P_0 & Z\\
Z' & P_1
\end{bmatrix}, \qquad \mbox{where}\quad P_0\in\mathbb{R}^{(K-2)\times (K-2)},\;  Z\in\mathbb{R}^{(K-2)\times 2},\; P_1\in\mathbb{R}^{2\times 2},
\eeq
it follows that 
\begin{align*}
P - \widetilde{P} = \left[
\begin{array}{cc}
{\bf 0}_{(K-2)\times (K-2)} & Z(I_2 - \frac{1}{{x}'{q}} {q}{x}') \\ [0.2cm]
(I_2 - \frac{1}{{x}'{q}} {x}{q} ')  Z' & P_1 - \frac{{q}'P_1{q}}{({x}'{q})^2} {x}{x}'
\end{array}
\right].
\end{align*}
Furthermore, using $\tilde{s} = \Vert \theta\Vert^{-2}\Pi'\Theta^2 {\bf 1}_n$, it is elementary to have
\begin{align*}
{\rm trace} \big([\Omega - \widetilde \Omega]^3\big)  =\Vert \theta\Vert^6 \cdot  {\rm trace} \big( [(P- \widetilde P)\diag(\tilde s) ]^3\big)
\end{align*}
with $(P- \widetilde P)$ defined in the equation above. This proved Lemma~\ref{lem:power-DCBM}.

\subsubsection{An example for Lemma~\ref{lem:power-DCBM}}
\label{app:lem:power-DCBM2}

We consider the explicit formula of ${\rm trace} \big([\Omega - \widetilde \Omega]^3\big)$ for a special case, which is stated in the following corollary. We introduce some notations first. Denote $P = \begin{pmatrix} 1& b\\ b&1\end{pmatrix}$ and $Z= [ \beta_1, \beta_2]$. Recall that $s= (s_1, s_2)'$, $q = (q_1, q_2)'$. We also define $a = (q_1 - q_2)^2/ (q_1+ q_2)^2$.

\begin{cor}\label{cor}
Consider a special case where $x\propto {\bf 1}_2$. Then, 
 \begin{align*}
  {\rm trace} \big([\Omega - \widetilde \Omega]^3\big)
 & =\Vert \theta\Vert^6 \cdot \frac{3(1-b)\Vert \gamma\Vert^2q_1^2q_2^2}{2(q_1+q_2)^2}\Big[   \Big( \frac{s_1}{q_1}+ \frac{s_2 }{q_2}\Big)^2 -a \Big( \frac{s_1}{q_1} - \frac{s_2 }{q_2}\Big)^2 \Big] \notag\\
 & \quad + \Vert \theta\Vert^6 \cdot \frac{(1-b)^3}{8}  \Big[(1-a)^3 ( s^3_1 + s^3_2)  + 3(1+a)^2(1-a)s_1s_2(s_1 + s_2)\Big] 
 \end{align*}
 where $\gamma=[ \diag(\tilde{s}_1, \cdots, \tilde{s}_{K-2})]^{1/2} (\beta_1- \beta_2)$. Moreover, if $b<1$, $|\lambda_{\min}(P)|\geq c_0|\lambda_{\min}(P_1)|$, and $\min\{\frac{q_1}{q_2},\frac{q_2}{q_1}\}\geq c_0$, for a constant $c_0\in (0,1)$, then $\mathrm{SNR}_{n,3}(\Omega)\geq C^{-1} \bigl(|\lambda_{K_0+1}(\Omega)|/\sqrt{\lambda_1(\Omega)}\bigr)^3$. 
\end{cor}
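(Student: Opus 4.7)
The plan is to invoke Lemma~\ref{lem:power-DCBM}, which already supplies the identity $\mathrm{trace}([\Omega-\widetilde{\Omega}]^3) = \|\theta\|^6\cdot\mathrm{trace}(M^3)$ with $M := (P-\widetilde{P})\mathrm{diag}(\tilde{s})$, and then to exploit the extra structure under $x\propto{\bf 1}_2$ to evaluate $\mathrm{trace}(M^3)$ in closed form. Writing $u := (q_2,-q_1)'$ and $\delta := \beta_1-\beta_2$, direct substitution into the expression for $P-\widetilde{P}$ collapses the off-diagonal blocks into the rank-one outer products $\frac{1}{q_1+q_2}\delta u'$ and $\frac{1}{q_1+q_2}u\delta'$, while the lower-right block reduces to $R := \frac{1-b}{(q_1+q_2)^2}\bigl[\begin{smallmatrix}2q_1q_2 & -(q_1^2+q_2^2)\\ -(q_1^2+q_2^2) & 2q_1q_2\end{smallmatrix}\bigr]$, which is diagonalized by the Hadamard basis $U=\frac{1}{\sqrt 2}\bigl[\begin{smallmatrix}1 & 1\\1 & -1\end{smallmatrix}\bigr]$ with eigenvalues $-(1-b)a$ and $1-b$. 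Writing $M=\bigl[\begin{smallmatrix}0 & B\\ C & D\end{smallmatrix}\bigr]$ with $D=R\mathrm{diag}(s)$, cyclic invariance of the trace produces the clean split $\mathrm{trace}(M^3) = 3\,\mathrm{trace}(CBD) + \mathrm{trace}(D^3)$, which matches the two summands in the claimed formula exactly.

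For the cross-term I would observe that $CB = \frac{\|\gamma\|^2}{(q_1+q_2)^2}uw'$ with $w:=\mathrm{diag}(s)u$, reducing $3\,\mathrm{trace}(CBD)$ to the scalar $\frac{3\|\gamma\|^2}{(q_1+q_2)^2}w'Rw$; the key algebraic observation is the factorization $s_1^2 q_2^2 + s_2^2 q_1^2 + s_1s_2(q_1^2+q_2^2) = (s_1+s_2)(s_1q_2^2+s_2q_1^2)$, which together with the identities $1-a = 4q_1q_2/(q_1+q_2)^2$ and $1+a = 2(q_1^2+q_2^2)/(q_1+q_2)^2$ rewrites the result as the first bracket in the target formula. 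For $\mathrm{trace}(D^3)$ I would pass to $E := U'DU$, whose trace $t = \frac{(1-b)(1-a)(s_1+s_2)}{2}$ and determinant $d = -(1-b)^2 a s_1 s_2$ follow from $U'\mathrm{diag}(s)U = \frac{1}{2}\bigl[\begin{smallmatrix}s_1+s_2 & s_1-s_2\\ s_1-s_2 & s_1+s_2\end{smallmatrix}\bigr]$, and then apply the Cayley-Hamilton identity $\mathrm{trace}(E^3)=t^3-3dt$ for a $2\times 2$ matrix.

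For the SNR lower bound I would combine the explicit formula with the eigenvalue relation $\lambda_k(\Omega) = \|\theta\|^2\lambda_k(PG)$, using well-conditioning of $G$ (implied by $\min\{q_1/q_2,q_2/q_1\}\geq c_0$) to obtain $|\lambda_{K_0+1}(\Omega)|\asymp|\lambda_{\min}(P)|\|\theta\|^2\gtrsim (1-b)\|\theta\|^2$ and $\lambda_1(\Omega)\asymp\|\theta\|^2$. Both summands in $\mathrm{trace}(M^3)$ are nonnegative under the hypotheses --- for the first, $(s_1/q_1+s_2/q_2)^2 \geq a(s_1/q_1-s_2/q_2)^2$ whenever $a\in[0,1)$ and $s,q$ are nonnegative --- so the $\mathrm{trace}(D^3)$ piece alone guarantees $\mathrm{trace}(M^3)\gtrsim (1-b)^3$, giving $\mathrm{trace}([\Omega-\widetilde{\Omega}]^3)\gtrsim (1-b)^3\|\theta\|^6$; dividing by $\sqrt{6\,\mathrm{trace}(\Omega^3)}\lesssim \|\theta\|^3$ yields the claimed lower bound.

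The main obstacle is the final algebraic bookkeeping that converts $t^3-3dt$ into the specific form with coefficients $(1-a)^3$ and $(1+a)^2(1-a)$: after expanding $(1-a)^3(s_1+s_2)^3$ and collecting the $s_1s_2(s_1+s_2)$ contributions, one must recognize the identity $3(1-a)^3 + 12a(1-a) = 3(1-a)(1+a)^2$, which stems from $(1-a)^2+4a=(1+a)^2$ but is easy to lose among the many factors of $1-a$, $1+a$, $1-b$, and cubic combinations of $s_1$ and $s_2$.
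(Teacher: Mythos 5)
Your proposal is correct and follows essentially the same route as the paper: reduce to $\mathrm{trace}\bigl([(P-\widetilde{P})\diag(\tilde{s})]^3\bigr)$ via Lemma~\ref{lem:power-DCBM}, split it as $3\,\mathrm{trace}(CBD)+\mathrm{trace}(D^3)$ using the zero upper-left block, and evaluate the two pieces from the rank-one off-diagonal blocks and the $2\times 2$ lower-right block; your $t^3-3dt$ identity in place of the paper's direct cubing of the symmetrized $2\times 2$ block, and your factorization $(s_1+s_2)(s_1q_2^2+s_2q_1^2)$ for the cross term, are equivalent computations that reproduce the stated coefficients. One small caution on the final step: the displayed inequality $|\lambda_{K_0+1}(\Omega)|\gtrsim(1-b)\|\theta\|^2$ is the direction supplied by the hypothesis $|\lambda_{\min}(P)|\geq c_0|\lambda_{\min}(P_1)|$ but not the one the bound actually consumes --- you need $|\lambda_{K_0+1}(\Omega)|\lesssim(1-b)\|\theta\|^2$, which your two-sided $\asymp$ (like the paper's own assertion $1-b\asymp|\lambda_{\min}(P)|$) does provide.
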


\begin{proof}[\bf Proof of Corollary~\ref{cor}]

 Write $S:= {\rm diag}(\sqrt{\tilde s_1}, \cdots, \sqrt{\tilde s_{K}}\, )$, by which we can also  represent ${\rm trace} \big([\Omega - \widetilde \Omega]^3\big)   = \Vert \theta\Vert^6\cdot  {\rm trace} \big( [S(P- \widetilde P)S ]^3\big)$. With a little abuse of notations, throughout this subsection, we denote $S_{1:K-2}:= {\rm diag}(\sqrt{\tilde s_1}, \cdots, \sqrt{ \tilde s_{K-2}}\, )$ and $S_{K-1:K}:= {\rm diag}(\sqrt{ \tilde s_{K-1}}, \sqrt{\tilde s_{K}}\, )$. It follows that 
\begin{align*}
S(P- \widetilde{P} )S = \left[
\begin{array}{cc}
{\bf 0}_{(K-2)\times (K-2)} & S_{1:K-2} Z(I_2 - \frac{1}{{x}'{q}} {q} {x}') S_{K-1:K} \\ [0.2cm]
S_{K-1:K} (I_2 - \frac{1}{{x}'{q}} {x}{q} ')  Z' S_{1:K-2}   & S_{K-1:K}\big( P_1 - \frac{{q}'P_1{q}}{({x}'{q})^2} {x}{x}' \big)S_{K-1:K}
\end{array}
\right].
\end{align*}
For $M = \begin{pmatrix}
  0 & A\\ 
  A' & B
\end{pmatrix}$ with symmetric $B$,  it can be checked by some elementary computations that  
\begin{align*}
{\rm tr} (M^3) = 3{\rm tr} (BA'A) + {\rm tr} (B^3).
\end{align*}
As a result, we have 
\begin{align*}
&{\rm tr} \big((S(P- \widetilde{P} )S)^3\big)  \\
& = 3 {\rm tr} \Big( S_{K-1:K}\big( P_1 - \frac{{q}'P_1{q}}{({x}'{q})^2} {x}{x}' \big)S^2_{K-1:K} (I_2 - \frac{1}{{x}'{q}} {x}{q} ')  Z' S^2_{1:K-2}Z(I_2 - \frac{1}{{x}'{q}} {q} {x}') S_{K-1:K} \Big) \notag\\
& \quad +  {\rm tr} \Big( \big(S_{K-1:K}\big( P_1 - \frac{{q}'P_1{q}}{({x}'{q})^2} {x}{x}' \big)S_{K-1:K}\big)^3\Big) \notag\\
&=: T_1 + T_2.
\end{align*}
Consider the special case ${x}\propto {\bf 1}_2$, i.e., $x_{K-1} = e_{K-1}'P  \tilde{q}=  e_{K}'P \tilde{q}= x_{K}$. Recall that $P_1= \begin{pmatrix} 1& b\\ b& 1\end{pmatrix}$. It yields that 
\begin{align*}
P_1 - \frac{{q}'P_1{q}}{({x}'{q})^2} {x}{x}'  & = \begin{pmatrix} 1& b\\ b& 1\end{pmatrix} -  \frac{q^2_1+ q^2_2 + 2b q_1q_2}{(q_1+ q_2)^2}\begin{pmatrix} 1& 1\\ 1& 1\end{pmatrix}
 =  \frac{1-b}{2}\begin{pmatrix} 1-a& -1-a\\ -1-a& 1-a\end{pmatrix} ,
\end{align*}
where we recall the notation $a = (q_1 - q_2)^2/ (q_1+ q_2)^2$. And we can also derive
\begin{align*}
I_2 -  \frac{1}{{x}'{q}} {q} {x}' = I_2 - \frac{1}{q_1+ q_2}  \begin{pmatrix} q_1& q_1\\ q_2& q_2\end{pmatrix} 
= \frac{1}{q_1+ q_2}  \begin{pmatrix} 1\\ -1 \end{pmatrix}  \begin{pmatrix} q_2 &- q_1\end{pmatrix}. 
\end{align*}
By the notation that $Z = ( \beta_1, \beta_2)$, we further have 
\begin{align*}
Z(I_2 - \frac{1}{{x}'{q}} {q} {x}') =\Big( \frac{q_{2}}{q_1 + q_2} (\beta_1-\beta_2),  -\frac{q_1}{q_1+ q_2} (\beta_1-\beta_2) \Big)=: (\gamma_1, \gamma_2).
\end{align*}
As a result, 
\begin{align*}
T_1 &=3 {\rm tr}\left[ \frac{1-b}{2} \diag(s) \begin{pmatrix} 1-a& -1-a\\ -1-a& 1-a\end{pmatrix} \diag(s) \begin{pmatrix} \gamma_1'\\ \gamma_2' \end{pmatrix} S^2_{1:K-2} (\gamma_1, \gamma_2) \right] \notag\\
& = \frac{3(1-b)}{2} \Big[ (1-a)  \big( s_1^2\gamma_1' S^2_{1:K-2} \gamma_1+s^2_{2}\gamma_2'S^2_{1:K-2} \gamma_2\big) - 2(1+a) s_1s_2 \big(\gamma_2' S^2_{1:K-2}\gamma_1 \big) \Big]\notag\\
& = \frac{3(1-b)\Vert \gamma\Vert^2q_1^2q_2^2}{2(q_1+q_2)^2}\Big[ (1-a)  \Big( \frac{s^2_1}{q_1^2}+ \frac{s^2_2 }{q_2^2}\Big) +2(1+a)\frac{s_1}{q_1}\frac{s_2}{q_2} \Big],
\end{align*}
where we used the facts that $S^2_{1:K-2} = \diag(\tilde{s}_{1:(K-2)})$, $S^2_{(K-1):K} = \diag(\tilde{s}_{K-1}, \tilde s_{K}) =\diag(s)$ and the short-hand notation $\gamma=[ \diag(\tilde{s}_{1:(K-2)})]^{1/2} (\beta_1- \beta_2)$. In addition,
\begin{align*}
T_2 & =  \frac{(1-b)^3}{8} {\rm tr} \left[ \begin{pmatrix} (1-a) s_1 & -(1+a)\sqrt{s_1s_2} \\ -(1+a) \sqrt{s_1s_2}& (1-a)s_2 \end{pmatrix} ^3\right] \notag\\
& =  \frac{(1-b)^3}{8}  \Big[(1-a)^3 ( s^3_1 + s^3_2)  + 3(1+a)^2(1-a)s_1s_2(s_1 + s_2)\Big].
\end{align*}
Consequently, combining these two terms gives
 \begin{align*}
   {\rm tr} \big((S(P- \widetilde{P} )S)^3\big) 
 & =\frac{3(1-b)\Vert \gamma\Vert^2q_1^2q_2^2}{2(q_1+q_2)^2}\Big[   \Big( \frac{s_1}{q_1}+ \frac{s_2 }{q_2}\Big)^2 -a \Big( \frac{s_1}{q_1} - \frac{s_2 }{q_2}\Big)^2 \Big] \notag\\
 & \quad + \frac{(1-b)^3}{8}  \Big[(1-a)^3 ( s^3_1 + s^3_2)  + 3(1+a)^2(1-a)s_1s_2(s_1 + s_2)\Big],
 \end{align*}
 which proves the first claim. 
 
Next, we prove the second claim.  If $b<1$ and $|\lambda_{\min}(P)| \geq c_0|\lambda_{\min} (P_1)|$, it follows that $1-b  = |\lambda_{\min} (P_1)|\asymp |\lambda_{\min}(P)| $.  Further with the condition that $\min\{ q_1/q_2, q_2/q_1\} \geq c_0$, it can be seen that $a<1$ and $ \Big( \frac{s_1}{q_1}+ \frac{s_2 }{q_2}\Big)^2 -a \Big( \frac{s_1}{q_1} - \frac{s_2 }{q_2}\Big)^2 >0$. As a result,
\begin{align*}
{\rm trace} \big([\Omega - \widetilde \Omega]^3\big)  &=   \Vert \theta\Vert^6\cdot  {\rm tr} \big((S(P- \widetilde{P} )S)^3\big) \notag\\
& \geq    \Vert \theta\Vert^6 \frac{(1-b)^3}{8}\Big[(1-a)^3 ( s^3_1 + s^3_2)  + 3(1+a)^2(1-a)s_1s_2(s_1 + s_2)\Big] \notag\\
&\geq C  \Vert \theta\Vert^6 |\lambda_{\min}(P)|^3 \geq C |\lambda_{k_0+1}(\Omega)|^3.
\end{align*}
Plugging this into the definition ${\rm SNR}_{n,3} (\Omega) ={\rm trace} \big([\Omega - \widetilde \Omega]^3\big) / \sqrt{6 \cdot{\rm trace}(\Omega^3)} $ gives that 
\begin{align*}
{\rm SNR}_{n,3} (\Omega)\geq C^{-1} \big(|\lambda_{k_0+1}(\Omega)|/ \sqrt{\lambda_1(\Omega)}\, \big)^3
\end{align*}
following from the fact that ${\rm trace}(\Omega^3) \leq K \lambda_1^3(\Omega)$.
This completes the proof. 

\end{proof}

\subsection{Extension of  Lemma~\ref{lem:DCBMvsDCMM} to general $K$. }
\label{app:power:2}
Lemma~\ref{lem:DCBMvsDCMM}  discusses the setting for $2$ communities.  Throughout this section, we consider the case of general $K$ communities. 
Suppose the  true model is a DCMM with $K$ communities ($K$ is known),  but we {\it misspecify} it as a DCBM with $K$ communities.  We study the power of detecting model misspecification. 

Same as before, let $\Omega = \Theta \Pi P \Pi' \Theta$ be the Bernoulli probability matrix of the 
true DCMM. For $1\leq k\leq K$, let $\lambda_k$ be the $k$th largest eigenvalue (in magnitude) of $\Omega$, and let $\xi_k\in\mathbb{R}^n$ be the associated eigenvector. Let $R=[\diag(\xi_1)]^{-1}[\xi_2,\ldots,\xi_K]$. Given any partition $\{1,2,\ldots,n\}=\cup_{k=1}^K S_k$, let $\mathrm{RSS}(S_{1:K})=n^{-1}\sum_{k=1}^K\sum_{i\in S_k}\|r_i-\bar{r}_k\|^2$, where $S_{1:K}$ is a short-hand notation of the partition, $r_i$ is the $i$th row of $R$, and $\bar{r}_k=|S_k|^{-1}\sum_{i\in S_k}r_i$. Let $\widetilde{S}_{1:K}$ be the partition that minimizes $\mathrm{RSS}(S_{1:K})$. Let $\widetilde{\Pi}=[\tilde{\pi}_1',\tilde{\pi}_2',\ldots,\tilde{\pi}_n']\in\mathbb{R}^{n\times K}$ be such that $\tilde{\pi}_i=e_k$ if $i\in \widetilde{S}_k$.  
We define a quantity $\chi_n:=\min_{S_{1:K}\neq \widetilde{S}_{1:K}}\{\mathrm{RSS}(S_{1:K})\}- \mathrm{RSS}(\widetilde{S}_{1:K})$, 
where $S_{1:K}\neq \widetilde{S}_{1:K}$ means the two partitions are different up to any permutation of $S_1,\ldots,S_K$. 
Let  $\zeta=\|\theta\|_1^{-1}P\Pi'\Theta{\bf 1}_n$. Define $\widetilde{\theta}\in\mathbb{R}^{n}$ by $\widetilde{\theta}_i=\theta_i\frac{\|\theta\|_1(\pi_i'\zeta)}{\sum_{j\in S_k}\theta_j(\pi_j'\zeta)}$. Write $\widetilde{P}=\frac{1}{\|\theta\|_1^2}\widetilde{\Pi}'\Omega\widetilde{\Pi}$, $\widetilde{\Theta}=\diag(\widetilde{\theta})$, $G=\frac{1}{\|\theta\|^{2}}\Pi'\Theta^2\Pi$, $\widetilde{G}=\frac{1}{\|\theta\|^2}\widetilde{\Pi}'\widetilde{\Theta}^2\widetilde{\Pi}$, and $\Gamma=\frac{1}{\|\theta\|^2}G^{-\frac12}\Pi'\Theta\widetilde{\Theta}\widetilde{\Pi}'\widetilde{G}^{-\frac12}$. 
For any $B, M_1,M_2\in\mathbb{R}^{K\times K}$, let $
f(M_1, M_2, B) = \mathrm{trace}(M_1^3-M_2^3)+3\mathrm{trace}(M_1BM_2B'M_1-M_2B'M_1BM_2)$. 
Same as before, let $\kappa(\cdot)$ denote the conditioning number of a matrix. 
Lemma~\ref{lem2:power-DCMM} is proved in Section~\ref{newnew}. 

\begin{lemma} \label{lem2:power-DCMM}
Consider a DCMM model with the same notations above. Suppose Condition~\ref{cond:afmSCORE} and Condition~\ref{cond:MSCORE} (b)-(c) hold, and $\kappa(\Pi'\Theta\widetilde{\Pi})\leq C$ up to a column-wise permutation of $\widetilde{\Pi}$. If $\chi_n \geq (\delta_n\beta_n\|\theta\|)^{-1}\log(n)$, then $\mathbb{P}(\widehat{\Pi} = \widetilde{\Pi}) = O(n^{-3})$, up to a column-wise permutation of $\widetilde{\Pi}$. Furthermore, on the event $\widehat{\Pi}=\widetilde{\Pi}$, $\widehat{\Omega}=\mathbb{M}(A)$ for a mapping $\mathbb{M}$. Let $\widetilde{\Omega}=\mathbb{M}(\Omega)$. This matrix satisfies that 
$\widetilde{\Omega}=\widetilde{\Theta}\widetilde{\Pi} \widetilde{P}\widetilde{\Pi}'\widetilde{\Theta}$ and $\mathrm{trace}([\Omega - \widetilde{\Omega}]^3)=\|\theta\|^6\, f\bigl(G^{\frac12}PG^{\frac12},\widetilde{G}^{\frac12}\widetilde{P}\widetilde{G}^{\frac12},\Gamma\bigr)$.
\end{lemma}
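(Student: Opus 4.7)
I would split the argument into three stages matching the three assertions of the lemma: (i) the concentration $\mathbb{P}(\widehat{\Pi}=\widetilde{\Pi})\geq 1-O(n^{-3})$ (up to a column permutation); (ii) the identification of $\widetilde{\Omega}=\mathbb{M}(\Omega)$ with the explicit factorization $\widetilde{\Theta}\widetilde{\Pi}\widetilde{P}\widetilde{\Pi}'\widetilde{\Theta}$; and (iii) the trace identity $\mathrm{trace}([\Omega-\widetilde{\Omega}]^3)=\|\theta\|^6\, f(G^{1/2}PG^{1/2},\widetilde{G}^{1/2}\widetilde{P}\widetilde{G}^{1/2},\Gamma)$. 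Throughout, I would write $U=\Theta\Pi$ and $\widetilde{U}=\widetilde{\Theta}\widetilde{\Pi}$, so that $\Omega=UPU'$, $U'U=\|\theta\|^2 G$, $\widetilde{U}'\widetilde{U}=\|\theta\|^2\widetilde{G}$, and by the definition of $\Gamma$ in the statement, $U'\widetilde{U}=\|\theta\|^2 G^{1/2}\Gamma\widetilde{G}^{1/2}$.

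For stage (i), I would first invoke Lemma~\ref{lem:entrywise-eigen} together with the derivation used to obtain the row-wise rate of MSCORE in Section~\ref{susec:pf-MSCORE}, which yields $\max_{1\leq i\leq n}\|\hat{r}_i-r_i\|\leq C(\delta_n\beta_n\|\theta\|)^{-1}\sqrt{\log(n)}$ with probability $1-O(n^{-3})$, where $r_i$ and $\hat{r}_i$ are the rows of $R$ and $\widehat{R}=[\diag(\hat\xi_1)]^{-1}[\hat\xi_2,\ldots,\hat\xi_K]$. Under Condition~\ref{cond:MSCORE}(b) the positive vector $\xi_1$ is bounded from below entry-wise, so $\|r_i\|$ (and hence $\|\hat r_i\|$ on the good event) is uniformly bounded, making the map $(r_1,\ldots,r_n)\mapsto \mathrm{RSS}(S_{1:K})$ Lipschitz with a constant independent of the partition. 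It follows that $\sup_{S_{1:K}}|\widehat{\mathrm{RSS}}(S_{1:K})-\mathrm{RSS}(S_{1:K})|\leq C'(\delta_n\beta_n\|\theta\|)^{-1}\sqrt{\log(n)}$. Combined with the assumed gap $\chi_n\geq(\delta_n\beta_n\|\theta\|)^{-1}\log(n)$, this gives $\widehat{\mathrm{RSS}}(S_{1:K})>\widehat{\mathrm{RSS}}(\widetilde{S}_{1:K})$ for every $S_{1:K}\neq\widetilde{S}_{1:K}$, forcing $\widehat{\Pi}=\widetilde{\Pi}$ up to permutation on the good event.

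For stage (ii), on the event $\widehat{\Pi}=\widetilde{\Pi}$ the GoF-SCORE algorithm of Section~\ref{subsec:DCBM} renders $\widehat{\Omega}$ as an explicit function $\mathbb{M}(A,\widetilde{\Pi})$; evaluating $\mathbb{M}$ at $\Omega$ in place of $A$ yields $\widetilde{\Omega}$. Using $e_i'\Omega{\bf 1}_n=\theta_i\|\theta\|_1(\pi_i'\zeta)$ and $\widetilde{\Pi}'\Omega\widetilde{\Pi}=\|\theta\|_1^2\widetilde{P}$, the population analogs of the intermediate quantities become $\tilde\theta_i^*=\widetilde{\theta}_i\sqrt{\widetilde{P}_{k_ik_i}}$ and $\widetilde{P}^*_{k\ell}=\widetilde{P}_{k\ell}/\sqrt{\widetilde{P}_{kk}\widetilde{P}_{\ell\ell}}$ (with $k_i$ denoting the cluster of $i$ in $\widetilde{S}_{1:K}$); the square-root factors cancel in $\diag(\tilde\theta^*)\widetilde{\Pi}\widetilde{P}^*\widetilde{\Pi}'\diag(\tilde\theta^*)$, delivering $\widetilde{\Omega}=\widetilde{\Theta}\widetilde{\Pi}\widetilde{P}\widetilde{\Pi}'\widetilde{\Theta}$. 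For stage (iii), applying the symmetric expansion $\mathrm{trace}([\Omega-\widetilde{\Omega}]^3)=\mathrm{trace}(\Omega^3)-3\mathrm{trace}(\Omega^2\widetilde{\Omega})+3\mathrm{trace}(\Omega\widetilde{\Omega}^2)-\mathrm{trace}(\widetilde{\Omega}^3)$ and inserting $U'U$, $\widetilde{U}'\widetilde{U}$, $U'\widetilde{U}$ via the cyclic property reduces each summand to a $K\times K$ trace in $M_1=G^{1/2}PG^{1/2}$, $M_2=\widetilde{G}^{1/2}\widetilde{P}\widetilde{G}^{1/2}$, and $\Gamma$ (e.g.\ $\mathrm{trace}(\Omega^2\widetilde{\Omega})=\|\theta\|^6\mathrm{trace}(M_1^2\Gamma M_2\Gamma')$ and $\mathrm{trace}(\Omega\widetilde{\Omega}^2)=\|\theta\|^6\mathrm{trace}(M_1\Gamma M_2^2\Gamma')$). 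Recognizing $\mathrm{trace}(M_1BM_2B'M_1)=\mathrm{trace}(M_1^2BM_2B')$ and $\mathrm{trace}(M_2B'M_1BM_2)=\mathrm{trace}(M_1BM_2^2B')$, the collected expression matches $\|\theta\|^6 f(M_1,M_2,\Gamma)$.

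The main obstacle will be stage (i): under DCMM the rows $r_i$ generally do \emph{not} sit at $K$ extreme vertices (as they do for DCBM in Section~\ref{supp:subsec_DCBM}), so the Non-Splitting Property of \cite{EstK} does not apply. The mechanism is instead the population gap $\chi_n$, which encodes identifiability of $\widetilde{S}_{1:K}$ as the unique k-means-optimal $K$-partition of $\{r_1,\ldots,r_n\}$. The delicate point is to show that the RSS perturbation is controlled by $\sup_i\|\hat r_i-r_i\|$ uniformly over all $K^n$ partitions without any union bound, which is where the boundedness of $\|r_i\|$ (from Condition~\ref{cond:MSCORE}(b)) and the Lipschitz form of the centroid map are essential. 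The condition $\kappa(\Pi'\Theta\widetilde{\Pi})\leq C$ enters stages (ii)--(iii) to ensure that $\Gamma$ and $\widetilde{G}$ are well-conditioned so that the cyclic-trace rewrites do not introduce spurious singularities.
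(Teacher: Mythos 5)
Your proposal is correct and, for the two substantive parts, follows essentially the same route as the paper: for the clustering step the paper likewise uses the row-wise SCORE bound $\max_i\|\hat r_i-r_i\|\lesssim(\delta_n\beta_n\|\theta\|)^{-1}\sqrt{\log(n)}$, the induced $O(\varepsilon_n)$ perturbation of the centroids, and a deterministic uniform bound $\sup_{S_{1:K}}|\widehat{\mathrm{RSS}}-\mathrm{RSS}|\lesssim\varepsilon_n$ (no union bound over partitions), which combined with the gap $\chi_n\gg\varepsilon_n$ forces $\widehat{\Pi}=\widetilde{\Pi}$; and the factorization $\widetilde{\Omega}=\widetilde{\Theta}\widetilde{\Pi}\widetilde{P}\widetilde{\Pi}'\widetilde{\Theta}$ is obtained by the same cancellation of the $\sqrt{M_{kk}}$ normalizations after substituting $\Omega$ into the DCBM refitting map. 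You correctly identify that the Non-Splitting Property is unavailable here and that the gap $\chi_n$ is the substitute, which is exactly the paper's mechanism. The only genuine difference is the trace identity: you expand $\mathrm{trace}([\Omega-\widetilde{\Omega}]^3)$ into the four terms $\mathrm{trace}(\Omega^3)-3\mathrm{trace}(\Omega^2\widetilde{\Omega})+3\mathrm{trace}(\Omega\widetilde{\Omega}^2)-\mathrm{trace}(\widetilde{\Omega}^3)$ and reduce each via the Gram matrices $U'U$, $\widetilde{U}'\widetilde{U}$, $U'\widetilde{U}$, whereas the paper writes $\Omega-\widetilde{\Omega}=ABA'$ with $A=[\Theta\Pi,\ \widetilde{\Theta}\widetilde{\Pi}J']$ and $B=\mathrm{diag}(P,-P)$ and computes the diagonal blocks of $B(A'A)B(A'A)B(A'A)$. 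The two are algebraically equivalent and your version is arguably more transparent. One caveat you should be aware of: your collected expression carries the cross terms with signs $-3\mathrm{trace}(M_1\Gamma M_2\Gamma'M_1)+3\mathrm{trace}(M_2\Gamma'M_1\Gamma M_2)$, which matches the paper's own computed display but is the \emph{negative} of the second term in the stated definition of $f$; this sign mismatch is internal to the paper (a typo in the definition of $f$ or in the final line of its proof), so your derivation is consistent with the paper's, but you should not assert the match with $f$ as defined without flagging the sign.
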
 

Using the expressions in Lemma~\ref{lem2:power-DCMM}, we can prove the following theorem
on the power of testing DCMM against DCBM with a general $K$: 
\begin{thm} \label{thm:power2} 
Fix $K\geq 2$ and suppose the true model is a DCMM such that the conditions in Lemma~\ref{lem2:power-DCMM} hold and $f\bigl(G^{\frac12}PG^{\frac12},\widetilde{G}^{\frac12}\widetilde{P}\widetilde{G}^{\frac12},\Gamma\bigr)\gg \|\theta\|^{-2}$. If $\mathrm{SNR}_{n,3}(\Omega)\to\infty$, then $T_n(\widehat{\Omega}^{\mathrm{DCBM}})\to\infty$, and for any fixed $\alpha\in (0,1)$, the power of the level-$\alpha$ GoF-SCORE test tends to $1$. 
\end{thm}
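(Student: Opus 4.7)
\textbf{Proof plan for Theorem~\ref{thm:power2}.}
The plan is to mirror the proof of Theorem~\ref{thm:power1-mainpaper} in Section~\ref{supp:susecpower1}, replacing the DCBM-vs-DCBM structure used there by the DCMM-vs-DCBM structure furnished by Lemma~\ref{lem2:power-DCMM}. First I invoke Lemma~\ref{lem2:power-DCMM}: up to a column permutation, $\widehat{\Pi} = \widetilde{\Pi}$ with probability $1-o(1)$, so on this event the GoF-SCORE output takes the explicit form $\widehat{\Omega}^{\mathrm{DCBM}} = \mathbb{M}(A)$ and its population analog is $\widetilde{\Omega} = \mathbb{M}(\Omega) = \widetilde{\Theta}\widetilde{\Pi}\widetilde{P}\widetilde{\Pi}'\widetilde{\Theta}$. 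Writing $T_n(\widehat{\Omega}^{\mathrm{DCBM}}) = U_{n,3}(\widehat{\Omega})/\sqrt{6C_{n,3}}$, I use the standard estimate $C_{n,3} = \mathrm{tr}(\Omega^3)\,(1+o_{\mathbb{P}}(1))$ together with $\mathrm{tr}(\Omega^3)\asymp\|\theta\|^6$ (cf.\ \eqref{trace-LB}), so it suffices to control $U_{n,3}(\widehat{\Omega})-U_{n,3}(\Omega)$.

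The second step is to decompose $\Delta := \widehat{\Omega}-\Omega = \widetilde{\Delta} + \widehat{\Delta}$, with $\widetilde{\Delta} := \widetilde{\Omega}-\Omega$ deterministic and $\widehat{\Delta} := \widehat{\Omega}-\widetilde{\Omega}$ capturing the random fluctuation, and apply Lemma~\ref{lem:diffU}. Terms depending only on $(W_1,\widehat{\Delta})$ are governed by the same argument used for the DCBM null in the proof of Theorem~\ref{thm:DCBM}: the key operator-norm bounds $\|\widehat{\Delta}\|=o_{\mathbb{P}}(1)$, $\|W_1\widehat{\Delta}\|=O_{\mathbb{P}}(\|\theta\|^2)$, and $\|W_1^2\widehat{\Delta}\|,\|\diag(W_1^2)\widehat{\Delta}\|=o_{\mathbb{P}}(\|\theta\|^3)$ still apply (since the refitting formula $\mathbb{M}$ is unchanged), yielding a contribution of $o_{\mathbb{P}}(\|\theta\|^3)$. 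The extra terms involving $\widetilde{\Delta}$ split as in Section~\ref{supp:susecpower1} into three groups: (i)~Hadamard products combining $\widetilde{\Delta}$, $\widehat{\Delta}$ and $\diag(\Omega)$, bounded via $\|\widetilde{\Delta}\|\lesssim \|\theta\|^2$ and $\|\diag(\widetilde{\Delta})\|\lesssim \theta_{\max}^2$; (ii)~linear-in-$W_1$ traces such as $\mathrm{tr}(W_1\widetilde{\Delta}\widehat{\Delta})$ and $\mathrm{tr}(W_1\circ W_1\widetilde{\Delta})$, handled by showing $\|W_1\widetilde{\Delta}\|=O_{\mathbb{P}}(\|\theta\|^2)$; (iii)~the leading cubic trace $\mathrm{tr}(\widetilde{\Delta}^3) = \|\theta\|^6\,f(G^{1/2}PG^{1/2},\widetilde{G}^{1/2}\widetilde{P}\widetilde{G}^{1/2},\Gamma)$ from Lemma~\ref{lem2:power-DCMM}, together with bias-type terms $\mathrm{tr}(\widetilde{\Delta}^2\widehat{\Delta})$, $\mathrm{tr}(W_1\widetilde{\Delta}^2)$, $\mathrm{tr}(W_1\circ\widetilde{\Delta}^2)$, and $\mathrm{tr}(\widetilde{\Delta}\circ W_1^2)-\mathrm{tr}(W_1^2\widetilde{\Delta})$, which I will show are each $O_{\mathbb{P}}(\|\theta\|^4)$ or smaller.

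Combining these estimates and invoking the hypothesis $f(\cdot)\gg\|\theta\|^{-2}$, which forces $|\mathrm{tr}(\widetilde{\Delta}^3)|\gg\|\theta\|^4$, the leading trace dominates every bias-type term, so that
\[
U_{n,3}(\widehat{\Omega})-U_{n,3}(\Omega) \;=\; \mathrm{tr}(\widetilde{\Delta}^3)\,(1+o_{\mathbb{P}}(1)) + o_{\mathbb{P}}(\|\theta\|^3).
\]
Dividing by $\sqrt{6 C_{n,3}}\asymp\|\theta\|^3$ gives $T_n(\widehat{\Omega}^{\mathrm{DCBM}}) = T_n(\Omega) + \mathrm{SNR}_{n,3}(\Omega)(1+o_{\mathbb{P}}(1)) + o_{\mathbb{P}}(1)$. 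Since $T_n(\Omega)=O_{\mathbb{P}}(1)$ by Theorem~\ref{thm:SCC} and $\mathrm{SNR}_{n,3}(\Omega)\to\infty$ by assumption, $|T_n(\widehat{\Omega}^{\mathrm{DCBM}})|\to\infty$ in probability; in particular, for any fixed $\alpha\in(0,1)$ the level-$\alpha$ test rejects with probability tending to $1$.

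The main technical obstacle will be in group (iii), sharply bounding the interaction traces $\mathrm{tr}(W_1\widetilde{\Delta}^2)$ and $\mathrm{tr}(\widetilde{\Delta}\circ W_1^2)-\mathrm{tr}(W_1^2\widetilde{\Delta})$. A naive bound $\|W_1\widetilde{\Delta}^2\|\leq \|W_1\|\cdot\|\widetilde{\Delta}\|^2$ is too loose by a factor of $\|W_1\|\asymp\sqrt{n\bar{\theta}\theta_{\max}}$. Instead, I will exploit that $\widetilde{\Delta}$ has rank at most $2K$, with column space spanned by $\Theta\Pi$ and $\widetilde{\Theta}\widetilde{\Pi}$, expanding $\widetilde{\Delta} = \Theta\Pi P\Pi'\Theta - \widetilde{\Theta}\widetilde{\Pi}\widetilde{P}\widetilde{\Pi}'\widetilde{\Theta}$ and reducing each term to quadratic forms $\Pi'\Theta W_1\Theta\Pi$, $\widetilde{\Pi}'\widetilde{\Theta}W_1\widetilde{\Theta}\widetilde{\Pi}$ and their cross versions, all covered by Lemma~\ref{lem:1.7}. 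A subsidiary obstacle is that, unlike the DCBM-vs-DCBM setting of Theorem~\ref{thm:power1-mainpaper}, $\widetilde{\Theta}\neq\Theta$ in general; establishing the pointwise bound $\widetilde{\theta}_i\asymp\theta_i$ (from the definition $\widetilde{\theta}_i = \theta_i\|\theta\|_1(\pi_i'\zeta)/\sum_{j\in \widetilde{S}_k}\theta_j(\pi_j'\zeta)$ in Lemma~\ref{lem2:power-DCMM} under $\kappa(\Pi'\Theta\widetilde{\Pi})\leq C$) is the key preliminary step that allows Lemma~\ref{lem:1.7} to be reused with $\widetilde{\theta}$ in place of $\theta$ up to universal constants.
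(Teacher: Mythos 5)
Your proposal is correct and follows essentially the same route as the paper: invoke Lemma~\ref{lem2:power-DCMM} to fix $\widehat{\Pi}=\widetilde{\Pi}$ and obtain the explicit form of $\widetilde{\Omega}$, split $\Delta=\widetilde{\Delta}+\widehat{\Delta}$, recycle the three-group term analysis from the proof of Theorem~\ref{thm:power1-mainpaper}, and let the hypothesis $f(\cdot)\gg\|\theta\|^{-2}$ make $\mathrm{tr}(\widetilde{\Delta}^3)$ dominate the $O_{\mathbb{P}}(\|\theta\|^4)$ bias terms. Your identified "key preliminary" ($\widetilde{\theta}_i\asymp\theta_i$ via $\kappa(\Pi'\Theta\widetilde{\Pi})\leq C$) is precisely the paper's argument that the diagonal matrix $D$ in $\widetilde{\Theta}=\Theta D$ has entries bounded above and below, after which it writes $\widetilde{\Delta}=\Theta\overline{\Pi}\,\overline{P}\,\overline{\Pi}'\Theta$ with $\|\overline{P}\|=O(1)$ — the same low-rank factorization you propose for controlling the interaction traces via Lemma~\ref{lem:1.7}.
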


\subsubsection{Proof of Lemma \ref{lem2:power-DCMM}}\label{newnew}
We start with claiming $\widehat \Pi = \widetilde \Pi$ with high probability when $\chi_n\geq (\delta_n \beta_n \Vert \theta\Vert )^{-1}\log(n) $.  Recall the partition $\cup_{k=1}^K S_k $ and the residual square sum ${\rm RSS}(S_{1:K}) = n^{-1} \sum_{k=1}^K \sum_{i\in S_k} \Vert r_i - \bar{r}_k\Vert^2$. Similarly, we define  $\widehat{\rm RSS}(S_{1:K}) $ for observed $\hat{r}_i's$ by
\begin{align*}
\widehat{\rm RSS}(S_{1:K}) = \frac 1n \sum_{k=1}^K \sum_{i\in S_k} \Vert \hat r_i  - \hat{\bar{r}}_k \Vert^2
\end{align*}
 where $\hat{\bar{r}}_k : = |S_k|^{-1} \sum_{i\in S_k} \hat r_i$. We write $\varepsilon_n: = (\delta_n \beta_n\Vert \theta \Vert)^{-1} \sqrt{\log (n)}$ for short. By SCORE, we have $ \max_{i}\Vert \hat r_i - r_i \Vert \lesssim \varepsilon_n = o(1)$. This also yields that 
 \begin{align*}
\Vert   \hat{\bar{r}}_k  - \bar{r}_k  \Vert  =\Big\Vert  \frac{1}{|S_k|} \sum_{i\in S_k} ( \hat r_i - r_i  )\Big\Vert \leq  \frac{1}{|S_k|} \sum_{i\in S_k}  \Vert \hat r_i - r_i  \Vert \lesssim \varepsilon_n\, . 
 \end{align*}
Based on these two upper bounds, we can derive
\begin{align*}
\big| \widehat{\rm RSS}(S_{1:K})   - {\rm RSS}(S_{1:K}) \big| \lesssim  \frac 1n \sum_{k=1}^K \sum_{i\in S_k} \big(2 \Vert r_i - \bar r_k\Vert + \varepsilon_n \big)\varepsilon_n  \lesssim \varepsilon_n  {\rm RSS}(S_{1:K})  +  \varepsilon_n^2 \lesssim \varepsilon_n
 \end{align*}
with probability $1- o(n^{-3})$ for all possible clustering $S_{1:K}$. This, together with the conditions that $\chi_n = \min_{S_{1:K}\neq \widetilde S_{1:K}} \{{\rm RSS}(S_{1:K})  \} -  {\rm RSS}( \widetilde S_{1:K})  \gg \varepsilon_n$, leads to 
\begin{align*}
\widehat{\rm RSS}(S_{1:K}) &\geq {\rm RSS}(S_{1:K})  - C\varepsilon_n \geq {\rm RSS}(\widetilde S_{1:K})  + \chi_n - C\varepsilon_n  \notag\\
&\geq \widehat{\rm RSS}(\widetilde S_{1:K}) + \chi_n - 2C\varepsilon_n> \widehat{\rm RSS}(\widetilde S_{1:K})
\end{align*}
for any $S_{1:K}\neq \widetilde S_{1:K}$. Consequently, $\widetilde \Pi = \widehat \Pi$ with high probability up to a column-wise permutation. 

Next, on the event $\widehat \Pi = \widetilde \Pi$, it follows directly 
\begin{align*}
\widehat \Omega =  \mathbb{M}(A)=\diag(A{\bf 1}_n)  \widetilde \Pi [\diag( \widetilde \Pi 'A {\bf 1}_n)]^{-1} \widetilde \Pi'A  \widetilde \Pi [\diag( \widetilde \Pi'A {\bf 1}_n)]^{-1} \widetilde \Pi'\diag(A{\bf 1}_n).
\end{align*}

In the sequel, we prove the last claim. 
Since $\widetilde{\Omega}=\mathbb{M}(\Omega)$, it follows that 
\beq
\widetilde{\Omega} = \diag(\Omega{\bf 1}_n)\widetilde{\Pi}[\diag(\widetilde{\Pi}'\Omega {\bf 1}_n)]^{-1}\widetilde{\Pi}'\Omega \widetilde{\Pi} [\diag(\widetilde{\Pi}'\Omega {\bf 1}_n)]^{-1}\widetilde{\Pi}'\diag(\Omega{\bf 1}_n). \notag
\eeq
Note that $\Omega=\Theta\Pi P\Pi'\Theta$. Recall that $\zeta:= \Vert \theta\Vert_1^{-1} \Pi' \Theta {\bf 1}_n$.  Let $J:=\|\theta\|_1^{-1}\Pi'\Theta\widetilde{\Pi}$. By direct calculations, 
\begin{align*}
\diag(\Omega{\bf 1}_n)= \Vert \theta\Vert_1 \cdot \Theta\diag(\Pi P\zeta ), \quad  \widetilde{\Pi}'\Omega \widetilde{\Pi} =\Vert \theta\Vert_1^2 \cdot  J'PJ, \quad  \diag(\widetilde{\Pi}'\Omega {\bf 1}_n)=\Vert \theta\Vert_1^2 \cdot\diag(J'P\zeta).
\end{align*}
It follows that
\beq \label{tildeOmega-1}
\widetilde{\Omega}=\|\theta\|_1^2\cdot \Theta\diag(\Pi P\zeta)\widetilde{\Pi}[\diag(J'P\zeta)]^{-1}J'PJ[\diag(J'P\zeta)]^{-1}\widetilde{\Pi}'\diag(\Pi P\zeta)\Theta. 
\eeq
Since each row of $\widetilde{\Pi}$ only takes values in $\{e_1,e_2,\ldots,e_K\}$, it follows that  
\[
\widetilde{\Pi}[\diag(v)]^{-1}=[\diag(\widetilde{\Pi}v)]^{-1}\widetilde{\Pi}, \qquad\mbox{for any }v\in\mathbb{R}^K. 
\] 
Therefore, 
\begin{align} \label{temp3}
\diag(\Pi P\zeta)\widetilde{\Pi}[\diag(J'P\zeta)]^{-1} &= \diag(\Pi P\zeta)[\diag(\widetilde{\Pi}J'P\zeta)]^{-1} \widetilde{\Pi}\cr
&=  \diag(\Pi P\zeta)[\diag(\widetilde{\Pi}\widetilde{\Pi}'\Theta \Pi P\zeta)]^{-1} \widetilde{\Pi}. 
\end{align}
Write
\[
\widetilde{\Theta}:=\|\theta\|_1\cdot\Theta \diag(\Pi P\zeta)[\diag(\widetilde{\Pi}\widetilde{\Pi}'\Theta \Pi P\zeta)]^{-1}. 
\]
Then, \eqref{tildeOmega-1} can be re-written as
\beq \label{tildeOmega-2}
\widetilde{\Omega} = \widetilde{\Theta}\widetilde{\Pi}J'PJ\widetilde{\Pi}'\widetilde{\Theta}. 
\eeq
And if $i\in S_k$, 
\begin{align*}
\widetilde \Theta(i,i) = \Vert \theta\Vert_1 \cdot \frac{ \theta_i \pi_i' P \zeta}{ \tilde\pi_i' \widetilde \Pi' \Theta \Pi P \zeta } = \theta_i  \cdot \frac{ \Vert \theta\Vert_1\pi_i' P \zeta}{ \sum_{j\in S_k} \theta_j  (\pi' P \zeta) } . 
\end{align*}
Further let 
\[
\widetilde{P}=J'PJ = \|\theta\|_1^{-2}\widetilde{\Pi}\Theta\Pi P\Pi'\Theta\widetilde{\Pi}=\|\theta\|_1^{-2}\widetilde{\Pi}\Omega\widetilde{\Pi}',
\]
we arrive at $\widetilde \Omega = \widetilde{\Theta}\widetilde{\Pi}\widetilde P \widetilde{\Pi}'\widetilde{\Theta} $. Lastly, we show the representation of ${\rm trace} ([\Omega - \widetilde \Omega]^3)$ below. 

We re-write
\[
\Omega - \widetilde{\Omega} = \begin{bmatrix}\Theta\Pi & \widetilde{\Theta}\widetilde{\Pi}J' \end{bmatrix} \begin{bmatrix} P \\ & -P\end{bmatrix}\begin{bmatrix}\Pi'\Theta\\J\widetilde{\Pi}'\widetilde{\Theta}\end{bmatrix}:=ABA'. 
\]
Note that $\mathrm{trace}((ABA')^3)=\mathrm{trace}[B(A'A)B(A'A)B(A'A)]$, where
\beq\label{temp2}
B = \begin{bmatrix} P \\ & -P\end{bmatrix}, \qquad A'A = \begin{bmatrix} \Pi'\Theta^2\Pi & \Pi'\Theta\widetilde{\Theta}\widetilde{\Pi}J' \\J\widetilde{\Pi}'\widetilde{\Theta}\Theta\Pi & J\widetilde{\Pi}'\widetilde{\Theta}^2\widetilde{\Pi} J'\end{bmatrix}:=\begin{bmatrix} M_{11} & M_{12} \\M_{12}' & M_{22}\end{bmatrix}. 
\eeq
By elementary calculations,
the top left block of $B(A'A)B(A'A)B(A'A)$ is
\[
PM_{11}PM_{11}PM_{11}-PM_{12}PM'_{12}PM_{11} - PM_{11}PM_{12}PM'_{12} + PM_{12}PM_{22}PM'_{12}, 
\]
and the bottom right block of $B(A'A)B(A'A)B(A'A)$ is 
\[
- PM_{12}'PM_{11}PM_{12} + PM_{22}PM'_{12}PM_{12} + PM'_{12}PM_{12}PM_{22} - PM_{22}PM_{22}PM_{22}. 
\]
It follows that
\begin{align} \label{trace-1}
\mathrm{trace}([\Omega-\widetilde{\Omega}]^3) &= \mathrm{trace}(PM_{11}PM_{11}PM_{11}) - \mathrm{trace}(PM_{22}PM_{22}PM_{22}) \cr
&\quad -3\mathrm{trace}(PM_{11}PM_{12}PM_{12}') +    3\mathrm{trace}(PM_{22}PM'_{12}PM_{12}). 
\end{align}
We plug in the expressions of $M_{11}$, $M_{12}$ and $M_{22}$ in \eqref{temp2}. 
It follows that 
\begin{align*}
PM_{11}PM_{11}PM_{11} & = P\Pi'\Theta^2\Pi P\Pi'\Theta^2\Pi P\Pi'\Theta^2\Pi, \cr
PM_{11}PM_{12}PM_{12}' &= P\Pi'\Theta^2\Pi P \Pi'\Theta\widetilde{\Theta}\widetilde{\Pi}J'P J\widetilde{\Pi}'\widetilde{\Theta} \Theta \Pi ,\cr
PM_{22}PM_{12}'PM_{12} &=  P J\widetilde{\Pi}'\widetilde{\Theta}^2\widetilde{\Pi} J' P J\widetilde{\Pi}'\widetilde{\Theta} \Theta \Pi P \Pi'\Theta\widetilde{\Theta}\widetilde{\Pi}J',\cr
PM_{22}'PM_{22}PM_{22} & = P J\widetilde{\Pi}'\widetilde{\Theta}^2\widetilde{\Pi} J' P J\widetilde{\Pi}'\widetilde{\Theta}^2\widetilde{\Pi} J' PJ\widetilde{\Pi}'\widetilde{\Theta}^2\widetilde{\Pi} J'.
\end{align*}
Recall the shorthand notations
\begin{align*}
G=\Vert \theta\Vert^{-2}\Pi'\Theta^2\Pi,\qquad 
\widetilde{G}=\Vert \theta\Vert^{-2}\widetilde{\Pi}'\widetilde{\Theta}^2\widetilde{\Pi},\qquad
\Gamma=\Vert \theta\Vert^{-2} G^{-\frac12}\Pi'\Theta\widetilde{\Theta}\widetilde{\Pi}\widetilde{G}^{\frac12}.
\end{align*}
We plug the above into \eqref{trace-1} to get
\begin{align} \label{trace-2}
\mathrm{trace}([\Omega-\widetilde{\Omega}]^3) &= \Vert \theta\Vert^6 \Big[ \mathrm{trace}\bigl(PG PG PG\bigr) - \mathrm{trace}\bigl(PJ\widetilde{G}J'PJ\widetilde{G}J'PJ\widetilde{G}J'\bigr)\cr
&\quad -3\mathrm{trace}\bigl(PG P G^{\frac12}\Gamma\widetilde{G}^{\frac12} J'P J\widetilde{G}^{\frac12} \Gamma'G^{\frac12}\bigr) \cr
&\quad + 3\mathrm{trace}\bigl(P J\widetilde{G}J' P J\widetilde{G}^{\frac12}\Gamma'G^{\frac12} P G^{\frac12}\Gamma\widetilde{G}^{\frac12} J' \bigr) \Big]\cr 
&= \Vert \theta\Vert^6 \Big[ \mathrm{trace}\Bigl(\bigl[G^{\frac12} PG^{\frac12}\bigr]^3\Bigr) - \mathrm{trace}\Bigl(\bigl[\widetilde{G}^{\frac12}\widetilde P\widetilde{G}^{\frac12}\bigr]^3\Bigr)\cr
&\quad -3\mathrm{trace}\Bigl([G^{\frac12} PG^{\frac12}]\Gamma [\widetilde{G}^{\frac12} \widetilde P \widetilde{G}^{\frac12}]\Gamma' [G^{\frac12} PG^{\frac12}]\Bigr) \cr
&\quad + 3\mathrm{trace}\Bigl([\widetilde{G}^{\frac12} \widetilde P\widetilde{G}^{\frac12}]\Gamma'[G^{\frac12}PG^{\frac12}]\Gamma [\widetilde{G}^{\frac12} \widetilde P\widetilde{G}^{\frac12}]\Bigr) \Big] \notag\\
& = \Vert \theta\Vert^6 f(G^{\frac12} PG^{\frac12}, \widetilde{G}^{\frac12} \widetilde P\widetilde{G}^{\frac12}, \Gamma ).
\end{align}
This finishes our proofs.

\subsubsection{Proof of Theorem~\ref{thm:power2} }

   We first recall that 
   \beq
\widetilde{\Omega} = \diag(\Omega{\bf 1}_n)\widetilde{\Pi}[\diag(\widetilde{\Pi}'\Omega {\bf 1}_n)]^{-1}\widetilde{\Pi}'\Omega \widetilde{\Pi} [\diag(\widetilde{\Pi}'\Omega {\bf 1}_n)]^{-1}\widetilde{\Pi}'\diag(\Omega{\bf 1}_n). \notag
\eeq   
\[
\widetilde{\Theta}:=\|\theta\|_1\cdot\Theta \diag(\Pi P\zeta)[\diag(\widetilde{\Pi}\widetilde{\Pi}'\Theta \Pi P\zeta)]^{-1}. 
\]
   Similarly to the proof of Theorem \ref{thm:power1-mainpaper} in Section~\ref{supp:susecpower1}, we need to bound those additional terms containing $\widetilde \Delta = \widetilde \Omega - \Omega$ which share the same form (1)-(3) in  Section~\ref{supp:susecpower1}. Note that 
 \[
  \widetilde{\Omega} - \Omega= \begin{bmatrix}\Theta\Pi & \widetilde{\Theta}\widetilde{\Pi}J' \end{bmatrix} \begin{bmatrix} -P \\ & P\end{bmatrix}\begin{bmatrix}\Pi'\Theta\\J\widetilde{\Pi}'\widetilde{\Theta}\end{bmatrix} 
\]  
  where $J:=\|\theta\|_1^{-1}\Pi'\Theta\widetilde{\Pi}$ and $\widetilde{\Theta}:=\|\theta\|_1\cdot\Theta \diag(\Pi P\zeta)[\diag(\widetilde{\Pi}\widetilde{\Pi}'\Theta \Pi P\zeta)]^{-1}$. We write for short $D: = \|\theta\|_1\cdot \diag(\Pi P\zeta)[\diag(\widetilde{\Pi}\widetilde{\Pi}'\Theta \Pi P\zeta)]^{-1}$. It follows that 
  \begin{align*}
   \widetilde{\Omega} - \Omega= \Theta \begin{bmatrix}\Pi & D\widetilde{\Pi}J' \end{bmatrix} \begin{bmatrix} -P \\ & P\end{bmatrix}\begin{bmatrix}\Pi'\\J\widetilde{\Pi}'D\end{bmatrix}  \Theta  =: \Theta \overline \Pi \, \overline P \, \overline \Pi ' \Theta.
  \end{align*} 
  
We now claim that all the diagonal entries of  $D$ are of order $O(1)$ based on the condition $\kappa(\Pi'\Theta \widetilde \Pi) = O(1)$. To see this, we first notice that by Cauchy-Schwarz inequality, the largest singular value of $\Pi'\Theta \widetilde \Pi$ satisfies 
\begin{align*}
\sigma_1(\Pi' \Theta \widetilde \Pi)\geq K^{-\frac 12} \Vert \Pi' \Theta \widetilde \Pi\Vert_{F} \geq K^{-3/2} {\bf 1}_K' \Pi' \Theta \widetilde \Pi{\bf 1}_K \asymp \Vert \theta\Vert_1, 
\end{align*}
and $\sigma_K(\Pi'\Theta \widetilde \Pi)\geq C \Vert \theta\Vert_1 $ following from $\kappa(\Pi'\Theta \widetilde \Pi) = O(1)$. By $\Vert \Pi'\Theta \widetilde \Pi\Vert_{\max} < \Vert \theta\Vert_1$,
we have  that for all $1\leq k \leq K$, 
\begin{align*}
\zeta(k) = \Vert \theta\Vert_1^{-1} e_k' \Pi' \Theta {\bf 1}_n =  \Vert \theta\Vert_1^{-1} e_k' \Pi' \Theta \widetilde \Pi {\bf 1}_K \geq \Vert \theta\Vert_1^{-2} \Vert e_k' \Pi' \Theta \widetilde \Pi \Vert^2 \geq  \Vert \theta\Vert_1^{-2} \sigma_K^2(\Pi'\Theta \widetilde \Pi) \geq C,
\end{align*}
and trivially $\zeta(k) = \Vert \theta\Vert_1^{-1} e_k' \Pi' \Theta \widetilde \Pi {\bf 1}_K \leq K $.  Consequently, there exist  some constants $0<c<C$ such that $c<\pi_i' P \zeta<C$ for all $1\leq i \leq n$ and $c<e_k' P\zeta<C$ for all $1\leq k \leq K$. Furthermore, for any $1\leq k\leq K$, 
\begin{align*}
e_k' \widetilde{\Pi}'\Theta \Pi P\zeta \geq ce_k' \widetilde{\Pi}'\Theta \Pi {\bf 1}_K \geq c' \Vert \theta\Vert_1  \quad \text{and } \quad e_k' \widetilde{\Pi}'\Theta \Pi P\zeta \leq C e_k' \widetilde{\Pi}'\Theta \Pi {\bf 1}_K \leq C' \Vert \theta\Vert_1. 
\end{align*}
We therefore obtain that all the diagonal entries in $ \diag(\Pi P\zeta)$ and $\Vert \theta\Vert_1[\diag(\widetilde{\Pi}\widetilde{\Pi}'\Theta \Pi P\zeta)]^{-1}$ are lower and upper bounded by some some positive constants, which concludes that $\Vert D\Vert_{\max} \asymp 1$ and $\kappa(D) \leq C$ for some constant $C>0$. It is also worth noticing that $\Vert J\Vert_{\max} =O(1)$.  As a result, each entry of $\overline \Pi$ is of order $O(1)$ and $\Vert \overline \Pi' \Theta^2 \overline \Pi \Vert \asymp \Vert \overline \Pi' \Theta^2 \overline \Pi \Vert_{\max} \lesssim \Vert \theta\Vert^2$.
We thus can derive 
   \begin{align*}
   \Vert \widetilde \Delta \Vert  \lesssim \Vert \overline{P} \Vert \Vert \overline{\Pi}' \Theta^2 \overline{\Pi} \Vert \lesssim \Vert  \theta\Vert^2  \quad  \text{ for } \quad  \big|\widetilde \Delta(i,j)\big|\lesssim \theta_i\theta_j \quad \text{ for all $1\leq i,j  \leq n$}.
   \end{align*}
 Based on the above bounds,   
similarly to  the analysis in Section~\ref{supp:susecpower1},  all the arguments there also hold in the  current proof by adapting $\Vert P - \widetilde P\Vert $ there by $\Vert \overline P\Vert = O(1) $.
We therefore omit the details and directly conclude that 
 \begin{align*}
 U_{n,3} (\widehat{\Omega} ) - U_{n,3} (\Omega) =  {\rm tr} \big( [ \Omega - \widetilde \Omega]^3 \big) + o_{\mathbb{P}} (\Vert \theta\Vert^3) +  O_{\mathbb{P}} (\Vert \overline{P}  \Vert \cdot \Vert \theta\Vert^4).
 \end{align*}
 
  Under the condition that $f(G^{\frac12} PG^{\frac12}, \widetilde{G}^{\frac12} \widetilde P\widetilde{G}^{\frac12}, \Gamma )\gg \Vert \theta\Vert^{-2}$,  we further deduce that ${\rm tr} \big( [ \Omega - \widetilde \Omega]^3 \big) \gg \Vert \theta\Vert^4$. This, together with the trivial bound $\Vert \overline{P} \Vert = O(1)$,  implies that
 \begin{align} \label{eq:20230901201}
T_n(\widehat{\Omega})&= T_n(\Omega ) + \frac{{\rm tr} \big( [ \Omega - \widetilde \Omega]^3 \big) }{\sqrt{6C_{n,3}}} (1+ o_{\mathbb{P}}(1)) + o(1)  \notag\\
& = T_n(\Omega ) + {\rm SNR}_{n,3}(\Omega )(1+ o_{\mathbb{P}}(1)) + o(1) .
\end{align}
 Consequently, if ${\rm SNR}_{n,3}(\Omega) \to \infty$, $T_n(\widehat{\Omega}^{\rm DCBM})\to \infty$, and the claim follows immediately.

\spacingset{0}
\footnotesize 

\bibliographystyle{chicago}
\bibliography{RefGoF}

\end{document}